%
%
%
\documentclass{amsart}
\setlength{\oddsidemargin}{0cm}
\setlength{\evensidemargin}{0in}
\setlength{\textwidth}{16.0cm}
\setlength{\topmargin}{0.36cm}
\setlength{\textheight}{8.55in}
\usepackage[nohug]{diagrams}
\usepackage[mathscr]{eucal}
\usepackage{amsfonts}
\usepackage{amsmath}
\usepackage{amsthm}
\usepackage{amssymb}
\usepackage{latexsym}

\usepackage[noadjust]{cite}

\newenvironment{polynomial}
{\par\vspace{\abovedisplayskip}%
\setlength{\leftskip}{\parindent}%
\setlength{\rightskip}{\leftskip}%
\medmuskip=4mu plus 2mu minus 2mu
\binoppenalty=0
\noindent$\displaystyle}
{$\par\vspace{\belowdisplayskip}
}

\usepackage{graphicx}
\usepackage{color}
\usepackage{epsfig}

\newfont{\msam}{msam10}
\newtheorem{theorem}{Theorem}[section]

\newtheorem{theorem_intro}{Theorem}

\newtheorem{proposition}[theorem]{Proposition}
\newtheorem{corollary}[theorem]{Corollary}
\newtheorem{lemma}[theorem]{Lemma}

\theoremstyle{definition}
\newtheorem{definition}[theorem]{Definition}
\newtheorem{remark}[theorem]{Remark}

\newtheorem{example}[theorem]{Example}
\newtheorem{conjecture}[theorem]{Conjecture}
\newtheorem{question}[theorem]{Question} 

\newtheorem{introconjecture}{Conjecture}
\newtheorem{introquestion}{Question}

\let\nc\newcommand

\nc{\la}{\label}

\def\bthm{\begin{theorem}}
\def\ethm{\end{theorem}}
\def\blemma{\begin{lemma}}
\def\elemma{\end{lemma}}
\def\bproof{\begin{proof}}
\def\eproof{\end{proof}}
\def\bprop{\begin{proposition}}
\def\eprop{\end{proposition}}

\def\Z{\mathbb{Z}}
\def\T{\mathbb{T}}

\def\O{\mathcal{O}}

\def\U{\mathcal{U}}

\def\H{\mathscr{H}}

\def\s{\hat{s}}

\def\e{\boldsymbol{\mathrm{e}}}
\def\f{\boldsymbol{\mathrm{f}}}


\def\ult{{\underline{t}}}
\def\loc{\mathrm{loc}}

\def\yy{\hat{y}}
\def\g{\mathfrak{g}}

\def\eae{{\e A \e}}
\def\sl{\mathfrak{sl}}

\def\c{\mathbb{C}}
\def\C{\mathbb{C}}

\nc{\Hom}{{\rm{Hom}}}
\nc{\Ext}{{\rm{Ext}}}
\nc{\htau}{{\bar{ t}}}
\nc{\HOM}{\underline{\rm{Hom}}}
\nc{\EXT}{\underline{\rm{Ext}}}
\nc{\TOR}{\underline{\rm{Tor}}}
\nc{\End}{{\rm{End}}}
\nc{\Map}{{\rm{Map}}}
\nc{\Out}{{\rm{Out}}}
\nc{\GL}{{\rm{GL}}}
\nc{\SL}{{\rm{SL}}}
\nc{\PGL}{{\rm{PGL}}}
\nc{\G}{{\rm{G}}}
\nc{\Rep}{{\rm{Rep}}}
\nc{\ad}{{\rm{ad}}}
\nc{\dlim}{\varinjlim}

\def\HH{\mathrm{H}}
\def\SH{\mathrm{SH}}
\def\BH{H}
\def\S{\mathrm{S}}

\newcommand{\Tr}{{\rm{Tr}}}
\newcommand{\tr}{{\rm{tr}}}

\numberwithin{equation}{section}

%


\newcommand{\ochar}{{\mathcal O \mathrm{Char}}}
\newcommand{\chr}{{\mathrm{Char}}}
\newcommand{\SU}{ {\mathrm{SU} }}
\newcommand{\orep}{ {\mathcal O \mathrm{Rep}}}




\begin{document}
%
\title[]{Double affine Hecke algebras and generalized Jones polynomials}
\date{\today}
\author{Yuri Berest}
\address{Department of Mathematics, Cornell University, Ithaca, NY 14853-4201, USA}
\email{berest@math.cornell.edu}
%
%
\author{Peter Samuelson}
\address{Department of Mathematics, University of Toronto, Toronto, ON M4Y 1H5, Canada}
\email{psam@math.toronto.ca}

\begin{abstract}
 In this paper, we propose and discuss implications of a general conjecture that there is a canonical action of a rank 1 double affine Hecke algebra on the Kauffman bracket skein module of the complement of a knot $K \subset S^3$. We prove this in a number of nontrivial cases, including all $(2,2p+1)$ torus knots, the figure eight knot, and all 2-bridge knots (when $q=\pm 1$). As the main application of the conjecture, we construct 3-variable polynomial knot invariants that specialize to the classical colored Jones polynomials introduced by Reshetikhin and Turaev in \cite{RT90}. 
 
 We also deduce some new properties of the classical Jones polynomials and prove that these hold for all knots (independently of the conjecture). We furthermore conjecture that the skein module of the unknot is a submodule of the skein module of an arbitrary knot. We confirm this for the same example knots, and we show that this implies the colored Jones polynomials of $K$ satisfy an inhomogeneous recursion relation.
\end{abstract}

\maketitle
\setcounter{tocdepth}{2}
\tableofcontents

\section{Introduction}
In this paper we introduce new connections between the representation theory of double affine Hecke algebras and the colored Jones polynomials of a knot in $S^3$. These connections can be motivated by the following general considerations.

If $K$ is a knot in $S^3$, the most natural algebraic invariant of $K$ is the fundamental group $\pi_1(S^3\setminus K)$ of the complement. This group is not a complete invariant, since complements of different knots can have isomorphic fundamental groups, but it is known that the \emph{peripheral map}
\begin{equation}\label{eq_permap}
\alpha:\pi_1(\partial(S^3\setminus K)) \to \pi_1(S^3\setminus K)
\end{equation}
is a complete knot invariant. More precisely, a theorem of Waldhausen \cite{Wal68} implies that the peripheral map determines the knot complement, and a theorem of Gordon and Luecke \cite{GL89} shows knots in $S^3$ are determined by their complements.

The peripheral map $\alpha$ is quite complicated, so it is natural to simplify (\ref{eq_permap}) by replacing groups with their linear representations. To this end, fix a complex reductive algebraic group $G$, and let $\Rep(\pi,G)$ be the set of group representations from a (discrete) group $\pi$ into $G$. The algebraic structure of $G$ gives the set $\Rep(\pi,G)$ the structure of an affine scheme, and $G$ acts on this scheme by conjugation. We write $\mathrm{Char}(\pi,G):=\Rep(\pi,G)//G$ for the algebro-geometric quotient, and  $\ochar(\pi,G)$ for the corresponding coordinate ring (which is the ring of $G$-invariant regular functions on $\Rep(\pi,G)$). This construction is functorial, so a map of groups $f: \pi \to \pi'$ induces a map of commutative algebras $f_*:\ochar(\pi,G) \to \ochar(\pi',G)$.

The boundary of the complement of (a neighborhood of) a knot is a torus $S^1\times S^1$, and if $\T \subset G$ is a maximal torus, we have the following natural map:
\[
 \T\times \T = \Rep(\Z^2, \T) \to \Rep(\Z^2, G) \twoheadrightarrow  \mathrm{Char}(\Z^2,G)
\]

It is known (see, e.g. \cite{Tha01}) that this induces an isomorphism between $(\T\times \T)/W$ and the component of $\chr(\Z^2,G)$ containing the trivial character (here $W$ is the Weyl group of $G$ acting diagonally on $\T\times\T$). In \cite{Ric79}, Richardson showed that if $G$ is simply connected, then $\chr(\Z^2,G)$ is connected.  Therefore, for simply connected $G$ we have an isomorphism of commutative algebras
\begin{equation}
  \O(\T\times \T)^W \cong \ochar(\Z^2, G) 
\end{equation}

The commutative algebra   $\O(\T \times \T)^W$ admits interesting (noncommutative) deformations which have been studied extensively in recent years. These deformations can be described in the following way. Denote by $P$ and $P^\vee$ the weight and co-weight lattices of the Lie algebra $\g$ of $G$, and write $\c[P\oplus P^\vee]$ for the group algebra of their direct sum. The diagonal action of $W$ on $P\oplus P^\vee$ extends by linearity to $\c[P\oplus P^\vee]$, and we can thus define the semi-direct product $\c[P\oplus P^\vee]\rtimes W$. The algebra $\O(\T \times \T)^W$ is canonically isomorphic to the invariant subalgebra $\c[P\oplus P^\vee]^W$, which embeds (non-unitally) in $\c[P\oplus P^\vee]\rtimes W$ via the map $a \mapsto a\e = \e a \e$ (where $\e = \sum_W w/\lvert W \rvert$ is the symmetrizing idempotent of $W$). The image of this last map is called the \emph{spherical subalgebra}. In this way, we get an identification $\O(\T \times \T)^W \cong \e\left(\C[P\oplus P^\vee]\rtimes W \right) \e$.

Now, for each $G$ as above, Cherednik \cite{Che95} (see also \cite{Che05}) defined the \emph{double affine Hecke algebra} $\HH_{q,t}$ of type $G$ as a two-parameter family of deformations of $\C[P\oplus P^\vee]\rtimes W$, depending on $q,t \in \C^*$. 
The symmetrizing idempotent of $W$ deforms to a distinguished idempotent $\e = \e_{q,t} \in \HH_{q,t}$, and the spherical subalgebra of $\c[P\oplus P^\vee]\rtimes W$ thus deforms to the spherical subalgebra of $\HH_{q,t}$, which we denote $\SH_{q,t} := \e \HH_{q,t}\e$.
In particular, when $q=t=1$ there is a natural algebra isomorphism between the spherical subalgebra $ \SH_{1,1}$ and the commutative algebra $\O(\T\times \T)^W$.

Summarizing the above discussion, for each knot $K \subset S^3$, we have a map of commutative algebras
\begin{equation}\label{repvariatiesmap}
\alpha_*: \SH_{1,1} \to \ochar(\pi_1(S^3\setminus K), G)
\end{equation} 
This leads us to propose the following natural questions:
\begin{introquestion}
Let $K \subset S^3$ be a knot and $N = \ochar(\pi_1(S^3\setminus K),G)$, viewed as an $\SH_{1,1}$-module via the map $\alpha_*$.
\begin{enumerate}\label{mainquestion}
\item Is there a canonical $\SH_{q,t}$-module $N_{q,t}$ which is a deformation of $N$?
\item What knot invariants can be extracted from $N_{q,t}$?
\end{enumerate}
\end{introquestion}

We remark that one's initial inclination might be to deform $\alpha_*$ as an algebra homomorphism, but this is too restrictive. For generic parameters, it is known that $\SH_{q,t}$ is a simple algebra, and $\ochar(\pi_1(S^3\setminus K), G)$ is `small' (in particular, the image of $\chr(\pi_1(S^3\setminus K),G)$ inside $\chr(\Z^2,G)$ is Lagrangian). Therefore, a deformation of $\alpha_*$ as an algebra homomorphism would necessarily have a nontrivial kernel, which is impossible by the simplicity of $\SH_{q,t}$.

To the best of our knowledge, this question has only been raised for $q$-deformations (i.e. for $t=1$) and has only been answered when $\g = \sl_2$ and $t=1$. (There is partial progress for $t=1$ and $\g = \sl_n$ in \cite{Sik05}, see also \cite{CKM12}.) When $\g = \sl_2$, we can identify $P \cong P^\vee \cong \Z$ and $W \cong \Z_2$, so that $\C[ P \oplus P^\vee] \rtimes \Z_2 \cong \C[X^{\pm 1},Y^{\pm 1}]\rtimes \Z_2$, where $\Z_2$ acts by simultaneously inverting $X$ and $Y$. Then $\Z_2$ acts in the same way on the \emph{quantum torus} $A_q$, which is the deformation of the algebra $\C[X^{\pm 1},Y^{\pm 1}]$ with $X,Y$ satisfying the relation $XY=q^2YX$.
We then have isomorphisms $\HH_{q,1} \cong A_q\rtimes \Z_2$ and $\SH_{q,1} \cong A_q^{\Z_2}$ (where $\e = (1+s)/2$ is the symmetrizing idempotent of $\Z_2$).

The connection to representation varieties comes from the \emph{Kauffman bracket skein module}. This is a topologically defined vector space $K_q(M)$ associated to an oriented 3-manifold $M$ and the parameter $q \in \C^*$ that has three important properties: 
\begin{enumerate}
 \item For a surface $F$, the vector space $K_q(F \times [0,1])$ is an algebra (typically noncommutative).
 \item If $\partial M = F$, then $K_q(M)$ is a module over $K_q(F\times [0,1])$. 
 \item If $q=\pm 1$, then $K_{q=\pm 1}(M)$ is a commutative algebra (for any $M$).
\end{enumerate}
 In \cite{PS00}, Przytycki and Sikora showed that $K_{q=-1}(M)$ is naturally isomorphic to $\ochar(\pi_1(M), \SL_2(\C))$ (see also \cite{Bul97}). By a theorem of Frohman and Gelca in \cite{FG00}, $K_q((S^1)^2 \times [0,1])$ is isomorphic to the algebra $A_q^{\Z_2}$. Combining these two theorems shows that $N_{q,t=1} := K_q(S^3\setminus K)$ is an $A_q^{\Z_2}$-module, which gives a positive answer to the first part of Question \ref{mainquestion} (when $\g = \sl_2$ and $t=1$).

At this point, we pause to remark that the knot invariant $K_q(S^3\setminus K)$ is different from many other knot invariants in a fundamental way. Roughly, many knot invariants are defined combinatorially, in the sense that they assign certain data to each crossing in a diagram of $K$ and then combine this data to produce an invariant that does not depend on the choice of diagram. In contrast, the definition of the module $K_q(S^3\setminus K)$ depends on the global topology of the complement $S^3\setminus K$, and this makes it difficult to prove general statements about $K_q(S^3\setminus K)$. In particular, one may ask what facts are known about $K_q(S^3\setminus K)$ for all $q$ and for all knots $K$, and to the best of our knowledge, there are two such statements: $K_q(S^3\setminus K)$ is a module over $A_q^{\Z_2}$, and $K_q(S^3\setminus K)$ determines the colored Jones polynomials of $K$ (see below). However, we believe that the calculations in Section \ref{sec_qeqqskeinmodules} give some evidence that these modules are not as intractable as they might seem. In examples, these modules are $q$-analogues of smooth holonomic $D$-modules, i.e. vector bundles with flat connections over $\C^*$. (Precisely, they are $\Z\rtimes \Z_2$-equivariant vector bundles over $\C^*$.)

The skein module $K_q(S^3\setminus K)$ is known to be closely related to other `quantum' knot invariants. In particular, in \cite{RT90} Reshetikhin and Turaev defined a polynomial invariant $J_V(\g, K; q) \in \C[q^{\pm 1}]$ for each finite dimensional representation $V$ of the quantum group $\U_q(\g)$. 
If $\g = \sl_2$ and $V$ is the defining representation of $\U_q(\sl_2)$, then $J_V(\sl_2,K; q)$ is the famous Jones polynomial, and if $V_n$ is the $n$-dimensional irreducible representation of $\U_q(\sl_2)$, then the polynomials $J_n(K;q) := J_{V_n}(\sl_2,K;q)$ are called the \emph{colored Jones polynomials}. 
The second part of Question \ref{mainquestion} is answered by a theorem of Kirby and Melvin. The embedding $S^3\setminus K \hookrightarrow S^3$ induces a $\C[q^{\pm 1}]$-linear map $\epsilon: K_q(S^3\setminus K) \to K_q(S^3) = \C[q^{\pm 1}]$, and it was shown in \cite{KM91} that 
\begin{equation}\label{equation_introjpoly}
J_n(K;q) = \epsilon(S_{n-1}(L)\cdot \varnothing) 
\end{equation}
where $S_{n-1}(L)$ is the $(n-1)^{\mathrm{st}}$ Chebyshev polynomial evaluated at the longitude $L$ of $K$ and applied to the empty link $\varnothing$.
 
Our main goal in this paper is to introduce the Hecke parameter $t$ into this story. In fact, in the rank 1 case, more deformation parameters are available: there is a family of algebras $\H_{q,\ult}$ depending on a parameter $q \in \C^*$ and four additional parameters $\ult \in (\C^*)^4$. This family is a nontrivial deformation of $A_q\rtimes \Z_2$ (see e.g. \cite{Sah99}), which is actually the universal (i.e. ``maximum possible'') deformation (see \cite{Obl04}). The algebra $\H_{q,\ult}$ is the double affine Hecke algebra associated to the (nonreduced) root system of type $C^{\vee} C_1$, and it was introduced by Sahi \cite{Sah99} (see also \cite{NS04} and \cite{Sto03}) to study the Askey-Wilson polynomials, which are generalizations of the famous Macdonald polynomials. As an abstract algebra, $\H_{q,\ult}$ is generated by elements $T_0, T_1, T_0^\vee, T_1^\vee$ subject to the relations
 \begin{align*}
 (T_0-t_1)(T_0+t_1^{-1}) &= 0\\ (T_0^\vee-t_2)(T_0^\vee+t_2^{-1}) &= 0 \\
 (T_1-t_3)(T_1+t_3^{-1}) &= 0\\ (T_1^\vee-t_4)(T_1^\vee+t_4^{-1}) &= 0\\
 T_1^\vee T_1T_0 T_0^\vee &= q
\end{align*}
 For $\g = \sl_2$, Cherednik's double affine Hecke algebra $\HH_{q,t}$ is isomorphic to $\H_{q,1,1,t^{-1},1}$ (see Remark \ref{remark_a1cciso}).
 
 If $q \in \C^*$ is not a root of unity, then $A_q\rtimes \Z_2$ and $A_q^{\Z_2}$ are \emph{Morita equivalent} algebras; in other words, the categories of modules over $A_q\rtimes \Z_2$ and $A_q^{\Z_2}$ are equivalent via the projection functor $N \mapsto \e N$. This implies that there is a unique  $A_q\rtimes \Z_2$-module $\hat K_q(S^3\setminus K)$ such that $\e \hat K_q(S^3\setminus K)$ and $K_q(S^3\setminus K)$ are isomorphic $A_q^{\Z_2}$-modules. Explicitly, 
 \[\hat K_q(S^3\setminus K) := A_q \otimes_{A_q^{\Z_2}} K_q(S^3\setminus K)\] 
 We call $\hat K_q(S^3\setminus K)$ the \emph{nonsymmetric skein module} - by definition it is a module over $A_q\rtimes \Z_2 = \H_{q,(1,1,1,1)}$. We can now reformulate Question \ref{mainquestion} as follows:

\begin{introquestion}\label{mainquestion2}
 Is there a \emph{canonical} deformation of the $\H_{q,(1,1,1,1)}$-module $\hat K_q(S^3\setminus K)$ to a family of modules over $\H_{q,\ult}$?
\end{introquestion}

When $\ult = (t_1,t_2,1,1)$, we (conjecturally) give a positive answer to this question using an approach
inspired by a construction of shift functors for rational double affine Hecke algebras developed in \cite{BC11} (see also \cite{BS12}). Let $D_q$ be the localization of the algebra $A_q\rtimes \Z_2$ obtained by inverting all nonzero polynomials in $X$. For every $\ult \in (\C^*)^4$, there is a natural embedding of $\H_{q,\ult}$ into $D_q$:
\begin{equation}\label{equation_introdlembedding}
 \Theta: \H_{q,\ult} \hookrightarrow D_q,
\end{equation}
whose image is the subalgebra generated by $X$, $X^{-1}$, and the following operators (see \cite{Sah99}, \cite{NS04}):
\[
 T_0 = t_1 sY - \frac{q \bar t_1 X  + \bar t_2}{q^{-1}X^{-1}-qX}(1-sY),\quad \quad
 T_1 = t_3s + \frac{\bar t_3 X^{-1}+\bar t_4 }{X^{-1}-X}(1-s),
\]
where $\bar t_i = t_i - t_i^{-1}$. (The operator $T_1$ is usually called the Demazure-Lusztig operator since it arises in the representation theory of affine Hecke algebras, see \cite{Lus89}.) If $\hat K_q^{\loc}(S^3\setminus K)$ is the localization of $\hat K_q(S^3\setminus K)$ at nonzero polynomials in $X$, then $\Theta$ gives $\hat K^\loc_q(S^3\setminus K)$ the structure of a module over $\H_{q,\ult}$. We can now state our main conjecture: 
\begin{introconjecture}\label{mainconjecture}
 For all $K$, the action of $\H_{q,t_1,t_2,1,1}$ on $\hat K_q^{loc}(S^3\setminus K)$ preserves the subspace\footnote{Technically, part of this conjecture is that the localization map $\hat K_q(S^3\setminus K) \to \hat K_q^\loc(S^3\setminus K)$ is injective.} $\hat K_q(S^3\setminus K)$.
\end{introconjecture}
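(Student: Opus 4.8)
The plan is to reduce Conjecture \ref{mainconjecture}, in two steps, to the statement that a single operator acts as the identity on two ``fibres'' of $\hat K_q(S^3\setminus K)$, and then to verify that statement case by case.

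\emph{Step 1: reduce to one operator.} By \eqref{equation_introdlembedding} and the description of $\Theta$ immediately following it, $\H_{q,t_1,t_2,1,1}$ is generated inside $D_q$ by $X^{\pm 1}$, $T_0$ and $T_1$. Now $X^{\pm 1}$ already preserve $\hat K_q(S^3\setminus K)$, since this is a module over $A_q\rtimes\Z_2$; and when $t_3=t_4=1$ we have $\bar t_3=\bar t_4=0$, so the formula for $T_1$ collapses to $T_1=s\in A_q\rtimes\Z_2$, which therefore also preserves $\hat K_q(S^3\setminus K)$. So Conjecture \ref{mainconjecture} is \emph{equivalent} to the single assertion that $T_0$ preserves $\hat K_q(S^3\setminus K)$.

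\emph{Step 2: reduce to a condition on fibres.} Put $\Delta:=q^{-1}X^{-1}-qX=-qX^{-1}(X-q^{-1})(X+q^{-1})$. In the formula for $T_0$ both $t_1\,sY$ and $1-sY$ lie in $A_q\rtimes\Z_2$, so for $m\in\hat K_q(S^3\setminus K)$ one has $T_0m=t_1(sY)m-\Delta^{-1}v$, where $v:=(q\bar t_1X+\bar t_2)(1-sY)m$, and both $t_1(sY)m$ and $v$ lie in $\hat K_q(S^3\setminus K)$. Granting the injectivity of $\hat K_q(S^3\setminus K)\to\hat K_q^{\loc}(S^3\setminus K)$ from the footnote, and using that $X^{-1}$ is a unit of $A_q\rtimes\Z_2$, $T_0$ preserves $\hat K_q(S^3\setminus K)$ if and only if $v\in(X^2-q^{-2})\hat K_q(S^3\setminus K)$ for every $m$. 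Setting $F_\pm:=\hat K_q(S^3\setminus K)/(X\mp q^{-1})\hat K_q(S^3\setminus K)$ (the ``fibre at $X=\pm q^{-1}$''), the Chinese Remainder Theorem rewrites this as the vanishing of the induced operator $(q\bar t_1X+\bar t_2)(1-sY)$ on $F_+\oplus F_-$. A short computation with $sXs=X^{-1}$ and $YXY^{-1}=q^{-2}X$ shows that $sY$ preserves $(X\mp q^{-1})\hat K_q(S^3\setminus K)$, so it descends to an endomorphism of each $F_\pm$, while $q\bar t_1X+\bar t_2$ acts on $F_\pm$ by the scalar $\pm\bar t_1+\bar t_2$. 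Hence, for generic $(t_1,t_2)$, the conjecture is \emph{equivalent} to: $sY$ acts as the identity on $F_+$ and on $F_-$ (on the loci $\pm\bar t_1+\bar t_2=0$ only one of these survives, of the same form).

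\emph{Step 3: base case and general mechanism.} For the unknot, $\hat K_q(S^3\setminus U)$ is the basic (polynomial) representation $\C[X^{\pm 1}]$ of the $C^\vee C_1$ double affine Hecke algebra, and the required invariance is the standard well-definedness of that representation — this is the base case. For general $K$ the idea is to match the specialisations $X=\pm q^{-1}$ with the ``peripheral'' eigenvalue of the meridian and to deduce $sY|_{F_\pm}=\mathrm{id}$ from the relation that the empty link $\varnothing$ satisfies over $K_q((S^1)^2\times[0,1])=A_q^{\Z_2}$ — morally the same relation behind the (inhomogeneous) recursion obeyed by $J_n(K;q)=\epsilon(S_{n-1}(L)\cdot\varnothing)$. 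If one moreover knew that $\hat K_q(S^3\setminus K)$ is finitely generated and torsion-free over $\C[X^{\pm 1}]$ (torsion-freeness being exactly the footnote's injectivity), then $F_\pm$ would be finite-dimensional and $sY|_{F_\pm}=\mathrm{id}$ would reduce to a finite computation controlled by that recursion.

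\emph{Step 4: what is actually provable, and the obstacle.} As stressed in the introduction, $K_q(S^3\setminus K)$ is defined by the global topology of the complement and has no presentation uniform in $K$, so Step 3 cannot be run in general; one proceeds case by case. For $q=\pm1$, $K_{\pm1}(S^3\setminus K)=\ochar(\pi_1(S^3\setminus K),\SL_2(\C))$ is commutative, the fibres $F_\pm$ become fibres of the $\SL_2(\C)$-character variety over the meridian eigenvalue, and the Step 2 condition becomes a statement about the peripheral map, which is explicitly verifiable for all $2$-bridge knots (their character varieties being plane curves cut out by the Riley polynomial). For generic $q$ one instead uses the known explicit presentations of $K_q(S^3\setminus K)$ for the $(2,2p+1)$-torus knots and for the figure-eight knot to compute $F_\pm$ and check $sY|_{F_\pm}=\mathrm{id}$ by hand. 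The main obstacle to a proof for all $K$ is precisely this lack of a uniform presentation: a general argument would seem to need a genuinely new structural input — the finiteness and torsion-freeness of $\hat K_q(S^3\setminus K)$ over $\C[X^{\pm 1}]$, or a functorial comparison of $\hat K_q(S^3\setminus K)$ with the polynomial representation of the unknot — and producing such an input is, I expect, the hard part.
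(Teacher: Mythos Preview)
This is a conjecture, not a theorem, and the paper does not prove it in general; it is established only for the unknot, the $(2,2p+1)$ torus knots, and the figure eight (Theorem \ref{thm_mainconjecture}), and for 2-bridge knots at $q=-1$ (Corollary \ref{cor_qeq1}). Your proposal is not a proof either, but rather an outline of the reduction strategy together with an honest acknowledgment (Step 4) that the general case is open. In that sense there is no gap to name: you correctly recognize that a uniform argument is unavailable.

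Your reduction in Steps 1--2 is essentially the paper's approach. The paper observes that the conjecture is equivalent to the single operator $U_0=(1-q^2X^2)^{-1}(1-s\hat y)$ preserving $\hat K_q(S^3\setminus K)$ (see \eqref{operatoru} and the proof of Theorem \ref{thm_mainconjecture}); your Step 2 rephrases this as $s\hat y$ acting as the identity on the fibres $F_\pm$ at $X=\pm q^{-1}$. These are the same condition: writing $s=B(X)S$ and $\hat y=A(X)P$ as in Lemma \ref{lemma_aqmodule}, one has $s\hat y=B(X)A(X^{-1})SP$, and since $SP$ fixes the points $X=\pm q^{-1}$, your fibrewise condition becomes $B(\pm q^{-1})A(\pm q)=\mathrm{Id}$, which is exactly Remark \ref{remark_signproblems}. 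The paper packages this as the divisibility condition $(1-B(X)A(X^{-1}))M\subset(1-q^2X^2)M$ (Lemma \ref{lemma_upreservesM}) and verifies it by direct matrix computation for each example knot.

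Two small points. First, your Chinese Remainder step in Step 2 tacitly uses that $\hat K_q(S^3\setminus K)$ is free (or at least torsion-free) over $\C[X^{\pm 1}]$, since one needs $(X-q^{-1})M\cap(X+q^{-1})M=(X^2-q^{-2})M$; the paper simply assumes freeness throughout Section \ref{sec_ccdeformations} (all examples are handled via Lemma \ref{lemma_aqmodule} with $M=\C[X^{\pm 1}]\otimes V$), so this is consistent, but it should be stated up front rather than deferred to Step 3. Second, the unknot's nonsymmetric skein module is the \emph{sign} representation (formula \eqref{formula_liftedunknot}), not the standard polynomial representation; the verification for the unknot is still immediate, but the identification in your Step 3 is slightly off.
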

We remark that this conjecture implies that $\hat K_q(S^3\setminus K)$ is naturally a module over the 3-parameter algebra $\H_{q,t_1,t_2}$, even though only $1$ of these parameters appears in the definition of the skein module. It is natural to ask whether Conjecture \ref{mainconjecture} can be extended to the full double affine Hecke algebra $\H_{q,t_1,t_2,t_3,t_4}$ depending on all five parameters. The simplest example shows that this is not possible: if $t_3 \not= 1$ or $t_4 \not= 1$, the operator $T_1$ does not preserve the skein module of the unknot. However, we believe that this is the \emph{only} obstruction to a canonical extension the action of $\H_{q,t_1,t_2}$ on $\hat K_q(S^3\setminus K)$ to all five parameters\footnote{In examples, 5-parameter deformations of $\hat K_q(S^3\setminus K)$ can be produced `by hand' (see Section \ref{sec_5paramdeformations}), but these are not canonical in general, unlike the deformations arising from Conjecture \ref{mainconjecture}.}. More precisely, we have the following two conjectures:
\begin{introconjecture}\label{conj_unknotsubmodule}
 For all knots, there is an embedding of left $A_q^{\Z_2}$-modules 
 \[
  K_q(S^3 \setminus \mathrm{unknot}) \hookrightarrow K_q(S^3\setminus K)
 \]
\end{introconjecture}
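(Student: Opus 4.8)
The plan is to reduce the statement to the existence of a single vector, and then to produce it. First, record the source module: $S^3\setminus\mathrm{unknot}$ is a solid torus, so $K_q(S^3\setminus\mathrm{unknot})$ is free on the powers $\{z^n\}_{n\ge 0}$ of the core curve; it is cyclic over $A_q^{\Z_2}$, generated by $\varnothing$ (the meridian curve acts as multiplication by $z$), and a short product-to-sum computation gives explicit generators for its annihilator $I_U:=\Ann_{A_q^{\Z_2}}(\varnothing)$ — classically $I_U$ is the ideal of the abelian locus inside $\ochar(\Z^2,\SL_2(\C))$. In the nonsymmetric picture this module becomes the polynomial representation $\C[q^{\pm1}][X^{\pm1}]$ of $A_q\rtimes\Z_2$, which one checks is simple whenever $q$ is not a root of unity (a nonzero proper $\C[X^{\pm1}]$-ideal stable under the shift $X\mapsto q^{2}X$ would have zero set a union of infinite orbits); so by the Morita equivalence $K_q(S^3\setminus\mathrm{unknot})$ is itself a simple $A_q^{\Z_2}$-module. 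Consequently $I_U$ is a maximal left ideal, and an embedding of left $A_q^{\Z_2}$-modules $K_q(S^3\setminus\mathrm{unknot})\hookrightarrow K_q(S^3\setminus K)$ is \emph{exactly} a nonzero vector $v\in K_q(S^3\setminus K)$ with $I_U\cdot v=0$: maximality then forces $\Ann(v)=I_U$, so $A_q^{\Z_2}\cdot v\cong K_q(S^3\setminus\mathrm{unknot})$.

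So the task is to exhibit a nonzero $v\in K_q(S^3\setminus K)$ killed by $I_U$, i.e. a common eigenvector of the peripheral operators with the ``unknot eigenvalues'' — the longitude $L$ acting as $-(q^2+q^{-2})$, or equivalently a $Y$-eigenvector with eigenvalue $-q^2$ in $\hat K_q(S^3\setminus K)$. The naive choice $v=\varnothing$ fails: for a nontrivial knot $L$ is not null-homotopic in $S^3\setminus K$, so $\varnothing$ is not an $L$-eigenvector, and indeed no purely topological construction can work, since $S^3\setminus K$ contains no embedded solid torus with boundary isotopic to the (incompressible) peripheral torus. When $q=\pm1$ one can nevertheless identify the submodule: by the Przytycki--Sikora isomorphism \cite{PS00}, $K_{\pm1}(S^3\setminus K)=\ochar(\pi_1(S^3\setminus K),\SL_2(\C))$, and the ideal of functions vanishing on all non-abelian components of the character variety is annihilated by $I_U$, is nonzero (the abelian component is always present and is not contained in the non-abelian part), and as an $A_{\pm1}^{\Z_2}$-module is isomorphic to $K_{\pm1}(S^3\setminus\mathrm{unknot})$, since it restricts isomorphically onto a principal ideal of the coordinate ring $\C[z]$ of the abelian component, on which the peripheral action is precisely that of the unknot. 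This settles the statement for all knots at $q=\pm1$.

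For general $q$ the idea is to deform this submodule. In the picture advertised in this paper, $\hat K_q(S^3\setminus K)$ is — conjecturally in general, provably in the known examples — a finite-rank free $\C[q^{\pm1}][X^{\pm1}]$-module carrying a $\Z_2$-equivariant flat $q$-connection, i.e. a $q$-difference module over $\C^{*}$; finding $v$ then amounts to solving the $q$-difference system ``$Y$ acts by $-q^2$'' \emph{inside} this lattice, as opposed to inside a Picard--Vessiot extension, where a one-dimensional solution space always exists. I would try to produce $v$ either (a) by an explicit skein-theoretic ansatz, writing it as a $\C[q^{\pm1}]$-combination of parallel meridians plus lower-order corrections pinned down by the linear recursion the module satisfies, or (b) by combining the known $q$-holonomicity of the colored Jones function with the inhomogeneous recursion discussed elsewhere in this paper. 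For the families treated here — the $(2,2p+1)$ torus knots, the figure eight knot, and $2$-bridge knots — the explicit descriptions of $K_q(S^3\setminus K)$ in the literature let one simply write down $v$ by hand, which is how I would verify the conjecture in those cases.

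The main obstacle is exactly the last point: showing that ``$Y=-q^2$'' has a solution lying in the module rather than only in a transcendental extension. The dimension of that solution space is upper semicontinuous in $q$, so even though the $q=\pm1$ computation gives a nonzero solution there, it could collapse for generic $q$; excluding this seems to require genuinely new input, being closely related to — and apparently strictly stronger than — the established $q$-holonomicity of the colored Jones polynomials. For this reason I expect a uniform proof for all knots and all $q$ to be hard, with the conjecture currently within reach only family by family, together with the uniform argument at $q=\pm1$ above.
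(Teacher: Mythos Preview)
The statement is a conjecture; the paper proves it only for the $(2,2p+1)$ torus knots and the figure eight (all $q$) and for 2-bridge knots at $q=-1$. You correctly recognize this, and your reduction for generic $q$---the unknot module is simple, so an embedding is the same datum as a nonzero vector with $\Ann(v)=I_U$, i.e.\ a longitude--eigenvector with eigenvalue $-(q^2+q^{-2})$---is sound and is exactly how the paper proceeds in its examples. For the torus knots and the figure eight the paper does precisely what you describe under (a): it computes the module explicitly and writes down the eigenvector ($w$ and $p$, respectively). So on the proved cases your approach and the paper's coincide.

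Your $q=\pm 1$ argument, however, asserts more than the paper proves: you claim that for \emph{every} knot the ideal $I$ of functions vanishing on the non-abelian components supplies the submodule. This is the right geometric picture---for 2-bridge knots the paper's eigenvector is exactly (a multiple of) the polynomial $Q$ cutting out the non-abelian locus---but as written it has a gap. Your verification that $I_U\cdot I=0$ runs: $\iota(I_U)$ vanishes on the abelian component, $I$ vanishes on the rest, so the product vanishes set-theoretically on all of $\chr(\pi)$. That only forces $\iota(I_U)\cdot I$ into the nilradical, and $\SL_2$ character schemes of knot groups are not known to be reduced in general; if there are nilpotents the argument fails. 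The companion claim that $I\neq 0$ (the abelian component is not contained in the closure of the non-abelian locus) is likewise plausible but not justified. The paper avoids both issues by restricting to 2-bridge knots, where the Brumfiel--Hilden presentation makes the ring explicit, and then proving the identity $YQ=Q$ by a direct word-by-word computation (Lemma~\ref{lemma_yqeqq}); that computation, not the general scheme-theoretic picture, is the actual content. So your $q=\pm 1$ sketch is a reasonable heuristic extension of the paper's argument, but it does not, as it stands, settle the conjecture for all knots at $q=\pm 1$.
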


\begin{introconjecture}\label{conj_5param}
 Assume Conjecture \ref{conj_unknotsubmodule} and let $\bar K_q(S^3\setminus K)$ be the quotient of $ \hat K_q(S^3\setminus K)$ by the image of the skein module of the unknot. Then the action of $\H_{q,t_1,t_2,t_3,t_4}$ on $\bar K_q^\loc(S^3\setminus K)$ preserves the subspace $\bar K_q(S^3\setminus K) \subset \bar K_q^\loc(S^3\setminus K)$.
\end{introconjecture}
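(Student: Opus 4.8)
The plan is to deduce Conjecture~\ref{conj_5param}, for each knot $K$ for which Conjectures~\ref{mainconjecture} and~\ref{conj_unknotsubmodule} are already known, from an explicit divisibility property of the reduced skein module, which one then verifies case by case. \emph{Step 1 (set-up and reduction to generators).} Since $q$ is not a root of unity, the functor $N\mapsto A_q\otimes_{A_q^{\Z_2}}N$ used to define $\hat K_q$ is exact (it is one side of the Morita equivalence). Applying it to the inclusion of Conjecture~\ref{conj_unknotsubmodule}, and then localizing at the nonzero polynomials in $X$, produces an inclusion of $D_q$-modules $\hat K_q^\loc(S^3\setminus\mathrm{unknot})\hookrightarrow\hat K_q^\loc(S^3\setminus K)$, hence via $\Theta$ an inclusion of $\H_{q,\ult}$-modules; write $U^\loc$ for its image. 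Then $\bar K_q^\loc(S^3\setminus K)=\hat K_q^\loc(S^3\setminus K)/U^\loc$ is an $\H_{q,\ult}$-module, and $\bar K_q(S^3\setminus K)$ is identified with the image of $\hat K_q(S^3\setminus K)$ in it, provided (as part of the same package as Conjecture~\ref{mainconjecture}) that the relevant localization maps are injective and the unknot submodule is saturated in $\hat K_q(S^3\setminus K)$. As $\H_{q,\ult}$ is generated by $X^{\pm1}$, $T_0$, $T_1$, $T_1^\vee$ and their inverses (with $T_0^\vee$ recovered from $T_1^\vee T_1 T_0 T_0^\vee=q$), it suffices to show that each of these carries $\hat K_q(S^3\setminus K)$ into $\hat K_q(S^3\setminus K)+U^\loc$ inside $\hat K_q^\loc(S^3\setminus K)$.

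\emph{Step 2 (isolating the new parameters).} The generators $X^{\pm1}$ and $T_0^{\pm1}$ involve only $q$ and the parameters $t_1,t_2$, so by Conjecture~\ref{mainconjecture} --- which is a theorem for the knots in question --- they already preserve $\hat K_q(S^3\setminus K)$. For $T_1=t_3 s+\dfrac{\bar t_3 X^{-1}+\bar t_4}{X^{-1}-X}(1-s)$, the summand $t_3 s$ lies in $A_q\rtimes\Z_2$ and preserves $\hat K_q(S^3\setminus K)$, and $\hat K_q(S^3\setminus K)+U^\loc$ is stable under multiplication by Laurent polynomials in $X$; hence stability under $T_1$ is equivalent to
\[
 \frac{(1-s)\,v}{X^{-1}-X}\ \in\ \hat K_q(S^3\setminus K)+U^\loc\qquad\text{for all }v\in\hat K_q(S^3\setminus K).
\]
The operator $T_1^\vee$ is given by an analogous Demazure-Lusztig formula --- with $s$ replaced by an affine reflection and $t_3,t_4$ again entering linearly --- so it reduces to the same statement with $X^{-1}-X$ replaced by the linear factor appearing in its denominator. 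Equivalently, passing to the quotient: the endomorphism $1-s$ of $\bar K_q(S^3\setminus K)$ must have image contained in $(X^{-1}-X)\,\bar K_q(S^3\setminus K)$ (and likewise for the $q$-shifted factor); that is, the antisymmetric part of every element of the reduced skein module must be divisible by the relevant linear factor.

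\emph{Step 3 (the divisibility, and the main obstacle).} This divisibility is the crux, and where the quotient by the unknot is essential. In the language of Section~\ref{sec_qeqqskeinmodules}, $\bar K_q^\loc(S^3\setminus K)$ is a $\Z\rtimes\Z_2$-equivariant vector bundle over $\C^*$; since $1-s$ kills the $s$-invariants, the divisibility is equivalent to $s$ acting trivially on the fibre $\bar K_q(S^3\setminus K)/(X^{-1}-X)\bar K_q(S^3\setminus K)$ over each of $X=\pm1$ (and on the analogous fibres over $X^2=q^{\pm2}$). At $q=-1$ this says that on the \emph{non-abelian} part of the $\SL_2(\C)$-character variety the longitude eigenvalue is $\pm1$ whenever the meridian eigenvalue is $\pm1$ --- a property that visibly fails on the abelian component, which is exactly why $T_1$ and $T_1^\vee$ do not preserve the skein module of the unknot alone and why the quotient is indispensable. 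For the $(2,2p+1)$ torus knots, the figure eight knot, and the $2$-bridge knots at $q=\pm1$, the modules $\hat K_q(S^3\setminus K)$ and the unknot submodule are written down explicitly in the earlier sections, and the divisibility is then checked directly: for each basis vector $v$ one exhibits Laurent polynomials $w$ and an unknot-class $u$ with $(1-s)v=(X^{-1}-X)\,w+u$, together with its $q$-shifted counterpart. I expect the absence of a uniform argument to be the real difficulty: a knot-independent proof would need either a structural description of $\bar K_q(S^3\setminus K)$ near $X=\pm1$ and $X^2=q^{\pm2}$, or a topological input controlling the longitude at central meridian holonomy --- which is presumably why the general statement is left as a conjecture.

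Granting the divisibility, all the listed generators preserve $\bar K_q(S^3\setminus K)$, hence so does $\H_{q,\ult}$; in particular the relation $T_1^\vee T_1 T_0 T_0^\vee=q$ descends to $\bar K_q(S^3\setminus K)$, which is exactly the input used to derive the inhomogeneous recursion for the colored Jones polynomials of $K$ in Theorem~\ref{theorem_inhomogenous}.
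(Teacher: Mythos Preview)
Your reduction is the same as the paper's: one must show that the operator $U_1=(1-X^2)^{-1}(1-s)$ preserves the quotient $\bar K_q(S^3\setminus K)$, i.e.\ that $(1-s)\bar K_q\subset(1-X^2)\bar K_q$. This is exactly the second condition in Lemma~\ref{lemma_upreservesM}.

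Two points, however, deserve correction. First, your treatment of $T_1^\vee$ is off. In the Dunkl embedding used here (Proposition~\ref{prop_dunklembedding}) one has $T_1^\vee=X^{-1}T_1^{-1}$, so once $X^{\pm1}$ and $T_1$ preserve the quotient, $T_1^\vee$ does automatically; there is no separate ``affine reflection'' and no ``$q$-shifted'' denominator to clear. The only new denominator beyond Conjecture~\ref{mainconjecture} is $1-X^2$, coming from $T_1$ alone. Your references to fibres at $X^2=q^{\pm2}$ and to a ``$q$-shifted counterpart'' conflate the $T_1$ check with the $T_0$ check already absorbed into Conjecture~\ref{mainconjecture}.

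Second, you describe the verification as ``for each basis vector $v$ one exhibits $w$ and an unknot-class $u$ with $(1-s)v=(X^{-1}-X)w+u$''. The paper's argument is much simpler and uniform across all the examples: in the explicit models of Sections~\ref{sec_qequals1} and~\ref{sec_qeqqskeinmodules} the only basis vector on which $s$ acts by $-1$ is the generator of the unknot submodule, so on the quotient basis $s$ acts as the identity matrix $B'=\mathrm{Id}$. Hence $(1-s)$ annihilates the quotient basis outright, and $U_1\bar K_q\subset\bar K_q$ is immediate---no witnesses needed. This is the single structural fact that makes Conjecture~\ref{conj_5param} go through in every case treated, and it is worth isolating rather than burying in a case-by-case search.

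Finally, your last sentence misattributes Theorem~\ref{theorem_inhomogenous}: the inhomogeneous recursion comes from Conjecture~\ref{conj_unknotsubmodule} alone (the eigenvector of $Y$ supplied by the unknot submodule), not from the DAHA relation on the quotient.
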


We provide some evidence for these conjectures with the following theorem (see Theorem \ref{thm_mainconjecture}, Theorem \ref{thm_conjecture2}, and Corollary \ref{cor_qeq1}):
\begin{theorem_intro}\label{maintheorem}
 Conjectures \ref{mainconjecture}, \ref{conj_unknotsubmodule}, and \ref{conj_5param} hold in the following cases:
 \begin{enumerate}
  \item when $K \subset S^3$ is the unknot, a $(2,2p+1)$ torus knot, or the figure eight knot,
  \item when $q=-1$ and $K$ is any 2-bridge knot.
 \end{enumerate}

\end{theorem_intro}

Strictly speaking, for 2-bridge knots we prove symmetric versions of Conjectures \ref{mainconjecture}, \ref{conj_unknotsubmodule}, and \ref{conj_5param} (see Section \ref{sec_qeqq_2bridge}). We also show that in this case the restriction $q=-1$ is almost unnecessary: more precisely, we conjecturally identify the $q=-1$ limit of the nonsymmetric skein module for 2-bridge knots (see Conjecture \ref{conj_q1skeinmodule}), and we show that if this identification is correct, Conjecture \ref{mainconjecture} holds for all 2-bridge knots with no restriction on $q$ (see Corollary \ref{corollary_qeqq}).

We now provide some remarks about these conjectures. First, Conjecture 1 is equivalent to the statement that the operator $(1-q^2X^2)^{-1}(1-s\yy)$ preserves the nonsymmetric skein module. In Lemma \ref{lemma_twisteddunkl} we show that $\HH_{q,t}$ can be embedded in $D_q$ using this operator. In particular, Conjecture \ref{mainconjecture} implies that $\HH_{q,t}$ acts on the nonsymmetric skein module of a knot complement. We therefore expect that Question \ref{mainquestion} has a positive answer for $\sl_n$, and for reductive $\g$ when $q=1$, at least up to a similar twist by an automorphism.

Second, Conjecture \ref{mainconjecture} can also be stated directly in terms of skein modules without using Morita theory - see Remark \ref{remark_eUe}. When specialized to $q=-1$, this interpretation implies that the rational function
\[
 F(\rho) =  \frac{\Tr(\rho(ml)) - \Tr(\rho(ml^{-1}))}{\Tr(\rho(m))^2 - 4}
\]
on $\mathrm{Char}(T^2)$ restricts to a regular function on the image of the map $\mathrm{Char}(S^3\setminus K) \to \mathrm{Char}(T^2)$. (Here $m$ and $l$ are elements in $\pi_1(T^2)$ which represent the meridian and longitude of the knot.) 
This geometric statement should be interpreted with care because the schemes involved are singular at the poles of $F$. In particular, $F$ is ``set-theoretically a regular function'' on $\mathrm{Char}(T^2)$.

We next provide two applications of Conjecture \ref{mainconjecture}. First, the existence of a natural $\H_{q,t_1,t_2}$-module structure on the non-symmetric skein module $\hat K_q(S^3\setminus K)$ allows us to define 3-variable polynomial knot invariants $J_{n}(K; q,t_1,t_2) \in \C[q^{\pm 1},t_1^{\pm 1}, t_2^{\pm 1}]$ that specialize to the colored Jones polynomials when $t_1=t_2=1$. To this end, we modify the Kirby-Melvin formula (\ref{equation_introjpoly}):
\begin{equation}\label{equation_intro3varpoly}
 J_n(K; q, t_1,t_2) := \epsilon(S_{n-1}(L_{t_1,t_2})\cdot \varnothing)
\end{equation}
where we replaced the longitude $L$ (viewed as an operator on $K_q(S^3\setminus K)$) by its natural $(t_1,t_2)$-deformation $L_{t_1,t_2} := T_1T_0 + T_0^{-1}T_1^{-1}$, which is called the \emph{Askey-Wilson operator} (cf. \cite{AW85} and \cite[Prop. 5.8]{NS04}). By definition, the $L_{1,1} = L$, and this combined with (\ref{equation_introjpoly}) shows that $J_n(K;q,t_1,t_2)$ specializes to the classical colored Jones polynomial. The Askey-Wilson operator has a denominator involving the meridian - the key point of Conjecture \ref{mainconjecture} is that these denominators cancel with the structure constants of the skein module of the knot complement. In Proposition \ref{prop_mirror} we show that if $\bar K$ is the mirror of the knot $K$, then $J_n(\bar K; q, t_1,t_2) = J_n(K; q^{-1}, t_1^{-1},t_2^{-1})$, which generalizes the well-known symmetry for the classical colored Jones polynomials. This provides some evidence that definition (\ref{equation_intro3varpoly}) is natural.

We remark that a strong form of the AJ conjecture (see, e.g. Conjecture 3 of \cite{Le06}) states that the submodule of $K_q(S^3\setminus K)$ generated by the empty link is determined by the colored Jones polynomials $J_n(K; q)$. If this statement is true for a knot $K$ \emph{and} a closed formula for the polynomials $J_n(K; q)$ is available, then one can in principle compute the 3-variable polynomials $J_n(K; q,t_1,t_2)$ without skein theory. (However, this calculation will be complicated even in simple examples.) This leads to the following questions:
\begin{introquestion}
 Is there an algorithm for computing $J_m(K; q,t_1,t_2)$ for a fixed $m$ that does not require computing the skein module $K_q(S^3\setminus K)$ or the colored Jones polynomials $J_n(K; q)$ for all $n$? Is there an interpretation of $J_m(q,t_1,t_2)$ in terms of representation theory of the quantum group $\U_q(\sl_2)$?
\end{introquestion}

One may also ask whether there is a purely topological construction of these deformations of skein modules, or of the corresponding polynomial knot invariants. This question (and its relation to Cherednik's 2-variable polynomials for torus knots \cite{Che11}) will be discussed in \cite{Sam14}.

As a second application, we deduce from Conjecture \ref{mainconjecture} some algebraic properties of the classical (colored) Jones polynomials which, to the best of our knowledge, have not appeared in the earlier literature. Namely, we prove the following (see Theorem \ref{thm_divisibility}):
\begin{theorem_intro}\label{thm_introdivisibility}
If Conjecture \ref{mainconjecture} holds for a knot $K$, then the rational function
\[
P_{j}(K;n;q) := \frac{(q^2-1)\left[ J(n+j) + J(n-1-j)\right]}{q^{4n-2}-1}
\]
is actually a Laurent polynomial. Furthermore, there are rational functions $a_{i,l}(K,j;q) \in \C(q)$, not depending on $n$, such that
\[
 P_{j}(K;n;q) = \sum_{k,l} a_{k,l}(K,j;q) q^{2nk}J(n+l)
\]
\end{theorem_intro}

In this corollary, we have used the notation $J(n) := J_{n}(K; q)$ and the convention $J(-n) = -J(n)$ (for our choice of normalization of $J(n)$ see Remark \ref{remark_signconvention}). We also remark that the proof of Theorem \ref{thm_introdivisibility} suggests that the sequence $J(n)$ of colored Jones polynomials satisfies a recursion relation governed by the algebra $\H_{q,\ult}$ (see Question \ref{question_Hrecursion}). We hope to address this relation in later work.

As further evidence for Conjecture \ref{mainconjecture}, we follow a suggestion of Garoufalidis and use Habiro's cyclotomic expansion of the colored Jones polynomials to prove the following theorem (see Theorem \ref{theorem_divisibilityfromhabiro}):
\begin{theorem_intro}\label{theorem_introdivisibilityfromhabiro}
 The rational function $P_j(K;n;q) \in \C(q)$ is a Laurent polynomial for all knots $K\subset S^3$.
\end{theorem_intro}
The result of Theorem \ref{theorem_introdivisibilityfromhabiro} seems to be new; however, one of its implications (namely, the numerator of $P_j(n;q)$ is zero when $q = -e^{i\pi/(2n+1)}$) also follows from Proposition 2.1 of \cite{CM11}. We also note that this theorem can be viewed as a congruence relation, and it is remarkably similar to several  congruence relations for knot polynomials conjectured in \cite{CLPZ14}. In fact, the proof of Theorem \ref{theorem_introdivisibilityfromhabiro} extends almost verbatim to a proof of \cite[Conj. 1.6]{CLPZ14}. We provide the details in Section \ref{sec_congruences} (see Theorem \ref{thm_clpzconj}).

We confirm Conjecture \ref{conj_unknotsubmodule} for the figure eight and for all $(2,2p+1)$ torus knots in Theorem \ref{thm_conjecture2}. This statement has a conceptual explanation - it can be viewed as a quantization of the fact that $L-1$ always divides the $A$-polynomial of the knot $K$. (See Remarks \ref{remark_lm1dividesA} and \ref{remark_unknotsubmodule} for further explanation.) We also point out that even in the simplest examples, this embedding is not obvious: in particular, the empty link in the skein module of the unknot is sent to a nontrivial element in the skein module of $S^3 \setminus K$. We expect that there is a topological interpretation of this embedding, but we will not address this here. As an application of Conjecture \ref{conj_unknotsubmodule}, we prove the following theorem (see Theorem \ref{theorem_inhomogenous}):

\begin{theorem_intro}\label{thm_introinhomogeneous}
 Suppose $f: K_q(S^3\setminus \mathrm{unknot}) \to K_q(S^3\setminus K)$ is an $A_q^{\Z_2}$-module map such that $\mathrm{im}(f) \subset K_q(T^2)\cdot \varnothing$. Then there exist 2-variable Laurent polynomials $c_k(-,-)$ and a sequence
 \[
  P(n) := \sum_k c_k(q, q^{2n}) J(n+k)
 \]
such that $P(n) = P(0)$, for all $n$.
\end{theorem_intro}

We now summarize the contents of the paper. In Section \ref{preliminaries} we give an introduction to double affine Hecke algebras and Kauffman bracket skein modules. 
In Section \ref{sec_qequals1} we prove Conjectures \ref{mainconjecture}, \ref{conj_unknotsubmodule} and \ref{conj_5param} for 2-bridge knots (when $q=-1$). In Section \ref{sec_qeqqskeinmodules} we use computations by Gelca and Sain to give complete descriptions of the skein modules of the $(2,2p+1)$ torus knots and the figure eight knot, and we prove Conjecture \ref{conj_unknotsubmodule} for these knots. In Section \ref{sec_divisibility}, we prove Theorems \ref{thm_introdivisibility} and \ref{theorem_introdivisibilityfromhabiro}, which involve divisibility properties for colored Jones polynomials. We also prove Theorem \ref{thm_introinhomogeneous}, which involves inhomogeneous recursion relations for colored Jones polynomials. In Section \ref{sec_ccdeformations} we prove Conjectures \ref{mainconjecture} and \ref{conj_5param} for $(2,2p+1)$ torus knots and the figure eight knot.  In Section \ref{sec_5paramdeformations}, we construct non-canonical deformations of $\hat K_q(S^3 \setminus \mathrm{trefoil})$ to a module over $\H_{q,\ult}$ for arbitrary $\ult \in (\C^*)^4$. In an Appendix we include example computations of 3 variable polynomials specializing to colored Jones polynomials of the trefoil, the $(5,2)$ torus knot, and the figure eight knot.

\noindent \textbf{Acknowledgements:}
We would like to thank I. Cherednik for guidance with references and several helpful comments, S. Garoufalidis for suggesting the approach to the proof of Theorem \ref{theorem_divisibilityfromhabiro}, and R. Gelca for kindly allowing the use of his figures. We would also like to thank D. Bar-Natan, O. Chalykh, B. Cooper, P. Etingof, J. Kamnitzer, T. L\^e, J. Marche, A. Marshall, G. Muller, A. Oblomkov, M. Pabiniak, D. Thurston, and B. Webster for enlightening conversations. The second author is grateful to the users of the website MathOverflow who have provided several helpful answers (see, e.g. \cite{35687}).

The work of the first author was partially supported by NSF grant DMS 09-01570.

\section{Preliminaries}\label{preliminaries}
In this section we provide the background necessary for the rest of the paper by discussing definitions and basic properties of the Kauffman bracket skein module and double affine Hecke algebras.

\subsection{Knot groups and their character varieties}\label{subsec_knotgroups}

Recall that two maps $f,g:M \to N$ of manifolds are \emph{ambiently isotopic} if they are in the same orbit of the identity component of the diffeomorphism group of $N$. This is an equivalence relation, and a \emph{knot} in a 3-manifold $M$ is the equivalence class of a smooth embedding $K: S^1 \hookrightarrow M$. If $N_K \subset M$ is an (open) tubular neighborhood of $K$, then the complement $M \setminus N_K$ has a torus boundary.

For an oriented knot $K \subset S^3$ there is a canonical identification $T = S^1 \times S^1 \to \partial(S^3 \setminus K)$. More precisely, let  $N_K \subset S^3$ be a closed tubular neighborhood of $K$, and let $N_c$ be the closure of its complement. Then the following lemma provides a unique (up to isotopy) identification of $N_K \cap N_c$ with $S^1\times S^1$ (see \cite[Thm. 3.1]{BZ03}):
\begin{lemma}
 There is a unique (up to isotopy) pair of simple loops  (the meridian $m$ and longitude $l$)  in $T$ subject to the conditions
\begin{enumerate}\label{lemma_meridianlongitude}
\item  $m$ is nullhomotopic in $N_K$,
\item $l$ is nullhomotopic in $N_c$,
\item $m,l$ intersect once in $T$,
\item in $S^3$, the linking numbers $(m,K)$ and $(l,K)$ are 1 and 0, respectively.
\end{enumerate}
\end{lemma}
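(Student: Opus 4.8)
The plan is to reduce the whole statement to the homology of the boundary torus $T = N_K \cap N_c$, using two classical inputs: the fact that an essential simple closed curve on a torus is determined up to isotopy by its homology class, and a Mayer--Vietoris computation for the decomposition $S^3 = N_K \cup_T N_c$. First I would recall that the assignment $\gamma \mapsto [\gamma] \in H_1(T;\Z)\cong\Z^2$ gives a bijection between isotopy classes of oriented essential simple closed curves on $T$ and primitive classes in $H_1(T;\Z)$ (unoriented curves corresponding to primitive classes up to sign); this is standard (see e.g.\ \cite{BZ03}) and follows from the transitivity of the $\SL_2(\Z)$-action of the mapping class group of $T$ on primitive vectors, together with the uniqueness of a simple closed curve representing a given primitive class. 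Thus it suffices to produce a distinguished ordered basis $\{[m],[l]\}$ of $H_1(T;\Z)$ singled out by conditions (1)--(4).

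For the meridian: since $N_K\cong S^1\times D^2$ is a solid torus with core $K$, the inclusion-induced map $i_*\colon H_1(T;\Z)\to H_1(N_K;\Z)\cong\Z$ is surjective with infinite cyclic kernel, generated by a primitive class $[m]$. The curve $m$ representing it bounds a meridian disk of $N_K$, hence is nullhomotopic in $N_K$ (condition (1)), and that disk meets $K$ transversally in a single point, so the linking number $(m,K)$ equals $\pm1$; orienting $m$ so that it equals $+1$ fixes the orientation of $m$ and gives condition (4) for $m$.

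For the longitude: apply Mayer--Vietoris to $S^3 = N_K\cup_T N_c$. Since $H_2(S^3)=0=H_1(S^3)$, the map $(i_*,j_*)\colon H_1(T;\Z)\to H_1(N_K;\Z)\oplus H_1(N_c;\Z)$ is an isomorphism; in particular $H_1(N_c;\Z)\cong\Z$, and since $(i_*,j_*)([m]) = (0,j_*[m])$ is primitive in $\Z\oplus\Z$, the class $j_*[m]$ is a generator of $H_1(N_c;\Z)$. Hence $j_*\colon H_1(T;\Z)\to H_1(N_c;\Z)\cong\Z$ is surjective with infinite cyclic kernel, generated by a primitive class $[l]$; the corresponding curve $l$ is nullhomologous in $N_c$ (condition (2)), which is exactly the vanishing of the linking number $(l,K)$, i.e.\ condition (4) for $l$. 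Because $j_*$ factors through the isomorphism $H_1(T;\Z)/\langle[l]\rangle\cong\Z$ and carries $[m]$ to a generator, $[m]$ generates $H_1(T;\Z)/\langle[l]\rangle$, so $\{[l],[m]\}$ is a $\Z$-basis of $H_1(T;\Z)$; equivalently $m$ and $l$ meet transversally in exactly one point (condition (3)). Finally orient $l$ so that the algebraic intersection number of $m$ and $l$ on $T$ is $+1$.

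Uniqueness is then immediate: any curve satisfying (1) represents a class in $\ker i_* = \langle[m]\rangle$ and any curve satisfying (2) represents a class in $\ker j_* = \langle[l]\rangle$, and by the curve/homology dictionary each of these primitive classes is realized by a unique essential simple closed curve up to isotopy; conditions (3) and (4) remove the remaining sign ambiguity, i.e.\ pin down the orientations. I do not expect a genuine obstacle here --- the only points needing a little care are the passage between isotopy classes of curves and homology classes, and the verification that $\{[m],[l]\}$ is actually a basis (condition (3)), both of which are routine. One caveat worth noting is that the literal phrasing ``$l$ is nullhomotopic in $N_c$'' holds only for the unknot (for a nontrivial knot $T$ is incompressible in $N_c$); it should be read as ``$l$ is nullhomologous in $N_c$,'' equivalently ``$l$ bounds a Seifert surface,'' which is what the argument establishes.
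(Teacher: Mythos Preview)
Your argument is correct and complete. The paper does not actually prove this lemma; it simply cites \cite[Thm.~3.1]{BZ03} and moves on. So there is no ``paper's own proof'' to compare against --- you have supplied what the authors outsourced to Burde--Zieschang.

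Your Mayer--Vietoris approach is the standard one and exactly what underlies the cited reference: reduce to homology via the curve/primitive-class dictionary on the torus, then read off $[m]$ and $[l]$ as the generators of $\ker(H_1(T)\to H_1(N_K))$ and $\ker(H_1(T)\to H_1(N_c))$ respectively, with the Mayer--Vietoris isomorphism $(i_*,j_*)$ guaranteeing they form a basis. The logic is clean and the details are right.

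Your closing caveat is well taken and worth emphasizing: condition (2) as literally written in the paper (``$l$ is nullhomotopic in $N_c$'') is false for every nontrivial knot, since by Dehn's Lemma a nullhomotopic longitude would bound an embedded disk, forcing $K$ to be unknotted. The intended condition is that $l$ is \emph{nullhomologous} in $N_c$ (equivalently, bounds a Seifert surface, equivalently has linking number zero with $K$), which is what your argument actually establishes and what is needed for the uniqueness statement. This is a minor slip in the paper's statement rather than a gap in your reasoning.
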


Therefore, to an oriented knot $K \subset S^3$ one can associate the data $(\pi_1(S^3 \setminus K), m, l)$, where $m, l \in \pi_1(S^3 \setminus K)$ are the elements corresponding to the meridian and longitude defined above. Since the meridian and longitude are well-defined up to (base-point free) isotopy, the elements $m,l$ are well-defined up to inner automorphism. The following theorem (see \cite[Thm. 3.15]{BZ03}) shows that this data is a complete invariant of the knot.

\begin{theorem}[{\cite{Wal68}}]
 Two knots $K, K' \subset S^3$ are ambiently isotopic if and only if there is an isomorphism $\phi: \pi_1(S^3 \setminus K) \to \pi_1(S^3 \setminus K')$ that satisfies $\phi(m) = m'$ and $\phi(l) = l'$.
\end{theorem}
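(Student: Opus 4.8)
The plan is to reduce everything to Waldhausen's rigidity theorem for Haken $3$-manifolds \cite{Wal68} together with a few standard facts about knot complements. The forward implication is essentially free: an ambient isotopy of $S^3$ carrying $K$ to $K'$ restricts on complements to a homeomorphism $S^3\setminus K\to S^3\setminus K'$ sending a tubular neighborhood of $K$ to one of $K'$, hence (via the intrinsic characterization of Lemma \ref{lemma_meridianlongitude}) sending the meridian to the meridian and the longitude to the longitude; the induced map on $\pi_1$ is the desired $\phi$, well defined up to inner automorphism just as $m$ and $l$ are. So the content is the converse.

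For the converse I would first use that knot complements are aspherical. By Papakyriakopoulos's sphere theorem (and the consequent asphericity of knot complements), $X := S^3\setminus\mathrm{int}(N_K)$ and $X' := S^3\setminus\mathrm{int}(N_{K'})$ are $K(\pi,1)$'s, so the abstract isomorphism $\phi\colon\pi_1 X\to\pi_1 X'$ is induced by a homotopy equivalence $f\colon X\to X'$, unique up to homotopy. If $K$ is the unknot the statement is checked directly (both complements are solid tori), so I would assume $K$ is knotted; then $\partial X$ is an incompressible torus and $X$ is Haken. The hypotheses $\phi(m)=m'$, $\phi(l)=l'$ let me homotope $f$ so that $f(\partial X)\subseteq\partial X'$ with $f|_{\partial X}$ realizing $\phi$ on peripheral subgroups; since a $\pi_1$-isomorphism of $2$-tori is realized by a linear homeomorphism, one more homotopy makes $f|_{\partial X}$ a homeomorphism carrying $m\mapsto m'$ and $l\mapsto l'$.

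Next I would invoke Waldhausen's theorem \cite{Wal68}: a homotopy equivalence of Haken manifolds restricting to a homeomorphism of boundaries is homotopic, rel boundary, to a homeomorphism. This upgrades $f$ to a homeomorphism $h\colon X\to X'$ with $h|_{\partial X}$ still sending $m\mapsto m'$, $l\mapsto l'$. Because $h$ sends the meridian $m$ — which bounds a meridian disk in the solid torus $N_K$ — to the meridian $m'$, it extends across the tubular neighborhoods to a homeomorphism $\bar h\colon S^3=X\cup N_K\to X'\cup N_{K'}=S^3$ with $\bar h(K)=K'$. One checks $\bar h$ is orientation preserving: the oriented ordered pair $(m,l)$ fixes the intersection form, hence the orientation of $\partial X$, which with the outward-normal convention orients $X$ and so $S^3$; since $h$ respects $(m,l)$ it is orientation preserving, hence so is $\bar h$. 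Finally, any orientation-preserving self-homeomorphism of $S^3$ is isotopic to the identity (Cerf, $\pi_0\,\mathrm{Diff}^+(S^3)=1$), and composing such an isotopy with $\bar h$ yields the desired ambient isotopy of $S^3$ taking $K$ to $K'$.

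The only deep input is Waldhausen's theorem; the rest is bookkeeping, and the parts I expect to be most error-prone are the orientation argument forcing $\bar h$ to preserve orientation and the (standard but easy to skip) verification that $K$ being knotted makes $\partial X$ incompressible, so that $X$ is genuinely Haken. Alternatively, once the homeomorphism $h\colon X\to X'$ is in hand one could simply quote \cite{GL89} to conclude $K$ and $K'$ are ambiently isotopic, the peripheral data ruling out the mirror; but the explicit extension over the solid tori above sidesteps the hard part of \cite{GL89}, precisely because knowing $h(m)=m'$ is exactly what makes that extension possible.
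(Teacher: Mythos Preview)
The paper does not give its own proof of this theorem: it is stated with attribution to \cite{Wal68} (see also the introduction, where the authors say a theorem of Waldhausen implies the peripheral map determines the complement, and then separately invoke Gordon--Luecke \cite{GL89}). So there is no in-paper argument to compare against.

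Your sketch is correct and is essentially the standard derivation. A couple of remarks. First, your direct extension over the solid tori using $h(m)=m'$ is exactly the point that makes Gordon--Luecke unnecessary here; the paper's phrasing in the introduction (Waldhausen for the complement, then \cite{GL89} for the knot) is a bit misleading in this respect, and you correctly flag this at the end. Second, the appeal to Cerf is heavier than needed: for the statement as written (ambient isotopy of topological or PL knots) Fisher's theorem that orientation-preserving homeomorphisms of $S^3$ are isotopic to the identity already suffices; Cerf is only required if you insist on the smooth category throughout. Finally, your orientation check is the right idea but could be tightened: rather than the intersection form on $\partial X$, it is cleaner to note that the ordered pair $(m,l)$ together with the outward normal determines the orientation of $S^3$ near $\partial X$, and since $h$ preserves both $(m,l)$ and the side on which the solid torus lies, $\bar h$ preserves orientation.
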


\subsubsection{Character varieties and the $A$-polynomial of a knot}\label{section_charvarieties}
If $\pi$ is a finitely generated (discrete) group and $G$ is an algebraic group, the set $\mathrm{Rep}(\pi, G) := \Hom(\pi, G)$ has a natural affine scheme structure. Informally, one way to realize this structure is to pick generators $g_1, \ldots, g_n \in \pi$ for $\pi$, so that a representation $\rho:\pi \to G$ is completely determined by the images of the $g_i$. This realizes $\mathrm{Rep}(\pi, G)$ as a subscheme of $G^n$, where the ideal defining this subscheme is defined by the relations between the $g_i$. It is well known that $\mathrm{Rep}( -, G)$ is functorial with respect to group homomorphisms $f: \pi \to \pi'$. In particular, the scheme structure on $\mathrm{Rep}(\pi, G)$ is independent of the choices made (see, e.g. \cite{LM85}).

There is a natural action of $G$ on $\mathrm{Rep}(\pi,G)$ (by conjugation), and this induces an action of $G$ on the corresponding commutative algebra $\O(\mathrm{Rep}(\pi,G))$. We denote the subalgebra of invariant functions by
\[
 \ochar(\pi,G) := \O(\mathrm{Rep}(\pi,G))^G
\]
The \emph{character variety} is the spectrum of this algebra: $\chr(\pi,G) := \mathrm{Spec}(\ochar(\pi,G))$. This scheme parameterizes closed $G$ orbits on $\mathrm{Rep}(\pi,G)$. 

We now assume $\pi = \pi_1(M)$ for a manifold $M$ and specialize to $G = \SL_2(\C)$. To shorten notation, we will write $\ochar(M) := \ochar(\pi_1(M), \SL_2(\C))$, etc. If $K \subset S^3$ is a knot and $M := S^3 \setminus K$ is the knot complement, then the inclusion $T^2 = \partial M \subset M$ induces a map of algebras $\iota: \ochar(T^2) \to \ochar(M)$.

We may identify the algebra $\ochar(T^2)$ with $\O(\C^* \times \C^*)^{\Z_2}$, where the generator of $\Z_2$ acts via $(a,b) \mapsto (a^{-1}, b^{-1})$. The algebra $\O(\C^*\times \C^*)^{\Z_2}$ is generated by the functions $x(a,b) = a+a^{-1}$, $y(a,b) = b+b^{-1}$, and $z(a,b) = ab+(ab)^{-1}$. Under the identification with $\ochar(T^2)$, these functions correspond to $(A,B) \mapsto \tr(A)$, $(A,B) \mapsto \tr(B)$, and $(A,B) \mapsto \tr(AB)$, respectively (c.f. Theorem \ref{thm_qeq1repvar}).

We now recall the definition of the $A$-polynomial, which was originally introduced in \cite{CCG94}. If $M$ is a knot complement, we have the diagram
\[
 \C^* \times \C^* \twoheadrightarrow (\C^* \times \C^*) / \Z_2 \cong \chr(T^2) \leftarrow \chr(M)
\]
We let $X_M \subset \C^*\times \C^*$ be the union of the 1-dimensional components of the preimage of the Zariski closure of the image of $\chr(M)$. A theorem of Thurston says that $X_M$ is nonempty, which allows the following definition.
\begin{definition}
 The $A$-polynomial is the polynomial $A(m,l) \in \C[m^{\pm 1},l^{\pm 1}]$ that defines the curve $X_M$.
\end{definition}
(Here we have used Lemma \ref{lemma_meridianlongitude} to pick generators $m,l$ for $\O(\C^*\times \C^*)$.)

\begin{remark}\label{remark_lm1dividesA}
 The abelianization of the fundamental group of a knot complement is isomorphic to $\Z$,  
  and it is well known that the set of representations factoring through the abelianization map is a component of the character variety. This implies that $l-1$ always divides the $A$-polynomial. If $K$ is the unknot, then $\pi_1(S^3\setminus K) = \Z$, which implies that the $A$-polynomial of the unknot divides the $A$-polynomial of an arbitrary knot. If we write $A_K$ and $A_U$ for the $A$-polynomials of a knot $K$ and the unknot $U$, then $A_K = B_K A_U$, and we have a map of $\C[m^{\pm 1},l^{\pm 1}]$-modules (which is \emph{not} a map of algebras):
  \begin{equation}\label{equation_unknotembeddingqeq1}
  \phi: \C[m^{\pm 1},l^{\pm 1}] / A_U \to \C[m^{\pm 1},l^{\pm 1}] / A_K,\quad \quad f \mapsto B_K f
  \end{equation}
\end{remark}

\subsection{Kauffman bracket skein modules}\label{kbsmsection}
A \emph{framed link} in an oriented 3-manifold $M$ is an 
embedding of a disjoint union of annuli $S^1 \times [0,1]$ into $M$. (The framing refers to the $[0,1]$ factor and is a technical detail that will be suppressed when possible.) We will consider framed links to be equivalent if they are ambiently isotopic. In what follows, the letter $q$ will denote either an element of $\C$ or the generator of the ring $\C[q,q^{-1}]$ (we will specify which when it matters and when it is not clear from context).

Let  $\mathscr L(M)$ be the vector space 
spanned by the set of ambient isotopy classes of framed unoriented links in $M$ (including the empty link). Let $\mathscr L'(M)$ be the smallest subspace of $\mathscr L(M)$ containing the skein expressions 
$L_+ - qL_0 - q^{-1}L_\infty$ and $L \sqcup \bigcirc + (q^2+q^{-2})L$. The links $L_+$, $L_0$, and $L_\infty$ are identical outside of a small 3-ball (embedded as an oriented manifold), and inside the 3-ball 
they appear as in Figure \ref{kbsm}. (All pictures drawn in this paper will have blackboard framing. In other words, a line on the page represents a strip $[0,1]\times [0,1]$ in a tubular neighborhood of the page, 
and the strip is always perpendicular to the paper (the intersection with the paper is $[0,1]\times\{0\}$).)

\begin{figure}
\begin{center}
\input{kbsmrelation.pstex_t}
\caption{Kauffman bracket skein relations}\label{kbsm}
\end{center}
\end{figure}

\begin{definition}[\cite{Prz91}]
The \emph{Kauffman bracket skein module} is the vector space $K_q(M) := \mathscr L / \mathscr L'$. It contains a canonical element $\varnothing \in K_q(M)$ corresponding to the empty link.
\end{definition}

\begin{remark}
 To shorten the notation, if $M = F \times [0,1]$ for a surface $F$, we will often write $K_q(F)$ for the skein module $K_q(F\times [0,1])$.
\end{remark}

\begin{example}\label{skeins3}
One original motivation for defining $K_q(M)$ is the isomorphism
\[\C[q,q^{-1}] \stackrel \sim \to K_q(S^3), \quad 1 \mapsto \varnothing \]
Kauffman proved that this map is an isomorphism and that the inverse image of a link is the Jones polynomial of the link. The map is surjective because the skein relations allow one to remove all crossings and loops in a diagram of any link, but
showing it is injective is (essentially) equivalent to showing the Jones polynomial of a link is well-defined, which is a non-trivial theorem.
\end{example}

In general $K_q(M)$ is just a vector space - however, if $M$ has extra structure, then $K_q(M)$ also has extra structure. In particular,
\begin{enumerate}
\item If $M = F \times [0,1]$ for some surface $F$, then $K_q(M)$ is an algebra, where the multiplication is given by ``stacking links." 
\item If $M$ is a manifold with boundary, then $K_q(M)$ is a module over $K_q(\partial M)$. The multiplication is given by ``pushing links from the boundary into the manifold.'' 
\item\label{embedding} An oriented embedding $M \hookrightarrow N$ of 3-manifolds induces a linear map $K_q(M) \to K_q(N)$. Therefore, $K_q(-)$ can be considered as a functor on the category whose objects are oriented 3-dimensional manifolds and whose morphisms are oriented embeddings.\footnote{To be pedantic, $K_q(-)$ is functorial with respect to maps $M \to N$ that are oriented embeddings when restricted to the interior of $M$. In particular, if we identify a surface $F$ with a boundary component of $M$ and $N$, then the gluing map $M \sqcup N \to M \sqcup_F N$ induces a linear map $K_q(M) \otimes_\C K_q(N) \to K_q(M\sqcup_F N)$.}
\item If $q=\pm 1$, then $K_q(M)$ is a commutative algebra (for any oriented 3-manifold $M$). The multiplication is given by ``disjoint union of links,'' which makes sense because when $q=\pm 1$, the skein relations allow strands to `pass through' each other.
\end{enumerate}

\begin{remark}
 In fact, the first two properties are a special case of the third. For example, there is an obvious map $F\times [0,1] \sqcup F\times [0,1] \to F\times [0,1]$, and the product structure of $K_q(F \times [0,1])$ comes from the application of the functor $K_q(-)$ to this map.
\end{remark}

\begin{example}
 Let $M = (S^1 \times [0,1]) \times [0,1]$ be the solid torus. If $u$ is the nontrivial loop, then the map $\C[u] \to K_q((S^1\times[0,1])\times [0,1])$ sending $u^n$ to $n$ parallel copies of $u$ is surjective (because all crossings and trivial loops can be removed using the skein relations). This is clearly an algebra map, and it also injective (see, e.g. \cite{SW07}).
\end{example}

\subsubsection{Skein modules and representation varieties}
Here we recall a theorem of Przytycki and Sikora \cite{PS00} (and also of Bullock \cite{Bul97}) that identifies the commutative algebra $K_{q=-1}(M)$ with the algebra $\ochar(M)$ of functions on the $\SL_2(\C)$-character variety of $\pi_1(M)$.

An unbased loop $\gamma: S^1 \to M$ determines a conjugacy class in $\pi_1(M)$, and since the trace of a matrix is invariant on conjugacy classes, we can define $\mathrm{Tr}(\gamma) \in \ochar(M)$ via
\[
 \mathrm{Tr}(\gamma)(\rho) := \mathrm{Tr}[\rho(\gamma)]
\]

\begin{theorem}[\cite{PS00}, \cite{Bul97}]\label{thm_qeq1repvar}
 The assignment $\gamma \mapsto -\mathrm{Tr}(\gamma)$ extends to an algebra isomorphism 
 \[ K_{q=-1}(M) \stackrel \sim \to \ochar(M)\]
\end{theorem}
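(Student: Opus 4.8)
The plan is to write the map down explicitly, check it is a well-defined algebra homomorphism using the trace identities for $\SL_2(\C)$, and then deduce bijectivity from the first and second fundamental theorems of $\SL_2(\C)$-invariant theory. Concretely, I would first define $\Phi_0$ on the spanning set of framed links by $\Phi_0(\varnothing)=1$ and $\Phi_0(L)=\prod_i\bigl(-\Tr(\gamma_i)\bigr)$ when $L$ has components $\gamma_1,\dots,\gamma_k$, where $\Tr(\gamma)\in\ochar(M)$ is the trace function defined just above. This is well defined because $\Tr(\gamma)$ depends only on the free homotopy class of $\gamma$ (conjugation-invariance of the trace), is unchanged by the framing, and is unchanged by reversing the orientation since $\Tr(A)=\Tr(A^{-1})$ in $\SL_2(\C)$; extending linearly gives $\Phi_0\colon\mathscr L(M)\to\ochar(M)$.

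\textbf{Descent and multiplicativity.} Next I would show $\Phi_0$ annihilates $\mathscr L'(M)$. The circle relation is immediate, since a trivial circle goes to $-\Tr(1)=-2$ (trace of the identity matrix) while $(q^2+q^{-2})|_{q=-1}=2$. For the crossing relation, $(L_+-qL_0-q^{-1}L_\infty)|_{q=-1}=L_++L_0+L_\infty$; choosing based loops $\gamma,\delta$ of $\pi=\pi_1(M)$ adapted to the two strands meeting in the crossing ball, a short case analysis identifies the three terms (up to a common factor from the remaining components of $L$) with the two-component link having components $\gamma,\delta$ and with the knots $\gamma\delta$ and $\gamma\delta^{-1}$, in some order, so that $\Phi_0(L_+)+\Phi_0(L_0)+\Phi_0(L_\infty)$ equals that factor times $\Tr(\gamma)\Tr(\delta)-\Tr(\gamma\delta)-\Tr(\gamma\delta^{-1})$, which is zero by the fundamental trace identity $\Tr(A)\Tr(B)=\Tr(AB)+\Tr(AB^{-1})$. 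Hence $\Phi_0$ descends to $\Phi\colon K_{-1}(M)\to\ochar(M)$; and since at $q=-1$ the product on $K_{-1}(M)$ is the disjoint union of links, $\Phi$ is an algebra homomorphism, with $\Phi(\langle\gamma\rangle)=-\Tr(\gamma)$ on a knot $\gamma$.

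\textbf{Bijectivity.} At $q=-1$ both a crossing change and a kink are skein-trivial (the Kauffman relation is symmetric in the two crossings, and the Reidemeister-I factor $-q^{\pm3}$ equals $1$), so --- since a free homotopy of embedded loops in a $3$-manifold decomposes, after general position, into isotopies and crossing changes --- the skein class $\langle\gamma\rangle\in K_{-1}(M)$ of a knot depends only on the conjugacy class of $\gamma$ in $\pi$. It follows that $K_{-1}(M)$ is generated as a $\C$-algebra by symbols $\langle\gamma\rangle$, $\gamma\in\pi$, satisfying at least $\langle1\rangle=-2$, $\langle\gamma\rangle=\langle\gamma^{-1}\rangle$, and $\langle\gamma\rangle\langle\delta\rangle=-\langle\gamma\delta\rangle-\langle\gamma\delta^{-1}\rangle$ (the crossing relation of the previous step, realized geometrically). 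Writing $T(\pi)$ for the abstract commutative $\C$-algebra presented by exactly these generators and relations, we get
\[
  T(\pi)\;\xrightarrow{\ p\ }\;K_{-1}(M)\;\xrightarrow{\ \Phi\ }\;\ochar(M),
\]
with $p$ surjective and $\Phi\circ p$ equal to the classical trace map $\tau\colon\langle\gamma\rangle\mapsto-\Tr(\gamma)$. The crucial input is that $\tau$ is an isomorphism: the first fundamental theorem for $\SL_2(\C)$ says $\ochar(M)$ is generated by trace functions (so $\tau$ is onto), and the second fundamental theorem says every polynomial relation among trace functions is a consequence of $\Tr(1)=2$ and the fundamental identity (so $\tau$ is injective), both at the level of the possibly non-reduced ring $\ochar(M)$ (Procesi; Brumfiel--Hilden). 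Granting this, $\Phi\circ p=\tau$ is an isomorphism, which forces $p$ to be injective, hence bijective, and therefore $\Phi$ to be an isomorphism.

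\textbf{Main obstacle.} The genuine difficulty is the second fundamental theorem, equivalently the injectivity of $\Phi$: one must rule out any hidden polynomial relation among trace functions beyond those forced by $\Tr(1)=2$ and $\Tr(A)\Tr(B)=\Tr(AB)+\Tr(AB^{-1})$. This is precisely the point that Bullock's original argument settles only up to the nilradical and that Przytycki--Sikora resolve, by reducing a presentation of $\pi$ to free groups and invoking the invariant theory of tuples of $2\times2$ matrices together with the Fricke--Vogt description of $\SL_2(\C)$-character rings of free groups. The remaining pieces --- the crossing case analysis in the descent step, and the general-position argument that free homotopies of loops break into crossing changes --- are standard but need to be carried out carefully.
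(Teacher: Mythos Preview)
Your outline is correct and follows the standard route through the cited references. Note, however, that the paper does not actually prove this theorem: it is quoted as a result of Przytycki--Sikora and Bullock, and the only argument the paper supplies is the ``key observation'' that at $q=-1$ the skein relation becomes the trace identity $\Tr(A)\Tr(B)=\Tr(AB)+\Tr(AB^{-1})$, which is exactly your descent step. Your bijectivity discussion --- surjectivity from the first fundamental theorem, injectivity as the genuine obstacle requiring the second fundamental theorem (and noting that Bullock's original argument only gets the reduced ring while Przytycki--Sikora handle the nilpotents) --- is an accurate summary of how the cited proofs go, and is more than the paper itself provides.
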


The key observation behind this theorem is that for $q=-1$, the skein relation becomes the following:
\[
 \mathrm{Tr}(A)\mathrm{Tr}(B) = \mathrm{Tr}(AB) + \mathrm{Tr}(AB^{-1})
\]
(This identity is a simple consequence of the Hamilton-Cayley identity, and is valid for any matrices $A,B \in \SL_2(\C)$.)
\subsubsection{The Kauffman bracket skein module of the torus}
We recall that the \emph{quantum torus} is the algebra
\[
A_q := \frac{\C\langle X^{\pm 1},Y^{\pm 1}\rangle}{XY-q^2YX}
\]
where $q \in \C^*$ is a parameter. Note that $\Z_2$ acts by algebra automorphisms on $A_q$ by inverting $X$ and $Y$.

We now we recall a beautiful theorem of Frohman and Gelca in \cite{FG00} that gives a connection between skein modules and the invariant subalgebra $A_q^{\Z_2}$. First we introduce some notation. Let $T_n \in \c[x]$ be the Chebyshev polynomials defined 
by $T_0 = 2$, $T_1 = x$, and the relation $T_{n+1} = xT_n-T_{n-1}$. If $m,l$ are relatively prime, write $(m,l)$ for the $m,l$ curve on the torus (the simple curve wrapping around 
the torus $l$ times in the longitudinal direction and $m$ times in the meridian's direction). It is clear that the links $(m,l)^n$ span $K_q(T^2)$, and it follows from \cite{SW07} that this set is actually a basis.  However, a more convenient basis is given by the elements $(m,l)_T := T_d((\frac m {d}, \frac l {d}))$ (where $d = \mathrm{gcd}(m,l)$). 
Define $e_{r,s} = q^{-rs}X^{r}Y^s \in A_q$, which form a linear basis for the quantum torus $A_q$ and satisfy the relations
\[e_{r,s}e_{u,v} = q^{rv-us}e_{r+u,s+v}\]

\begin{theorem}[\cite{FG00}]\label{fg00}
The map $K_q(T^2) \to A_q^{\Z_2}$ given by $(m,l)_T\mapsto e_{m,l}+e_{-m,-l}$ is an isomorphism of algebras.
\end{theorem}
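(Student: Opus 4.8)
The plan is to deduce the theorem from the \emph{product-to-sum formula} for the Kauffman bracket skein algebra of the torus. For $v,w\in\Z^2$ write $v_T\in K_q(T^2)$ for the corresponding element (so $v_T=(m,l)_T$ when $v=(m,l)$, with the convention $(0,0)_T=2\varnothing$ forced by $T_0=2$), and let $D=\det(v,w)$ denote the determinant of the matrix with rows $v,w$. The formula asserts
\[
  v_T\cdot w_T \;=\; q^{D}\,(v+w)_T \;+\; q^{-D}\,(v-w)_T .
\]
Granting this identity, the theorem follows at once; establishing it is the substance of the matter.

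\textbf{From the formula to the theorem.} Since $v$ and $-v$ describe the same unoriented curve, $v_T=(-v)_T$, so the spanning set $\{v_T\}$ is naturally indexed by $\Z^2/\{\pm1\}$, and (a basis of $K_q(T^2)$, as recalled above following \cite{SW07}). On the target side, $\{e_v\}$ is a basis of $A_q$ on which $\Z_2$ acts by $v\mapsto-v$, so the orbit sums $e_v+e_{-v}$ form a basis of $A_q^{\Z_2}$, again indexed by $\Z^2/\{\pm1\}$ (the fixed orbit $v=0$ contributing $2e_0=2$, consistent with the convention above). Hence $v_T\mapsto e_v+e_{-v}$ extends to a linear isomorphism $\Phi$. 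By bilinearity it suffices to check multiplicativity on basis vectors, and a one-line computation using $e_ae_b=q^{\det(a,b)}e_{a+b}$ gives
\[
  (e_v+e_{-v})(e_w+e_{-w}) \;=\; q^{D}\bigl(e_{v+w}+e_{-(v+w)}\bigr)\;+\;q^{-D}\bigl(e_{v-w}+e_{-(v-w)}\bigr),
\]
which is precisely $\Phi$ applied to the right-hand side of the product-to-sum formula. Therefore $\Phi(v_T\cdot w_T)=\Phi(v_T)\,\Phi(w_T)$, and $\Phi$ is an isomorphism of algebras.

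\textbf{Proving the product-to-sum formula.} Both sides are equivariant for the action of the mapping class group $\SL_2(\Z)$ of $T^2$ on $K_q(T^2)$, which sends $v_T$ to $(gv)_T$: on the right this uses $\det(gv,gw)=\det(v,w)$. Since $\SL_2(\Z)$ acts transitively on primitive vectors, when $w$ is primitive one may assume $w=(1,0)$; then $v=(p_1,p_2)$ is arbitrary, $v_T=T_d$ of the primitive curve $(p_1/d,p_2/d)$ with $d=\gcd(p_1,p_2)$, and one resolves the crossings between the meridian and that curve with the Kauffman relations $L_+=qL_0+q^{-1}L_\infty$ and $L\sqcup\bigcirc=(-q^2-q^{-2})L$ (all diagrams blackboard-framed), organizing the outcome via the Chebyshev recursion $T_{n+1}=xT_n-T_{n-1}$; the two smoothings recombine into the curves $(p_1\pm1,p_2)$ with coefficients $q^{\pm D}$. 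The case of non-primitive $w$ is then reduced to the primitive case: writing $w=k\cdot w'$ with $w'$ primitive, one inducts on $k$ using $T_k(w'_T)=w'_T\,T_{k-1}(w'_T)-T_{k-2}(w'_T)$, associativity, the general-times-primitive case just proved, and the inductive hypothesis for smaller multiples of $w'$; the two middle terms are killed by the $T_{k-2}$ subtraction because all the relevant determinants are multiples of $\det(v,w')$. Collinear pairs $v,w$ are handled directly by the classical identity $T_aT_b=T_{a+b}+T_{|a-b|}$.

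\textbf{The main obstacle} is the crossing-resolution computation in the meridian-versus-curve case. One must keep track of framing corrections (isotopies of curves on the torus can introduce kinks, hence spurious powers of $q$ via $\bigcirc=-q^2-q^{-2}$), confirm that the smoothings which are neither $(p_1+1,p_2)$ nor $(p_1-1,p_2)$ cancel or are absorbed by the Chebyshev reorganization, and — above all — pin the exponent down to be exactly $D=\det(v,w)$ rather than $2D$ or $-D$. This sign-and-normalization statement is the entire content of the Frohman--Gelca theorem, and essentially all of the work lives there.
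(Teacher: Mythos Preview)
The paper does not give its own proof of this statement: it is quoted as a theorem of Frohman and Gelca \cite{FG00} and used as a black box. So there is no in-paper argument to compare against.

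Your proposal is a faithful outline of the original Frohman--Gelca proof. The ``from the formula to the theorem'' step is clean and complete: once the product-to-sum identity $v_T\cdot w_T=q^{D}(v+w)_T+q^{-D}(v-w)_T$ is known, the map $\Phi$ is visibly bijective on the indicated bases and your four-term expansion of $(e_v+e_{-v})(e_w+e_{-w})$ verifies multiplicativity. Your sketch of the product-to-sum formula (mapping-class-group reduction to a meridian against an arbitrary curve, Kauffman resolution, Chebyshev induction for the non-primitive factor) is the right architecture and you are candid that the crossing-resolution step, with its framing and sign bookkeeping, is where the content lives. As written this part is a sketch rather than a proof --- in particular the claim that the ``unwanted'' smoothings are exactly absorbed by the Chebyshev reorganization, and that the exponent comes out as $D$ on the nose, is asserted rather than checked --- but that is consistent with the status of the theorem in this paper as a cited result.
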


\begin{remark}\label{remark_canonicalmodulestructure}
 As explained in Section \ref{subsec_knotgroups}, if $K$ is an oriented knot, then there is a canonical identification of $S^1\times S^1$ with the boundary of $S^3\setminus K$. If the orientation of $K$ is reversed, this identification is twisted by the `hyper-elliptic involution' of $S^1\times S^1$ (which negates both components). However, this induces the identity isomorphism on $K_q(T^2\times [0,1])$, so the $A_q^{\Z_2}$-module structure on $K_q(S^3\setminus K)$ is canonical and does not depend on the choice of orientation of $K$.
\end{remark}

We also recall another presentation of this algebra that will be useful for computations.
Let $x,y,z \in K_q(T^2)$ be the meridian, longitude, and $(1,1)$ curve, respectively.
\begin{theorem}[\cite{BP00}]\label{thm_bp00}
The algebra $K_q(T^2)$ is generated by $x,y,z$ with relations
\begin{equation}\label{relationsforB'}
[x,y]_q = (q^2-q^{-2})z,\quad
[z,x]_q = (q^2-q^{-2})y,\quad
[y,z]_q = (q^2-q^{-2})x
\end{equation}
and the additional cubic relation
\begin{equation}\label{casimir_rel}
q^2x^2 + q^{-2}y^2+ q^2z^2 -qxyz= 2(q^2+q^{-2})
\end{equation}
\end{theorem}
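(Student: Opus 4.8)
The natural strategy is to transport the statement across the Frohman--Gelca isomorphism of Theorem~\ref{fg00} and do all the work inside the quantum torus $A_q^{\Z_2}$. Under that isomorphism the meridian, longitude, and $(1,1)$-curve correspond to $x = e_{1,0}+e_{-1,0}$, $y = e_{0,1}+e_{0,-1}$, and $z = e_{1,1}+e_{-1,-1}$. The first step is to verify, using only the multiplication rule $e_{r,s}e_{u,v} = q^{rv-us}e_{r+u,s+v}$, that these three elements satisfy the $q$-commutator relations \eqref{relationsforB'} and the cubic relation \eqref{casimir_rel}; this is a short computation (for instance $q\,xy - q^{-1}yx = (q^2-q^{-2})z$ drops out at once, and the two cyclic variants follow from the evident symmetry among the curves $x,y,z$ coming from the $\SL_2(\Z)$-action on $T^2$). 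Hence there is a well-defined algebra homomorphism $\Phi\colon B \to A_q^{\Z_2}\cong K_q(T^2)$, where $B$ denotes the algebra abstractly presented by generators $x,y,z$ and relations \eqref{relationsforB'}, \eqref{casimir_rel}. It remains to prove $\Phi$ is bijective.

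Surjectivity of $\Phi$ amounts to showing that $x,y,z$ generate $A_q^{\Z_2}$ as an algebra. For this I would use the product-to-sum identity
\[
(e_{a,b}+e_{-a,-b})(e_{c,d}+e_{-c,-d}) = q^{\,ad-bc}(e_{a+c,\,b+d}+e_{-a-c,\,-b-d}) + q^{\,bc-ad}(e_{a-c,\,b-d}+e_{-a+c,\,-b+d}),
\]
which follows directly from the formula for $e_{r,s}e_{u,v}$. Writing $w_{m,l}:=e_{m,l}+e_{-m,-l}$, the elements $w_{m,l}$ (together with $w_{0,0}=2$) span $A_q^{\Z_2}$, and the identity expresses $w_{m,l}$ in terms of $w$'s of strictly smaller value of $|m|+|l|$ after multiplying by a suitable one of $x=w_{1,0}$, $y=w_{0,1}$, $z=w_{1,1}$ (the only slightly exceptional small case, $w_{1,-1}$, is handled by $xy = qz + q^{-1}w_{1,-1}$); a straightforward induction on $|m|+|l|$ then puts every $w_{m,l}$ in the image of $\Phi$.

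The main obstacle is injectivity, i.e.\ showing that \eqref{relationsforB'} and \eqref{casimir_rel} are a \emph{complete} set of relations, which requires bounding $\dim B$ from above. I would do this with a Bergman diamond-lemma (or Gr\"obner) argument. Ordering monomials with $z > y > x$, the three relations in \eqref{relationsforB'} become rewriting rules that move every occurrence of $z$ to the right and sort $x$ before $y$, while \eqref{casimir_rel}, solved for $z^2$, becomes a rule reducing the $z$-degree to at most $1$; thus every element of $B$ is a $\C[q^{\pm1}]$-combination of the normal-form monomials $x^a y^b$ and $x^a y^b z$ with $a,b\ge 0$. One then checks that the overlap ambiguities of this system (the overlaps among the four relations, e.g.\ $z\cdot z\cdot z$, $y\cdot z\cdot z$, $z\cdot y\cdot x$, and so on) all resolve, so by the diamond lemma these normal forms are a basis of $B$. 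Finally, tracking leading terms under $\Phi$ — one has $\Phi(x^a y^b) = w_{a,b}+(\text{lower})$ and $\Phi(x^a y^b z) = w_{a+1,b+1}+(\text{lower})$ with respect to a suitable filtration of $A_q^{\Z_2}$ — shows $\Phi$ sends the normal-form basis of $B$ to a basis of $A_q^{\Z_2}$, hence is injective. I expect verifying confluence of the overlap ambiguities to be the genuinely computational heart of the argument; an alternative that sidesteps it is to quote a known presentation of the spherical algebra $A_q^{\Z_2}$ for generic $q$ and then deduce the remaining values of $q$ by a specialization argument over $\C[q^{\pm1}]$.
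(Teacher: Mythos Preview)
The paper does not prove this theorem; it is quoted from \cite{BP00} without argument, so there is no proof in the paper to compare against. Your proposal is a reasonable independent verification and the overall strategy --- transport across Theorem~\ref{fg00}, verify the relations, prove surjectivity by the product-to-sum induction, then prove injectivity by a diamond-lemma normal form count --- is sound.

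One imprecision worth flagging: your leading-term claim ``$\Phi(x^a y^b) = w_{a,b} + (\text{lower})$'' is not literally correct. For example $xy = q\,w_{1,1} + q^{-1}w_{1,-1}$ has two terms of top filtration degree, so $w_{m,l}$ with $l<0$ are not hit as single leading terms of any normal form. What is true, and suffices, is that the images of the normal forms $\{x^a y^b,\ x^a y^b z : a,b\ge 0\}$ in the associated graded of $A_q^{\Z_2}$ (for the filtration by $|m|+|l|$) are linearly independent; this is an upper-triangularity statement in each graded piece rather than a single-leading-term statement, and it does go through (both sides have $2n$ basis elements in degree $n\ge 1$). With that adjustment, the argument is complete once the four overlap ambiguities $zyx$, $z^2x$, $z^2y$, $z^3$ are checked to resolve --- which, as you say, is the computational core.
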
 

Combining this presentation with the isomorphism $K_q(T^2) \cong A_q^{\Z_2}$, we have $x \mapsto X+X^{-1}$, $y \mapsto Y+Y^{-1}$, and $z \mapsto q^{-1}(XY+X^{-1}Y^{-1})$.

\subsubsection{A topological pairing}\label{topologicalpairing}
Let $K \subset S^3$ be a knot. There is a natural pairing 
\[ 
K_q(D^2\times S^1)\otimes_\C K_q(S^3\setminus K) \to \C 
\]
which is used to compute colored Jones polynomials. Informally, this pairing is induced by gluing a solid torus $D^2\times S^1$ to the complement of a tubular neighborhood of a knot to obtain $S^3$.

Let  $N_K \subset S^3$ be a closed tubular neighborhood of $K$, and let $N_c$ be the closure of its complement. Then $N_K \cap N_c$ is a torus $T$, 
and we let $N_T$ be a closed tubular neighborhood of $T$. By Remark \ref{remark_canonicalmodulestructure}, both $K_q(N_K)$ and $K_q(N_c)$ have canonical $A_q^{\Z_2}$-module structure. More precisely, if we identify $N_T$ with $T \times [0,1]$, then the 
embedding $T \times [0,1] \hookrightarrow S^3$ gives $K_q(N_c)$ and $K_q(N_K)$ a left and right $A_q^{\Z_2}$-module structure\footnote{The asymmetry between left and right comes from the definition of 
multiplication in $F \times [0,1]$: the product $ab$ means ``stack $a$ on top of $b$.'' Since the tori $T^2\times \{0\}$ and $T^2\times\{1\}$ are glued to $N_c$ and $N_K$, the spaces $K_q(N_c)$ and $K_q(N_K)$ are a left and right $K_q(T^2)$-modules (respectively).} (respectively).

It is easy to see that $K_q(N_K \sqcup N_c) \cong K_q(N_K)\otimes_\C K_q(N_c)$, and the embedding property (\ref{embedding}) above shows that this induces a map $\langle -,-\rangle:K_q(N_K)\otimes_\C K_q(N_c) \to K_q(S^3)$. 
If $\alpha \subset N_T$ is a link, it can be isotoped to a link inside $N_K$ or a link inside $N_c$, and inside $S^3$ both these links are isotopic. Since these isotopies define the module structure of $K_q(N_K)$ and $K_q(N_c)$, the pairing $\langle -,-\rangle$ descends to
\begin{equation}\label{knotpairing}
\langle -,-\rangle:K_q(D^2\times S^1)\otimes_{K_q(T^2)} K_q(S^3\setminus K) \to \C
\end{equation}
(To ease notation for later reference, in this formula we have identified $N_K$ with the solid torus $D^2 \times S^1$ and written $S^3\setminus K$ for $N_c$. We also used the isomorphism $K_q(S^3) \cong \C$ 
described in Example \ref{skeins3}.)

\subsubsection{The colored Jones polynomials}\label{subsec_coloredJpolys}
The colored Jones polynomials $J_{n, K}(q) \in \C[q^{\pm 1}]$ of a knot $K \subset S^3$ were originally defined by Reshetikhin and Turaev in \cite{RT90} using the representation theory of $\U_q(\sl_2)$. (In fact, their definition works for any semisimple Lie algebra $\mathfrak g$, but we only deal with $\mathfrak g = \sl_2$.) Here we recall a theory of Kirby and Melvin that shows that $J_{n,K}(q)$ can be computed in terms of the pairing from the previous section. 

If $N_K$ is a tubular neighborhood of the knot $K$, then we identify $K_q(N_K)\cong \C[u]$, where $u \in K_q(N_K)$ is the image of the (0-framed) longitude $l \in K_q(\partial N_K)$. 
Let $S_n \in \C[u]$ be the Chebyshev polynomials of the second kind, which satisfy the initial conditions $S_0 = 1$ and $S_1 = u$, and the recursion relation $S_{n+1} = uS_n - S_{n-1}$.

\begin{theorem}[\cite{KM91}]\label{thm_coloredjonespolys}
 If $\varnothing \in K_q(S^3\setminus K)$ is the empty link, we have
 \[
  J_{n,K}(q) = (-1)^{n-1}\langle \varnothing\cdot S_{n-1}(u), \varnothing \rangle
 \]
\end{theorem}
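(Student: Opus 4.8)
The plan is to reduce the statement to the standard cabling formula for the colored Jones polynomial, combined with the identification of Jones--Wenzl idempotents with Chebyshev polynomials in the skein module of the annulus, and then to recognize the resulting evaluation in $S^3$ as the topological pairing (\ref{knotpairing}). First I would recall the skein-theoretic model of the Reshetikhin--Turaev invariant. By Kauffman's theorem (Example \ref{skeins3}), for any framed link $L \subset S^3$ the class $[L] \in K_q(S^3) = \C[q^{\pm 1}]$ is a framing-normalized Jones polynomial of $L$, computed from the Kauffman bracket with Kauffman variable $A=q$ and unknot value $-(q^2+q^{-2})$. On the other hand, under the Temperley--Lieb description of the category of $\U_q(\sl_2)$-modules, the $n$-dimensional irreducible $V_n$ corresponds to the $(n-1)$-st Jones--Wenzl idempotent $f_{n-1}$; hence $J_{n,K}(q)$ equals, up to the standard normalization, the Kauffman bracket of the $(n-1)$-st Jones--Wenzl cable of $K$, i.e. the link obtained by taking $n-1$ blackboard-framed parallel copies of $K$ and inserting $f_{n-1}$ along a small disk transverse to $K$. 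This cabling formula I would quote, or sketch from the definition of $J_{n,K}$.

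Next I would move the decoration into the solid torus $N_K = D^2\times S^1$. The $(n-1)$-st Jones--Wenzl cable, living in a tubular neighborhood of $K$, is the image under $TL_{n-1}\to K_q(D^2\times S^1)$ of the annular closure of $f_{n-1}$; and taking annular closures turns the Jones--Wenzl recursion into the three-term relation $S_{n+1}=uS_n-S_{n-1}$ with $S_0=1$, $S_1=u$, where $u\in K_q(D^2\times S^1)$ is the core curve, i.e. the image of the $0$-framed longitude. Thus the annular closure of $f_{n-1}$ is precisely $S_{n-1}(u) = \varnothing\cdot S_{n-1}(u)$, which is the element named in the notation fixed just before the statement. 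Now write $S^3 = N_K\cup_T N_c$ with $N_c = S^3\setminus K$; by the functoriality of $K_q(-)$ under gluing along $T$ (property (\ref{embedding}) and its footnote) the class in $K_q(S^3)=\C$ of the cabled knot is exactly $\langle S_{n-1}(u),\,\varnothing\rangle$ for the pairing (\ref{knotpairing}), with $\varnothing\in K_q(S^3\setminus K)$ the empty skein. Combining these steps yields $J_{n,K}(q) = (-1)^{n-1}\langle\varnothing\cdot S_{n-1}(u),\,\varnothing\rangle$ once the overall constant is pinned down.

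The sign $(-1)^{n-1}$ is then accounted for by the discrepancy between the quantum dimension of $V_n$ used to normalize the Reshetikhin--Turaev invariant and the Kauffman-bracket value of the cabled unknot: pushing $S_{n-1}(u)$ from $D^2\times S^1$ into $S^3$ gives $S_{n-1}(-(q^2+q^{-2})) = (-1)^{n-1}[n]_{q^2}$, which differs from the chosen normalization (Remark \ref{remark_signconvention}) by precisely $(-1)^{n-1}$; tracking this constant through the cabling formula produces the stated factor.

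The main obstacle is entirely bookkeeping: reconciling the several competing normalizations --- of the colored Jones polynomial (signs, $q\leftrightarrow q^{-1}$, $q\leftrightarrow q^2$), of the Kauffman bracket ($A=q$, unknot $\mapsto -(q^2+q^{-2})$), and of the Jones--Wenzl / Chebyshev conventions --- so that the polynomial which appears is exactly $S_{n-1}$ with the recursion fixed above and the global constant is exactly $(-1)^{n-1}$. The underlying topological content, that gluing the solid torus to the knot complement realizes the pairing (\ref{knotpairing}), is immediate from the functoriality of $K_q(-)$ established in Section \ref{kbsmsection}.
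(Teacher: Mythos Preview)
The paper does not prove this theorem; it is stated as a result of Kirby and Melvin \cite{KM91} and used as input. So there is no ``paper's own proof'' to compare against. Your outline is essentially the standard argument behind the Kirby--Melvin result: the Reshetikhin--Turaev invariant for $V_n$ is computed by cabling $K$ with the $(n-1)$-st Jones--Wenzl idempotent, the annular closure of $f_{n-1}$ in $K_q(D^2\times S^1)\cong\C[u]$ is $S_{n-1}(u)$, and the Heegaard gluing $N_K\cup_T N_c=S^3$ realizes the pairing (\ref{knotpairing}). The sign bookkeeping you describe, matching $S_{n-1}(-(q^2+q^{-2}))=(-1)^{n-1}[n]_{q^2}$ to the normalization of Remark \ref{remark_signconvention}, is exactly the right way to pin down the $(-1)^{n-1}$.
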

\begin{remark}\label{remark_signconvention}
The sign correction is chosen so that $J_{n, \mathrm{unknot}}(q) = (q^{2n}-q^{-2n})/(q^2-q^{-2})$. Also, with this normalization, $J_{0,K}(q) = 0$ and $J_{1,K}(q) = 1$ for every knot $K$. These conventions agree with the convention of labelling irreducible representations of $\U_q(\sl_2)$ by their dimension.
\end{remark}


\subsection{The $C^{\vee} C_1$ double affine Hecke algebra}
In this section we define the 5-parameter family of algebras $\H_{q,\ult}$ which was introduced by Sahi in \cite{Sah99} (see also \cite{NS04}).
This is the universal deformation of the algebra $\C[X^{\pm 1}, Y^{\pm 1}] \rtimes \Z_2$ (see \cite{Obl04}), and it depends on the parameters $q \in \C^*$ and $\ult \in (\C^*)^4$. The algebra $\H_{q,\ult}$ can be abstractly presented as follows: it is generated by the elements $T_0$, $T_1$, $T_0^\vee$, and $T_1^\vee$ subject to the relations
\begin{align}\label{ccdaharelations}
 (T_0-t_1)(T_0+t_1^{-1}) &= 0\notag\\
 (T_0^\vee-t_2)(T_0^\vee+t_2^{-1}) &= 0\notag\\
 (T_1-t_3)(T_1+t_3^{-1}) &= 0\\
 (T_1^\vee-t_4)(T_1^\vee+t_4^{-1}) &= 0\notag\\
 T_1^\vee T_1T_0 T_0^\vee &= q\notag
\end{align}
\begin{remark}
Comparing notation to \cite{NS04}, our $q^{-2}$ is their $q$, and our parameters $(t_1,t_2,t_3,t_4)$ are their $(k_0,u_0,k_1,u_1)$. These parameters relate to the original parameters $a,b,c,d$ of Askey and Wilson \cite{AW85} via $a=t_3t_4$, $b = -t_3t_4^{-1}$, $c=q^{-1}t_1t_2$, and $d = -q^{-1}t_1t_2^{-1}$. 
\end{remark}

\begin{remark}
The algebra $\H_{q,\ult}$ is a flat deformation of the fundamental group algebra of an orbifold Riemann surface. Recall that if $X$ is a simply connected Riemann surface and $\Gamma$ is a cocompact lattice (i.e. a Fuchsian subgroup) in $\mathrm{Aut}(X)$, the quotient $\Sigma = X / \Gamma$ is defined as an orbifold and $\Gamma$ is isomorphic to the (orbifold) fundamental group of $\Sigma$ (see, e.g. \cite[Sec. 2]{Sco83}):
\[
 \pi_1^{\mathrm{orb}}(\Sigma, *) = \langle a_1,b_1,\ldots,a_g,b_g,c_1,\ldots,c_n \mid c_i^{n_i} = 1,\, \prod_{i=1}^g [a_i,b_i] c_1\cdots c_n = 1\rangle
\]
where $c_i$ are generators corresponding to loops around special points of $X$ with stabilizers $\Z / n_i\Z$ and $n_i > 1$. In the case when $X = \C$ and $\Gamma = (\Z \oplus i\Z)\rtimes \Z_2$ acting on $X$ by translation-reflections, we recover from the isomorphism $G \cong \pi_1^{\mathrm{orb}}(\Sigma,*)$ the presentation
\[
 \Gamma = \langle c_1,c_2,c_3,c_4 \mid c_i^2 = 1,\, c_1c_2c_3c_4=1\rangle
\]
where the $c_i$ are loops around 4 special points $\{0,1/2,1/2+i/2,i/2\} \in \C$. Thus, $\H_{1,1,1,1,1} \cong \C[\Gamma]$. For other interesting examples of Hecke algebras associated to Fuchsian groups, see \cite{EOR07}.
\end{remark}

For $\ult = \underline{1}$, this algebra is isomorphic to $A_q\rtimes\Z_2$ (see Remark \ref{remark_ccotherpresentation}). This algebra can also be realized concretely as a subalgebra of a certain localization of $A_q\rtimes \Z_2$. More precisely, we recall that $A_q\rtimes \Z_2$ is generated by $X$, $\yy$, and $s$ (all invertible), which satisfy the relations
\[
 sX=X^{-1}s,\quad s\yy = \yy^{-1}s, \quad s^2 = 1,\quad X\yy = q^2\yy X
\]
Let $D_q$ be the localization of $A_q\rtimes \Z_2$ obtained by inverting all nonzero polynomials in $X$, and define the following operators in $D_q$:
\begin{eqnarray*}
 T_0 &=& t_1 s\yy - \frac{q^2 \bar t_1 X^2  + q\bar t_2 X}{1-q^2X^2}(1-s\yy)\\
 T_1 &=& t_3s + \frac{\bar t_3 +\bar t_4X}{1-X^2}(1-s)
\end{eqnarray*}
(We have slightly abused notation by giving these operators the same names as the abstract generators of $\H_{q,t}$.) The following Dunkl-type embedding is defined using these operators (see \cite[Thm. 2.22]{NS04}):
\begin{proposition}[\cite{Sah99}]\label{prop_dunklembedding}
 The assignments 
 \begin{equation}\label{ccdunklembedding}
 T_i \mapsto T_i,\quad T_0^\vee \mapsto qT_0^{-1}X,\quad T_1^\vee \mapsto X^{-1}T_1^{-1}
\end{equation}
extend to an injective algebra homomorphism $\H_{q,\ult} \to D_q$.
\end{proposition}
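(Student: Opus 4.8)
The plan is to verify directly that the assignments in \eqref{ccdunklembedding} respect the defining relations \eqref{ccdaharelations}, and then to argue injectivity via a PBW-type dimension count. First I would check that each of the four quadratic relations holds for the proposed images. For $T_0$ and $T_1$ this is a self-contained computation inside $D_q$: substitute the explicit formulas, collect the $s\yy$-part and the rational-function part, and confirm that $(T_0 - t_1)(T_0 + t_1^{-1}) = 0$ and likewise for $T_1$; the key identity to keep track of is $(s\yy)^2 = 1$ together with the commutation rule $s\yy \cdot f(X) = f(X^{-1})\cdot s\yy$ for rational functions $f$, which is what makes the rational coefficients conjugate correctly. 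For $T_0^\vee = qT_0^{-1}X$ and $T_1^\vee = X^{-1}T_1^{-1}$, the quadratic relations follow formally: since $T_0$ satisfies $(T_0 - t_1)(T_0 + t_1^{-1}) = 0$, it is invertible with $T_0^{-1} = \bar t_1 + T_0^{-1}\cdot(\text{unit})$—more precisely $T_0^{-1} = t_1^{-1}t_1\cdots$, so $T_0^{-1}$ satisfies $(T_0^{-1} - t_1^{-1})(T_0^{-1} + t_1) = 0$, and conjugating by the invertible element $X$ (resp. pre/post-multiplying) preserves a quadratic relation of this shape up to bookkeeping of the scalar $q$. I would record these elementary facts as a short lemma.

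Next I would verify the last relation $T_1^\vee T_1 T_0 T_0^\vee = q$. Under the assignments this reads $(X^{-1}T_1^{-1})\,T_1\,T_0\,(qT_0^{-1}X) = X^{-1}T_1^{-1}T_1 T_0 T_0^{-1} q X = X^{-1}\cdot q \cdot X = q$, which is immediate once one knows $T_0, T_1$ are invertible in $D_q$; invertibility of $T_0$ and $T_1$ is exactly what the quadratic relations give (a monic quadratic with nonzero constant term $-t_i\cdot t_i^{-1}\neq 0$, wait, constant term is $-1$, hence nonzero). So the algebra map $\H_{q,\ult}\to D_q$ exists. This part is essentially formal and I do not expect difficulty here.

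The substantive point is injectivity, and this is where I expect the main obstacle. The cleanest route is to use the known PBW basis for $\H_{q,\ult}$: the algebra has a triangular decomposition in which, as a vector space, $\H_{q,\ult} \cong \C[X^{\pm 1}]\otimes \C[\Z_2]\otimes \C[Y^{\pm 1}]$ (this is the content of Sahi's flatness result, \cite{Sah99}, \cite{Obl04}). Concretely one shows that the image of $\H_{q,\ult}$ in $D_q$ contains $X^{\pm 1}$ (given), contains $s\yy$ and hence, combined with $T_1$, contains $s$ and therefore $\yy = s\cdot(s\yy)$, so the image is precisely the subalgebra of $D_q$ generated by $X^{\pm 1}$, $s$, $\yy$ together with the rational denominators $(1-q^2X^2)^{-1}$ and $(1-X^2)^{-1}$. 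One then sets up the filtration on $\H_{q,\ult}$ by "degree in $T_0^\vee$" (or equivalently by powers of $\yy$ after passing to $D_q$): a general element of $\H_{q,\ult}$ can be written as $\sum_k a_k(X) \e_k \cdot (T_0^\vee)^k + \cdots$ where $\e_k\in\C[\Z_2]$, and its image in $D_q$ has leading term (in powers of $\yy$) equal to $\sum a_k(X)\,(\text{leading term of }T_0^{-1})^k X^k$. Since the leading term of $T_0^{-1}$ in $\yy$ is (up to a unit in $\C[X^{\pm 1}_{\loc}]$) proportional to $s\yy^{-1}$, distinct monomials in the PBW basis map to $D_q$-elements with distinct leading $\yy$-degrees and $\C[X^{\pm1}_\loc]\rtimes\Z_2$-linearly independent leading coefficients; hence a nonzero element of $\H_{q,\ult}$ cannot map to $0$. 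The delicate bookkeeping is precisely in identifying these leading terms and checking the coefficients are nonzero rational functions of $X$ (not identically zero), so that no cancellation across different PBW degrees can occur. I would organize this as: (i) exhibit the PBW basis of $\H_{q,\ult}$; (ii) compute the leading $\yy$-behavior of the image of each basis vector; (iii) conclude linear independence of the images, hence injectivity. Alternatively, one can cite \cite[Thm.~2.22]{NS04} directly for the injectivity statement and merely supply the relation-check above, which is the route I would actually take to keep the exposition short.
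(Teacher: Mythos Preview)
The paper does not supply its own proof of this proposition: it is attributed to Sahi \cite{Sah99} with a pointer to \cite[Thm.~2.22]{NS04}, so your closing suggestion (cite the literature) is exactly what the paper does.

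Your sketch of the relation-check, however, contains a genuine gap. You assert that the quadratic relations for $T_0^\vee = qT_0^{-1}X$ and $T_1^\vee = X^{-1}T_1^{-1}$ ``follow formally'' from those for $T_0$ and $T_1$ via conjugation or one-sided multiplication by $X$. This is not so: $qT_0^{-1}X$ is not a conjugate of $T_0^{-1}$, and even if it were, conjugation would produce a quadratic with parameter $t_1$, not $t_2$. The required relation $(T_0^\vee - t_2)(T_0^\vee + t_2^{-1}) = 0$ involves the parameter $t_2$, which enters the Dunkl formula for $T_0$ only through the rational coefficient $\bar t_2$; verifying it forces you to compute $T_0^{-1}XT_0^{-1}X$ using that explicit formula, and this is precisely where $t_2$ earns its role. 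The same applies to $T_1^\vee$ and $t_4$. These two quadratic relations are the substantive content of the embedding---your check of the product relation $T_1^\vee T_1 T_0 T_0^\vee = q$ is indeed a triviality, and the quadratics for $T_0$, $T_1$ are self-contained as you say---so the ``formal'' shortcut skips the heart of the computation. Your injectivity outline via PBW and leading $\yy$-degree is reasonable in spirit, but note that it presupposes Sahi's flatness (PBW) theorem, which is itself the nontrivial input you would be citing.
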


We recall that 
the standard polynomial representation $V = \C(X)$ of $D_q$ is isomorphic as a $\C[X^{\pm 1}]$-module to rational functions in $X$, with the action of $s$ and $\yy$ given by
\[
 s\cdot f(X) = f(X^{-1}),\quad\quad \yy\cdot f(X) = f(q^{-2}X)
\]
Under this action, it is easy to check that both operators $T_0$ and $T_1$ preserve the subspace $\C[X^{\pm 1}] \subset \C(X)$, which shows that the action of $\H_{q,\ult}$ on $\C(X)$ preserves $\C[X^{\pm 1}] \subset \C(X)$. In other words, $\C[X^{\pm 1}]$ is an $\H_{q,\ult}$-module, which is called the \emph{polynomial representation}.

\begin{remark}\label{remark_ccotherpresentation}
The algebra $\H_{q,\ult}$ is also generated by the elements
 $X^{\pm 1}$, $Y := T_1T_0$, and $T := T_1$. With this set of generators, the relations become the following (see \cite[2.21]{NS04}):
\begin{eqnarray}\label{equation_ccxyt}
XT &=& T^{-1}X^{-1} - \bar t_4\notag\\
T^{-1}Y &=& Y^{-1}T + \bar t_1\notag\\
T^2 &=& 1 + \bar t_3T\notag\\
TXY &=& q^2T^{-1}YX - q^2 \bar t_1 X - q \bar t_2  - \bar t_4Y
\end{eqnarray}
(where $\bar t_i = t_i - t_i^{-1}$). With this presentation, it is clear that $\H_{q,1,1,1,1} = A_q\rtimes \Z_2$ (as subalgebras of $D_q$), since under this specialization of the parameters, we have $T = T_1 = s$ and $Y = \hat y$.
\end{remark}

The element $\e := (T_1+t_3^{-1})/(t_3+t_3^{-1}) \in \H_{q,\ult}$ is an idempotent, and the algebra $\S\H_{q,t} := \e \H_{q,\ult}\e$ is called the \emph{spherical subalgebra}. It is easy to check that $\e$ commutes with $X+X^{-1}$, and this implies the subspace $\e \cdot \C[X^{\pm 1}]$ is equal to the subspace $\C[X+X^{-1}]$ of symmetric polynomials. The spherical algebra therefore acts on $\C[X+X^{-1}]$, and this module is called the \emph{symmetric polynomial representation}.

A presentation for the spherical subalgebra $\S\H_{q,t}$ has been given in \cite{Koo08}. We now recall this presentation in our notation. First, we define 
\begin{eqnarray*}
x &=& (X+X^{-1})\e\\ 
y &=& (Y+Y^{-1})\e\\
z &=& \frac{[x,y]_q}{(q^2-q^{-2})}
\end{eqnarray*}
\begin{theorem}[\cite{Koo08}]
The spherical subalgebra is generated by $x,y,z$ with relations
\begin{eqnarray*}
 [x,y]_q &=& (q^2-q^{-2})z\\
{ } [y,z]_q &=& (q^2-q^{-2})x - (q-q^{-1})By - (q-q^{-1})D_1\\
{ } [z,x]_q &=& (q^2-q^{-2})y - (q-q^{-1})Bx + (q-q^{-1})D_0\\
 Q_0 &=& -qxyz + q^2x^2 + q^{-2}y^2 + q^2z^2 - qD_1x - q^{-1}D_0y - qB(xy-(q-q^{-1})z)
\end{eqnarray*}
\end{theorem}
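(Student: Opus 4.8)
The plan is to realize the candidate generators inside $\S\H_{q,t}=\e\H_{q,\ult}\e$, to verify the four relations by a direct computation, and then to prove that no further relations are needed by a PBW-type argument together with the faithful polynomial representation. Throughout I would use the presentation of $\H_{q,\ult}$ from Remark~\ref{remark_ccotherpresentation} (generators $X^{\pm1}$, $Y=T_1T_0$, $T=T_1$) and the Dunkl realization inside $D_q$ from Proposition~\ref{prop_dunklembedding}.

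First I set $x=(X+X^{-1})\e$, $y=(Y+Y^{-1})\e$ and $z=\tfrac1{q^2-q^{-2}}[x,y]_q$, so the first relation is the definition of $z$. The idempotent $\e=(T_1+t_3^{-1})/(t_3+t_3^{-1})$ is a two-sided absorber for $T_1$ and commutes with $X+X^{-1}$; using this, the quadratic relations $(T_i-t_i)(T_i+t_i^{-1})=0$, and the relations of Remark~\ref{remark_ccotherpresentation}, one reduces each of $[y,z]_q$, $[z,x]_q$ and the cubic combination $-qxyz+q^2x^2+q^{-2}y^2+q^2z^2-qB(xy-(q-q^{-1})z)$ to an element of $\C x+\C y+\C\e$. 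In practice it is simplest to express $Y+Y^{-1}$ and $Y-Y^{-1}$ as $q$-difference operators on $\C(X)$ via the Dunkl embedding and verify the identities there, which is essentially the computation of \cite{Koo08}; reading off coefficients produces the constants $B$, $D_0$, $D_1$ and the scalar $Q_0$ (the image of the central Casimir of Zhedanov's algebra $AW(3)$) as explicit Laurent polynomials in $q,t_1,\dots,t_4$.

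Second I would show that $x,y,z$ generate $\S\H_{q,t}$. From the PBW basis $\{X^iY^jT^{\varepsilon}:i,j\in\Z,\ \varepsilon\in\{0,1\}\}$ of $\H_{q,\ult}$, the subalgebra $\e\H_{q,\ult}\e$ is spanned by the symmetrized monomials $\e X^iY^j\e$ (any factor of $T_1$ is absorbed into a copy of $\e$). Filtering by total degree in $X^{\pm1},Y^{\pm1}$, the associated graded of $\S\H_{q,t}$ is the commutative algebra $\O(\T\times\T)^{\Z_2}$, which is generated by the classes of $X+X^{-1}$, $Y+Y^{-1}$ and $XY+X^{-1}Y^{-1}$ (the leading term of $z$); lifting this, and correcting by terms of strictly lower filtration degree, shows $x,y,z$ generate.

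Third, and here is the main obstacle, I would prove completeness of the relations. Let $\mathcal A$ be the abstract algebra on $x,y,z$ with the four stated relations, so the previous two steps give a surjection $\pi\colon\mathcal A\twoheadrightarrow\S\H_{q,t}$. Using the commutator relations to reorder $x,y,z$ and the $Q_0$-relation to eliminate powers $z^{m}$ with $m\geq2$, a diamond-lemma argument shows $\mathcal A$ is spanned by the ordered monomials $\{x^iy^jz^{\varepsilon}:i,j\geq0,\ \varepsilon\in\{0,1\}\}$. It remains to see that $\pi$ sends this set to a linearly independent family; for generic $\ult$ this follows by acting on the faithful symmetric polynomial representation $\C[X+X^{-1}]$, where $x$ is multiplication by $X+X^{-1}$ and $y$, $z$ are $q$-difference operators, so that passing to leading symbols (degree of the multiplication part together with order of the difference operator) separates the monomials $x^iy^jz^{\varepsilon}$; the general case then follows either from the direct PBW basis argument of \cite{Koo08} or from flatness of the $(q,\ult)$-family (\cite{Sah99},\cite{Obl04}) combined with the classical presentation of the $\Z_2$-quotient of $\T\times\T$. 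Hence $\pi$ is injective, and therefore an isomorphism. The delicate point throughout is this last independence statement: one must be certain the four relations already force $\mathcal A$ down to the size of $\O(\T\times\T)^{\Z_2}$ and that no hidden relation has been dropped.
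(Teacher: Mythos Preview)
The paper does not give a proof of this theorem at all: it is simply quoted from \cite{Koo08} as background, with no argument in the text. So there is nothing to compare against, and your task was really to reconstruct Koornwinder's argument.

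Your outline is the standard and correct strategy for such a presentation result (verify the relations, prove generation via a filtration/PBW argument, prove completeness by bounding the size of the abstract algebra and matching it to $\S\H_{q,t}$), and it is essentially what \cite{Koo08} does. A couple of comments on places where your sketch is thinner than it should be. First, in the generation step you assert that $\e\H_{q,\ult}\e$ is spanned by $\e X^iY^j\e$; this is not literally the PBW basis of $\H_{q,\ult}$ projected by $\e$ on both sides, since a general PBW monomial looks like $X^iT^\varepsilon Y^j$ rather than $X^iY^jT^\varepsilon$, and absorbing the $T$ requires commuting it past powers of $Y$ using the Lusztig relations, which introduces lower-order corrections. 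This is harmless for the filtered argument you give, but you should say so explicitly. Second, and more seriously, the diamond-lemma step reducing $\mathcal A$ to the span of $\{x^iy^jz^\varepsilon\}$ is exactly the nontrivial content of Zhedanov's original paper on $AW(3)$: the overlap ambiguities between the three $q$-commutator relations and the Casimir relation do resolve, but checking this is a genuine computation and is where the constants $B,D_0,D_1,Q_0$ must satisfy a compatibility that you have not verified. Your appeal to flatness of the family over $(q,\ult)$ to pass from generic to all parameters is fine in spirit, but flatness of $\S\H_{q,t}$ is itself a nontrivial input (it follows from flatness of $\H_{q,\ult}$ plus Morita equivalence for generic $q$, and needs a separate argument at roots of unity), so you should be careful not to make the argument circular.
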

The right hand side of the final relation is a central element in the algebra generated by the elements $x$, $y$ , and $z$ subject to the first three relations, and the constants $Q_0$, $B$,  $D_1$, and $D_0$ are given by
\begin{eqnarray*}
 (q+q^{-1})^2Q_0 &:=& -(q^2+q^{-2})\bar t_1 \bar t_2 (\overline{qt_3}) \bar t_4 + (q^2+q^{-2})t_2^{-2}(t_1^2+1+t_2^2) \\
 &{ }& + (1+q^{-2}t_1^{-2})[q^4+q^2(q^2+t_1^2)(t_4^2+t_4^{-2})] + (q^2+t_2^2)(q^2+t_2^{-2})(q^{-2}t_3^{-2}+q^2t_3^2)\\
 &{ }& - 4(q+q^{-1})^2 + 6 + q^{-4} + q^{-2}\\
 B &:=& \frac{1}{q+q^{-1}}(\bar t_2 (\overline{qt_3}) + \bar t_1 \bar t_4)\\
 D_0 &:=& \bar t_1 (\overline{qt_3}) + \bar t_2 \bar t_4\\
 D_1 &:=& \bar t_1 \bar t_2 + (\overline{qt_3})\bar t_4
\end{eqnarray*}
(Here we have used the notation $\bar t_i := t_i - t_i^{-1}$ and $\overline{qt_3} := qt_3-q^{-1}t_3^{-1}$.)

\begin{remark}
 If $q=1$ then the spherical subalgebra is commutative - the spectrum of this algebra was studied in detail in \cite{Obl04}. Also, if $t_i=1$ this presentation agrees with the presentation of \cite{BP00} for the skein algebra of the torus (see Theorem \ref{thm_bp00}).
\end{remark}

The operator $L_{t_1,t_2} := Y+Y^{-1} = T_1T_0 + T_0^{-1}T_1^{-1}$ is called the \emph{Askey-Wilson operator}: it also commutes with $\e$, so it preserves the subspace of symmetric functions $\C[X+X^{-1}]$. If we write $L_{t_1,t_2}^{\mathrm{sym}}$ for the restriction of this operator to $\C[X+X^{-1}]$, then $L_{t_1,t_2}^{\mathrm{sym}}$ is diagonalizable with distinct eigenvalues (for generic parameters), and its eigenvectors are the \emph{Askey-Wilson polynomials} (see, e.g. \cite{Mac03}). In the following lemma, we use the notation $(x)^+ = x+x^{-1}$ and $(x)^- = x-x^{-1}$.
\begin{lemma}\label{lemma_askeywilson}
 If $t_3=t_4=1$ the Askey-Wilson operator can be written as follows:
\begin{align*}
  L_{t_1,t_2}^{\mathrm{sym}} =  \frac{1}{(X^2)^+-(q^2)^+}\Bigg[&t_1(X^2\yy)^+ + t_1^{-1}(X^{-2}\yy)^+ 
   - t_1^+(X^2)^+  - (t_1^{-1}q^2)^+(\yy^+-2) \\
   &+ t_2^-\Big(q(X\yy)^+-q^{-1}(X^{-1}\yy)^+ -q^-X^+\Big)\Bigg] + t_1^+
\end{align*}
\end{lemma}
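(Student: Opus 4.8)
The plan is to compute $L_{t_1,t_2}^{\mathrm{sym}}$ directly from the Dunkl-type realization of $\H_{q,\ult}$ in $D_q$ (Proposition \ref{prop_dunklembedding}), specialized to $t_3 = t_4 = 1$, and then restrict to the symmetric subspace $\C[X+X^{-1}] = \e\cdot\C[X^{\pm1}]$. First I would write $L_{t_1,t_2} = Y + Y^{-1} = T_1 T_0 + T_0^{-1}T_1^{-1}$ using the formulas
\[
 T_0 = t_1 s\yy - \frac{q^2\bar t_1 X^2 + q\bar t_2 X}{1-q^2X^2}(1-s\yy), \qquad T_1 = t_3 s + \frac{\bar t_3 + \bar t_4 X}{1-X^2}(1-s),
\]
and set $t_3 = t_4 = 1$, so that $\bar t_3 = \bar t_4 = 0$ and $T_1$ collapses to $s$. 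Thus $Y = sT_0$ and $Y^{-1} = T_0^{-1}s$, and since $\e$ acts as the symmetrizer when $t_3=1$, on the symmetric subspace $s$ acts as the identity; hence $L_{t_1,t_2}^{\mathrm{sym}}$ is the restriction of $T_0 + T_0^{-1}$ (conjugated appropriately, or more precisely $\e(sT_0 + T_0^{-1}s)\e$ acting on symmetric functions) to $\C[X+X^{-1}]$.

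The main computational step is to expand $T_0 + T_0^{-1}$ as an explicit operator on $\C(X)$. For $T_0$ I would split into the $s\yy$-part and the rational multiplier part; from the quadratic relation $(T_0 - t_1)(T_0 + t_1^{-1}) = 0$ one gets $T_0^{-1} = t_1^{-1}t_1 \cdot$(something) — more directly, $T_0^{-1} = -\bar t_1 + $ the image of $T_0$ under inverting the Hecke sign, but the cleanest route is $T_0^{-1} = T_0 - \bar t_1 \cdot \mathrm{id}$ rearranged, i.e. $T_0^2 = \bar t_1 T_0 + \mathrm{id}$ gives $T_0 + T_0^{-1} = T_0 + (T_0 - \bar t_1) = 2T_0 - \bar t_1$ is wrong in general; rather $T_0^{-1} = T_0^{-1}$, and from $T_0^2 - \bar t_1 T_0 - 1 = 0$ we get $T_0^{-1} = T_0 - \bar t_1$. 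So $L_{t_1,t_2} = T_1(T_0 + T_0^{-1})T_1^{-1}$... I need to be careful: $L = T_1T_0 + T_0^{-1}T_1^{-1}$, and with $T_1 = s$, $T_1^{-1} = s$, this is $sT_0 + T_0^{-1}s$. Using $T_0^{-1} = T_0 - \bar t_1$, we get $L = sT_0 + (T_0 - \bar t_1)s = sT_0 + T_0 s - \bar t_1 s$. Then I substitute the explicit formula for $T_0$, use $sX = X^{-1}s$, $s\yy = \yy^{-1}s$, $s^2 = 1$, and collect terms over the common denominator $1 - q^2 X^2$ (and its $s$-conjugate $1 - q^2 X^{-2}$, which combine to produce the symmetric denominator $(X^2)^+ - (q^2)^+$ after clearing). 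Acting on a symmetric function, all residual $s$'s disappear, and I would organize the numerator into the $t_1$-terms, the $t_1^{-1}$-terms, and the $t_2^- = \bar t_2$-terms, matching the claimed grouping.

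The hard part will be the bookkeeping: tracking how the two denominators $1-q^2X^2$ and $1-q^2X^{-2}$ combine, ensuring the $\yy$-shift operators ($\yy$ sends $X \mapsto q^{-2}X$) are moved past the rational coefficients correctly (they do not commute — $\yy \cdot r(X) = r(q^{-2}X)\cdot \yy$), and verifying that the constant term works out to exactly $t_1^+ = t_1 + t_1^{-1}$ with no leftover. I would double-check the final expression by testing it on the constant function $1$ and on $X + X^{-1}$, comparing against the known first Askey-Wilson polynomials (or against the $t_i = 1$ case, where it must reduce to the skein-theoretic longitude operator from Theorem \ref{thm_bp00}, namely $L_{1,1}^{\mathrm{sym}}$ acting as $y = Y + Y^{-1}$). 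A consistency check at $q \to 1$ against the classical formula $F(\rho)$-type expressions, and at $t_2 \to 1$ against Cherednik's $A_1$ Askey-Wilson operator, would confirm the normalization. Since the statement is an explicit identity of operators, once the algebra is set up the verification is routine but lengthy, with no genuine obstruction beyond sign and $q$-power discipline.
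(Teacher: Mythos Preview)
Your plan is correct and would succeed: with $t_3=t_4=1$ you have $T_1=s$, so $L=sT_0+T_0^{-1}s$, and the quadratic relation gives $T_0^{-1}=T_0-\bar t_1$; substituting the Dunkl formula for $T_0$ and restricting to $s$-invariants produces the stated second-order $q$-difference operator after bringing everything over the common denominator $(X^2)^+-(q^2)^+$. The meandering in the middle (the false start with $2T_0-\bar t_1$) is harmless since you land on the right identity.

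The paper takes a shorter route: rather than computing from the Dunkl operators, it invokes \cite[Prop.~5.8]{NS04}, which already records $Y+Y^{-1}$ on the symmetric polynomial representation as
\[
A(X)(\yy-1)+A(X^{-1})(\yy^{-1}-1)+t_1+t_1^{-1},\qquad A(X)=\frac{t_1^{-1}qX^{-1}-\bar t_2-t_1q^{-1}X}{qX^{-1}-q^{-1}X}
\]
(this is exactly the formula that reappears later as \eqref{eq_askeywilsonop}), and then the ``short calculation'' is just clearing the two denominators $qX^{-1}-q^{-1}X$ and $qX-q^{-1}X^{-1}$ to the common symmetric denominator and regrouping the numerator by $t_1$, $t_1^{-1}$, and $t_2^-$. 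Your approach derives that NS04 formula along the way, so it is more self-contained but longer; the paper's approach trades the $T_0$-algebra for a citation. Both are routine once set up.
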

\begin{proof}
 This follows from \cite[Prop. 5.8]{NS04} and a short calculation.
\end{proof}
(We remark that under the isomorphism of Theorem \ref{fg00}, $(X\yy)^+ = q(1,1)$, where the right hand side is the $(1,1)$ curve on the torus. This accounts for the apparent differences in the powers of $q$ in the last terms of the above formula for $L_{t_1,t_2}^{\mathrm{sym}}$ and Remark \ref{remark_eUe}.)

\begin{remark}\label{remark_a1cciso}
 Cherednik's $\sl_2$ double affine Hecke algebra $\HH_{q,t}$ is isomorphic to $\H_{q,1,1,t^{-1},1}$. Under this specialization, the presentation (\ref{equation_ccxyt}) of $\H_{q,1,1,t^{-1},1}$ becomes
 \begin{equation}\label{equation_a1daha}
  TXT = X^{-1},\quad TY^{-1}T = Y,\quad (T-t^{-1})(T+t) = 0,\quad XY=q^2T^{-2}YX
 \end{equation}
The standard presentation of $\HH_{q,t}$ (see \cite{Che05}) replaces the last relation (\ref{equation_a1daha}) with $XY=q^2YXT^2$. Under the map $\HH_{q,t} \to \H_{q,1,1,t^{-1},1}$ given by $X \mapsto X^{-1}$, $Y \mapsto Y^{-1}$, and $T \mapsto T^{-1}$, this becomes the last relation in (\ref{equation_a1daha}).
\end{remark}

Let $M$ be an $A_q\rtimes \Z_2$ module, let $M^\loc$ be its localization at nonzero polynomials in $X$, and suppose $M \to M^\loc$ is injective. Let $U_0 = (1-q^2X^2)^{-1}(1-\s \yy) \in D_q$.
\begin{lemma}\label{lemma_twisteddunkl}
If $U_0M \subset M$, then $\HH_{q,t}$ acts on $M$.
\end{lemma}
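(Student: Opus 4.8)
The goal is to show that if the operator $U_0 = (1-q^2X^2)^{-1}(1-\s\yy) \in D_q$ preserves an $A_q\rtimes\Z_2$-submodule $M$ of its localization $M^\loc$, then Cherednik's algebra $\HH_{q,t}$ acts on $M$. The strategy is to produce, using $U_0$, a copy of $\HH_{q,t}$ inside an appropriate subalgebra of $D_q$ that automatically preserves $M$. Concretely, $M$ is a module over the subalgebra $E \subset D_q$ consisting of those elements of $D_q$ that map $M$ into itself (this is well-defined precisely because $M \hookrightarrow M^\loc$); by hypothesis $A_q\rtimes\Z_2 \subseteq E$ and $U_0 \in E$, so anything we can build as a noncommutative polynomial in $X^{\pm 1}, \yy^{\pm 1}, s$ and $U_0$ also lies in $E$.

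First I would pin down the presentation of $\HH_{q,t}$ to be used, namely $(\ref{equation_a1daha})$ from Remark \ref{remark_a1cciso}: generators $T, X, Y$ with $TXT = X^{-1}$, $TY^{-1}T = Y$, $(T-t^{-1})(T+t) = 0$, and $XY = q^2 T^{-2} Y X$ (equivalently the standard relation after the sign twist indicated there). Next I would write down explicit elements $\hat T, \hat X, \hat Y \in D_q$ in terms of $X, \yy, s$ and $U_0$. The natural candidate for $\hat T$ is a Demazure–Lusztig-type operator — some combination $\alpha s + \beta(1-q^2X^2)^{-1}(1-s)$ (or the analogous expression that arises after twisting the $T_1$ in $(\ref{equation_introdlembedding})$ by the automorphism relating $U_0$ to the standard Dunkl picture) — chosen so that the quadratic relation $(\hat T - t^{-1})(\hat T + t) = 0$ and the braid-type relations hold; one sets $\hat X = X$ (or $X^{-1}$) and defines $\hat Y = \hat T \hat T_0$ where $\hat T_0$ is a second Demazure–Lusztig operator expressible through $U_0$. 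The key algebraic point, which is essentially the content of Lemma \ref{lemma_twisteddunkl}, is that $U_0$ is exactly the "denominator-carrying" piece needed to write the standard Dunkl embedding $\Theta\colon\HH_{q,t}\to D_q$ of $(\ref{equation_introdlembedding})$ after conjugating by an automorphism of $D_q$ (a twist of the form $f \mapsto \sigma f \sigma^{-1}$ or a rescaling $X\mapsto cX$) so that all occurrences of the rational denominator $1-q^2X^2$ are absorbed into a single factor of $U_0$. So the plan is: (i) exhibit the twisting automorphism $\varphi$ of $D_q$; (ii) check that $\varphi \circ \Theta$ (with $\Theta$ the Sahi/NS embedding specialized to $\ult = (1,1,t^{-1},1)$ and then restricted/renamed as in Remark \ref{remark_a1cciso}) has image contained in the subalgebra of $D_q$ generated by $A_q\rtimes\Z_2$ and $U_0$; (iii) conclude that this image lies in $E$, hence $M$ is a module over $\HH_{q,t}$ via this embedding.

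Steps (i)–(ii) amount to a direct computation: one expands $T_1 = t_3 s + \bar t_3(1-X^2)^{-1}(1-s)$ and $T_0 = t_1 s\yy - (q^2\bar t_1 X^2 + q\bar t_2 X)(1-q^2X^2)^{-1}(1-s\yy)$ at $t_1=t_2=1$, $t_3 = t^{-1}$, $t_4 = 1$, so that $T_0$ collapses to $s\yy$ and $T_1$ becomes $t^{-1}s + (t^{-1}-t)(1-X^2)^{-1}(1-s)$, and then one verifies that after the sign/inversion twist of Remark \ref{remark_a1cciso} the resulting generators $X^{-1}, Y^{-1} = (T_1T_0)^{-1}\!$ twisted, $T^{-1}$ can be rewritten so that the only rational function appearing is $(1-q^2X^2)^{-1}$, grouped as the single operator $U_0 = (1-q^2X^2)^{-1}(1-s\yy)$. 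This is the place where the precise power of $q$ and the placement of $\yy$ matter (the parenthetical remark after Lemma \ref{lemma_askeywilson} about $(X\yy)^+ = q(1,1)$ is a warning that bookkeeping of $q$-powers is delicate), so I would carry it out carefully rather than by analogy.

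**Main obstacle.** The real work — and the step most likely to hide a subtlety — is (ii): confirming that the twisted Dunkl operators for $\HH_{q,t}$ genuinely lie in the subalgebra generated by $A_q\rtimes\Z_2$ and the \emph{single} operator $U_0$, rather than merely in $D_q$. Two things must go right simultaneously: the rational denominator must be exactly $1-q^2X^2$ (not $1-X^2$ or a product of several distinct linear factors), which forces the specific choice of twist $\varphi$; and the numerator must combine with $(1-s\yy)$ in precisely the way packaged by $U_0$, with no leftover $s$-term that would require inverting $1-q^2X^2$ separately. Once that identity of operators in $D_q$ is established, the rest is formal: $E$ is a subalgebra, it contains $X^{\pm1}, \yy^{\pm1}, s$ and $U_0$ by hypothesis, hence it contains the image of $\HH_{q,t}$, and $M$ — being by definition a faithful module over $E$ inside $M^\loc$ — inherits the $\HH_{q,t}$-action. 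I would also record the minor point that injectivity of $M \to M^\loc$ is what makes "the subalgebra of $D_q$ preserving $M$" a well-defined object, so the hypothesis is used exactly once and exactly there.
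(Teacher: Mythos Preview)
Your approach is essentially the paper's: twist the Dunkl embedding $\Theta$ by an automorphism of $D_q$ so that the image of $\HH_{q,t}$ lies in the subalgebra generated by $A_q\rtimes\Z_2$ and $U_0$. The paper makes this completely explicit---the automorphism is $X\mapsto qX$, $\yy\mapsto\yy$, $s\mapsto s\yy$ (not the sign/inversion map of Remark~\ref{remark_a1cciso}, which is an identification of presentations rather than an automorphism of $D_q$), and under it $T_1 = ts + \bar t(1-X^2)^{-1}(1-s)$ becomes $ts\yy + \bar t U_0$ while $Y = \yy s T_1$ becomes $t\yy + \bar t sU_0$, so the whole verification collapses to two lines.
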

\begin{proof}
 Cherednik's embedding of $\HH_{q,t}$ into $D_q$ is given by the following formulas (see \cite{Che05}):
 \[
  X \mapsto X,\quad T \mapsto T_1,\quad Y \mapsto \yy s T_1
 \]
with parameters $(t_1,t_2,t_3,t_4) = (1,1,t,1)$. We now twist this embedding by the automorphism of $D_q$ given by $X \mapsto qX$, $\yy \mapsto \yy$, and $s \mapsto s \yy$. After this twist, the map $\HH_{q,t} \to D_q$ is given by
\[
  X \mapsto qX,\quad T \mapsto ts \yy + {\bar t}U_0,\quad Y \mapsto t \yy  + {\bar t}sU_0
 \]
 By assumption, $U_0M \subset M$, which implies $\HH_{q,t}M \subset M$.
\end{proof}

\section{Deformed skein modules of 2-bridge knots}\label{sec_qequals1}
In this section we will prove Conjectures \ref{mainconjecture}, \ref{conj_unknotsubmodule}, and \ref{conj_5param} (when $q=-1$) for an arbitrary 2-bridge knot. To this end we will use an algebraic construction of $\SL_2$ character varieties of finitely generated groups due to Brumfiel and Hilden \cite{BH95}. We begin by recalling the results of \cite{BH95} in the form that we need.

\subsection{The Brumfiel-Hilden construction}\label{sec_BHconstruction}
Let $\mathbf{Grp}$ be the category of (discrete) groups and let $\mathbf{Alg}^*$ be the category of associative $\C$-algebras equipped with an anti-involution $a \mapsto a^*$. 
Assigning to a group $\pi$ its complex group algebra $\C\pi$ defines a functor $\C [-]:\mathbf{Grp} \to \mathbf{Alg}^*$, where the anti-involution on $\C\pi$ is defined on the group elements by $g^* := g^{-1}$ and is extended to $\C\pi$ by linearity. The group algebra functor has an obvious right adjoint $\SU:\mathbf{Alg}^* \to \mathbf{Grp}$ which is defined by $\SU(A) := \{a \in A \mid aa^* = 1\}$. 

Now, for a commutative $\C$-algebra $B$, the algebra $\mathbb M_2(B)$ of $2\times 2$ matrices over $B$ has an anti-involution given by the classical adjoint:
\begin{equation}\label{equation_adjoint}
\left(
\begin{array}{cc} a&b\\c&d\end{array}\right)^* := \left(\begin{array}{cc}d&-b\\-c&a\end{array}\right)
\end{equation}
In this case, $\SU[\mathbb M_2(B)] = \mathrm{\SL}_2(B)$. Thus for any commutative $\C$-algebra $B$ we have a natural bijection
\begin{equation}\label{equation_adjointpair}
\mathrm{Hom}_{\mathbf{Alg}^*}(\C[\pi],\mathbb{M}_2(B)) = \mathrm{Hom}_{\mathbf{Grp}}(\pi,\mathrm{\SL}_2(B)) 
\end{equation}

The representation scheme $\Rep(\pi) := \Rep(\pi,\SL_2)$ is defined by its functor of points, and with the identification (\ref{equation_adjointpair}), this can be written as
\[
\mathrm{Rep}(\pi): \mathbf{CommAlg} \to \mathbf{Sets},\quad B \mapsto \mathrm{Hom}_{\mathbf{Alg}^*} (\C[\pi],\mathbb M_2(B))
\]
As explained in Section \ref{section_charvarieties}, this functor is representable by the commutative algebra $\orep(\pi) := \O(\Rep(\pi))$. 
Let $\rho_u:\C[\pi] \to \mathbb M_2(\orep(\pi))$ be the algebra map corresponding to the universal representation $\rho_u:\pi \to \mathrm{\SL}_2(\orep(\pi))$.  Geometrically, the scheme $\Rep(\pi)$ parametrizes representations of $\pi$ into $\mathrm{\SL}_2(\C)$, and $\rho_u(g)$ is the section of the trivial bundle $\Rep(\pi) \times \mathbb M_2(\C)$ given by $\rho \mapsto \rho(g)$.

It is easy to see that for any $\pi$, the homomorphism $\rho_u$ factors through the algebra
\[
\BH[\pi] := \frac{\C[\pi]}{\langle [g,h+h^{-1}] \mid g,h \in \pi \rangle}
\]
which we call the \emph{Brumfiel-Hilden algebra} of $\pi$ (since it was introduced and studied in \cite{BH95}). The algebra $\BH[\pi]$ has a canonical (commutative) subalgebra $\BH^+[\pi] := \{a \in \BH[\pi] \mid a^* = a\}$ (which is actually central in $\BH[\pi]$). The meaning of these algebras is made clear by the following result proved in \cite[Prop. 9.1]{BH95}:

\begin{theorem}\label{theorem_BHbasic}
Assume that $\pi$ is a finitely presented group.
\begin{enumerate}
\item The universal representation $\rho_u:\C[\pi] \to \mathbb M_2(\orep(\pi))$ factors through $\BH[\pi]$, and the induced map $\bar \rho_u:\BH[\pi] \to \mathbb M_2(\orep(\pi))$ is injective.
The image of $\bar \rho_u$ coincides with the subring of $\GL_2(\C)$-invariants in $\mathbb M_2(\orep(\pi))$, and we therefore have a canonical isomorphism of algebras
\[
\bar \rho_u:\BH[\pi] \stackrel \sim \to \mathbb M_2(\orep(\pi))^{\GL_2(\C)}
\]
\item The invariant subring $\BH^+[\pi]$ is mapped bijectively by $\bar \rho_u$ onto $\orep(\pi)^{\GL_2(\C)}$.
\end{enumerate}
\end{theorem}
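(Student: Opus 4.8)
The plan is to argue in three stages: a Cayley--Hamilton computation to get the factorization through $\BH[\pi]$, classical invariant theory to identify the image with the $\GL_2(\C)$-invariants and to establish injectivity, and finally a short inspection of the anti-involution to deduce part (2). For the factorization, note that for any commutative $\C$-algebra $B$ and any $a\in\SL_2(B)$ the Cayley--Hamilton identity $a^{2}-\tr(a)a+1=0$ gives $a+a^{-1}=\tr(a)\cdot I$, a scalar — hence central — matrix in $\mathbb M_2(B)$. Taking $a=\rho_u(h)$ for $h\in\pi$ shows that $\rho_u(h)+\rho_u(h)^{-1}$ is central in $\mathbb M_2(\orep(\pi))$, so $\rho_u([g,h+h^{-1}])=0$ for all $g,h\in\pi$; thus $\rho_u$ annihilates the defining ideal and descends to a homomorphism of $*$-algebras $\bar\rho_u\colon\BH[\pi]\to\mathbb M_2(\orep(\pi))$, the involution on the target being the classical adjoint $(\ref{equation_adjoint})$.

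For the image and injectivity, write $\mathbb M_2(\orep(\pi))=\orep(\pi)\otimes_\C\mathbb M_2(\C)$ with $\GL_2(\C)$ acting diagonally (by conjugation on $\Rep(\pi)$ and on $\mathbb M_2(\C)$); an element is invariant exactly when, regarded as a matrix-valued function on $\Rep(\pi)$, it is $\GL_2(\C)$-equivariant. Each $\rho_u(g)\colon\rho\mapsto\rho(g)$ is equivariant, so $\im(\bar\rho_u)\subseteq\mathbb M_2(\orep(\pi))^{\GL_2(\C)}$. To prove equality and injectivity I would first treat a free group $F$ of rank $n$, where $\Rep(F)=\SL_2^{n}$: by the first fundamental theorem of invariant theory for $2\times2$ matrices, every $\GL_2(\C)$-equivariant matrix-valued polynomial map on $\mathbb M_2(\C)^{n}$ is a polynomial in the coordinate matrices with coefficients polynomial in the traces of words in them; on $\SL_2^{n}$ the coordinate matrices are invertible and each such trace scalar equals $\rho_u(w+w^{-1})$ for the corresponding word $w$, so $\bar\rho_u$ surjects onto the invariants. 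Injectivity for $F$ follows from the explicit $\C$-linear basis of $\BH[F]$ in \cite{BH95}, whose image is a linearly independent set of matrix-valued functions — already transparent for $F=\Z$, where $\BH[\Z]=\C[x^{\pm1}]$ maps isomorphically onto $\C[A^{\pm1}]=\C[\tr A]\cdot I\oplus\C[\tr A]\cdot A$ via Chebyshev polynomials. For a general finite presentation $\pi=F/N$ with $N$ normally generated by $r_1,\dots,r_m$, the scheme $\Rep(\pi)\hookrightarrow\Rep(F)$ is a closed $\GL_2(\C)$-stable subscheme, so $\orep(\pi)=\orep(F)/J$ with $J$ a $\GL_2(\C)$-stable ideal; since $\GL_2(\C)$ is linearly reductive the invariants functor is exact, and one identifies $\mathbb M_2(J)^{\GL_2(\C)}$ with the image under $\bar\rho_u^{F}$ of the two-sided ideal of $\BH[F]$ generated by $r_1-1,\dots,r_m-1$, whose quotient is $\BH[\pi]$. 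Passing to quotients then gives surjectivity and injectivity of $\bar\rho_u$ for $\pi$ simultaneously.

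Finally, part (2) is formal once (1) is in hand: the fixed subalgebra of the classical adjoint on $\mathbb M_2(B)$ is exactly the scalar matrices $b\cdot I$, since $\left(\begin{smallmatrix}a&b\\c&d\end{smallmatrix}\right)^{*}=\left(\begin{smallmatrix}d&-b\\-c&a\end{smallmatrix}\right)$ equals itself only when $b=c=0$ and $a=d$. As $\bar\rho_u$ is a $*$-isomorphism onto $\mathbb M_2(\orep(\pi))^{\GL_2(\C)}$, it carries $\BH^{+}[\pi]$ bijectively onto the scalar matrices $f\cdot I$ with $f\in\orep(\pi)^{\GL_2(\C)}$, and identifying $f\cdot I$ with $f$ yields the claim. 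The only routine input is the Cayley--Hamilton step; the substance lies in the second stage. The free-group case rests on the first fundamental theorem together with a basis computation for $\BH[F]$, and I expect the main obstacle to be the passage to a general $\pi$: one must check that modding out by the relators on the algebra side is compatible, after taking $\GL_2(\C)$-invariants, with cutting out $\Rep(\pi)\subseteq\Rep(F)$, which is precisely where linear reductivity of $\GL_2(\C)$ and the finite presentation of $\pi$ enter.
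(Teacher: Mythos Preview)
The paper does not prove this theorem: it is quoted verbatim from \cite[Prop.~9.1]{BH95}, so there is no in-paper argument to compare your proposal against. Your three-stage outline (Cayley--Hamilton for the factorization, the first fundamental theorem of invariant theory plus an explicit basis of $\BH[F]$ for the free case, then linear reductivity of $\GL_2(\C)$ to descend to a finitely presented $\pi$) is the natural strategy and is in the spirit of Brumfiel--Hilden's own development.

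You have, however, correctly located the one step that is not yet justified. Reductivity gives you surjectivity of $\mathbb M_2(\orep(F))^{\GL_2}\to\mathbb M_2(\orep(\pi))^{\GL_2}$ with kernel $\mathbb M_2(J)^{\GL_2}$, and the inclusion $\bar\rho_u^F(\langle r_i-1\rangle)\subseteq\mathbb M_2(J)^{\GL_2}$ is immediate; but the sentence ``one identifies $\mathbb M_2(J)^{\GL_2(\C)}$ with the image under $\bar\rho_u^{F}$ of the two-sided ideal of $\BH[F]$ generated by $r_1-1,\dots,r_m-1$'' is precisely the reverse inclusion, and it does not follow from reductivity alone. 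Taking $G$-invariants does not in general commute with ``two-sided ideal generated by a set of invariant elements,'' so this needs an additional argument --- either a direct proof that $\BH[\pi]\to\mathbb M_2(\orep(\pi))$ is injective for every finitely generated $\pi$ (which is how \cite{BH95} proceeds, using their structure theory for $\BH$), or an explicit verification that every $\GL_2$-invariant element of $\mathbb M_2(J)$ can be written in terms of the $\rho_u(r_i)-I$ inside $\BH[F]$. As written, your proof has a genuine gap at exactly the point you flag as the ``main obstacle.''
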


Summarizing, we have the following commutative diagram:
\begin{diagram}
 \BH^+[\pi] & \rInto & \BH[\pi] & \lOnto & \C[\pi]\\
 \dTo^{\rotatebox[origin=c]{90}{$\sim$}} & & \dTo^{\rotatebox[origin=c]{90}{$\sim$}} & & \dTo^{\rho_u}\\
 \orep(\pi)^{\GL_2(\C)} & \rInto & \mathbb M_2(\orep(\pi))^{\GL_2(\C)} & \rInto & \mathbb M_2(\orep(\pi))
\end{diagram}

Geometrically, this theorem says that $\BH[\pi]$ is the algebra of $\GL_2(\C)$-equivariant matrix-valued functions on the representation scheme, and $\BH^+[\pi]$ is the (commutative) algebra of $\GL_2(\C)$-invariant scalar functions. In other words, $\BH^+[\pi]$ is the ring of functions on the $\SL_2$ character variety of $\pi$.

Now, suppose $M$ is a manifold with boundary $\partial M$. The inclusion $\partial M \to M$ induces a map of groups $\alpha:\pi_1(\partial M) \to \pi_1(M)$. 
By functoriality of the Brumfiel-Hilden construction, we have the following commutative diagram of algebra homomorphisms:

\begin{diagram}
 \BH^+[\pi_1(\partial M)] & \rInto & \BH[\pi_1(\partial M)] & \lOnto & \C[\pi_1(\partial M)]\\
 \dTo^{\alpha_*} & & \dTo^{\bar \alpha} & & \dInto^{\alpha}\\
 \BH^+[\pi_1(M)] & \rInto & \BH[\pi_1(M)] & \lOnto & \C[\pi_1(M)]
\end{diagram}

If $M = S^3\setminus K$ is the complement of a nontrivial knot in $S^3$, then $\partial M = T^2$ and the map $\alpha$ is an embedding (see e.g. \cite[Prop. 3.17]{BZ03}) (however, the induced map $\bar \alpha$ is not injective). Also, in this case the map $\C[\pi_1(T^2)] \to \BH[\pi_1(T^2)]$ is an isomorphism because the fundamental group of $T^2$ is abelian. With the identifications of Theorem \ref{theorem_BHbasic}, the leftmost arrow is precisely the peripheral map discussed in Section \ref{section_charvarieties}.

\subsection{The Brumfiel-Hilden algebra of a 2-bridge knot}
Theorem \ref{theorem_BHbasic} reduces the problem of describing the character variety $\chr(\pi,\SL_2)$ for a finitely presented group to that of describing the algebra $\BH[\pi]$. In many interesting cases, e.g. for 2-generator groups, $\BH[\pi]$ can be computed explicitly (see \cite[Chap. 3]{BH95}). If $K \subset S^3$ is a 2-bridge knot, the fundamental group $\pi_1(S^3\setminus K)$ is generated by two elements (meridians) subject to one relation; in this case, $\BH^+[\pi]$ is isomorphic to the ring of regular functions on a plane curve and $\BH[\pi]$ has the structure of a generalized quaternion algebra over $\BH^+[\pi]$. We will briefly describe this structure below, and refer the reader to \cite[Chap. 4*]{BH95} for proofs and more details. We begin with the standard presentation of $\pi_1(S^3\setminus K)$ in the case of 2-bridge knots.

\subsubsection{The fundamental group}\label{sec_fundamentalgroup}
Recall that the 2-bridge knots can be indexed (non-uniquely) by pairs $(p,q)$ of relatively prime odd integers with $p > 0$ and $\lvert q \rvert < p$. For a knot $K = K(p,q)$, the group $\pi := \pi_1(S^3\setminus K(p,q))$ has a presentation
\[
 \pi = \langle a,b \mid aw = wb\rangle
\]
where $a$ and $b$ are meridians around two trivial strands contained in one of the balls in a 2-bridge decomposition of $L$. The element $w$ can be expressed in terms of $a^{\pm 1}$ and $b^{\pm 1}$ as a product of two words
\[
 w = v \bar v
\]
which are images of each other under the anti-involution of the free group $F\langle a,b\rangle$ that switches $a$ and $b$. The word $v$ is given by
\[
 v := b^{e_1}a^{e_2}\cdots (a \textrm { or } b)^{e_{d}}
\]
where $d = (p-1)/2$ and the exponents $e_n \in \{\pm 1\}$ are computed by the rule $\mathrm{sign}(e_n) = \mathrm{sign}(k_n)$, with $k_n$ defined by the conditions
\[
 k_n \equiv n q\,(\mathrm{mod}\, 2 p),\quad -  p < k_n < p,\quad k_n \not= 0
\]
The peripheral map $\alpha: \pi_1(T^2) \to \pi$ is defined by the assignments
\[
 \alpha(m) = a,\quad \quad \alpha(l) = w \tilde w a^{-s}
\]
where $\tilde w$ is the word $w$ written backwards (without switching $a$ and $b$), and $s = 4\sum_{n=1}^{d} e_n$.

\begin{example}
Let $p=5$ and $q=3$. The corresponding knot $K = K(5,3)$ is the figure eight knot. In this case, $d = (5-1)/2 = 2$, $k_1 = 3$, $k_2 = -4$, so that $e_1 = 1$, $e_2=-1$, and $s = e_1 + e_2 = 0$. We therefore have $w = ba^{-1}b^{-1}a$, and the fundamental group has the following presentation:
\[
 \pi_1(K) = \langle a,b\mid aba^{-1}b^{-1}a = ba^{-1}b^{-1}ab\rangle = \langle a,b \mid aba^{-1}ba = bab^{-1}ab\rangle
\]
(The second presentation is standard, and the relation in the first is conjugate to the relation in the second.)
In this notation, the peripheral map $\alpha: \pi_1(T^2) \to \pi_1(K)$ is given by
\[
 m \mapsto a,\quad l \mapsto ba^{-1}b^{-1}a^2b^{-1}a^{-1}b
\]

\end{example}

\subsubsection{The Brumfiel-Hilden algebra}
For a 2-bridge knot, the algebra $\BH[\pi]$ has the following structure (see \cite[Prop. A.4*.9]{BH95}).
\begin{theorem}\label{theorem_BHA49}
 Let $\pi$ be the fundamental group of a 2-bridge knot. There is a polynomial $Q = Q(I,J) \in \C[I,J]$ of degree $d = (p-1)/2$ such that $\BH[\pi]$ admits an $\BH^+[\pi]$-module decomposition
 \begin{equation*}
  \BH[\pi] = \BH^+[\pi] \oplus \bar \BH^+[\pi]i \oplus \BH^+[\pi]j \oplus \bar \BH^+[\pi]k
 \end{equation*}
where
\[
 \BH^+[\pi] := \frac{\C[x, I, J]}{\langle IQ, I+J-4(x^2-1) \rangle},\quad \bar \BH^+[\pi] := \frac{\C[x, I, J]}{\langle Q, I+J-4(x^2-1) \rangle}
\]
The multiplication in $\BH[\pi]$ is determined by the (generalized) quaternion relations
\[
 i^2 = I,\quad j^2 = J,\quad ij=-ji=k
\]
\end{theorem}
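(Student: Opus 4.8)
The plan is to obtain $\BH[\pi]$ as an explicit quotient of the Brumfiel--Hilden algebra of the free group $F:=F\langle a,b\rangle$. Since $\C[\pi]=\C[F]/(aw-wb)$ (a two--sided ideal) and forming the Brumfiel--Hilden quotient, i.e.\ killing the commutators $[g,h+h^{-1}]$, commutes with this, we have $\BH[\pi]=\BH[F]/\langle aw-wb\rangle$ with the ideal now taken inside $\BH[F]$. By Theorem \ref{theorem_BHbasic} together with the classical invariant theory of $\SL_2$--representations of a free group of rank two (Fricke, Vogt; Procesi), $\BH[F]\cong\mathbb M_2(\orep(F))^{\GL_2}$ is a free module of rank $4$ over its centre $\C[\tr(a),\tr(b),\tr(ab)]$: using Cayley--Hamilton ($g^{-1}=\tr(g)-g$ for $g\in\SL_2$), every word in $a^{\pm1},b^{\pm1}$ reduces to a central combination of $1,a,b,ab$. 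Because $b=w^{-1}aw$ in $\pi$, the element $\tr(a)-\tr(b)$ dies in $\BH[\pi]$, so I may pass at once to $\Lambda:=\BH[F]/(\tr(a)-\tr(b))$, which is free of rank $4$ over $\C[x',r]$ ($x':=\tr(a)=\tr(b)$, $r:=\tr(ab)$) on the $*$--antisymmetric elements $1$, $i:=b-a$, $j:=a+b-x'$, $k:=ij=ba-ab$. A short Cayley--Hamilton computation gives $i^2=x'^2-2-r$, $j^2=r-2$, and $ij=-ji=k$; in particular $i^2+j^2=x'^2-4$ holds identically, which after the harmless rescaling $x:=\tfrac12 x'$, $I:=i^2$, $J:=j^2$ is exactly the relation $I+J=4(x^2-1)$, and $\{x,I\}$ is a new system of coordinates for the centre.

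Next I would expand the relator. Write $w=v\bar v$ with $v=b^{e_1}a^{e_2}\cdots$ of length $d=(p-1)/2$ and exponents $e_n$ read off from the $k_n$ as in Section \ref{sec_fundamentalgroup}; recall $\bar v=\tau(v)$ for the order--two anti-automorphism $\tau$ of $F$ with $\tau(a)=b$, so $w$ is $\tau$--invariant. By induction on the length of $v$, multiplying by one letter at a time and reducing via the quaternion table, one produces an expansion $w=P_0+P_1 i+P_2 j+P_3 k$ with $P_\mu\in\C[x',r]$, keeping track of degrees. Multiplying out $2a=x'+j-i$ and $2b=x'+i+j$ and using the quaternion relations gives the clean identity
\[
aw-wb \;=\; -I P_1 \;-\; (P_0+JP_3)\,i \;-\; P_1\,k,
\]
so the $j$--component drops out automatically. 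Set $Q:=P_0+JP_3$, rewritten in the coordinates $I,J$; this is (a symmetrized form of) the Riley polynomial of $K(p,q)$. The $\tau$--invariance of $w$, which makes $\rho(\bar v)$ a fixed conjugate of $\rho(v)$ in Riley's normal form, is what forces the effective length of the recursion to be $d$ rather than $2d$, hence $\deg Q=d$.

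Finally I would compute the two--sided ideal $\mathcal J:=\langle aw-wb\rangle\subset\Lambda$ by multiplying $\omega:=-IP_1-(P_0+JP_3)i-P_1k$ on the left and right by $1,i,j,k$ and reducing. Two facts keep the bookkeeping finite: $\Lambda$, being a generalized quaternion algebra over $\C[x,I]$ with $i^2=I$, $j^2=J$, is Azumaya (hence, over $\C$, a matrix algebra) away from the reducible locus $\{I=0\}$, so there $\mathcal J$ is controlled by $\mathcal J\cap\C[x,I]$; and one checks the line $\{I=0\}$ --- the abelian/reducible characters --- directly, where $i$ and $k$ become nilpotent. The expected outcome is that $\mathcal J\cap\C[x,I]=(IQ)$, that the $1$-- and $j$--components survive modulo $\mathcal J$ with coefficients in $\C[x,I]/(IQ)=\BH^+[\pi]$, and that the $i$-- and $k$--components are truncated to coefficients in $\C[x,I]/(Q)=\bar\BH^+[\pi]$ --- the relation $I\cdot Q=0$ in $\BH^+[\pi]$ being precisely what makes the multiplication $\bar\BH^+[\pi]\,i\times\bar\BH^+[\pi]\,i\to\BH^+[\pi]$, $i^2=I$, well defined. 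This yields the asserted decomposition together with the quaternion relations already recorded above. I expect the main obstacle to be exactly this last step hand in hand with the degree count: showing that a \emph{single} generator $\omega$ collapses the even part to the principal quotient $(IQ)$ and truncates the odd part in the stated asymmetric way, and showing that the word recursion for $P_0+JP_3$ genuinely outputs degree $d$ (not $2d$) --- both of which hinge on using the $\tau$--symmetry of $w$ decisively, rather than treating $v$ and $\bar v$ as independent.
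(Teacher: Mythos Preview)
The paper does not give its own proof of this theorem: it is quoted verbatim from Brumfiel--Hilden \cite[Prop.~A.4*.9]{BH95}, so there is no in-paper argument to compare against. That said, the computations you would need are essentially carried out in the proof of Lemma~\ref{lemma_yuri1}, so your sketch can be checked against that.

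Your outline is sound and the algebra is right: the free-rank-4 description of $\BH[F]$ over $\C[x',r]$, the quaternion table $i^2=I$, $j^2=J$, $I+J=4(x^2-1)$, and the expansion $aw-wb=-IP_1-(P_0+JP_3)i-P_1k$ all check. The one place where you are making life harder than necessary is the role of the symmetry. You invoke the $\tau$-invariance of $w$ only to cut the degree of $Q$ from $2d$ to $d$, but its more important consequence is that the $i$-component of $w$ vanishes: writing $v=D+Ei+Fj+Gk$ and using that $\bar v=\sigma\gamma(v)=D-Ei+Fj+Gk$ (where $\gamma$ reverses words and $\sigma$ swaps $a\leftrightarrow b$), one multiplies out $w=v\bar v$ and finds $P_1=0$. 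With this in hand your relator collapses to $\omega=-Q\,i$, and the two-sided ideal is computed in one line: $i\cdot(Qi)=IQ$, $(Qi)\cdot j=Qk$, $k\cdot(Qi)=-IQj$, so $\mathcal J=(IQ)\oplus(Q)i\oplus(IQ)j\oplus(Q)k$, which is exactly the asserted decomposition. No Azumaya argument or separate analysis of the locus $I=0$ is needed. Without $P_1=0$ your generator has three live components and the ``expected outcome'' you describe would require real work to verify; with it, both the ideal computation and the degree count (now a length-$d$ recursion for $v$, not a length-$2d$ one for $w$) become straightforward.
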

The canonical projection $\C[\pi] \to \BH[\pi]$ is given by the equations
\begin{eqnarray*}
 a^{\pm 1} &\mapsto& x \pm (i+j)/2\\
 b^{\pm 1} &\mapsto& x \mp (i-j)/2
\end{eqnarray*}
Finally, the canonical anti-involution $*: \BH[\pi] \to \BH[\pi]$ is given by
\[
 x^* = x,\quad i^* = -i,\quad j^* = -j, \quad k^* = -k
\]

\begin{remark}
 Under the identification of Theorem \ref{theorem_BHbasic}, $H^+[\pi] \cong \ochar(\pi)$, the generators $x$, $I$, and $J$ correspond to the following functions:
\begin{eqnarray}
 x &\mapsto &\frac 12 \Tr(a)\notag\\
 I &\mapsto &\frac 12\Tr({a})^2 + \frac 12 \Tr(a)\Tr(b) - \Tr({ab}) - 2\label{eq_trmap}\\
 J &\mapsto &\frac 12 \Tr(a)^2 - \frac 12 \Tr(a)\Tr(b) + \Tr({ab}) - 2\notag
\end{eqnarray}
 where $\Tr(g)$ is the character function $\rho \mapsto \Tr(\rho(g))$. The image of the polynomial $Q$ in $\BH^+[\pi]$ determines a curve of characters of mostly irreducible $\SL_2$ representations of $\pi$: more precisely, the characters (equivalently, the conjugacy classes) of irreducible representations $\rho:\pi \to \SL_2(\C)$ correspond to algebra homomorphisms $\varphi: H^+[\pi] \to \C$ such that $\varphi(Q) = 0$ and $\varphi(I) \not= 0$. It is shown in \cite{BH95} that $Q$ actually has integral coefficients, i.e. $Q(I,J) \in \Z[I,J]$. We give a formula for $Q$ in Remark \ref{remark_notationQ}.
\end{remark}

\subsection{The nonsymmetric skein module (at $q=-1$)}
In this section we prove Conjectures \ref{mainconjecture}, \ref{conj_unknotsubmodule}, and \ref{conj_5param} for all 2-bridge knots when $q=-1$. We fix a 2-bridge knot $K = K(p,q)$ and write $\BH := \BH[\pi]$ and $\BH^+ := \BH^+[\pi]$ for the corresponding knot group $\pi = \pi_1(S^3\setminus K)$. Let $X \in \BH$ and $Y \in \BH$ denote the images of the meridian $m$ and longitude $l$ under the (induced) peripheral map
\begin{equation}\label{equation_alphabar}
 \bar \alpha: \BH[T^2] = \C[m^{\pm 1},l^{\pm 1}] \to \BH
\end{equation}
With the identification of Theorem \ref{theorem_BHA49}, we have
\begin{equation}\label{equation_yuri9}
 X^{\pm 1} = x \pm (i+j)/2,\quad Y = w \tilde w X^{-s}
\end{equation}

Next, we let $\BH^+[X^{\pm 1}]$ denote the subalgebra of $\BH$ generated by $\BH^+$ and $X^{\pm 1}$. Note that $\BH^+[X^{\pm 1}]$ is commutative and the canonical anti-involution on $\BH$ restricts to $\BH^+[X^{\pm 1}]$ and maps $X \mapsto X^{-1}$. The following lemma is implicit in \cite{BH95} (see loc. cit., Prop. A.4*.10):
\begin{lemma}\label{lemma_yuri1}
 For any 2-bridge knot, the image of the peripheral map (\ref{equation_alphabar}) is contained in $\BH^+[X^{\pm 1}]$.
\end{lemma}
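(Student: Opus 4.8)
The image of $\bar\alpha$ is the subalgebra of $\BH$ generated by $X^{\pm 1}$ and $Y^{\pm 1}$, since $m^{\pm 1}$ and $l^{\pm 1}$ generate $\C[m^{\pm 1},l^{\pm 1}]$. So I would reduce the lemma to showing $Y\in\BH^+[X^{\pm1}]$: the elements $X^{\pm1}$ lie in $\BH^+[X^{\pm1}]$ by definition, and $\BH^+[X^{\pm1}]$ is stable under the anti-involution $*$ (since $\BH^+$ is $*$-fixed and $X^{*}=X^{-1}$), so $Y^{-1}=Y^{*}$ will also lie there once $Y$ does. Two elementary facts will be used. First, $XY=YX$ in $\BH$, because $m$ and $l$ commute in $\pi_1(T^2)$ and $\bar\alpha$ is an algebra homomorphism. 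Second, the image $\bar g\in\BH$ of any $g\in\pi$ satisfies $\bar g\,\bar g^{*}=1$ — the classical adjoint of an $\SL_2$-matrix being its inverse — so in particular $X^{*}=X^{-1}$ and $Y^{*}=Y^{-1}$. From (\ref{equation_yuri9}) we then read off $X^{-1}=X^{*}=x-(i+j)/2$, hence $X+X^{-1}=2x$ lies in the central subalgebra $\BH^+$ and $\xi:=X-X^{-1}=i+j$ satisfies $\xi^{2}=(X+X^{-1})^{2}-4=4x^{2}-4\in\BH^+$. Using $X^{2}=2xX-1$ one checks $\BH^+[X^{\pm1}]=\BH^+\oplus\BH^+\xi$, whose $*$-fixed part is $\BH^+$ and whose $*$-anti-fixed part is $\BH^+\xi$.

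Because $X+X^{-1}$ is central, $Y$ commutes with $X$ if and only if $Y$ commutes with $\xi$, so it is enough to show that the centralizer $Z_\BH(\xi)$ of $\xi$ in $\BH$ equals $\BH^+\oplus\BH^+\xi$. As $\BH^+$ is central, this reduces to identifying the $\xi$-central elements of zero scalar part, which I would compute directly using the quaternion presentation of Theorem \ref{theorem_BHA49}. Writing such an element as $a=\beta i+\gamma j+\delta k$ with $\gamma\in\BH^+$ and $\beta,\delta$ lifted to central elements of $\BH^+$, and expanding with $i^{2}=I$, $j^{2}=J$, $ij=-ji=k$, one obtains
\[
 [\xi,a]\;=\;-2\delta J\,i\;+\;2\delta I\,j\;+\;2(\gamma-\beta)\,k .
\]
The vanishing of the $j$-component gives $\delta I=0$ in $\BH^+\cong\C[x,I]/(IQ)$ (after eliminating $J=4(x^{2}-1)-I$); since $\C[x,I]$ is an integral domain, a lift of $\delta$ must lie in $(Q)$, and therefore $\delta=0$ already in $\bar\BH^+\cong\C[x,I]/(Q)$. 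The vanishing of the $k$-component then forces $\beta=\bar\gamma$, the image of $\gamma$ in $\bar\BH^+$, so that $a=\gamma(i+j)=\gamma\xi\in\BH^+\xi$. Hence $Z_\BH(\xi)=\BH^+\oplus\BH^+\xi=\BH^+[X^{\pm1}]$; in particular $Y$ and $Y^{-1}$ lie in $\BH^+[X^{\pm1}]$, which proves the lemma.

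The displayed computation is routine, and the one point that needs care is the bookkeeping created by Theorem \ref{theorem_BHA49}: the $i$- and $k$-coefficients of $\BH$ live in $\bar\BH^+$ rather than $\BH^+$, so $Q$ — and with it the coefficient ring of the $k$-term — is a zero divisor in $\BH^+$. I expect the crux to be precisely the step in which $\delta I=0$ is shown to kill the $k$-component in spite of this, where the integrality of $\C[x,I]$ enters; there are no degenerate cases, since $Q$ is a genuine nonzero polynomial of positive degree for every nontrivial $2$-bridge knot. Alternatively, the lemma can be extracted from the explicit description of $\bar\alpha(l)$ in \cite[Prop. A.4*.10]{BH95}.
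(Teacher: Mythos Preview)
Your proof is correct and takes a genuinely different route from the paper's. The paper argues by direct computation: it introduces auxiliary (anti-)automorphisms $\gamma,\sigma$ of $\BH$, writes $v=D+Ei+Fj+Gk$, and from this obtains $w=v\bar v=L+Mj+Nk$ and $\tilde w=\gamma(w)=L+Mj-Nk$ with explicit $L,M,N\in\BH^+$. Using the relations $ww^*=1$ and $aw=wb$ (the latter yields $(L-JN)i=0$, i.e.\ $Qi=0$), they simplify $w\tilde w$ to obtain the explicit formula
\[
Y\;=\;w\tilde w\,X^{-s}\;=\;\bigl[(1+2M^2J)+2ML\,\delta\bigr]X^{-s}\in\BH^+[X^{\pm1}],
\]
where $\delta=X-X^{-1}$ is your $\xi$. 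Your argument instead identifies $\BH^+[X^{\pm1}]=\BH^+\oplus\BH^+\xi$ as the full centralizer $Z_\BH(\xi)$ via the quaternion commutator computation, and then concludes from $[X,Y]=0$ (and centrality of $X+X^{-1}$) that $Y\in Z_\BH(\xi)$.

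Each approach has its merits. Yours is cleaner and explains structurally why the lemma holds: it uses nothing about the particular word $w$ beyond the fact that $m$ and $l$ commute. The paper's approach, while more computational, yields the explicit expression for $Y$ in terms of $L,M,N$ that is essential for the subsequent proof of Theorem~\ref{theorem_qeq1} (specifically, for verifying identity~(\ref{equation_yuri18})). So your argument proves the lemma economically, but one would still need to carry out the paper's computation afterwards to proceed. Your handling of the bookkeeping issue (that $\delta$ lives in $\bar\BH^+$ while the $j$-coefficient $\delta I$ must vanish in $\BH^+=\C[x,I]/(IQ)$, forcing $\delta\in(Q)$ via integrality of $\C[x,I]$) is correct and is indeed the only delicate step.
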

\begin{proof}
 We need to prove that $Y \in \BH^+[X^{\pm 1}]$. Following \cite{BH95}, we introduce the anti-automorphism $\gamma:\BH \to \BH$ that fixes $a$ and $b$ and sends $ab \mapsto ba$. We also introduce the involution $\sigma:\BH \to \BH$ that ``switches $a$ and $b$'' (i.e. $\sigma(a) = b$ and $\sigma(b) = a$). It is easy to see that $\gamma$ and $\sigma$ act trivially on $\BH^+$, while 
 \[
  \gamma: (i,j,k) \mapsto (i,j,-k)\quad \sigma: (i,j,k) \mapsto (-i,j,-k)
 \]
If we write $v = D + Ei + Fj + Gk$ for some $D,E,F,G \in \BH^+$, then
\[
 \bar v = \sigma \gamma (v) = D - Ei + Fj + Gk
\]
If follows that 
\[
 w = v \bar v = L + Mj + N k,\quad \tilde w = \gamma(w) = L + Mj - Nk
\]
where
\begin{eqnarray}\label{equation_yuri10}
 L &=& D^2 - E^2I+F^2J - G^2IJ\notag\\
 M &=& 2DF+2GEI\\
 N &=& 2DG+2EF\notag
\end{eqnarray}
We therefore have
\begin{equation}\label{equation_yuri11}
 w \tilde w = (L^2+M^2J+N^2IJ) + 2MNJi + 2MLj
\end{equation}
On the other hand, $w w^* = 1$ and $aw - wb = 0$ give (respectively)
\begin{eqnarray}
 L^2 - M^2J + N^2IJ &=& 1\label{equation_yuri12}\\
 (L-JN)i &=& 0\label{equation_yuri13}
\end{eqnarray}
Writing $\delta := i+j$, we see from (\ref{equation_yuri9}) that $\delta = X-X^{-1}$. Combining this with the equations (\ref{equation_yuri11}), (\ref{equation_yuri12}), and (\ref{equation_yuri13}) we get
\begin{equation}\label{equation_yuri14}
Y = w \tilde w X^{-s} = [(1+2M^2J) + 2ML(i+j)]X^{-s} = [(1+2M^2J) + 2ML\delta]X^{-s} \in \BH^+[X^{\pm 1}] 
\end{equation}
\end{proof}

\begin{remark}\label{remark_notationQ}
 In the notation of the previous lemma, $Q = L - JN$.
\end{remark}

Next, we note that $S = \{1,\delta,\delta^2,\ldots\}$ is an Ore subset in $\BH$. We write $\BH[\delta^{-1}]$ for the localization of $\BH$ at $S$ and define
\begin{equation}\label{equation_yuri15}
 M := \BH^+[X^{\pm 1}] + \BH^+[X^{\pm 1}]Q\delta^{-1} \subset \BH[\delta^{-1}]
\end{equation}
where $Q = Q(I,J)$ is the polynomial featured in Theorem \ref{theorem_BHA49}. By construction, $M$ is a module (actually, a fractional ideal) over $\BH^+[X^{\pm 1}]$, and by Lemma \ref{lemma_yuri1}, it is a module over $\C[m^{\pm 1},l^{\pm 1}]$. We extend this last module structure to $\C[m^{\pm 1},l^{\pm 1}]\rtimes \Z_2$, letting $s \in \Z_2$ act on $M$ by the canonical involution $X \mapsto X^{-1}$.

Now, recall the double affine Hecke algebra from (\ref{ccdaharelations}). We let $q=-1$ and $t_3=t_4=1$ and define
\begin{eqnarray}
 T_0 &\mapsto & -t_1 s Y + (\bar t_1 X + \bar t_2 )\delta^{-1}(1+sY)\label{equation_yuri16}\\
 T_0^\vee &\mapsto& T_0^{-1}X^{-1}\notag\\
 T_1 &\mapsto& s\notag\\
 T_1^\vee &\mapsto& Xs\notag
\end{eqnarray}
where $\bar t_i = t_i - t_i^{-1}$ for $i=1,2$. The main result of this section is the following:
\begin{theorem}\label{theorem_qeq1}
 For any 2-bridge knot, the assignment (\ref{equation_yuri16}) extends to an action of $\H_{-1,t_1,t_2}$ on $M$.
\end{theorem}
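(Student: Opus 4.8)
The plan is to establish the two things an action requires: that each of the four operators in (\ref{equation_yuri16}) maps the subspace $M$ into itself, and that they satisfy the defining relations (\ref{ccdaharelations}) of $\H_{-1,t_1,t_2}$ (with $q=-1$, $t_3=t_4=1$). I expect the relations to be essentially formal, and the inclusion $T_0M\subseteq M$ — the place where all the $2$-bridge input is consumed — to be the real content.

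First I would move everything into a commutative ring. By Lemma \ref{lemma_yuri1}, $Y=w\tilde wX^{-s}$ lies in the \emph{commutative} algebra $\BH^+[X^{\pm1}]$; since the canonical anti-involution of $\BH$ restricts on $\BH^+[X^{\pm1}]$ to an involution fixing $\BH^+$ with $X\mapsto X^{-1}$, one computes from (\ref{equation_yuri11})--(\ref{equation_yuri14}) and $ww^*=1$ (using $Y\in\SU(\BH)$) that it also sends $Y\mapsto Y^{-1}$ and $\delta:=X-X^{-1}\mapsto-\delta$. Thus all four operators of (\ref{equation_yuri16}) lie in the crossed product $R\rtimes\Z_2$, where $R:=\BH^+[X^{\pm1}][\delta^{-1}]$ acts on itself by multiplication and $s$ acts by this involution; moreover $M$ is the $\BH^+[X^{\pm1}]$-submodule $\BH^+[X^{\pm1}]+\BH^+[X^{\pm1}]Q\delta^{-1}$, equivalently $M=\delta^{-1}\mathfrak a$ with $\mathfrak a:=(\delta,Q)\subseteq\BH^+[X^{\pm1}]$ the ideal generated by $\delta$ and $Q=L-JN$ (Remark \ref{remark_notationQ}). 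Since $sM=M$ and $XM\subseteq M$, the operators $T_1=s$ and $T_1^\vee=Xs$ obviously preserve $M$.

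Next I would dispose of the relations (\ref{ccdaharelations}). They are identities in $R\rtimes\Z_2$ not involving $M$, and only use $s^2=1$, $sXs=X^{-1}$, $sYs=Y^{-1}$, $XY=YX$ (at $q=-1$ the quantum-torus relation $XY=q^2YX$ is commutativity), so it is enough to check them in the ``universal'' crossed product $\C[X^{\pm1},Y^{\pm1}][\delta^{-1}]\rtimes\Z_2$ — indeed (\ref{equation_yuri16}) is the $q=-1$ specialization of a twisted Dunkl embedding à la Proposition \ref{prop_dunklembedding}, with the (now degenerate) shift $\yy$ replaced by multiplication by the longitude, for which (\ref{ccdaharelations}) holds automatically. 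Concretely: $T_1^2=(T_1^\vee)^2=1$ are immediate; writing $T_0=f+(f-t_1)\,sY$ with $f=(\bar t_1X+\bar t_2)\delta^{-1}$, and using $s(f)=-(\bar t_1X^{-1}+\bar t_2)\delta^{-1}$ (so $f+s(f)=\bar t_1$) together with $(sY)^2=1$, one gets $T_0^2=\bar t_1T_0+1$, hence $T_0^{-1}=T_0-\bar t_1$ and $T_0^\vee=T_0^{-1}X^{-1}$ is well-defined; the remaining quadratic relation for $T_0^\vee$ and the relation $T_1^\vee T_1T_0T_0^\vee=q$ then follow by a short calculation using $sXs=X^{-1}$.

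The main work is $T_0M\subseteq M$ — from this $T_0^{-1}=T_0-\bar t_1$ and $T_0^\vee=T_0^{-1}X^{-1}$ also preserve $M$. On $v\in R$ one has $T_0(v)=fv+(f-t_1)Y^{-1}s(v)$. For $v=c\in\BH^+[X^{\pm1}]$ the only term with a $\delta^{-1}$-pole is $(\bar t_1X+\bar t_2)\delta^{-1}\bigl(c+Y^{-1}s(c)\bigr)$, so $T_0(c)\in M$ as soon as $c+Y^{-1}s(c)\in\mathfrak a=(\delta,Q)$. Since $s(c)\equiv c\pmod\delta$ and, by (\ref{equation_yuri14}) together with $X^2\equiv1\pmod\delta$, $Y^{-1}\equiv1+2M_0^2J\pmod\delta$ (here $M_0$ denotes the coefficient of (\ref{equation_yuri10}), \emph{not} the module $M$), this reduces to the single congruence $1+M_0^2J\equiv0\pmod{(\delta,Q)}$ in $\BH^+[X^{\pm1}]$. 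That congruence comes straight from the Brumfiel--Hilden relations: modulo $Q=L-JN$ we have $L\equiv JN$, so by (\ref{equation_yuri12}), $M_0^2J=L^2+N^2IJ-1\equiv N^2J(I+J)-1$, and modulo $\delta$ we have $I+J=\delta^2\equiv0$, whence $M_0^2J\equiv-1$. For $v\in\BH^+[X^{\pm1}]Q\delta^{-1}$ the same congruences, together with $Q^2\delta^{-1}=Q\cdot(Q\delta^{-1})\in M$, show the (now order-two) poles cancel; this case is the most laborious bookkeeping but uses no further input. Assembling the pieces produces the claimed $\H_{-1,t_1,t_2}$-action on $M$. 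The genuine obstacle is exactly this cancellation of the $\delta^{-1}$-denominators of $T_0$ against the structure of the ideal $(\delta,Q)$; it is the $q=-1$, Brumfiel--Hilden avatar of the mechanism predicted by Conjecture \ref{mainconjecture} (the poles of the Dunkl/Askey--Wilson operators being absorbed by the structure constants of the skein module of the complement), and it is precisely where the explicit $2$-bridge relations (\ref{equation_yuri10})--(\ref{equation_yuri14}) are needed.
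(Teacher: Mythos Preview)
Your framework is sound and your treatment of the $\BH^+[X^{\pm1}]$-part of $M$ is correct and rather clean: the identification $M=\delta^{-1}\mathfrak a$ with $\mathfrak a=(\delta,Q)$, and the reduction of $T_0(c)\in M$ to the congruence $1+M_0^2J\equiv 0\pmod{(\delta,Q)}$, are exactly right. Your derivation of that congruence from (\ref{equation_yuri12}) and $I+J=\delta^2$ is the same computation the paper makes in proving (\ref{equation_yuri18}).

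The gap is in the last paragraph. For $v=cQ\delta^{-1}$ you need $(c-Y^{-1}s(c))\,Q\,\delta^{-2}\in M$, i.e.\ $(c-Y^{-1}s(c))Q\in\delta^2\BH^+[X^{\pm1}]+\delta Q\,\BH^+[X^{\pm1}]$. Your congruences only give $c-Y^{-1}s(c)\equiv -2M_0^2Jc\pmod\delta$ and $M_0^2J\equiv-1\pmod{(\delta,Q)}$; multiplying by $Q$ one finds $(c-Y^{-1}s(c))Q\equiv 2cQ$ modulo $(\delta Q,\ \delta^2,\ Q^2)$, and the residual term $2cQ$ does \emph{not} lie in $(\delta^2,\delta Q)$ in general (set $c=1$: $Q$ does not vanish when $\delta=0$). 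The observation $Q^2\delta^{-1}\in M$ does not help here, since the obstruction involves $Q\delta^{-2}$, not $Q^2\delta^{-2}$. In other words, a first-order congruence in $\delta$ cannot control a second-order pole.

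What actually closes this case is the exact identity $YQ=Q$ (equivalently $Y^{-1}Q=Q$), proved in the paper as a separate lemma by a direct word-by-word computation with $\tilde w a^{-s/2}$ in $\BH[\pi]$. With $Y^{-1}Q=Q$ one gets immediately $(c-Y^{-1}s(c))Q=(c-s(c))Q\in\delta Q\,\BH^+[X^{\pm1}]$, and the $\delta^{-2}$ cancels. This is genuine additional input from the $2$-bridge word structure, not derivable from the congruence $1+M_0^2J\in(\delta,Q)$ alone; your claim that this case ``uses no further input'' is where the argument breaks.
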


\begin{remark}
 The term $(1+sY)$ in formula (\ref{equation_yuri16}) agrees with the term $(1-s\yy)$ in formula (\ref{ccdunklembedding}) because of the sign in Theorem \ref{thm_qeq1repvar} (a loop $\gamma$ in the skein algebra at $q=-1$ gets sent to the function $\rho \mapsto -\mathrm{Tr}(\rho(\gamma))$ on the character variety). (The change $X \mapsto X^{-1}$ is inessential.)
\end{remark}
\begin{proof}
 Let $U = \delta^{-1}(1+sY) = \delta^{-1}(1+Y^{-1}s) \in \BH[\delta^{-1}]$. We need to check that $UM \subset M$.
For this, it suffices to check that there exist $f,g,h \in \BH^+[X^{\pm 1}]$ such that
\begin{eqnarray}
 Y &=& fQ + g\delta - 1\label{equation_yuri18}\\
 YQ &=& (1 + h\delta)Q\label{equation_yuri19}
\end{eqnarray}
or, equivalently, there exist $f',g',h' \in \BH^+[X^{\pm 1}]$ such that 
\begin{eqnarray}
 Y^{-1} &=& f'Q + g'\delta - 1\label{equation_yuri13'}\\
 Y^{-1}Q &=& (1 + h'\delta)Q\label{equation_yuri14'}
\end{eqnarray}
Indeed, observe that 
\begin{equation}\label{equation_yuri20}
 M = \BH^+[X^{\pm 1}]+\BH^+[X^{\pm 1}]Q\delta^{-1} =\BH^+ + \BH^+\delta + \BH^+Q\delta^{-1}
\end{equation}
so it suffices to check that $U$ maps $\BH^+$, $\BH^+\delta$, and $\BH^+Q\delta^{-1}$ into $M$. Assuming (\ref{equation_yuri13'}) and (\ref{equation_yuri14'}), we have
\begin{eqnarray*}
 U[\BH^+] &=& \delta^{-1}(1+Y^{-1}s)\BH^+ = \delta^{-1}(1+Y^{-1})\BH^+\\
 &=& \delta^{-1}(f'Q+g'\delta)\BH^+ = g'\BH^+ + f'\BH^+Q\delta^{-1}
\end{eqnarray*}
Similarly, we have
\begin{eqnarray*}
 U[\BH^+Q\delta^{-1}] &=& \delta^{-1}(1+Y^{-1}s)\BH^+Q\delta^{-1} = \delta^{-1}(1-Y^{-1})Q\BH^+\delta^{-1}\\
 &=& \delta^{-1}(-h'\delta Q)\BH^+\delta^{-1} = -h'\BH^+Q\delta^{-1} \subset M\\
 U[\BH^+\delta] &=& \delta^{-1}(1+Y^{-1}s)\BH^+\delta = \delta^{-1}(1-Y^{-1})\BH^+\delta\\
 &=& (1-Y^{-1})\BH^+ \subset \BH^+[X^{\pm 1}] \subset M
\end{eqnarray*}
where in the last line we used Lemma \ref{lemma_yuri1} (and the fact that $X$ and $Y$ commute).

To prove (\ref{equation_yuri18}), we note that by Remark \ref{remark_notationQ} we have $L-JN = Q$, and we also have $I = 4(x^2-1) - J = \delta^2 - J$. Hence, by (\ref{equation_yuri12}) and (\ref{equation_yuri14}) we have
\begin{eqnarray*}
 Y &=& X^{-s}(1+2M^2J + 2ML\delta)\\
 &=& X^{-s}(2L^2 + 2N^2J(\delta^2-J) - 1 + 2ML\delta)\\
 &=& X^{-s}[2(L+NJ)(L-NJ)+2(N^2J\delta + LM)\delta - 1]\\
 &=& 2X^{-s}(L+NJ)Q + 2X^{-s}(N^2J\delta + LM)\delta - X^{-s}\\
 &=& 2X^{-s}(L+NJ)Q + X^{-s}[2NJ\delta + 2LM + A(X)]\delta - 1
\end{eqnarray*}
where $A(X) := X + X^3 + \cdots + X^{s-1}$ with $s = 4\sum_{n=1}^d e_n$. Formula (\ref{equation_yuri19}) follows from Lemma \ref{lemma_yqeqq} below, which completes the proof of the theorem.
\end{proof}

As a consequence of Theorem \ref{theorem_qeq1}, we can deduce Conjecture \ref{mainconjecture} (for $q=-1$) ``at the symmetric level.''
\begin{corollary}\label{cor_qeq1}
 The $\SL_2$-character ring of any 2-bridge knot carries a natural action of the spherical subalgebra $\mathrm S\H_{-1,t_1,t_2}$ of the double affine Hecke algebra $\H_{-1,t_1,t_2}$.
\end{corollary}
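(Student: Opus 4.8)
The plan is to obtain the corollary from Theorem \ref{theorem_qeq1} by passing to the spherical part of the module $M$ via the idempotent $\e$. Under the assignment (\ref{equation_yuri16}) with $t_3 = 1$ we have $T_1 \mapsto s$, so the idempotent $\e = (T_1 + t_3^{-1})/(t_3+t_3^{-1}) \in \H_{-1,t_1,t_2}$ becomes the symmetrizer $\e = (1+s)/2$ of the $\Z_2$-action $X \mapsto X^{-1}$. Since $M$ is a left $\H_{-1,t_1,t_2}$-module by Theorem \ref{theorem_qeq1}, the subspace $\e M$ is automatically a module over the spherical subalgebra $\S\H_{-1,t_1,t_2} = \e\H_{-1,t_1,t_2}\e$, with the action $(\e h \e)\cdot(\e v) := \e h \e v$; this is the general idempotent-truncation fact and requires nothing beyond $\e$ being a genuine idempotent. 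So the entire content of the corollary reduces to identifying $\e M$ with the $\SL_2$-character ring.

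Next I would compute $\e M$. Recall from (\ref{equation_yuri20}) the $\BH^+$-module decomposition $M = \BH^+ + \BH^+\delta + \BH^+ Q\delta^{-1}$, where $\delta = X - X^{-1}$ and $Q = Q(I,J) \in \BH^+$. Because $\delta^2 = 4(x^2-1) \in \BH^+$ (as $(i+j)^2 = I+J$), the first two pieces already account for $\BH^+[X^{\pm 1}]$. The involution $s$ fixes $\BH^+$ pointwise (hence fixes $Q$) and sends $\delta \mapsto -\delta$, $\delta^{-1}\mapsto -\delta^{-1}$; therefore $\BH^+$ is the $s$-invariant part of $M$ while $\BH^+\delta + \BH^+ Q\delta^{-1}$ is $s$-anti-invariant. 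Writing any $m \in M$ uniquely as $a + w$ with $a \in \BH^+$ and $w$ anti-invariant, we get $\e m = \tfrac{m + sm}{2} = a$, so $\e M = \BH^+ = \BH^+[\pi]$. By Theorem \ref{theorem_BHbasic} we have $\BH^+[\pi] \cong \orep(\pi)^{\GL_2(\C)} = \ochar(\pi_1(S^3\setminus K),\SL_2)$, which (by Theorem \ref{thm_qeq1repvar}) is exactly the $\SL_2$-character ring $K_{-1}(S^3\setminus K)$ of the knot. This produces the asserted $\S\H_{-1,t_1,t_2}$-action.

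The only real obstacle is bookkeeping: one must verify that the three-term description of $M$ really is compatible with the $\Z_2$-grading so that $\e M$ is precisely $\BH^+$ rather than something larger, and that $M$ is closed under $s$ (which it is, since $s$ preserves each summand — this is built into the module structure set up just before Theorem \ref{theorem_qeq1}). I would also, for completeness, check the naturality claim by specializing $t_1 = t_2 = 1$ in (\ref{equation_yuri16}): there $T_0 \mapsto -sY$, $T_0^\vee \mapsto -Y^{-1}X^{-1}s$, $T_1 \mapsto s$, $T_1^\vee \mapsto Xs$, so the generators act through multiplication by $X^{\pm 1}$, $Y^{\pm 1}$ and the involution $s$, i.e. the $\S\H_{-1,1,1} \cong \O(\T\times\T)^{\Z_2} \cong K_{-1}(T^2)$-module structure on $\BH^+$ reduces to the one given by the peripheral map $\bar\alpha$ of (\ref{equation_alphabar}); thus the action of Theorem \ref{theorem_qeq1} is genuinely a deformation of the classical character-variety module structure, which is what "natural" should mean here.
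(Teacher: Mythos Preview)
Your proof is correct and follows essentially the same route as the paper: apply $\e=(1+s)/2$ to the decomposition (\ref{equation_yuri20}), use $s\delta=-\delta$ to see that only $\BH^+$ survives, and then invoke Theorem~\ref{theorem_BHbasic}. The paper's proof is a two-line version of this (``$\e\delta=0$, hence $\e M=\BH^+$''); your extra naturality check at $t_1=t_2=1$ is a nice addendum but not part of the paper's argument.
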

\begin{proof}
 Since $\e \delta = (1+s)\delta / 2 = 0$, equation (\ref{equation_yuri20}) shows that $\e M = \BH^+$, while $\BH^+ = \ochar(\pi,\SL_2)$ by Theorem \ref{theorem_BHbasic}. Then the action of $\H_{-1,t_1,t_2,1,1}$ on $M$ induces an action of $\mathrm S\H_{-1,t_1,t_2,1,1}$ on $\e M$.
\end{proof}

To prove Conjecture \ref{conj_unknotsubmodule} we need the following lemma which refines the formula (\ref{equation_yuri19}) used in the proof of Theorem \ref{theorem_qeq1}. We will keep the notation introduced earlier in this section.
\begin{lemma}\label{lemma_yqeqq}
 For any 2-bridge knot, the following identity holds in $\BH[\pi]$:
 \begin{equation}\label{eq_y1}
  YQ = Q
 \end{equation}
\end{lemma}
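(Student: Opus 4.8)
The plan is to prove the identity $YQ = Q$ in $\BH[\pi]$ by a direct computation using the explicit presentation of $Y$ and $Q$ coming from the Brumfiel–Hilden structure of a 2-bridge knot. Recall from Remark \ref{remark_notationQ} that $Q = L - JN$, and from \eqref{equation_yuri14} that $Y = [(1+2M^2J) + 2ML\delta]X^{-s}$, where $\delta = X - X^{-1} = i+j$. Since $X$ and $Y$ commute and $X$ is invertible, it is harmless to absorb the factor $X^{-s}$; the real content is to show that $[(1+2M^2J) + 2ML\delta]\,Q = X^{s}Q$, and in fact (comparing with \eqref{equation_yuri19}, where the relation $YQ = (1+h\delta)Q$ is claimed with $h \in \BH^+[X^{\pm 1}]$) one expects the stronger statement that the ``extra'' multiple of $\delta$ actually vanishes against $Q$, so that $YQ = Q$ on the nose.

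The key steps, in order, are as follows. First I would use the two relations \eqref{equation_yuri12} and \eqref{equation_yuri13}, namely $L^2 - M^2 J + N^2 IJ = 1$ and $(L - JN)i = 0$, i.e.\ $Qi = 0$. The second of these is the crucial structural fact: multiplication by $Q$ kills $i$, hence kills $\delta i$-type terms. Since $\delta = i + j$ and $i^2 = I$, $j^2 = J$, $ij = -ji = k$, one computes $\delta Q = (i+j)Q$; using $Qi = 0$ together with the commutativity/centrality properties of $\BH^+[\pi]$ (note $Q \in \bar\BH^+[\pi]$, which is central in $\BH[\pi]$) one can rewrite $\delta Q$ and $\delta^2 Q$ purely in terms of $j$, $J$, and elements of $\BH^+$. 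Concretely $\delta Q = (i+j)Q = jQ$ (after using $Qi=0$ and centrality of $Q$), and then $2ML\delta\, Q = 2MLj\,Q$. Second, I would substitute $I = \delta^2 - J = 4(x^2-1) - J$ (the relation $I + J = 4(x^2-1)$ from Theorem \ref{theorem_BHA49}) into \eqref{equation_yuri12} to express everything in terms of $J$, $L$, $M$, $N$ and $\delta$, and then expand $(1 + 2M^2 J + 2MLj)\,Q$, collecting the scalar (i.e.\ $\BH^+$-) part and the coefficient of $j$ separately. Third, I would verify — using $L^2 - M^2J + N^2IJ = 1$, the identity $L - JN = Q$, and the formulas \eqref{equation_yuri10} for $L, M, N$ in terms of $D, E, F, G$ — that the coefficient of $j$ vanishes and the scalar part equals $1$ (up to the factor $X^s$ which is then cancelled by the $X^{-s}$ in $Y$). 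The algebraic heart is the factorization $2L^2 + 2N^2 J(\delta^2 - J) - 1 = 2(L+NJ)(L-NJ) + 2N^2 J \delta^2 = 2(L+NJ)Q + 2N^2J\delta^2$, which already appears in the displayed computation at the end of the proof of Theorem \ref{theorem_qeq1}; I would push that computation one step further, multiplying through by $Q$ and using $\delta^2 Q = (I + 2k + J)Q$ combined with $Qi = 0$ and $Qk = Q(ij) = (Qi)j = 0$ to kill the off-diagonal contributions.

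The main obstacle I anticipate is bookkeeping the quaternionic multiplication carefully: $\BH[\pi]$ is noncommutative, so while $\BH^+[\pi]$ and $\bar\BH^+[\pi]$ (hence $Q$) are central, the elements $i, j, k, \delta$ are not, and one must be vigilant about left versus right multiplication when moving $Q$ past $\delta$, $i$, $j$. The saving grace is precisely that $Q$ is central and that $Qi = 0$ (equivalently $Qk = 0$), which collapses most of the would-be-problematic cross terms; once those vanishing relations are applied, what remains is a commutative computation inside $\BH^+[X^{\pm 1}]$ driven by the single quadratic relation $L^2 - M^2J + N^2IJ = 1$ and the substitution $I = \delta^2 - J$. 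A secondary subtlety is making sure the power-of-$X$ correction $A(X) = X + X^3 + \cdots + X^{s-1}$ appearing in the proof of Theorem \ref{theorem_qeq1} does not interfere: here, because we are proving the cleaner identity $YQ = Q$ directly rather than the weaker $YQ = (1+h\delta)Q$, I expect the $A(X)\delta$ term to be exactly the part that is annihilated upon multiplication by $Q$, so that no $X$-dependent correction survives. I would finish by remarking that \eqref{eq_y1} is the promised refinement of \eqref{equation_yuri19} (take $h = 0$) and also immediately yields $Y^{-1}Q = Q$, which is \eqref{equation_yuri14'}, and hence feeds back into the verification that $U$ preserves $M$.
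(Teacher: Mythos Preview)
Your reduction to showing $w\tilde w\,Q = X^{s}Q$ is correct, as are the observations $Qi=0$, $Qk=0$, and hence $\delta Q=jQ$ (note however that $\delta^2=(i+j)^2=I+J$, not $I+2k+J$, since $ij=-ji$). The real gap is the step where you ``expect the $A(X)\delta$ term to be exactly the part that is annihilated upon multiplication by $Q$.'' This is false: $A(X)\delta Q = A(X)jQ$, and $jQ\neq 0$ (indeed $j$ has coefficient in $\BH^+[\pi]$, where $Q$ does not vanish). More generally, the relations you list --- $L^2-M^2J+N^2IJ=1$, $Q=L-JN$, $IQ=0$, $iQ=kQ=0$ --- do not by themselves encode the integer $s$, so there is no way to produce the factor $X^s$ (equivalently $x_+^s$) on the right-hand side from these relations alone. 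The identity $(2L^2-1+2MLj)Q=x_+^sQ$ that you are implicitly aiming for is a statement about the specific polynomials $L,M$ arising from the word $v$, not a formal consequence of the quadratic relation.

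The paper's proof avoids this by returning to the group word. It first proves the stronger statement $\tilde w\,a^{-s/2}Q=Q$ and then deduces $YQ=Q$ via the automorphism $\beta:(i,j,k)\mapsto(-i,-j,k)$ coming from invertibility of 2-bridge knots. For the stronger statement, each letter of $\tilde w a^{-s/2}$ is written as $x_{e_n}\pm e_n i/2$ with $x_e:=x+ej/2$; expanding the product and using both $iQ=0$ and $IQ=0$ kills every term containing at least one factor of $i$, leaving $x_{e_1}^2\cdots x_{e_d}^2\,x_-^{s/2}Q$. Since the $x_\pm$ commute and $N_+-N_-=s/4$, this equals $(x_+x_-)^{2N_+}Q$, and the identity $x_+x_-=x^2-J/4=1+I/4$ together with $IQ=0$ gives $(x_+x_-)Q=Q$. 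The exponent $s$ is thus absorbed combinatorially through the count of $+1$'s and $-1$'s among the $e_n$, which is exactly the word-level information your approach was missing.
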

\begin{proof}
 We will actually prove a stronger identity: namely,
 \begin{equation}\label{eq_y2}
  \tilde w a^{-s/2} Q = Q
 \end{equation}
where $s = 4\sum_{n=1}^d e_n$ and $\tilde w$ is the word $w$ written backwards. To see that (\ref{eq_y2}) implies (\ref{eq_y1}), first note that there is an \emph{automorphism}\footnote{The existence of this automorphism is a consequence of the fact that 2-bridge knots are invertible \cite[Ex. 10.4]{BH95}.} of the fundamental group $\pi$ mapping $a\mapsto a^{-1}$, $b \mapsto b^{-1}$. This induces an automorphism $\beta$ of the Brumfiel-Hilden algebra $\BH[\pi]$ which fixes elements of $\BH^+$ and maps $(i,j,k) \mapsto (-i,-j,k)$. Applying $\beta$ to (\ref{eq_y2}), we get $w^*a^{s/2}Q = Q$. Therefore, $w^*a^{s/2}Q = \tilde w a^{-s/2}Q$, which implies $Q = a^{-s/2}w \tilde w a^{-s/2}Q = w \tilde w a^{-s} Q = YQ$, since $w \tilde w$ commutes with $a$.

To prove (\ref{eq_y2}), we assume (without loss of generality) that $s \geq 0$ and 
\[
 v = b^{e_1}a^{e_2}\cdots b^{e_{d-1}} a^{e_d},\quad w = v \bar v,\quad \tilde w = a^{e_1}b^{e_2}\cdots b^{e_d} a^{e_d}b^{e_{d-1}}\cdots b^{e_1}
\]
Next, we introduce the following notation: for $e = \pm 1$, we write $x_e := x + ej/2$, so that 
\[
 a^{e_n} = x + e_n(i+j)/2 = x_{e_n} + e_ni/2,\quad b^{e_n} = x + e_n(j-i)/2 = x_{e_n}-e_ni/2
\]
for $n=1,2,\ldots,d$. Observe that we obviously have
\begin{equation}\label{eq_y45}
 x_{e_n}x_{e_m} = x_{e_m}x_{e_n},\quad ix_e = x_{-e}i
\end{equation}
Using the commutator relations (\ref{eq_y45}) and the fact that $iQ = 0$ in $\BH[\pi]$, we compute
\begin{eqnarray*}
 \tilde w a^{-s/2} Q &=& (x_{e_1} + e_1 i/2)(x_{e_2}-e_2i/2)\cdots(x_{e_d}-e_di/2)(x_{e_d}+e_di/2)(x_{e_{d-1}}-e_{d-1}i/2)\cdots \\
 &\,&\cdots (x_{e_1}-e_1i/2)(x_- - i/2)^{s/2}Q\\
 &=& x_{e_1}x_{e_2}\cdots x_{e_d}x_{e_d}x_{e_{d-1}}\cdots x_{e_1} x_-^{s/2}Q\\
 &=& x_+^{2N_+}x_-^{2N_-}x_-^{s/2}Q\\
 &=& x_+^{2N_+}x_-^{2N_+}Q\\
 &=& (x_+x_-)^{2N_+}Q
\end{eqnarray*}
where $N_+$ is the number of $(+1)$'s among the $\{e_1,\ldots,e_d\}$ and $N_-$ is the number of $(-1)$'s, so that $N_+-N_- = \sum_{n=1}^d e_n = s/4$. Finally, note that
\[
 x_+x_- = (x+j/2)(x-j/2) = x^2-J/4 = 1+I/4
\]
This shows $(x_+x_-)Q = Q$, and therefore $\tilde w a^{-s/2}Q = (x_+x_-)^{2N_+}Q = Q$.
\end{proof}
\begin{remark}
 The proof of Lemma \ref{lemma_yqeqq} shows that we also have $v a^{-s/4}Q = Q$ in $\BH[\pi]$.
\end{remark}

To prove Conjecture \ref{conj_unknotsubmodule}, we note that by Theorem \ref{theorem_BHA49} (c.f. also \cite[Prop A.4*.10]{BH95}) the element $Q \in \BH[\pi]$ has the following form:
\[
 Q = a_0 + a_1 I + \cdots + a_{d-1}I^{d-1} + (-1)^d I^d
\]
where $a_i \in \C[x^2]\subset \BH^+$. Hence every element of $\BH^+[X^{\pm 1}] \cong \C[X^{\pm 1},I]/(IQ)$ can be written (uniquely) in the form
\[
 u = u_0 + u_1I + \cdots + u_dI^d \,\,(\mathrm{mod}\,IQ)
\]
where $u_i \in \C[X^{\pm 1}]$. It follows that $\BH^+[X^{\pm 1}]Q\delta^{-1} = \C[X^{\pm 1}]Q\delta^{-1}$ in $\BH[\delta^{-1}]$. The module $M = \BH^+[X^{\pm 1}] + \BH^+[X^{\pm 1}]Q\delta^{-1}$ from (\ref{equation_yuri15}) is therefore free over $\C[X^{\pm 1}]$ of rank $d+1$; for a basis in $M$ we can take $\{1,I,\ldots,I^{d-1},Q\delta^{-1}\}$. Now by Lemma \ref{lemma_yqeqq}, $M_0 := \C[X^{\pm 1}]Q\delta^{-1} \subset M$ is a $\C[X^{\pm 1},Y^{\pm 1}]\rtimes \Z_2$-submodule of $M$ which is isomorphic to the sign representation. 
The corresponding $A_1^{\Z_2}$-module $\e M_0$ is thus isomorphic to the skein module of the unknot, which implies Conjecture \ref{conj_unknotsubmodule} for two-bridge knots at the symmetric level (when $q=-1$). 

Now, for $M_0$ as above, consider the quotient $\bar M := M / M_0$ and identify
\begin{equation}\label{eq_yuristar}
\bar M = \C[X^{\pm 1}] 1 \oplus \cdots \oplus \C[X^{\pm 1}] I^{d-1} 
\end{equation}
so that $\bar M^\loc = \C(X^{\pm 1}) 1 \oplus \cdots \oplus \C(X^{\pm 1}) I^{d-1} $. To prove Conjecture \ref{conj_5param} we need to show that the operators
\begin{eqnarray*}
 T_0 &=& -t_1sY + (\bar t_1X+\bar t_2)\delta^{-1}(1+sY)\\
 T_1 &=& t_3s + (\bar t_3 X + \bar t_4)\delta^{-1}(1-s)
\end{eqnarray*}
preserve the subspace $\bar M \subset \bar M^\loc$ for all values $t_1,\ldots,t_4$. For this, it suffices to check that $U = \delta^{-1}(1+sY)$ and $U_1 = \delta^{-1}(1-s)$ preserve (\ref{eq_yuristar}). The inclusion $U\bar M \subset \bar M$ follows from (\ref{equation_yuri18}) by the same argument as in the proof of Theorem \ref{theorem_qeq1}, and $U_1\bar M \subset \bar M$ follows from the fact that $s$ acts trivially on the basis vectors $\{1,I,\ldots,I^{d-1}\}$. Thus Conjecture \ref{conj_5param} follows.

We conclude this section by exhibiting an interesting relation between the polynomial $Q$ and the classical Alexander polynomial $\Delta_K(t)$ of a 2-bridge knot. First, we observe that for any knot group $\pi = \pi_1(S^3\setminus K)$ and for any complex reductive group $G$, there is a natural map
\begin{equation}\label{eq_yy1}
 f: \T \stackrel \sim \to \Rep(\pi, \T) \hookrightarrow \Rep(\pi,G) \to \chr(\pi,G)
\end{equation}
where $\T \subset G$ is a maximal torus of $G$ (the first arrow in the definition of $f$ is an isomorphism induced by the abelianization map $\pi \to \pi/[\pi,\pi] = \Z$ of $\pi$). It is easy to see that $f$ factors through the quotient by $W$ so that $f:\T / W \to \chr(\pi,G)$. Hence, by dualizing (\ref{eq_yy1}) we get a map of commutative algebras
\begin{equation}\label{eq_yy2}
 f_*: \ochar(\pi,G) \to \O(\T)^W \subset \O(\T)
\end{equation}

If $G = \SL_2(\C)$ and $\pi$ is the fundamental group of a 2-bridge knot, we can identify $\ochar(\pi,G) = \C[x,I]/(IQ)$ and $\O(\T) = \C[X^{\pm 1}]$ using (\ref{eq_trmap}). With this identification, the map (\ref{eq_yy2}) is given by
\[
 f_*(x) = (X+X^{-1})/2,\quad f_*(I) = 0
\]
A direct calculation (similar to the one in Lemma \ref{lemma_yqeqq}) shows that
\[
 f_*(Q) = X^{-s/2}\Big[1-X^{2e_1} + X^{2(e_1+e_2)}-X^{2(e_1+e_2+e_3)} + \cdots + X^{2(e_1+\cdots + e_{p-1})}\Big]
\]
where $s = 4 \sum_{i=1}^d e_i$. The expression in the right-hand side coincides with a known formula for the Alexander polynomial $\Delta_K(t)$ of a 2-bridge knot evaluated at $t=X^2$ (see, e.g. \cite[Thm. 1.2(1)]{Fuk05} or \cite{Min82}). Thus, we conclude
\begin{proposition}
 For any 2-bridge knot, the image of $Q$ under map (\ref{eq_yy2}) is equal to $\Delta_K(X^2)$.
\end{proposition}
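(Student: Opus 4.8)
The plan is to compute the image $f_*(Q)$ by hand inside the Brumfiel--Hilden algebra and then recognize the answer as a classical formula for the Alexander polynomial. The map $f_*$ is dual to the abelian (diagonal) representation $\rho_X\colon a,b\mapsto \mathrm{diag}(X,X^{-1})$; in terms of the presentation $\BH^+[\pi]=\C[x,I,J]/(IQ,\,I+J-4(x^2-1))$ of Theorem~\ref{theorem_BHA49}, it sends $x\mapsto (X+X^{-1})/2$, $I\mapsto 0$ (a one-line check from (\ref{eq_trmap})), and hence $J\mapsto 4(x^2-1)=(X-X^{-1})^2$. Thus $f_*(Q)$ is simply $Q$ reduced modulo the central ideal $(I)\subset\BH[\pi]$, after substituting $x=(X+X^{-1})/2$. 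The first key point is that $\BH[\pi]/(I)$ is very rigid: there $i^2=0$, and since $\BH^+[\pi]$ is central while $ik,ki\in(I)$ (because $ik=i\cdot ij=i^2 j=Ij$, etc.), one gets $izi\equiv 0$ for every $z$; consequently \emph{any monomial containing two or more factors of $i$ vanishes modulo $(I)$}. This is the analogue, in the present setting, of the identity $iQ=0$ that drove the proof of Lemma~\ref{lemma_yqeqq}, and it is what makes the computation finite.

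With this in hand I would expand $Q=L-JN$ (Remark~\ref{remark_notationQ}) using $w=v\bar v$ and the word $v=b^{e_1}a^{e_2}\cdots$ from Section~\ref{sec_fundamentalgroup}. Writing $x_e:=x+\tfrac e2 j$, each letter is $a^{e}=x_{e}+\tfrac e2 i$ and $b^{e}=x_{e}-\tfrac e2 i$, and modulo $(I)$ only the term of $v\bar v$ with no factor of $i$ (which gives $L$, together with $M$) and the terms with exactly one factor of $i$ (which, since the $i$-coefficient of $w$ is zero, must collapse into the $k$-direction and hence compute $N$) survive. Moving the lone $i$ to one end via $ix_e=x_{-e}i$ and then using $x_ex_{-e}\equiv 1\pmod I$ together with commutativity of the $x_e$'s, each surviving monomial reduces to a power of $x_+$ or $x_-$ governed by a partial sum $\sigma_k=e_1+\cdots+e_k$ of the exponents --- exactly the mechanism by which Lemma~\ref{lemma_yqeqq} reduced $x_{e_1}\cdots x_{e_d}\cdots x_{e_1}$ to a power of $x_+x_-$. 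Assembling $L-JN$, setting $J=4(x^2-1)$, and passing to $x=(X+X^{-1})/2$, the sum telescopes to
\[
 f_*(Q)=X^{-s/2}\Big[\,1-X^{2e_1}+X^{2(e_1+e_2)}-\cdots+X^{2(e_1+\cdots+e_{p-1})}\,\Big],
\]
where $e_{d+1},\dots,e_{p-1}$ (the exponents of $\bar v$) equal $e_1,\dots,e_d$ and $s=4\sum_{n=1}^d e_n$; in particular this expression is invariant under $X\mapsto X^{-1}$, consistent with $f_*(Q)\in\O(\T)^W$ and with $\Delta_K(t)\doteq\Delta_K(t^{-1})$.

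It remains to identify the bracketed sum with $\Delta_K(X^2)$. The Fox/Minkus formula for the Alexander polynomial of the $2$-bridge knot $K(p,q)$ (\cite[Thm.~1.2(1)]{Fuk05}, \cite{Min82}) is an alternating sum of precisely this shape, taken over the partial sums of a $\pm1$-sequence read off from $q$ modulo $p$; one checks that the rule $\mathrm{sign}(e_n)=\mathrm{sign}(k_n)$, $k_n\equiv nq\ (\mathrm{mod}\ 2p)$, used in Section~\ref{sec_fundamentalgroup} produces, after reindexing, the same sequence as Minkus's, so the two alternating sums agree. The prefactor $X^{-s/2}$ only records the choice of the balanced representative of $\Delta_K$ (well defined up to $\pm X^{2k}$), and with that normalization $f_*(Q)=\Delta_K(X^2)$ exactly.

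I expect the real work to lie in the middle paragraph: faithfully tracking which monomials of $v\bar v$ survive modulo $(I)$, keeping the signs $e_n$ and the left/right order straight, and --- crucially --- extracting $N$ as well as $L$, since the abelian representation alone detects $L$ but is blind to $N$, so one must genuinely compute inside $\BH[\pi]/(I)$ with the quaternion relations rather than merely evaluate character functions. A lesser, purely combinatorial nuisance is reconciling the exponent convention used here with the one in the cited Alexander-polynomial formula. (Alternatively, one can bypass the hands-on expansion by identifying $Q(I,J)$, via $I\mapsto u$ and $J\mapsto (X-X^{-1})^2-u$, with the Riley polynomial of $K$ up to a unit in $\C[X^{\pm1}]$, and invoking the classical fact that the Riley polynomial at $u=0$ equals $\Delta_K(X^2)$ up to units; pinning down the unit still requires the same normalization check.)
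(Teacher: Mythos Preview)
Your proposal is correct and follows essentially the same route as the paper. The paper's argument is little more than a one-line sketch --- ``a direct calculation (similar to the one in Lemma~\ref{lemma_yqeqq})'' yielding the displayed alternating sum, followed by identification with the Fox/Minkus formula for $\Delta_K$ --- and your outline fleshes out precisely that calculation: reduce modulo $I$, expand $w=v\bar v$ using $a^{e}=x_e+\tfrac{e}{2}i$, $b^{e}=x_e-\tfrac{e}{2}i$, exploit $ix_e=x_{-e}i$ and $x_+x_-\equiv 1$, and collapse to the alternating sum over partial sums of the $e_n$. One small slip: the exponents $e_{d+1},\dots,e_{p-1}$ of $\bar v$ are $e_d,\dots,e_1$ (the reverse of the first half, reflecting the palindromic symmetry $e_k=e_{p-k}$), not $e_1,\dots,e_d$ in the same order; this does not affect the displayed formula or its identification with $\Delta_K(X^2)$.
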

\begin{remark}
 The relation between $Q$ and the Alexander polynomial $\Delta_K(t)$ was observed in \cite{BH95} (see loc. cit., Example $A.8^*.13$). However, this relation is stated in \cite{BH95} in purely algebraic terms, without referring to the map (\ref{eq_yy2}), and the proof in \cite{BH95} is quite different from ours.
\end{remark}

\subsection{Conjecture \ref{mainconjecture} for 2-bridge knots for an arbitrary $q$}\label{sec_qeqq_2bridge}
Technically, we proved Conjectures \ref{mainconjecture}, \ref{conj_unknotsubmodule}, and \ref{conj_5param} for the module $M \subset \BH[\delta^{-1}]$ defined in (\ref{equation_yuri15}), which imply symmetric versions of these conjectures for skein modules of 2-bridge knot complements (cf. Corollary \ref{cor_qeq1}). However, we believe the following is true:

\begin{conjecture}\label{conj_q1skeinmodule}
 The module $M$ defined in (\ref{equation_yuri15}) is the $q=- 1$ specialization of the nonsymmetric skein module of the 2-bridge knot $K$. 
\end{conjecture}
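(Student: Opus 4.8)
The plan is to work not at $q=-1$ directly but over a formal neighbourhood of $q=-1$, replacing the Brumfiel--Hilden algebra $\BH[\pi]$ by its Kauffman-bracket (skein) analogue, so that the whole of Section~\ref{sec_qequals1} deforms and the conjecture (together with the stronger Corollary~\ref{corollary_qeqq} for arbitrary $q$) follows by specialisation. First I would set up the ``skein Brumfiel--Hilden algebra'' $\SK[\pi]$ of a $2$-bridge knot group: for $\pi=\langle a,b\mid aw=wb\rangle$ it is generated by the skein classes of $a$, $b$, $ab$ (the $q$-analogue of the generators $x,i,j$ of $\BH[\pi]$), and straightening the relator word $w$ by the Kauffman relations should produce a presentation with a distinguished element $Q_q\in\SK^+[\pi]$ and a $q$-deformed ``generalized quaternion'' structure, specialising to Theorem~\ref{theorem_BHA49} at $q=-1$. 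Then I would prove $q$-deformed versions of Lemma~\ref{lemma_yuri1} (the image of the peripheral map lies in the $X$-localisable subalgebra $\SK^+[\pi][X^{\pm1}]$) and of Lemma~\ref{lemma_yqeqq} ($Y\,Q_q=Q_q$, up to the appropriate twist); the $x_e$-commutation computation in the proof of Lemma~\ref{lemma_yqeqq} should survive with $q$-commutators, because the two meridians still satisfy controlled relations in the skein algebra.

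With these in hand, one defines $M_q:=\SK^+[\pi][X^{\pm1}]+\SK^+[\pi][X^{\pm1}]Q_q\delta^{-1}$ inside the localisation of $\SK[\pi]$ at $\delta=X-X^{-1}$, mirroring (\ref{equation_yuri15}); the same formulas (\ref{equation_yuri16}) then make $M_q$ an $\H_{q,t_1,t_2}$-module with $\e M_q=\SK^+[\pi]$. The key identification is $\hat K_q(S^3\setminus K)\cong M_q$ modulo $X$-torsion — equivalently, $M_q$ is the image of $\hat K_q(S^3\setminus K)$ in its localisation $\hat K^{\loc}_q(S^3\setminus K)$. To see this, compare $A_q\otimes_{A_q^{\Z_2}}K_q(S^3\setminus K)$ with $M_q$: away from roots of unity $A_q$ and $A_q^{\Z_2}$ are Morita equivalent, so the comparison map is an isomorphism after localising at nonzero polynomials in $X$; one then checks, using the explicit structure of $K_q(S^3\setminus K)$ for $2$-bridge knots (known, e.g., from Gelca's and Le's computations) and a rank count, that $M_q$ is exactly the torsion-free quotient. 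Specialising $q\to-1$ gives $\SK^+[\pi]\mapsto\BH^+[\pi]$, $Q_q\mapsto Q$, $M_q\mapsto M$, hence the claimed identification of $M$ with the $q=-1$ specialisation of $\hat K_q(S^3\setminus K)$ — where, as in the footnote to Conjecture~\ref{mainconjecture}, ``specialisation'' is understood after killing $X$-torsion, i.e. as the image in $\hat K^{\loc}_{-1}(S^3\setminus K)$.

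The main obstacle is the first paragraph: performing the Brumfiel--Hilden computation $q$-equivariantly. At $q=-1$ everything collapses to the trace identity $\Tr(A)\Tr(B)=\Tr(AB)+\Tr(AB^{-1})$ and ordinary commutative algebra, whereas for general $q$ one must straighten the long relator word $w$ (a product of $\sim p$ letters $a^{\pm1},b^{\pm1}$) inside the noncommutative skein algebra of the complement and read off the deformed polynomial $Q_q$; I expect $Q_q$ to be governed by Habiro's cyclotomic expansion and the AJ-type recursion, and controlling it — in particular proving $Y\,Q_q=Q_q$ — is where the real work lies. A secondary difficulty is the torsion bookkeeping in the second paragraph: one must be sure that passing to the $q=-1$ fibre neither creates nor destroys torsion in an uncontrolled way, so that the torsion-free quotient of $\hat K_{-1}(S^3\setminus K)$ is precisely $M$ and not something strictly larger or smaller. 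If one is content with the $q=-1$ case alone, paragraphs one and two can be replaced by a direct argument: take the comparison map $A_{-1}\otimes_{A_{-1}^{\Z_2}}\BH^+[\pi]\to M$ built from Lemmas~\ref{lemma_yuri1} and~\ref{lemma_yqeqq} and the decomposition (\ref{equation_yuri20}), use the identities (\ref{equation_yuri18})--(\ref{equation_yuri19}) and (\ref{equation_yuri13'})--(\ref{equation_yuri14'}) to show it is surjective modulo $X$-torsion, and show its kernel is exactly the $X$-torsion by localising at $\delta$.
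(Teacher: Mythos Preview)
The statement you are attempting to prove is labelled a \emph{Conjecture} in the paper and is not proved there. The only evidence the paper offers is the sentence immediately following the statement: ``Using the results of Section~\ref{sec_qeqqskeinmodules}, it is easy to check that this conjecture is true for the trefoil and the figure eight knot.'' That verification is by brute force --- in Section~\ref{sec_qeqqskeinmodules} the nonsymmetric skein modules of those two knots are computed explicitly for generic $q$ (formulas (\ref{equation_trefoilmatrices}) and (\ref{figeightops})), and one simply specialises the matrices to $q=-1$ and compares with the module $M$ of (\ref{equation_yuri15}). No general argument is given or claimed.

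Your proposal is therefore not to be compared with a proof in the paper; it is a research programme for an open problem, and you yourself correctly flag the obstacle. The $q$-deformed Brumfiel--Hilden algebra you posit would, if it existed with the properties you list, amount to a closed-form description of $K_q(S^3\setminus K)$ as a $K_q(T^2)$-module for every $2$-bridge knot. L\^e's theorem (cited in the proof of Corollary~\ref{corollary_qeqq}) gives freeness and finite rank over $\C[x]$, but the actual action of $y$ and $z$ on a basis --- equivalently, your $Q_q$ and the deformed quaternion relations --- is not known in general; the papers \cite{Gel02}, \cite{GS03}, \cite{GS04} compute it case by case with substantial effort. Proving the $q$-analogue of Lemma~\ref{lemma_yqeqq} would in particular require controlling this unknown structure.

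Your fallback ``direct $q=-1$ argument'' does not escape the difficulty. The nonsymmetric skein module is defined as $A_q\otimes_{A_q^{\Z_2}}K_q(S^3\setminus K)$ for generic $q$, and the conjecture concerns its specialisation at $q=-1$. At $q=-1$ the Morita equivalence between $A_q\rtimes\Z_2$ and $A_q^{\Z_2}$ fails (the lemma in Section~\ref{sec_divisibility} requires $q^4-1$ invertible), so the object $A_{-1}\otimes_{A_{-1}^{\Z_2}}\BH^+[\pi]$ is not obviously the correct specialisation, and in any case showing that a comparison map to $M$ is bijective modulo $X$-torsion is exactly the content of the conjecture, not a route to it. The identities (\ref{equation_yuri18})--(\ref{equation_yuri14'}) show that $M$ is closed under the relevant operators; they do not by themselves pin down $M$ among all $A_{-1}\rtimes\Z_2$-modules with $\e M\cong\BH^+[\pi]$.
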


Using the results of Section \ref{sec_qeqqskeinmodules}, it is easy to check that this conjecture is true for the trefoil and the figure eight knot. We now show that Conjecture \ref{conj_q1skeinmodule} implies Conjecture \ref{mainconjecture} for arbitrary $q$. Let $U = (1-q^2X^2)^{-1}(1-s\yy) \in D_q$.
\begin{theorem}\label{theorem_q1impliesqq}
 Let $N$ be a module over $A_q\rtimes \Z_2$ which is free and finitely generated over $\C[X^{\pm 1}]$, and suppose $UN \subset N$ for $q=\pm 1$. Then $UN \subset N$ for arbitrary $q$.
\end{theorem}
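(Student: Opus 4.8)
The plan is to translate the inclusion $UN\subseteq N$ into a rigidity statement for a single involution defined over $R:=\C[q^{\pm1}]$, and then exploit that this involution is known to be trivial at the two points $q=\pm1$. Throughout I regard $q$ as a formal parameter, so $A_q\rtimes\Z_2$ is an $R$-algebra, $N$ is free of finite rank $r$ over $R[X^{\pm1}]$, and $N_{q_0}:=N/(q-q_0)N$ is the specialized module over $A_{q_0}\rtimes\Z_2$. Write $w:=s\yy\in A_q\rtimes\Z_2$.

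First I would record two elementary consequences of the defining relations: $w^2=1$ (using $s\yy=\yy^{-1}s$), and $wXw^{-1}=q^{-2}X^{-1}$ (using $X\yy=q^2\yy X$), so that $w(1-q^2X^2)w^{-1}=1-q^{-2}X^{-2}=-q^{-2}X^{-2}(1-q^2X^2)$. The last equation shows $w$ carries the $R[X^{\pm1}]$-submodule $(1-q^2X^2)N$ into itself, hence induces an $R$-linear involution $\bar w$ of the quotient $\bar N:=N/(1-q^2X^2)N$; since $1-q^2X^2$ is a nonzerodivisor and $R[X^{\pm1}]/(1-q^2X^2)$ is $R$-free of rank $2$, the module $\bar N$ is free over $R$ of rank $2r$. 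Now for $v\in N$ one has $Uv=(1-q^2X^2)^{-1}(1-w)v$ with $(1-w)v\in N$, so $Uv\in N$ for all $v$ if and only if $(1-w)N\subseteq(1-q^2X^2)N$, that is, if and only if $\bar w=\mathrm{id}_{\bar N}$. The same computation performed after specializing $q$ shows that ``$UN\subseteq N$ at $q=q_0$'' is equivalent to triviality of the induced involution on $N_{q_0}/(1-q_0^2X^2)N_{q_0}$; and a routine base-change check identifies this module with $\bar N\otimes_R R/(q-q_0)$ and the involution with $\bar w\otimes\mathrm{id}$. Feeding in the hypothesis at $q_0=1$ and $q_0=-1$ yields $\bar w\equiv I$ modulo $q-1$ and modulo $q+1$.

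The heart of the matter is then a rigidity lemma: if $W\in\mathrm{Mat}_m(R)$ satisfies $W^2=I$ and is congruent to $I$ modulo both $q-1$ and $q+1$, then $W=I$. Since $q-1$ and $q+1$ are coprime in the principal ideal domain $R$, I may write $W=I+(q^2-1)M$ with $M\in\mathrm{Mat}_m(R)$; expanding $W^2=I$ gives $2M+(q^2-1)M^2=0$, and hence inductively $M^k=\bigl(-2/(q^2-1)\bigr)^{k-1}M$ for every $k\ge1$. The left-hand side has entries in $R$ while the right-hand side has a pole of order $k-1$ at $q=1$ unless $M=0$; letting $k\to\infty$ forces $M=0$, so $W=I$. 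Applying this with $W$ the matrix of $\bar w$ in an $R$-basis of $\bar N$ gives $\bar w=\mathrm{id}_{\bar N}$, which by the previous step is exactly $UN\subseteq N$ over $R$, and therefore also after every specialization $q=q_0\in\C^*$.

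The only non-formal input is this rigidity lemma, and it is precisely where the hypothesis needs \emph{two} parameter values rather than one: an involution over $R$ trivial at a single point of $\Spec R$ need not be trivial, whereas the pole-counting argument uses both congruences to annihilate $M$. The remaining ingredients — the two commutator identities for $w$, freeness of $\bar N$, and base-change compatibility — are straightforward, so I do not anticipate any serious obstacle beyond organizing the reduction cleanly.
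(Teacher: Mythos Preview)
Your proposal is correct and follows essentially the same strategy as the paper: reduce $UN\subset N$ to the triviality of an involution depending on $q$, use the $q=\pm1$ hypothesis as the initial condition, and invoke a rigidity argument to force the involution to be the identity for all $q$.

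The packaging differs mildly. The paper fixes a $\C[X^{\pm1}]$-basis, writes $\yy=A(X)P$ and $s=B(X)S$, and singles out the matrix $C(q):=B(q^{-1})A(q)$ (one for each root of $1-q^2X^2$); the involution condition becomes $C(q)^2=I$, and the rigidity step is carried out by setting $q=e^z$ and inducting on the Taylor coefficients at $z=0$. Your version is coordinate-free: you pass to $\bar N=N/(1-q^2X^2)N$, observe that $w=s\yy$ descends to an involution $\bar w$, and treat both roots at once in a rank-$2r$ module. Your rigidity lemma (write $W=I+(q^2-1)M$, deduce $M^k=(-2/(q^2-1))^{k-1}M$, and count poles) is a clean algebraic reformulation of the paper's Taylor-expansion argument. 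One small phrasing issue: the right-hand side of $M^k=(-2/(q^2-1))^{k-1}M$ has a pole of order $k-1$ \emph{unless $M$ vanishes to order $k-1$ at $q=1$}, not merely ``unless $M=0$''; the conclusion $M=0$ then follows by letting $k\to\infty$, exactly as you indicate.
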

\begin{proof}
 Pick an identification of $\C[X^{\pm 1}]$-modules $N \cong \C[X^{\pm 1}]\otimes_\C V$ for some finite dimensional vector space $V$. The action of $\yy$ and $s$ on $N$ are completely determined by the matrices $A(X),B(X) \in \End_{\C[X^{\pm 1}]}(N)$ defined by 
 \[
  A(X)\cdot v := \yy\cdot (1\otimes v),\quad B(X)\cdot v := s\cdot (1\otimes v)
 \]
Define operators $S,P: N \to N$ via the formulas $S\cdot (f(X)\otimes v) = f(X^{-1})\otimes v$ and $P\cdot (f(X)\otimes v) = f(q^{-2}X)\otimes v$. Then the action of $\yy$ and $s$ on $N$ can be written in terms of the operators $P,S$ as follows:
\[
 \yy = A(X)P,\quad s = B(X)S
\]
where the equalities are inside $\End_\C(N)$. Furthermore, the operators $X$, $\yy$, and $s$ satisfy the relations of $A_q\rtimes \Z_2$, and the relation $\yy s \yy s = 1$ implies the identity
\begin{equation}\label{eq_abidentity}
 B(X)A(X^{-1})B(q^{-2}X^{-1})A(q^2X) = \mathrm{Id}
\end{equation}
Let $C(q) \in \End_\C(V)$ be the matrix $C(q) = B(q^{-1})A(q)$. By Remark \ref{remark_signproblems}, the condition $UN \subset N$ is equivalent to the conditions $C(q) = \mathrm{Id}$ and $C(-q) = \mathrm{Id}$.

Now by assumption we have $C(1) = C(-1) = \mathrm{Id}$, and if we substitute $X = q^{-1}$ into equation (\ref{eq_abidentity}), we get $C(q)^2 = \mathrm{Id}$. Now let $q = e^z$ and write $C(q) = C(e^z) = \sum_i C_i z^i$. Expanding the equation $C(q)^2 = \mathrm{Id}$ in powers of $z$, we obtain $C(e^z)^2 = C_0^2 + C_0C_1z + \cdots = \mathrm{Id}$, and induction on powers of $z$ shows that $C(e^z) = \mathrm{Id}$. This shows $C(q) = \mathrm{Id}$, and a similar argument shows $C(-q) = \mathrm{Id}$, which completes the proof.
\end{proof}

\begin{corollary}\label{corollary_qeqq}
 Conjecture \ref{conj_q1skeinmodule} implies Conjecture \ref{mainconjecture} for 2-bridge knots (for an arbitrary $q$).
\end{corollary}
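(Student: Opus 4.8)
The plan is to deduce Conjecture \ref{mainconjecture} from Theorem \ref{theorem_q1impliesqq}, using Conjecture \ref{conj_q1skeinmodule} to supply the description of the module at $q=-1$. First recall the reduction: since $t_3=t_4=1$ forces $T_1=s$ (and $Y=T_1T_0=sT_0$), the algebra $\H_{q,t_1,t_2,1,1}$ is generated inside $D_q$ by $X^{\pm1}$, $s$, and $T_0=t_1 s\yy-(q^2\bar t_1 X^2+q\bar t_2 X)\,(1-q^2X^2)^{-1}(1-s\yy)$; hence (as in the discussion of the conjectures following the statement of Theorem \ref{maintheorem}, and in Lemma \ref{lemma_twisteddunkl}) the assertion of Conjecture \ref{mainconjecture} for $K$ is equivalent to the single statement that $U:=(1-q^2X^2)^{-1}(1-s\yy)$ preserves $\hat K_q(S^3\setminus K)\subset\hat K_q^\loc(S^3\setminus K)$ for all $q$ — the injectivity of the localization map, which is the other half of the conjecture, will follow once we know $\hat K_q(S^3\setminus K)$ is free over $\C[X^{\pm1}]$. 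So it is enough to verify the two hypotheses of Theorem \ref{theorem_q1impliesqq} for $N:=\hat K_q(S^3\setminus K)$: that $N$ is free and finitely generated over $\C[X^{\pm1}]$, and that $UN\subset N$ when $q=\pm1$.

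The case $q=-1$ is exactly what was carried out in Section \ref{sec_qequals1}: Conjecture \ref{conj_q1skeinmodule} identifies $N|_{q=-1}$ with the fractional ideal $M$ of (\ref{equation_yuri15}), and the inclusion $UM\subset M$ is the content of the proof of Theorem \ref{theorem_qeq1} — it is precisely where (\ref{equation_yuri18}), (\ref{equation_yuri19}) and Lemma \ref{lemma_yqeqq} are used — once one matches $U$ with $\delta^{-1}(1+sY)$ up to the inessential substitution $X\mapsto X^{-1}$ and the sign of Theorem \ref{thm_qeq1repvar} (see the Remark after Theorem \ref{theorem_qeq1}). The case $q=+1$ is obtained from the case $q=-1$ via the standard isomorphism $K_q(\cdot)\cong K_{-q}(\cdot)$ of Kauffman bracket skein modules induced by $A\mapsto -A$; since this isomorphism is natural for oriented embeddings it is compatible with the $A_q^{\Z_2}$-module structure on a knot complement, so the $q=+1$ nonsymmetric skein module is carried to $M$ up to an automorphism of $A_1\rtimes\Z_2$, and one checks that $U$ is again preserved — equivalently, one re-runs the Brumfiel--Hilden computation of Section \ref{sec_qequals1}, whose only use of the value of $q$ is through $q^2=1$. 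For the freeness hypothesis: by Conjecture \ref{conj_q1skeinmodule} and the analysis following Lemma \ref{lemma_yqeqq}, $N|_{q=-1}=M$ is free over $\C[X^{\pm1}]$ of rank $d+1=(p+1)/2$, with basis $\{1,I,\dots,I^{d-1},Q\delta^{-1}\}$; because the symmetric (hence the nonsymmetric) skein module of a $2$-bridge knot is a flat family over $\C[q^{\pm1}]$ — concretely, $K_q(S^3\setminus K)$ is a free $\C[q^{\pm1}]$-module, as is apparent from the presentations exploited in Section \ref{sec_qeqqskeinmodules} and from the known structure of $2$-bridge knot skein modules — the rank of $N$ over $\C[X^{\pm1}]$ is locally constant, so $N$ is free of rank $d+1$ for every $q$. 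Theorem \ref{theorem_q1impliesqq} then yields $UN\subset N$ for all $q$, which by the reduction above is Conjecture \ref{mainconjecture} for $K$.

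The main obstacle is the $q=+1$ statement together with the bookkeeping it entails: at $q=\pm1$ both the algebra $A_q\rtimes\Z_2$ and the operator $U$ are literally unchanged, so what must be pinned down is that the $q=+1$ and $q=-1$ nonsymmetric skein modules genuinely agree as $A_1\rtimes\Z_2$-modules up to an automorphism intertwining $U$. This requires tracking the Frohman--Gelca identification $K_q(T^2)\cong A_q^{\Z_2}$, which depends on $q$ through $e_{r,s}=q^{-rs}X^rY^s$, at both values, as well as the sign twist built into Theorem \ref{thm_qeq1repvar}. A secondary, more routine, point is the flatness of the family $\hat K_q(S^3\setminus K)$ over $\C[q^{\pm1}]$, needed to transport the explicit freeness of $M$ at $q=-1$ to generic $q$; for the $2$-bridge knots handled elsewhere in the paper this is transparent, but in general it should be read off from the known presentations of $2$-bridge knot skein modules.
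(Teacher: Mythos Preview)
Your approach is essentially the paper's: reduce Conjecture \ref{mainconjecture} to the statement that $U=(1-q^2X^2)^{-1}(1-s\yy)$ preserves $\hat K_q(S^3\setminus K)$, supply the $q=-1$ case from Conjecture \ref{conj_q1skeinmodule} and Theorem \ref{theorem_qeq1}, transfer to $q=+1$, and then invoke Theorem \ref{theorem_q1impliesqq}. The logical skeleton is identical.

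The differences are in how two of the three ingredients are justified. For the $q=+1$ case, the paper simply cites Barrett's theorem \cite{Bar99}, which gives an isomorphism between the $q=1$ and $q=-1$ skein modules; this is exactly the ``standard isomorphism $K_q(\cdot)\cong K_{-q}(\cdot)$'' you invoke, so your argument is correct but the citation makes the bookkeeping you flag as the ``main obstacle'' unnecessary. For freeness and finite generation of $N$ over $\C[X^{\pm1}]$, the paper again cites a known result (L\^e \cite{Le06}) rather than arguing via flatness of the family over $\C[q^{\pm1}]$. Your flatness argument is plausible but not self-contained as written: you assert that the skein module of a 2-bridge knot is a flat family, appealing only to ``known structure'' and the examples in Section \ref{sec_qeqqskeinmodules}; this is precisely what L\^e's theorem supplies, so you are implicitly using the same input. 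In short, your proof is correct and on the same track, but the paper's version is shorter because both loose ends you identify are already theorems in the literature.
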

\begin{proof}
 Indeed, if $M$ is the correct specialization of $  N = \hat{K}_q $
at $ q = - 1 $, then the assumption $UN \subset N$ of Theorem \ref{theorem_q1impliesqq}  holds by
(the proof of) Theorem \ref{theorem_qeq1} for $q=-1$. A theorem of Barret \cite{Bar99} shows that the $q=1$ and $q=-1$ skein modules are isomorphic, which shows $UN \subset N$ for $q=1$. The fact that $N$ is free and finitely generated over $\C[X^{\pm 1}]$ was proved in \cite{Le06}, and Theorem \ref{theorem_q1impliesqq} therefore implies $UN \subset N$ for arbitrary $q$, which implies Conjecture \ref{mainconjecture}.
\end{proof}

\section{Examples of skein modules of knot complements}\label{sec_qeqqskeinmodules}
In this section we give an explicit description of the nonsymmetric skein modules for the unknot, $(2,2p+1)$ torus knots, and the figure eight knot. In the process we prove Conjecture 2 for these knots:

\begin{theorem}\label{thm_conjecture2}
 If $K$ is the figure eight knot or any $(2,2p+1)$ torus knot, then $K_q(\mathrm{unknot})$ is a submodule of $K_q(S^3\setminus K)$.
\end{theorem}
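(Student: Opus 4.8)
The plan is to prove Theorem~\ref{thm_conjecture2} by giving explicit presentations of the relevant nonsymmetric skein modules and exhibiting the embedding by hand in each family of examples. The strategy is parallel to (but more concrete than) the argument used in Section~\ref{sec_qequals1} for $2$-bridge knots: there, the submodule isomorphic to $K_q(\mathrm{unknot})$ was the principal fractional ideal $\C[X^{\pm 1}]Q\delta^{-1}$ cut out by the ``$A$-polynomial factor'' $Q$, and its symmetrization was the sign representation $\e M_0 \cong K_q(S^3\setminus\mathrm{unknot})$. I expect the same mechanism here: the $A$-polynomial of any knot is divisible by $L-1$ (Remark~\ref{remark_lm1dividesA}), so $A_K = B_K A_U$, and the map $f \mapsto B_K f$ of (\ref{equation_unknotembeddingqeq1}) should quantize to an $A_q^{\Z_2}$-module embedding $K_q(S^3\setminus\mathrm{unknot}) \hookrightarrow K_q(S^3\setminus K)$. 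So the first step is to recall/record that $K_q(S^3\setminus\mathrm{unknot}) \cong A_q^{\Z_2}/(Y-1)$ (the skein module of the solid torus with its $A_q^{\Z_2}$-action), equivalently the nonsymmetric version $\hat K_q(S^3\setminus\mathrm{unknot}) \cong A_q\rtimes\Z_2$-module generated by one vector killed by $\yy-1$ appropriately symmetrized.

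Second, for the $(2,2p+1)$ torus knots and the figure eight knot I would invoke the computations of Gelca and Sain (cited in the excerpt as the source for Section~\ref{sec_qeqqskeinmodules}) to write down $K_q(S^3\setminus K)$ explicitly as an $A_q^{\Z_2}$-module: a finitely generated module over $\C[X+X^{-1}]$ (or $\C[X^{\pm 1}]$ nonsymmetrically) with the action of the longitude $Y+Y^{-1}$ given by an explicit matrix. In each case the $A$-polynomial is known — for $(2,2p+1)$ torus knots $A_K = (L-1)(L\,M^{4p+2}+1)$ up to units, and for the figure eight $A_K = (L-1)(L^2 M^4 - L(1-M^2-2M^4-M^6+M^8)+M^4)$ — so I can read off $B_K$ and write down the candidate element $v_K \in K_q(S^3\setminus K)$ that should generate the copy of the unknot module (the image of $\varnothing$ under the conjectural embedding). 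Concretely, $v_K$ will be a specific $\C[X^{\pm 1}]$-linear combination of the module generators, and I must check (i) $v_K$ generates a submodule on which $Y$ (or $Y+Y^{-1}$) acts by the scalar forced by the unknot relation, and (ii) the map $K_q(S^3\setminus\mathrm{unknot}) \to K_q(S^3\setminus K)$, $\varnothing \mapsto v_K$, is injective, i.e. $v_K$ is not a torsion element and the induced map of cyclic modules has no kernel. Step (i) is a finite matrix computation with the explicit longitude action; step (ii) follows once one checks the annihilator of $v_K$ in $A_q^{\Z_2}$ equals the annihilator of $\varnothing$ in the unknot module, which reduces to a gcd/degree count over $\C[X^{\pm 1}]$.

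The main obstacle I anticipate is step (ii) together with the subtlety flagged in the excerpt: ``the empty link in the skein module of the unknot is sent to a nontrivial element in the skein module of $S^3\setminus K$,'' so $v_K$ is genuinely a nonobvious vector and one cannot simply take $v_K = \varnothing_K$. Finding the right $v_K$ is guided by the classical picture (multiplication by $B_K$ in $\C[m^{\pm1},l^{\pm1}]/A_U$), but promoting this to all $q$ requires care with the noncommutativity of $A_q$ and with possible $q$-power corrections in the structure constants from the Frohman--Gelca presentation (Theorem~\ref{fg00}) and the torus relations (Theorem~\ref{thm_bp00}). A clean way to organize this, and to avoid case-by-case guesswork, is: first prove the theorem at $q=-1$ using Theorem~\ref{thm_qeq1repvar} and the classical divisibility $A_K = B_K A_U$ (this is essentially (\ref{equation_unknotembeddingqeq1}) transported through the skein-module identification), and then deform. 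For the $q$-deformation, since $K_q(S^3\setminus K)$ is free and finitely generated over $\C[X^{\pm1}]$ for these knots, an argument in the spirit of Theorem~\ref{theorem_q1impliesqq} — lifting a relation that holds at $q=\pm1$ to all $q$ via the explicit matrix identities satisfied by the longitude action — should upgrade the $q=-1$ embedding to all $q$. The computations by Gelca and Sain already provide the $q$-dependent matrices, so in each of the finitely many families this becomes a bounded, if tedious, verification rather than a conceptual difficulty; the real work is setting up the dictionary between the $A$-polynomial factorization and the generator $v_K$ so that the verification is forced.
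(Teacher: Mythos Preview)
Your core idea --- use the Gelca--Sain presentations, locate an explicit element $v_K$, and verify it generates a copy of the unknot module --- is correct and is what the paper does. But the paper does \emph{not} route through $q=-1$ and then deform; it works directly for all $q$. The eigenvectors are written down explicitly with their $q$-dependence (for the trefoil $w = w' + q^{-2} 1_K$ where $w'$ is a specific loop; for general $(2,2p+1)$ knots $w = S_{p-1}(v) + q^{-2}S_p(v)$ in a Chebyshev basis of loops; for the figure eight $p = (x^2-3)u + v + w$ in terms of loop generators), and one then verifies by direct computation, valid for all $q$, the two relations
\[
y \cdot v_K = -(q^2+q^{-2})\,v_K, \qquad z \cdot v_K = -q^{-3} x \, v_K,
\]
which by Lemma~\ref{lemma_invsubalgebraiso} force the cyclic submodule generated by $v_K$ to be isomorphic to $K_q(S^3\setminus\mathrm{unknot})$. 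Injectivity is automatic: the target modules are free over $\C[x]$ and $v_K$ is part of a $\C[x]$-basis, so the induced map of cyclic $\C[x]$-modules has no kernel. For the $(2,2p+1)$ family the verification (Lemma~\ref{lemma_22pplus1sym}) is not a routine matrix check against existing formulas --- it requires genuinely extending the Gelca--Sain skein calculations using the M\"obius-band topology of the complement and their recursion scheme, so your ``bounded, if tedious'' is optimistic about what is already available in the literature.

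Your ``prove at $q=-1$ then deform in the spirit of Theorem~\ref{theorem_q1impliesqq}'' strategy has a genuine gap. That theorem upgrades the inclusion $UN \subset N$ from $q=\pm 1$ to all $q$ by exploiting an a priori identity $C(q)^2 = \mathrm{Id}$ forced by the relations of $A_q\rtimes\Z_2$; there is no analogous squaring mechanism for the statement ``$y$ has an eigenvector with eigenvalue $-(q^2+q^{-2})$.'' The eigenvector genuinely varies with $q$ (already for the trefoil the coefficient $q^{-2}$ in $w'+q^{-2}1_K$ is essential), so a $q=-1$ eigenvector gives you no canonical $q$-dependent candidate, and nothing short of the full $q$-dependent calculation pins one down --- at which point the deformation step is vacuous and you are back to the paper's direct approach. (A minor slip: the nonsymmetric unknot module is the \emph{sign} representation, so the defining relation on the generator is $\yy \cdot 1 = -1$, not $Y - 1 = 0$; cf.\ (\ref{formula_liftedunknot}).)
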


\begin{remark}\label{remark_unknotsubmodule}
   This theorem can be viewed as a quantization of the map $\phi$ from (\ref{equation_unknotembeddingqeq1}). In particular, in examples the image of the empty link is a nontrivial element of the skein module of the knot complement, and this quantizes the fact that $\phi(1) = B_K$ (see Remark \ref{remark_lm1dividesA}).
  
  We also remark that the natural algebra map $\C[m^{\pm 1},l^{\pm 1}] / A_K \twoheadrightarrow \C[m^{\pm 1},l^{\pm 1}] / A_U$ from Remark \ref{remark_lm1dividesA} does \emph{not} quantize for $(2,2p+1)$ torus knots or the figure eight knot. In particular, for the trefoil, Lemma \ref{lemma_trefoilseq} shows that if $q$ is not a root of unity, then the unknot submodule of $M := K_q(\mathrm{trefoil})$ is the unique submodule, which shows that $M$ has a unique quotient. This quotient is clearly not isomorphic to the skein module of the unknot.
\end{remark}

To simplify notation we will divide the proof of Theorem \ref{thm_conjecture2} into separate subsections after first proving some useful technical lemmas. For the trefoil (i.e. the $(2,3)$ torus knot) and the figure eight knot, Gelca and Sain have given complete calculations of the symmetrized module $K_q(S^3\setminus K)$ in \cite{Gel02} and \cite{GS04}, respectively. Using these calculations we describe the corresponding $A_q\rtimes \Z_2$-modules explicitly. For the $(2,2p+1)$ torus knots, Gelca and Sain gave only partial computations of the module structure of $K_q(S^3\setminus K)$ in \cite{GS03}. We complete their computations to fully determine the module structure of the submodule of $K_q(S^3\setminus K)$ generated by the empty link and describe the corresponding $A_q\rtimes \Z_2$-module.

\begin{remark}
 The calculations in this section are lengthy, and the reader might worry about errors with signs or powers of $q$. However, a strong ``consistency check'' is available - one can use Lemma \ref{lemma_liftedcoloredJones} together with the module structures described in this section to give explicit computations of the colored Jones polynomials, and then compare these to known results. For the $(2,2p+1)$ knots this has been done in \cite[Lemma 6.4.7]{Sam12} for all $n$, and for the figure eight this has been done for many small $n$. (See also the explicit computations in the appendix.)
\end{remark}

We now establish a few technical lemmas. We recall that if $m, l \in K_q(T^2)$ are the meridian and longitude, respectively, and $z$ is the $(1,1)$ curve, then under the embedding $K_q(T^2) \hookrightarrow A_q\rtimes \Z_2$ we have
\[
 m \mapsto x := X+X^{-1},\quad l \mapsto y := Y+Y^{-1},\quad z \mapsto q^{-1}(XY+X^{-1}Y^{-1})
\]
As an algebra, the image of $K_q(T^2)$ is generated by $x$, $y$, and $z$. We now prove a lemma that is useful for constructing isomorphisms of $A_q^{\Z_2}$-modules - it essentially says that the $A_q^{\Z_2}$-module structure of $M$ is determined by the action of $y$ and $z$ on a $\C[x]$-basis for $M$.

\begin{lemma}\label{lemma_invsubalgebraiso}
 Suppose that $M$ and $N$ are modules over $A_q^{\Z_2}$, and that as a $\C[x]$-module,  $M$ is generated by elements $\{m_i\}\subset M$. Furthermore, suppose that $f:M \to N$ is an isomorphism of $\C[x]$-modules that satisfies $f(ym_i) = yf(m_i)$ and $f(zm_i) = zf(m_i)$. Then $f$ is an isomorphism of $A_q^{\Z_2}$-modules.
\end{lemma}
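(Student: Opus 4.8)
The plan is to reduce the claim to the statement that the $\C$-algebra generated by $y$ and $z$ over $\C[x]$ is all of $K_q(T^2) \cong A_q^{\Z_2}$, and then to check that $f$ intertwines the action of this generating set on every element of $M$, not just on the generators $m_i$. First I would recall from Theorem \ref{thm_bp00} that $K_q(T^2)$ is generated as an algebra by $x$, $y$, $z$; since $f$ is by hypothesis a $\C[x]$-module map, it automatically commutes with the action of $x$, so it suffices to show $f$ commutes with the actions of $y$ and $z$. Thus the whole statement comes down to: if $f(y m_i) = y f(m_i)$ and $f(z m_i) = z f(m_i)$ for the $\C[x]$-module generators $m_i$, then $f(y m) = y f(m)$ and $f(z m) = z f(m)$ for \emph{all} $m \in M$.

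The key step is the following observation. An arbitrary $m \in M$ can be written as $m = \sum_i p_i(x) m_i$ with $p_i(x) \in \C[x]$. Since $f$ is a $\C[x]$-module map, $f(m) = \sum_i p_i(x) f(m_i)$. Now I would like to conclude $f(ym) = \sum_i f(y p_i(x) m_i)$ equals $\sum_i y p_i(x) f(m_i) = y f(m)$ — but this requires knowing that $f$ intertwines $y$ on elements of the form $p_i(x) m_i$, which is \emph{not} immediate because $y$ and $x$ do not commute in $A_q^{\Z_2}$. The resolution is to use the commutation relations (\ref{relationsforB'}): $[x,y]_q = (q^2-q^{-2}) z$ and $[z,x]_q = (q^2-q^{-2}) y$ allow one to rewrite $y \cdot p(x)$ as $\sum_k r_k(x) w_k$ where each $w_k \in \{1, y, z\}$ and $r_k(x) \in \C[x]$; more precisely one shows by induction on $\deg p$ that $y \cdot x^n$ and $z \cdot x^n$ both lie in $\C[x] \cdot 1 + \C[x]\cdot y + \C[x]\cdot z$ (this is just the statement that $1, y, z$ generate $K_q(T^2)$ as a left $\C[x]$-module, which also follows from the PBW-type basis $e_{m,l}+e_{-m,-l}$). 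Then for each generator $m_i$, $y m_i$ and $z m_i$ themselves lie in $\C[x]m_i' $-spans of the $m_j$... actually more carefully: write $y\,p(x)\,m_i = \sum_k r_k(x)\, (w_k m_i)$ with $w_k \in \{1,y,z\}$; apply $f$, use $\C[x]$-linearity to pull out $r_k(x)$, use the hypothesis $f(w_k m_i) = w_k f(m_i)$, and then run the commutation identities backwards to reassemble $\sum_k r_k(x) w_k f(m_i) = y\, p(x)\, f(m_i)$. Summing over $i$ gives $f(ym) = y f(m)$, and identically for $z$.

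Finally, having shown $f$ commutes with $x$, $y$, $z$ — a generating set for the algebra $A_q^{\Z_2}$ — it commutes with the whole algebra, so $f$ is an $A_q^{\Z_2}$-module homomorphism; being an isomorphism of $\C[x]$-modules it is bijective, hence an isomorphism of $A_q^{\Z_2}$-modules. The main obstacle, and the only genuinely non-formal point, is the bookkeeping in the middle step: one must be careful that moving polynomials in $x$ past $y$ and $z$ via the $q$-commutator relations stays within the left-$\C[x]$-span of $\{1,y,z\}$ and that the same rearrangement is valid after applying $f$. This is a finite computation with the relations (\ref{relationsforB'}) and (\ref{casimir_rel}) — the cubic relation (\ref{casimir_rel}) is what guarantees that no higher words in $y,z$ are needed — so there is no real difficulty, only care with powers of $q$.
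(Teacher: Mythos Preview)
Your proposal is correct and follows essentially the same approach as the paper's proof: both reduce to showing $f$ commutes with $y$ and $z$ on all of $M$, and both use the $q$-commutator relations (\ref{relationsforB'}) to inductively move powers of $x$ to the left of $y$ and $z$. One small remark: the cubic relation (\ref{casimir_rel}) is not actually needed here, since commuting a single $y$ or $z$ past $x$ only produces terms with a single $y$ or $z$ (no higher words ever appear), so the induction on $\deg p$ goes through using only the three $q$-commutator relations.
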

\begin{proof}
 From (\ref{relationsforB'}), the elements $x,y,z \in A_q^{\Z_2}$ satisfy the commutation relations
 \[
[x,y]_q = (q^2-q^{-2})z,\quad [z,x]_q = (q^2-q^{-2})y,\quad [y,z]_q = (q^2-q^{-2})x
 \]
(where we have used the notation $[a,b]_q := qab - q^{-1}ba$). An arbitrary element of $M$ can be written as $m = \sum_{i=1}^n p_i(x) m_i$, and using the $\C[x]$-linearity of $f$ and the commutation relations, powers of $x$ in the expressions $ym$ and $zm$ can inductively be moved to the left. This shows that $f(ym) = yf(m)$ and $f(zm) = zf(m)$ for arbitrary $m \in M$, which completes the proof.
\end{proof}

We also give a lemma which is useful for explicitly constructing modules over $A_q\rtimes \Z_2$.  Let $M = \C[X^{\pm 1}]\otimes_\C V$ and define operators $S, P: M \to M$ via the formulas
\begin{equation}\label{equation_diagonalaction}
S\cdot \left(f(X) \otimes v\right) := f(X^{-1})\otimes v,\quad P\cdot \left(f(X)\otimes v\right) := f(q^{-2}X)\otimes v 
\end{equation}
Then the operators $X$, $S$, and $P$ satisfy the relations of $A_q\rtimes \Z_2$ (and with these operators, $M$ is a direct sum of copies of the standard polynomial representation of $A_q\rtimes \Z_2$). Furthermore, if $A(X) \in \End_{\C[X^{\pm 1}]}(M)$, then $SA(X) = A(X^{-1})S$ and $PA(X) = A(q^{-2}X)P$ (where the equalities are inside $\End_\C(M)$).
\begin{lemma}\label{lemma_aqmodule}
  Suppose that $A(X),B(X)\in \End_{\C[X^{\pm 1}]}(M)$ satisfy
\[
 B(X^{-1})B(X) = \mathrm{Id}_M,\quad A(X)B(q^{-2}X)A(q^2X^{-1}) = B(X)
\]
Then the operators $X$, $s := B(X)S$, and $Y := A(X)P$ endow $M$ with the structure of an $A_q\rtimes \Z_2$-module.
\end{lemma}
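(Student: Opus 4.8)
The plan is to verify directly that the operators $X$, $s = B(X)S$, and $Y = A(X)P$ acting on $M = \C[X^{\pm1}]\otimes_\C V$ satisfy the four defining relations of $A_q\rtimes\Z_2$, namely $sX = X^{-1}s$, $sY = Y^{-1}s$, $s^2 = 1$, and $XY = q^2YX$. Throughout I will use the two ``commutation'' identities recorded just before the statement: for any $C(X)\in\End_{\C[X^{\pm1}]}(M)$ we have $SC(X) = C(X^{-1})S$ and $PC(X) = C(q^{-2}X)P$, together with the obvious relations $S^2 = \mathrm{Id}$, $P X = q^{-2}XP$, $SX = X^{-1}S$, and the fact that $X$ (acting by multiplication) commutes with every $C(X)$.

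First I would check $s^2 = \mathrm{Id}$: compute $s^2 = B(X)S\,B(X)S = B(X)B(X^{-1})SS = B(X)B(X^{-1})$, and this equals $\mathrm{Id}_M$ by the first hypothesis (applied with $X\mapsto X^{-1}$, using $B(X^{-1})B((X^{-1})^{-1}) = \mathrm{Id}$, equivalently $B(X)B(X^{-1}) = \mathrm{Id}$). Next, $sX = B(X)SX = B(X)X^{-1}S = X^{-1}B(X)S = X^{-1}s$, since multiplication by $X^{-1}$ is central in $\End_{\C[X^{\pm1}]}(M)$. The relation $XY = q^2YX$ is equally routine: $YX = A(X)PX = A(X)q^{-2}XP = q^{-2}XA(X)P = q^{-2}XY$, hence $XY = q^2YX$. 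The only relation requiring the second hypothesis is $sYs = Y^{-1}$ (equivalently $sY = Y^{-1}s$, since $s^2 = \mathrm{Id}$). Here I would first observe that $Y = A(X)P$ is invertible with $Y^{-1} = P^{-1}A(X)^{-1}$ — this needs a small preliminary remark that $A(X)$ is invertible, which follows by taking determinants (or inverses) in the identity $A(X)B(q^{-2}X)A(q^2X^{-1}) = B(X)$ once we know $B(X)$ is invertible (from $s^2 = \mathrm{Id}$). Then compute
\[
sYs = B(X)S\,A(X)P\,B(X)S = B(X)A(X^{-1})SP\,B(X)S = B(X)A(X^{-1})\,B(q^{2}X^{-1})\,PS\,S,
\]
where I used $SP = PS$ (both act diagonally, one by $X\mapsto X^{-1}$ and one by $X\mapsto q^{-2}X$, and these commute up to the check $S(q^{-2}X) = q^{-2}X^{-1}$... — here one must be slightly careful: $SPf(X) = S f(q^{-2}X) = f(q^{-2}X^{-1})$ while $PSf(X) = Pf(X^{-1}) = f(q^2X^{-1})$, so in fact $SP\neq PS$; the correct manipulation is $SP = P^{-1}S$). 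So I would instead push $S$ past $P$ using $SP = P^{-1}S$ and $S B(X) = B(X^{-1})S$, arriving after simplification at an expression of the form $B(X)A(X^{-1})B(q^{2}X^{-1})P^{-1}$; one then recognizes, via the hypothesis $A(X)B(q^{-2}X)A(q^2X^{-1}) = B(X)$ rewritten as $B(X)A(X^{-1}) = A(q^{-2}X^{-1})^{-1}\cdots$ (i.e. solving for the relevant factor), that this equals $P^{-1}A(X)^{-1} = Y^{-1}$.

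The main obstacle — and really the only place care is needed — is bookkeeping the conjugation actions of $S$ and $P$ on the matrix-valued functions $A(X)$ and $B(X)$: since $SP = P^{-1}S$ rather than $PS$, and since $S$ inverts $X$ while $P$ rescales it, the substitutions $X\mapsto X^{-1}$, $X\mapsto q^{\pm2}X$ must be tracked in the exact order the operators appear. The cleanest way to organize this is to conjugate everything into $\End_\C(M)$ and reduce the relation $sYs = Y^{-1}$ to a pure identity among the $\End_{\C[X^{\pm1}]}$-valued functions $A(X)$, $B(X)$ evaluated at the appropriate arguments — at which point it is precisely the second displayed hypothesis (after the substitution $X\mapsto q^{-2}X^{-1}$, say) — and separately to check $B(X)B(X^{-1}) = \mathrm{Id}$ is the first hypothesis. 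No further input is needed; the fact that $M$ is then automatically a direct sum of copies of the polynomial representation is immediate from the construction since $S,P$ act exactly as in \eqref{equation_diagonalaction}.
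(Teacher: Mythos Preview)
Your approach is correct and is essentially the same as the paper's: direct verification of the four relations using the commutation rules $SC(X) = C(X^{-1})S$ and $PC(X) = C(q^{-2}X)P$. The one difference worth noting is that the paper verifies the last relation in the equivalent form $YsY = s$ rather than $sYs = Y^{-1}$; this is slightly cleaner because it avoids the detour through the invertibility of $A(X)$ and reduces to the single operator identity $PSP = S$:
\[
YsY = A(X)PB(X)SA(X)P = A(X)B(q^{-2}X)\,(PS)\,A(X)P = A(X)B(q^{-2}X)A(q^{2}X^{-1})\,PSP = B(X)S = s,
\]
where one uses that $PS$ substitutes $X\mapsto q^{2}X^{-1}$ (so $(PS)A(X) = A(q^{2}X^{-1})(PS)$) and that $PSP = S$.

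One small slip in your bookkeeping: after pushing $S$ and $P^{-1}$ through, the expression for $sYs$ is $B(X)A(X^{-1})B(q^{-2}X^{-1})P^{-1}$, not $B(q^{2}X^{-1})$, since $P^{-1}B(X^{-1}) = B((q^{2}X)^{-1})P^{-1} = B(q^{-2}X^{-1})P^{-1}$. With the corrected exponent the argument goes through: substituting $X\mapsto q^{2}X$ in the second hypothesis gives $A(q^{2}X)B(X)A(X^{-1}) = B(q^{2}X)$, hence
\[
B(X)A(X^{-1})B(q^{-2}X^{-1}) = A(q^{2}X)^{-1}B(q^{2}X)B(q^{-2}X^{-1}) = A(q^{2}X)^{-1},
\]
and $A(q^{2}X)^{-1}P^{-1} = P^{-1}A(X)^{-1} = Y^{-1}$.
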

\begin{proof}
The relation $XY=q^2YX$ follows from the fact that $X$ commutes with $A(X)$ and the fact that $XP=q^2PX$. Since $X$ commutes with $B(X)$, we see that $XsX = s$. The relation $s^2=1$ follows from the fact that $B(X^{-1})B(X)=1$. For the final relation, we compute
\[
  YsY = A(X)PB(X)SA(X)P = A(X)B(q^{-2}X)A(q^2X^{-1})PSP = A(X)B(q^{-2}X)A(q^2X^{-1})S = s
  \]
\end{proof}
In the following sections we will frequently use the element 
\[\delta := X-X^{-1} \in A_q\rtimes \Z_2\] 
We will also use the Chebyshev polynomials $S_n,T_n\in \C[x]$, which are defined by
\begin{equation*}
\begin{array}{lll}
 S_0 = 1, & S_1 = x, &S_{n+1} = xS_n - S_{n-1}\\
 T_0 = 2, & T_1 = x, &T_{n+1} = xT_n - T_{n-1}
\end{array}
\end{equation*}
\begin{lemma}\label{chebyshevidentities}
 The Chebyshev polynomials satisfy the identities
 \[
  (X-X^{-1}) S_n(X+X^{-1}) = X^{n+1}-X^{-n-1}\quad \textrm{ and }\quad T_n(X+X^{-1}) = X^n+X^{-n}.
 \]
\end{lemma}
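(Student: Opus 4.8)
The plan is to prove both identities simultaneously by induction on $n$, using the defining three-term recurrences for $S_n$ and $T_n$. Throughout, set $x = X + X^{-1}$, so that the two claims read $(X - X^{-1})S_n(x) = X^{n+1} - X^{-n-1}$ and $T_n(x) = X^n + X^{-n}$, viewed as identities in $\C[X^{\pm 1}]$.

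First I would dispose of the base cases. For the $T$-identity: $T_0(x) = 2 = X^0 + X^{-0}$ and $T_1(x) = x = X + X^{-1}$. For the $S$-identity: $(X - X^{-1})S_0(x) = X - X^{-1}$ and $(X - X^{-1})S_1(x) = (X - X^{-1})(X + X^{-1}) = X^2 - X^{-2}$. Since each defining recurrence is second order, these two consecutive base cases suffice to launch the induction.

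Next comes the inductive step. Assuming $T_n(x) = X^n + X^{-n}$ and $T_{n-1}(x) = X^{n-1} + X^{-(n-1)}$, the recurrence $T_{n+1} = xT_n - T_{n-1}$ gives
\[
T_{n+1}(x) = (X + X^{-1})(X^n + X^{-n}) - (X^{n-1} + X^{-(n-1)}) = X^{n+1} + X^{-(n+1)},
\]
the cross terms $X^{n-1} + X^{-(n-1)}$ cancelling. Likewise, assuming $(X - X^{-1})S_n(x) = X^{n+1} - X^{-n-1}$ and $(X - X^{-1})S_{n-1}(x) = X^n - X^{-n}$, multiplying the recurrence $S_{n+1} = xS_n - S_{n-1}$ through by $X - X^{-1}$ yields
\[
(X - X^{-1})S_{n+1}(x) = (X + X^{-1})(X^{n+1} - X^{-n-1}) - (X^n - X^{-n}) = X^{n+2} - X^{-(n+2)},
\]
again after cancellation of the cross terms. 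This closes both inductions.

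I do not expect any genuine obstacle here: the inductive step is a one-line manipulation of Laurent monomials in $\C[X^{\pm 1}]$, and the only point meriting (minor) care is that each recursion is second order, so two base cases must be checked rather than one. As an alternative one could observe that both sides are symmetric Laurent polynomials in $X$ and verify the identities after the substitution $X = e^{i\theta}$ via $S_n(2\cos\theta) = \sin((n+1)\theta)/\sin\theta$ and $T_n(2\cos\theta) = 2\cos(n\theta)$; but the direct induction is cleaner and requires no analytic input.
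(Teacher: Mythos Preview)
Your proof is correct. The paper states this lemma without proof, treating the identities as well known; your induction on $n$ using the defining three-term recurrences is exactly the standard verification and there is nothing to add.
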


\subsection{The unknot}
Let $K \subset S^3$ be the unknot, so that $S^3 \setminus K$ is a solid torus. Then $K_q(S^3\setminus K) \cong \C[u]1_K$, where $1_K \in K_q(S^3\setminus K)$ is the empty link. The action of $K_q(T^2)$ on $K_q(S^3\setminus K)$ is given by
\begin{eqnarray}\label{theunknot}
 x\cdot f(u)1_K &=& uf(u)1_K\notag\\
 y\cdot 1_K &=& (-q^2-q^{-2})1_K\\
 z\cdot 1_K &=& -q^{-3}u1_K\notag
\end{eqnarray}
(The image of the longitude inside the solid torus is contractible, and the $-q^{-3}$ factor in the third formula comes from the framing of the image of the $(1,1)$ curve inside the solid torus.)

We give the $\C[X^{\pm 1}]$-module $\hat M := \C[X^{\pm 1}]$ the structure of an $A_q\rtimes \Z_2$-module via the formulas
\begin{equation}\label{formula_liftedunknot}
 Y\cdot f(X) := -f(q^{-2}X),\quad s\cdot f(X) := -f(X^{-1})
\end{equation}
The module $\hat M$ is called the \emph{sign representation} of $A_q\rtimes \Z_2$. As a $\Z_2$-module, we have the decomposition $\hat M \cong \C[x] \oplus \C[x]\delta$. Since $s\cdot 1 = -1$, we see that $\e \hat M = \C[x]\delta$ as a $\C[x]$-module.

\begin{lemma}\label{unknotlift}
 The $\C[x]$-isomorphism $f:\e \hat M \to K_q(S^3\setminus K)$ defined by $f(\delta) = 1_K$ is an isomorphism of $A_q^{\Z_2}$-modules. In particular, the skein module of the unknot is the (symmetric) sign representation.
\end{lemma}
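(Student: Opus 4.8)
The plan is to apply Lemma~\ref{lemma_invsubalgebraiso}: both $\e\hat M$ and $K_q(S^3\setminus K)$ are modules over $A_q^{\Z_2}$ (the image of $K_q(T^2)$) that are free of rank one over $\C[x]$, so it suffices to produce a $\C[x]$-linear isomorphism between them and then check compatibility with the actions of $y$ and $z$ on a single $\C[x]$-module generator.

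First I would make the two $\C[x]$-structures explicit. On the source side, $s$ acts on $\hat M = \C[X^{\pm 1}]$ by $f(X)\mapsto -f(X^{-1})$, so $\e = (1+s)/2$ projects onto the antisymmetric Laurent polynomials; by Lemma~\ref{chebyshevidentities} these are exactly $\C[x]\delta$ with $\delta = X - X^{-1}$, so $\e\hat M$ is free of rank one over $\C[x]$ on the generator $\delta$. On the target side, $K_q(S^3\setminus K) = \C[u]1_K$ is free of rank one over $\C[x]$, with $x$ acting as multiplication by $u$ by the first line of (\ref{theunknot}). Hence $\delta\mapsto 1_K$ does extend to a $\C[x]$-linear isomorphism $f$, and the problem reduces to verifying $f(y\cdot\delta) = y\cdot 1_K$ and $f(z\cdot\delta) = z\cdot 1_K$.

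Next I would compute $y\cdot\delta$ and $z\cdot\delta$ in $\e\hat M$ from the formulas (\ref{formula_liftedunknot}) together with $y = Y + Y^{-1}$ and $z = q^{-1}(XY + X^{-1}Y^{-1})$ (the images of $m,l$ and the $(1,1)$ curve in $A_q\rtimes\Z_2$). A short calculation gives $Y\cdot\delta = -q^{-2}X + q^2X^{-1}$ and $Y^{-1}\cdot\delta = -q^2X + q^{-2}X^{-1}$, hence $y\cdot\delta = -(q^2+q^{-2})\delta$; and similarly $XY\cdot\delta + X^{-1}Y^{-1}\cdot\delta = -q^{-2}(X^2 - X^{-2})$, hence $z\cdot\delta = -q^{-3}(X^2 - X^{-2}) = -q^{-3}\,x\cdot\delta$. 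These match the values $y\cdot 1_K = -(q^2+q^{-2})1_K$ and $z\cdot 1_K = -q^{-3}u\,1_K = -q^{-3}\,x\cdot 1_K$ read off from (\ref{theunknot}), so by $\C[x]$-linearity of $f$ both generator conditions hold and Lemma~\ref{lemma_invsubalgebraiso} gives that $f$ is an $A_q^{\Z_2}$-module isomorphism. The last sentence of the statement is then immediate, since $\hat M$ was defined in (\ref{formula_liftedunknot}) to be the sign representation of $A_q\rtimes\Z_2$ and $\e\hat M$ is by definition its symmetrization.

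The argument contains no genuine obstacle; it is a finite verification. The only thing needing care is the bookkeeping of signs and powers of $q$: one must confirm that the framing factor $-q^{-3}$ in the $z$-action of (\ref{theunknot}) is reproduced exactly by (\ref{formula_liftedunknot}), and that the overall minus signs coming from the sign representation are consistent with those in (\ref{theunknot}) — which ultimately trace back to the $\gamma\mapsto -\Tr(\gamma)$ normalization of Theorem~\ref{thm_qeq1repvar}, cf.\ the remark after Theorem~\ref{theorem_qeq1}.
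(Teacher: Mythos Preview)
Your proof is correct and follows exactly the same approach as the paper: invoke Lemma~\ref{lemma_invsubalgebraiso}, reduce to checking the $y$- and $z$-action on the single $\C[x]$-generator $\delta$, and verify these against the formulas~(\ref{theunknot}). In fact your computation $y\cdot\delta = -(q^2+q^{-2})\delta$ is the correct one; the paper's displayed line $y\cdot\delta = -(q^2-q^{-2})\delta$ is a typo.
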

\begin{proof}
 By Lemma \ref{lemma_invsubalgebraiso}, the following computations show the claim:
 \begin{eqnarray*}
  y\cdot \delta &=& (Y+Y^{-1})(X-X^{-1}) = -(q^2-q^{-2})\delta\\
  z\cdot \delta &=& q^{-1}(XY+X^{-1}Y^{-1})(X-X^{-1}) = -q^{-3}(X+X^{-1})\delta
 \end{eqnarray*}
\end{proof}

\subsection{The trefoil}
Let $M = K_q(S^3 \setminus K)$ be the skein module of the complement of the trefoil knot. In \cite{Gel02}, Gelca showed that $M$ is free and finitely generated as a module over the meridian subalgebra $\C[x]$. (This was also shown in \cite{Le06} and \cite{BL05}.) Gelca's generators are $w', 1_K \in M$, where $1_K$ is the empty link and $w'$ is the loop labelled $d$ in Figure \ref{fig_22pp1}. However, the vector $w:=w'+q^{-2}1_K$ generates a proper submodule of $M$, so it is easiest to describe the $A_q^{\Z_2}$-module structure in terms of the basis $w, 1_K$. Translating his formulas for the action of $y$ and $z$ into this basis, we get
\begin{eqnarray*}
y\cdot w &=& -(q^2+q^{-2})w\\
z\cdot w &=& -q^{-3}S_1(x)w\\
y\cdot 1_K &=& (q^6S_4(x)-q^2)w + q^6T_6(x)1_K\\
z\cdot 1_K &=&  q^5S_3(x)w + q^5T_5(x)1_K
\end{eqnarray*}
(This follows from \cite{Gel02}, Lemma 3 and Lemma 7 for $q=0$ and $q=1$. The parameter $q$ in \cite{Gel02} is an integer,  unrelated to our $q$, and the parameter $t$ is our $q$.)

\begin{remark}
 From these formulas it is clear that $w$ generates an $A_q^{\Z_2}$-submodule of $M$. Comparing to formula (\ref{theunknot}), we see that this submodule is isomorphic to the skein module of the unknot. (The isomorphism is determined by sending the empty link to $w \in M$.)
\end{remark}

To describe the nonsymmetric module, we first define a $\C[X^{\pm 1}]$-module $\hat M$ via
\begin{equation}\label{equation_trefoilbasis}
 \hat M \cong \C[X^{\pm 1}]u \oplus \C[X^{\pm 1}]v
\end{equation}
(With this notation, $v$ will be identified with the empty link.) Let $P,S:\hat M \to \hat M$ be the operators given by (\ref{equation_diagonalaction}). We then define an $A_q\rtimes \Z_2$-module structure on $\hat M$ via the following matrices (written with respect to the ordered basis $u,v$):
\begin{equation}\label{equation_trefoilmatrices}
 s = \left[\begin{array}{cc}-1&0\\0&1\end{array}\right]S, \quad Y = \left[\begin{array}{cc}-1& q^2X^{-1}-q^6X^{-5}\\0&q^6X^{-6}\end{array}\right]P
\end{equation}
It is easy to check that these matrices satisfy the conditions of Lemma \ref{lemma_aqmodule}, which implies they define a representation of $A_q\rtimes \Z_2$.  Explicitly, the action of $Y$ on $\hat M$ is given by the formulas
\begin{equation}\label{eq_trefoilexplicit}
 Y\cdot u = -u,\quad Y\cdot v = \left(q^2X^{-1} - q^6X^{-5}\right)u + q^6X^{-6}v
\end{equation}

Since $s$ acts diagonally in the basis $(u,v)$, there is a $\C[x]$-module isomorphism $\e \hat M = \C[x]\delta u \oplus \C[x] v$.
\begin{lemma}\label{lemma_symmetrictrefoil}
 The $\C[x]$ isomorphism $f:\e \hat M \to M$ determined by $f(\delta u) = w$ and $f(v) = 1_K$ is an isomorphism of $A_q^{\Z_2}$-modules.
\end{lemma}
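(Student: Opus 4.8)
The plan is to invoke Lemma~\ref{lemma_invsubalgebraiso}. Since $f$ is already prescribed as an isomorphism of $\C[x]$-modules — it carries the $\C[x]$-basis $\{\delta u, v\}$ of $\e\hat M$ onto the $\C[x]$-basis $\{w, 1_K\}$ of $M$ — it suffices to check that $f$ intertwines the actions of the two generators $y = Y+Y^{-1}$ and $z = q^{-1}(XY+X^{-1}Y^{-1})$ of the meridian/longitude subalgebra on the module generators $\delta u$ and $v$. Concretely, using the explicit $A_q\rtimes\Z_2$-action on $\hat M$ recorded in \eqref{equation_trefoilmatrices} and \eqref{eq_trefoilexplicit}, I would compute $y\cdot\delta u$, $z\cdot\delta u$, $y\cdot v$ and $z\cdot v$ inside $\e\hat M$, and compare them term by term with Gelca's formulas for $y\cdot w$, $z\cdot w$, $y\cdot 1_K$ and $z\cdot 1_K$ recalled above.

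The computations on $\delta u$ are short. From $Y\cdot(X^{\pm1}u) = -q^{\mp2}X^{\pm1}u$ and $Y^{-1}\cdot(X^{\pm1}u) = -q^{\pm2}X^{\pm1}u$ one gets $y\cdot\delta u = -(q^2+q^{-2})\,\delta u$ and $q^{-1}(XY+X^{-1}Y^{-1})\cdot\delta u = -q^{-3}x\,\delta u = -q^{-3}S_1(x)\,\delta u$, which match $y\cdot w$ and $z\cdot w$ under $f$. For the action on $v$ I would first invert the upper-triangular matrix in \eqref{equation_trefoilmatrices} to obtain $Y^{-1}\cdot v = (q^6X^5 - q^2X)u + q^6X^6 v$, then add this to $Y\cdot v$ from \eqref{eq_trefoilexplicit}. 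Collecting the Laurent monomials in $X$ and applying the Chebyshev identities of Lemma~\ref{chebyshevidentities} — specifically $(X-X^{-1})S_n(x) = X^{n+1}-X^{-n-1}$ and $T_n(x) = X^n+X^{-n}$ — rewrites the $u$-coefficient as $(q^6S_4(x)-q^2)\delta$ and the $v$-coefficient as $q^6T_6(x)$, so that $y\cdot v = (q^6S_4(x)-q^2)\delta u + q^6T_6(x)v = f^{-1}(y\cdot 1_K)$; the same manipulation gives $z\cdot v = q^5S_3(x)\delta u + q^5T_5(x)v = f^{-1}(z\cdot 1_K)$. Lemma~\ref{lemma_invsubalgebraiso} then yields that $f$ is an isomorphism of $A_q^{\Z_2}$-modules.

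This is an entirely mechanical verification; the only delicate point is the bookkeeping of powers of $q$ and $X$, coming from the off-diagonal entry of $Y^{-1}$ together with the substitutions $X\mapsto q^{\pm2}X$ produced by the operator $P$ (and its inverse), and from recognizing the resulting Laurent polynomials as Chebyshev values. As a safeguard against sign or $q$-power slips, one can apply the consistency check described in the remark above: substituting this module structure into Lemma~\ref{lemma_liftedcoloredJones} reproduces the known colored Jones polynomials of the trefoil.
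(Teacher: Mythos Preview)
Your proposal is correct and follows essentially the same approach as the paper: invoke Lemma~\ref{lemma_invsubalgebraiso} and verify by direct computation that $y$ and $z$ act on the generators $\delta u$, $v$ as Gelca's formulas prescribe for $w$, $1_K$, using the Chebyshev identities of Lemma~\ref{chebyshevidentities} to repackage the Laurent polynomials. The only cosmetic difference is that the paper computes $y\cdot v$ via the shortcut $(Y+Y^{-1})v = (Y+sYs)v = (1+s)Yv$ (using $sv=v$), which avoids explicitly inverting the matrix for $Y$; your route of computing $Y^{-1}\cdot v$ directly is equally valid and yields the same expression.
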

\begin{proof}
 Lemma \ref{lemma_invsubalgebraiso} reduces this to several straightforward computations. For example, 
 \begin{align*}
  y\cdot v &= (Y+Y^{-1})v = (Y + sYs)v = (1+s)Yv\\
  &= (1+s)\left[(q^2X^{-1}-q^6X^{-5})u + q^6X^{-6}v\right]\\
  &= \left[q^2(X^{-1}-X)-q^6(X^{-5}-X^5)\right]u + q^6(X^{-6}+X^{6})v\\
  &= \left[-q^2 + q^6S_4(x)\right]\delta u + q^6T_6(x)v
 \end{align*}
 (In the last step we used Lemma \ref{chebyshevidentities}.)
\end{proof}

The next lemma describes the structure of $M$ in terms of standard (induced) modules of $A_q\rtimes \Z_2$. Let $\tau: A_q\rtimes \Z_2 \to A_q\rtimes \Z_2$ denote the automorphism
\[
 \tau(X) = X,\quad \tau(s) = s,\quad \tau(Y) = q^{-1}XY
\]
Let $V^-$ be the sign representation of $A_q\rtimes \Z_2$ (i.e. the nonsymmetric skein module of the unknot), and let $V^+$ be the standard polynomial representation. 
\begin{lemma}\label{lemma_trefoilseq}
 $M$ admits a decomposition into a nonsplit exact sequence
 \[
  0 \to V^- \to M \to \tau^{-6}(V^+) \to 0
 \]
where $\tau^{N}(V^+)$ is the twist of $V^+$ by $\tau^N$. If $q$ is not a root of unity, then $V^-$ is the unique nontrivial submodule of $M$.
\end{lemma}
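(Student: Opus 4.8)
The module appearing in the statement is the non-symmetric module $\hat M$ of \eqref{equation_trefoilmatrices} (whose symmetrization $\e\hat M$ is the skein module by Lemma~\ref{lemma_symmetrictrefoil}); I will write $M$ for it, as in the lemma. The plan has three steps: read the short exact sequence off the matrices \eqref{equation_trefoilmatrices}; prove it is non-split by exhibiting a functional equation for a Laurent polynomial that has no solution; and deduce the uniqueness statement from the simplicity of $V^+$, $V^-$ and $\tau^{-6}(V^+)$ as $A_q\rtimes\Z_2$-modules when $q$ is not a root of unity.

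\emph{Step 1.} In the ordered basis $(u,v)$ the operator $Y$ of \eqref{equation_trefoilmatrices} is upper triangular and $s$ is diagonal, so $V:=\C[X^{\pm1}]u$ is an $A_q\rtimes\Z_2$-submodule. By \eqref{eq_trefoilexplicit} one has $Y\cdot(f(X)u)=-f(q^{-2}X)u$ and $s\cdot(f(X)u)=-f(X^{-1})u$, so $V\cong V^-$ by \eqref{formula_liftedunknot}. On $M/V$, writing $\bar v$ for the class of $v$, the same formulas give $Y\cdot(f(X)\bar v)=q^{6}X^{-6}f(q^{-2}X)\bar v$ and $s\cdot(f(X)\bar v)=f(X^{-1})\bar v$; since $\tau(Y)=q^{-1}XY$ yields $\tau^{-6}(Y)=q^{6}X^{-6}Y$ (and $\tau$ fixes $s$), this is exactly the action on $\tau^{-6}(V^+)$. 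Hence the exact sequence.

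\emph{Step 2.} A splitting would be an $A_q\rtimes\Z_2$-linear map $\sigma\colon\tau^{-6}(V^+)\to M$ lifting $\bar v$, so $\sigma(\bar v)=v+g(X)u$ for some $g\in\C[X^{\pm1}]$. Comparing the $u$-components of $Y\cdot\sigma(\bar v)$ and $\sigma(Y\cdot\bar v)=q^{6}X^{-6}\sigma(\bar v)$, using $Y\cdot(g(X)u)=-g(q^{-2}X)u$ together with \eqref{eq_trefoilexplicit}, yields
\[
g(q^{-2}X)+q^{6}X^{-6}g(X)=q^{2}X^{-1}-q^{6}X^{-5}.
\]
Writing $g=\sum_k a_kX^k$, this is the recursion $q^{-2m}a_m+q^{6}a_{m+6}=c_m$ with $c_{-1}=q^{2}$, $c_{-5}=-q^{6}$ and $c_m=0$ otherwise. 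Any Laurent-polynomial solution vanishes for $m\ll0$, hence is obtained by forward propagation on each residue class modulo $6$; but a one-line check shows that on the classes of $-1$ and $-5$ this propagated sequence is eventually nonzero (e.g.\ $a_5=q^{-4}$, $a_1=-1$, after which the homogeneous step $a_{m+6}=-q^{-6-2m}a_m$ never kills a nonzero term), so it has infinite support — a contradiction. Thus the sequence is non-split for every $q\neq0$.

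\emph{Step 3.} Suppose $q$ is not a root of unity and let $0\neq N\subseteq M$ be a submodule, with $\pi\colon M\to M/V$ the quotient map. A submodule of the standard (resp.\ sign) representation is a $q^{-2}$-shift--invariant ideal of $\C[X^{\pm1}]$; since $q^{-2}$ has infinite order, such an ideal is $0$ or everything, so $V^\pm$ and $\tau^{-6}(V^+)$ are simple $A_q\rtimes\Z_2$-modules. Now $\pi(N)$ is $0$ or all of $\tau^{-6}(V^+)$. If $\pi(N)=0$ then $N\subseteq V\cong V^-$ is simple, so $N=V$. If $\pi(N)=\tau^{-6}(V^+)$ then $N+V=M$; were $V\not\subseteq N$ we would get $N\cap V=0$ by simplicity of $V$, hence $M=N\oplus V$, contradicting Step 2, so $V\subseteq N$ and $N=M$. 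Therefore $V\cong V^-$ is the unique nontrivial submodule. I expect Step 2 to be the main obstacle: one must keep careful track of the twisted $Y$-action on the quotient and of the powers of $q$ to set up the functional equation correctly, and then argue rigorously that its Fourier-coefficient recursion admits no finitely supported solution (the short propagation argument above being the heart of the matter).
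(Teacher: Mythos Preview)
Your proof is correct and follows essentially the same strategy as the paper: read the sequence from the upper-triangular matrices, derive the same functional equation $q^{6}X^{-6}g(X)+g(q^{-2}X)=q^{2}X^{-1}-q^{6}X^{-5}$ to rule out a splitting, and deduce uniqueness from simplicity of the two factors. The only notable difference is in Step~2: the paper disposes of the functional equation in one line by observing that if $g\ne 0$ has support in $[m,M]$ then the left side has support exactly $[m-6,M]$, hence width $\ge 6$, while the right side has width $4$; your recursion argument reaches the same conclusion but is longer than necessary.
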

\begin{proof}
 The existence of this short exact sequence is clear because in our chosen basis (\ref{equation_trefoilbasis}) the operators $S$, $P$, $s$, $Y$, and $X$ all act by upper-triangular matrices. We have already identified the submodule with $V^-$, and the identification of the quotient is clear by (\ref{equation_trefoilmatrices}). If this sequence were split, there would exist an $m \in M$ with $m = v + f(x)u$ and $(q^6X^{-6} - Y)\cdot m = 0$ (since this equation holds in the quotient). However, this equation implies $q^6X^{-6}f(X) + f(q^{-2}X) = q^6X^{-5} - q^{2}X^{-1}$, and this is impossible because the total degree of the left hand side is at least $6$, while the total degree of the right hand side is $4$.
 
 If $q$ is not a root of unity, then in the standard polynomial representation $V^+$ the element $X^k$ is a $\C$-basis for the kernel of the operator $Y - q^{-2k}$. This implies that $V^+$ is simple, which implies $V^-$ and $\tau(V^+)$ are simple. Then the final claim follows from the general fact that a nonsplit extension of two simple modules has a unique nontrivial submodule.
\end{proof}

\subsection{$(2,2p+1)$ torus knots}
In this subsection we recall the calculations of Gelca and Sain \cite{GS03} for the $(2,2p+1)$ torus knot $K_p$. We also extend their calculations to completely determine the module structure of the submodule of $K_q(S^3\setminus K_p)$ generated by the empty link $1_K \in K_q(S^3\setminus K_p)$, and we give an explicit presentation of the nonsymmetric version of this module. 

In \cite{GS03}, the authors proved that there is an isomorphism of $\C[x]$-modules
\[
K_q(S^3\setminus K_p) \cong \bigoplus_{i=0}^p \C[x] v^i
\]
Here $v$ is the loop labelled $d$ in Figure \ref{fig_22pp1}, and $v^i$ is $i$ parallel copies of $v$ (and $v^0 = 1_K$ is the empty link, by convention). We define the element
\[
w := S_{p-1}(v) + q^{-2}S_p(v) \in K_q(S^3\setminus K_p)
\]
Then Gelca and Sain prove the following:
\begin{lemma}[\cite{GS03} Prop 4.4]\label{gelcalemma}
\begin{eqnarray}\label{22pplus1formulas1}
y\cdot 1_K &=& q^{4p+2}T_{4p+2}1_K + (-1)^{p+1}q^{2p+2}(q^2S_{2p+2}-q^{-2}S_{2p-2})w\notag\\
z\cdot 1_K &=& q^{4p+1}T_{4p+1}1_K + (-1)^{p+1}q^{2p+2}(qS_{2p+1}-q^{-3}S_{2p-3})w
\end{eqnarray}
\end{lemma}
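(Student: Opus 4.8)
The plan is to deduce this directly from Proposition~4.4 of \cite{GS03}: the statement is theirs, so the real work is to recall their computation and translate it into the normalizations used here. First I would recall the description of $S^3\setminus K_p$ employed in \cite{GS03}, in which the peripheral torus $T^2=\partial(S^3\setminus K_p)$ is positioned so that the curve $v$ (the loop ``$d$'' of Figure~\ref{fig_22pp1}) and its parallel push-offs, rewritten via Chebyshev polynomials, furnish the free $\C[x]$-basis $\{v^i : 0\le i\le p\}$ of the skein module. One then isotopes the $0$-framed longitude $l$ and the $(1,1)$-curve $z$ off $T^2$ into the complement and reduces the resulting framed links using the Kauffman bracket relations of Figure~\ref{kbsm}, collecting the outcome as $\C[x]$-combinations of the $v^i$ and packaging the top part into the single element $w := S_{p-1}(v)+q^{-2}S_p(v)$. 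The resulting identities are the two displayed formulas. (This repackaging also explains why $w$, rather than, say, $v^{p}$, is the element spanning the ``unknot submodule'' that appears later.)

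The second step --- reconciling conventions --- is essentially the only delicate point. The auxiliary integer parameter in \cite{GS03} is unrelated to our $q$ and should be specialized away; their deformation variable becomes our $q$ after the evident substitution (as in the trefoil case of \cite{Gel02}); and one must insert the framing correction relating the blackboard framing of the diagrams to the $0$-framing of $l$, using the trivial-loop value $-q^2-q^{-2}$ of Figure~\ref{kbsm}. This correction is what produces the overall sign $(-1)^{p+1}$ and, together with the skein reduction itself, the powers $q^{4p+2},\,q^{4p+1},\,q^{2p+2}$; the Chebyshev polynomials $T_{4p+2}(x),\,T_{4p+1}(x)$ and the combinations $q^2S_{2p+2}(x)-q^{-2}S_{2p-2}(x)$, $qS_{2p+1}(x)-q^{-3}S_{2p-3}(x)$ then come out of the product-to-sum identities for $T_n$ and $S_n$ (equivalently, from the relations~(\ref{relationsforB'}) and the commutation rules in $A_q^{\Z_2}$, cf.\ Lemma~\ref{lemma_invsubalgebraiso}), which govern how a push-off on $T^2$ expands in the $v^i$-basis under the isomorphism of Theorem~\ref{fg00}.

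The main obstacle is bookkeeping rather than conceptual difficulty: tracking the framing-induced powers of $q$ and the overall sign through the chain of skein reductions, where a slip is easy and invisible to the eye. To guard against this I would run the independent consistency check afforded by Lemma~\ref{lemma_liftedcoloredJones}: feeding the resulting $A_q\rtimes\Z_2$-module structure into the colored Jones recursion and comparing the output with the classical colored Jones polynomials of the $(2,2p+1)$ torus knots (as done in \cite[Lemma~6.4.7]{Sam12}) pins down every sign and power of $q$ unambiguously.
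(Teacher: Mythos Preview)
Your proposal is correct and matches the paper's treatment: the paper does not supply a proof of this lemma at all but simply records it as a citation of Proposition~4.4 of \cite{GS03}, so ``deduce this directly from Proposition~4.4 of \cite{GS03}'' is exactly what is done. Your additional discussion of the convention translation (their auxiliary integer versus our $q$, their $t$ becoming our $q$, and the consistency check via Lemma~\ref{lemma_liftedcoloredJones}) goes beyond what the paper writes here but is consistent with the parenthetical remarks the paper makes in the trefoil case and in the surrounding text.
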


\begin{figure}
\begin{center}
\includegraphics[scale=.6]{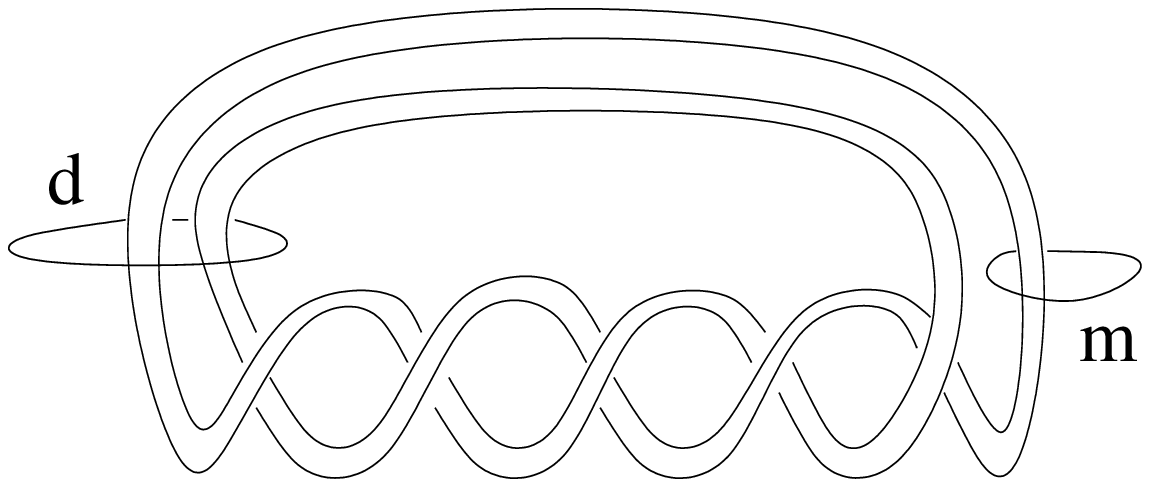}
\caption{Generators for the $(2,2p+1)$ knot}\label{fig_22pp1}
\end{center}
\end{figure}


We extend their computations with the following lemma:
\begin{lemma}\label{lemma_22pplus1sym}
 The action of $K_q(T^2)$ on $w \in K_q(S^3\setminus K_p)$ is determined by 
 \begin{eqnarray}\label{22pplus1formulas2}
  y\cdot w &=& -(q^2+q^{-2})w \label{equation_22pp1_1}\\ 
  z\cdot w &=& -q^{-3}xw\notag 
 \end{eqnarray}
\end{lemma}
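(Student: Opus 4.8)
The plan is to reduce the identity to a computation of the $K_q(T^2)$-action on the full $\C[x]$-basis $\{v^i\}_{i=0}^p$ of $M := K_q(S^3\setminus K_p)$, and then to substitute $w = S_{p-1}(v)+q^{-2}S_p(v)$ and collapse the resulting Chebyshev expressions. The only input of Gelca--Sain (Lemma~\ref{gelcalemma}) concerns the basis vector $v^0 = 1_K$, so the real work is to extend their computation of $y\cdot(-)$ and $z\cdot(-)$ to every $v^i$ with $1\le i\le p$. This is the same kind of statement that, for $p=1$ (the trefoil), is already contained in \cite{Gel02} and is used in the proof of Lemma~\ref{lemma_symmetrictrefoil}.

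\emph{Step 1: the action on all $v^i$.} The $i$ parallel copies of $d$ lie in a solid torus $V\subset S^3\setminus K_p$ which is a regular neighbourhood of $d$. Isotoping the longitude $l$ and the $(1,1)$-curve of $K_p$ across the Heegaard surface of $S^3$ exactly as in \cite{GS03} — but now pushing them past the $i$ strands of $d$ and resolving the resulting crossings with the Kauffman relation — writes $y\cdot v^i$ and $z\cdot v^i$ as $\C[x]$-linear combinations of the $v^j$. For $i=0$ this reproduces (\ref{22pplus1formulas1}); in general one obtains formulas of the same shape, with the Chebyshev indices shifted by $i$ and governed by the multiplication of $K_q(T^2)$ (Theorem~\ref{fg00}), together with the relation that rewrites $v^{p+1}$ in the basis $\{v^0,\dots,v^p\}$ — this last relation is where the integer $2p+1$ enters. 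Equivalently, one may phrase the output of this step as a recursion relating the $K_q(T^2)$-action on $v^i$ to that on $v^{i-1}$.

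\emph{Step 2: substitution and cancellation.} Plugging $w = S_{p-1}(v)+q^{-2}S_p(v)$ into the formulas of Step 1 and applying the Chebyshev product-to-sum identities (in the form of Lemma~\ref{chebyshevidentities}, together with the standard $S_aS_b = \sum_k S_{a-b+2k}$ and $T_aS_b$ relations), the high-degree terms ($T_{4p+2}$, $S_{2p\pm 2}$, $S_{2p+1}$, $S_{2p-3}$, and the Chebyshevs in $v$) must telescope and cancel, leaving precisely $-(q^2+q^{-2})w$ for $y\cdot w$ and $-q^{-3}xw$ for $z\cdot w$. The specific linear combination $S_{p-1}(v)+q^{-2}S_p(v)$ is exactly the one (up to scalar) for which this collapse occurs; it is the quantum analogue of the factor $B_K$ of the $A$-polynomial (cf.\ Remark~\ref{remark_unknotsubmodule}), and its appearance is forced by the requirement that $w$ span a copy of the unknot skein module, i.e.\ that the $K_q(T^2)$-action on $w$ match (\ref{theunknot}).

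\emph{Main obstacle and a check.} The hard part is Step 1: correctly extending the Gelca--Sain resolution from $1_K$ to all $v^i$, and then managing the Chebyshev bookkeeping in Step 2 so that all terms outside $\{w, xw\}$ vanish. An independent consistency check is available, as noted after Theorem~\ref{thm_conjecture2}: the module structure produced by Step 1, fed through Lemma~\ref{lemma_liftedcoloredJones}, must reproduce the colored Jones polynomials of the $(2,2p+1)$ torus knots, which were verified for all $n$ in \cite[Lemma 6.4.7]{Sam12}.
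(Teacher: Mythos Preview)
Your approach is conceptually sound but takes a much harder route than the paper's, and Step~1 as you describe it is essentially a black box that you have not opened.

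The paper does \emph{not} compute $y\cdot v^i$ and $z\cdot v^i$ for general $i$. Instead it uses two tricks to act on $w$ directly. First, it replaces the pair $(y,z)$ by the pair $((1,-4p-2)_T,(1,-4p-1)_T)$: these are related to $(y,z)$ by an iterated Dehn twist of $T^2$, so (via the argument of Lemma~\ref{lemma_invsubalgebraiso}) establishing the action of either pair on $w$ together with Lemma~\ref{gelcalemma} determines the same $A_q^{\Z_2}$-module. Second, and this is the key point, the curve $(1,-4p-2)_T$ and the element $w$ both live in a \emph{solid torus} inside $S^3\setminus K_p$: one removes a regular neighbourhood of the M\"obius band bounded by the $(2,2p+1)$ knot and is left with a solid torus in which $v$ is the core longitude and $(1,-4p-2)_T$ becomes the $(2p+1,-2)_T$ curve. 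The product $(2p+1,-2)_T\cdot w$ can then be evaluated in $K_q(\textrm{solid torus})=\C[v]$ and matched against the right-hand side using \cite[Thm.~3.1]{GS03}. For the odd curve $(1,-4p-1)_T$ there is no such clean solid-torus reduction; here the paper instead modifies the skein sequences $a_k,b_k$ of \cite[Prop.~4.3]{GS03} to sequences $a'_k,b'_k$ with $q^{-6p}a'_{2p+1}=(1,-4p-1)_T\cdot w$, solves the same first- and second-order recursions, and simplifies with \cite[Thm.~3.1]{GS03}.

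By contrast, your Step~1 asks for the full action on every $v^i$, which is exactly what \cite{GS03} stopped short of doing; even granting it, your Step~2 would require tracking Chebyshev cancellations across $p+1$ basis vectors rather than a single one. The paper's M\"obius-band/Dehn-twist reduction buys a computation localized entirely on $w$ and carried out in a solid torus, which is why it succeeds without ever writing down $y\cdot v^i$ for $i>0$.
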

\begin{proof}
(In \cite{GS03}, the convention for the isomorphism of Theorem \ref{fg00} is different from ours. In particular, in their convention the image of $(m,n)_T \in K_q(T^2)$ in $A_q^{\Z_2}$ is $q^{mn}(X^mY^{-n} + X^{-m}Y^n)$. In this proof only, we will follow their convention.) We first note that proving equations (\ref{equation_22pp1_1}) is equivalent to proving the following:
 \begin{eqnarray}
  (1,-4p-2)_T\cdot w &=& -q^{4p-2}\left( q^{-2}S_{4p+2} - q^2 S_{4p}\right)w \label{equation_22pp1_2}\\
  (1,-4p-1)_T\cdot w &=& -q^{4p-1}\left(q^{-2}S_{4p+1} - q^2 S_{4p-1}\right)w\notag
 \end{eqnarray}
 To see this, we first note that the proof of Lemma \ref{lemma_22pp1lift} (along with a short calculation) shows that both sets of equations (\ref{equation_22pp1_1}) and (\ref{equation_22pp1_2}) hold inside the $A_q^{\Z_2}$-module $\e \hat M$ (which is defined via the operators in (\ref{equation_22pp1liftops})). Then an appropriate Dehn twist of $T^2$ provides an automorphism of $A_q^{\Z_2}$ that fixes $x$ and sends $y,z$ to the elements $(1,-4p-1)_T$ and $(1,-4p-2)_T$, respectively. Therefore, the proof of Lemma \ref{lemma_invsubalgebraiso} shows that either set of equations (together with Lemma \ref{gelcalemma}) completely determine the $A_q^{\Z_2}$-module structure of $\e \hat M$. (We note that $\e \hat M$ is actually an $A_q^{\Z_2}$-module by Lemma \ref{lemma_22pp1_liftismod}.)

To prove the first equation of (\ref{equation_22pp1_2}) we follow the strategy of \cite[Prop. 4.1]{GS03}. Namely, if we remove from $S_3\setminus K_p$ a regular neighborhood of the M\"obius band that is bounded by the knot, then the resulting 3-manifold is a solid torus which contains both the $(1,-4p-2)_T$ curve and the element $w$ in its interior. Therefore, the left hand side of the first equation in (\ref{equation_22pp1_2}) can be simplifed inside the skein module of the solid torus as follows:
\begin{equation}\label{equation_22pp1_whenwillitend}
 (2p+1,-2)_T\cdot \left[ S_{p-1}(v) + q^{-2}S_p(v)\right] = q^{-4p-2}\left[q^{-4p}S_{3p}(v) + q^{-4p-6}S_{3p+1}(v) - q^{4p}S_p(v) - q^{4p+2}S_{p-1}(v)\right]
\end{equation}
(The image in the solid torus of the $(1,-4p-2)_T$ curve on the original torus is the same as the image of the $(2p+1,-2)_T$ curve on the torus which bounds the solid torus, and the $v$ curve in the original knot complement is the image of the longitude of the boundary of the solid torus.) Then the right hand side of the first equation in (\ref{equation_22pp1_2}) can be simplified using \cite[Thm. 3.1]{GS03}, and this agrees with the right hand side of equation (\ref{equation_22pp1_whenwillitend}). (To make the powers of $q$ match exactly, note that the rightmost parenthesized expression of \cite[Thm. 3.1]{GS03} is $qw$.) This completes the proof of the first equation of (\ref{equation_22pp1_2}).

The proof of the second equation in (\ref{equation_22pp1_2}) is more lengthy, so we include a sketch and leave the details for the interested reader. The strategy is to follow the proof of \cite[Prop. 4.3]{GS03}. To prove this, the authors define two sequences of skeins $a_k, b_k \in K_q(S^3\setminus K_p)$ so that $q^{-6p}a_{2p+1} = (1,-4p-1)_T\cdot 1_K$. These sequences of skeins can be modified in a straightforward way to obtain sequences $a'_k,b'_k$ with $q^{-6p}a'_{2p+1} = (1,-4p-1)_T\cdot w$. The authors then show that the sequence $b_k$ satisfies a second order recurrence that can be solved explicitly, and the sequence $a_k$ satisfies a first order recurrence with an inhomogenous term depending on $b_k$ which can also be solved explicitly. Then \cite[Thm. 3.1]{GS03} allows the simplification of this explicit expression to obtain the second formula of Lemma \ref{gelcalemma}. In a similar way, the sequences $a'_k$ and $b'_k$ can be written explicitly to obtain the second formula of (\ref{equation_22pp1_2}).
\end{proof}

We define the submodule $ M \subset K_q(S^3\setminus K_p)$ by
\[
M := K_q(T^2)\cdot 1_K
\]

\begin{corollary}
 We have equality of subspaces 
 $M = K_q(T^2)\cdot 1_K = \C[x] 1_K + \C[x]w$.
\end{corollary}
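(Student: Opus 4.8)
The statement says that the cyclic module $M=K_q(T^2)\cdot 1_K$ coincides with $N:=\C[x]\,1_K+\C[x]\,w$, and I would prove the two inclusions separately, the second being the substantive one.

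For $M\subseteq N$: the plan is to verify that $N$ is a $K_q(T^2)$-submodule containing $1_K$; since $x,y,z$ generate $K_q(T^2)$ this gives $M\subseteq N$. Stability of $N$ under multiplication by $x$ is immediate. By Lemma \ref{gelcalemma} we have $y\cdot 1_K,\ z\cdot 1_K\in N$, and by Lemma \ref{lemma_22pplus1sym} we have $y\cdot w,\ z\cdot w\in N$; it then remains to propagate this to $y\cdot(x^k 1_K),\ z\cdot(x^k 1_K),\ y\cdot(x^k w),\ z\cdot(x^k w)$ for all $k$. I would do this by a simultaneous induction on $k$, pushing the extra factor of $x$ across $y$ or $z$ by means of the relations $yx=q^2xy-(q^3-q^{-1})z$ and $zx=q^{-2}xz+(q-q^{-3})y$ coming from (\ref{relationsforB'}); this is precisely the bookkeeping behind Lemma \ref{lemma_invsubalgebraiso}, so one may instead simply cite that lemma.

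For $N\subseteq M$: clearly $\C[x]\,1_K\subseteq M$, so the whole point is to show $w\in M$ (then $\C[x]\,w\subseteq M$ since $M$ is $\C[x]$-stable). From Lemma \ref{gelcalemma} one reads off that $c_1(x)\,w=y\cdot 1_K-q^{4p+2}T_{4p+2}(x)\cdot 1_K$ and $c_2(x)\,w=z\cdot 1_K-q^{4p+1}T_{4p+1}(x)\cdot 1_K$ both lie in $M$, where $c_1=(-1)^{p+1}q^{2p+2}\bigl(q^2S_{2p+2}-q^{-2}S_{2p-2}\bigr)$ and $c_2=(-1)^{p+1}q^{2p+2}\bigl(qS_{2p+1}-q^{-3}S_{2p-3}\bigr)$. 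Hence $I:=\{\,b\in\C[x]:b\,w\in M\,\}$ is a nonzero ideal of $\C[x]$ containing $c_1$ and $c_2$, and the goal becomes $I=\C[x]$. The key point is that $I$ is constrained not just as an ideal but as a submodule: $I\,w=M\cap\C[x]\,w$, and since $M$ is a $K_q(T^2)$-submodule of $N$, the quotient $N/M$ is a $K_q(T^2)$-module which, as a $\C[x]$-module, is the cyclic torsion module $\C[x]/I$ generated by the image $\bar w$ of $w$, with $y\bar w=-(q^2+q^{-2})\bar w$ and $z\bar w=-q^{-3}x\bar w$; by Lemma \ref{unknotlift} these are exactly the relations satisfied by the generator of the skein module of the unknot, so $N/M$ is a torsion quotient of that module. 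It therefore suffices to show the unknot skein module has no proper nonzero submodule: using Lemma \ref{chebyshevidentities} one checks that on it the longitude $y$ is diagonalized by the Chebyshev basis $\{S_n(x)\}$ with eigenvalue $-(q^{2n+2}+q^{-2n-2})$ on $S_n$, so a submodule is a span of some of the $S_n$, and closure under multiplication by $x$ (which sends $S_n\mapsto S_{n+1}+S_{n-1}$) forces it to be everything. Thus $I=\C[x]$, $w\in M$, and $M=N$.

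The main obstacle is this last step — equivalently, the simplicity of the unknot module, or the statement that $c_1$ and $c_2$ generate the unit ideal. The eigenvalue argument is clean exactly when the eigenvalues $-(q^{2n+2}+q^{-2n-2})$ are pairwise distinct, i.e. when $q$ is not a root of unity; at the remaining relevant roots of unity one must instead examine $\gcd(c_1,c_2)$ directly using the closed forms of Lemma \ref{chebyshevidentities} — the computation reduces to observing that a common root $x=X+X^{-1}$ of $q^2S_{2p+2}-q^{-2}S_{2p-2}$ and $qS_{2p+1}-q^{-3}S_{2p-3}$ would force $(X^2-1)(X^8-1)=0$ — and treating those finitely many exceptional parameters by hand. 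This case analysis, rather than anything conceptual, is where the care is needed.
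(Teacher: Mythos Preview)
Your argument is correct and considerably more careful than the paper's, which disposes of the whole corollary in one line: ``This is straightforward from the formulas (\ref{22pplus1formulas1}) and (\ref{22pplus1formulas2}).'' The intended reading is almost certainly just your argument for $M\subseteq N$ --- the formulas show $N$ is closed under $y$ and $z$ on the $\C[x]$-generators, hence (via Lemma \ref{lemma_invsubalgebraiso}) is a $K_q(T^2)$-submodule containing $1_K$ --- together with reading off $c_1(x)\,w,\ c_2(x)\,w\in M$ from (\ref{22pplus1formulas1}) and not worrying further. You are right that the reverse inclusion is where the content lies, and your reduction to simplicity of the unknot module is the clean way to see it; that argument is valid whenever $q$ is not a root of unity, which is the regime the paper implicitly works in (the subsequent construction of $\hat M$ via Morita equivalence already requires $q^4\neq1$).

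One correction: your asserted endgame for the root-of-unity case, that a common root of $c_1,c_2$ forces $(X^2-1)(X^8-1)=0$ independently of $p$ and $q$, is not right. A cleaner direct route is the two-step Euclidean reduction
\[
q\,x\,b_p - a_p = a_{p-1},\qquad x\,a_{p-1} - q\,b_p = q\,b_{p-1}
\]
(with $a_p:=q^2S_{2p+2}-q^{-2}S_{2p-2}$ and $b_p:=qS_{2p+1}-q^{-3}S_{2p-3}$), which shows $(a_p,b_p)=(a_0,b_0)=\bigl(q^2x^2-q^2+q^{-2},\,(q+q^{-3})x\bigr)$ for every $p$; this is the unit ideal precisely when $q^8\neq1$. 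At $q^4=-1$, for instance, one finds $\gcd(c_1,c_2)=x^2-2$ already for the trefoil, so the equality $M=N$ genuinely needs a restriction on $q$ (or a further argument) that the paper does not make explicit.
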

\begin{proof}
 This is straightforward from the formulas (\ref{22pplus1formulas1}) and (\ref{22pplus1formulas2}).
\end{proof}

We now describe the $A_q\rtimes \Z_2$-module $\hat M$ that satisfies $\e \hat M \cong M$. As in the case of the trefoil, we define the $\C[X^{\pm 1}]$-module structure first: 
\[ \hat M := \C[X^{\pm 1}]u \oplus \C[X^{\pm 1}] v\]
(In this notation, the empty link is identified with $v$.) We define operators $P, S:\hat M \to \hat M$ using formula (\ref{equation_diagonalaction}). Then we define the actions of $Y$ and $s$ via the following operators (which are written with respect to the ordered basis $(u,v)$):
\begin{equation}\label{equation_22pp1liftops}
 s = \left[\begin{array}{cc}-1&0\\0&1\end{array}\right]S, \quad Y = \left[\begin{array}{cc}-1& (-1)^{p}q^{2p+4}(X^{-2p-3}-q^{-4} X^{-2p+1})\\0 &q^{2(2p+1)}X^{-2(2p+1)}\end{array}\right]P
\end{equation}
\begin{lemma}\label{lemma_22pp1_liftismod}
The formulas (\ref{equation_22pp1liftops}) give $\hat M$ an $A_q\rtimes \Z_2$-module structure.
\end{lemma}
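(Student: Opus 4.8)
The plan is to apply Lemma \ref{lemma_aqmodule} verbatim, so the only work is to extract the matrices $A(X)$ and $B(X)$ from (\ref{equation_22pp1liftops}) and check the two hypotheses of that lemma. Here
\[
B(X) = \begin{bmatrix} -1 & 0 \\ 0 & 1 \end{bmatrix}, \qquad
A(X) = \begin{bmatrix} -1 & g(X) \\ 0 & h(X) \end{bmatrix},
\]
where $g(X) := (-1)^p q^{2p+4}\bigl(X^{-2p-3} - q^{-4}X^{-2p+1}\bigr)$ and $h(X) := q^{2(2p+1)}X^{-2(2p+1)}$. Since $B(X)$ is independent of $X$ and $B(X)^2 = \mathrm{Id}$, the first hypothesis $B(X^{-1})B(X) = \mathrm{Id}$ is immediate, and the second hypothesis reduces to the identity $A(X)\,B\,A(q^2X^{-1}) = B$.

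The remaining step is to multiply out the three upper-triangular $2\times 2$ matrices. Comparing entries of $A(X)\,B\,A(q^2X^{-1})$ with $B$ gives three scalar identities: the $(1,1)$-entry is automatically $-1$; the $(2,2)$-entry demands $h(X)\,h(q^2X^{-1}) = 1$, which holds because $h(q^2X^{-1}) = q^{-2(2p+1)}X^{2(2p+1)}$; and the $(1,2)$-entry demands $g(q^2X^{-1}) = -g(X)\,h(q^2X^{-1})$. This last equality is the only one requiring a genuine computation: expanding both sides, one finds $g(q^2X^{-1}) = (-1)^p\bigl(q^{-2p-2}X^{2p+3} - q^{-2p+2}X^{2p-1}\bigr)$ and $-g(X)\,h(q^2X^{-1}) = (-1)^p\bigl(q^{-2p-2}X^{2p+3} - q^{-2p+2}X^{2p-1}\bigr)$, so they agree. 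Together with Lemma \ref{lemma_aqmodule} this shows $X$, $s = B(X)S$, and $Y = A(X)P$ satisfy the defining relations of $A_q\rtimes\Z_2$ on $\hat M$.

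The main (and essentially only) obstacle is the bookkeeping of the powers of $q$ and $X$ in the $(1,2)$-entry check: the exponents appearing in (\ref{equation_22pp1liftops}) — in particular the shifts $-2p-3$, $-2p+1$, and $-2(2p+1)$, and the scalar $(-1)^p q^{2p+4}$ — are chosen precisely so that this cancellation occurs, so I would carry out that expansion carefully (tracking the substitutions $X \mapsto q^2X^{-1}$ and $X \mapsto q^{-2}X$ separately) and present it as a short display rather than in prose.
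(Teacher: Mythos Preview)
Your proposal is correct and takes exactly the same approach as the paper: the paper's proof consists of the single sentence ``This follows from a straightforward calculation and Lemma \ref{lemma_aqmodule},'' and you have simply written out that calculation in full. Your bookkeeping of the $(1,2)$-entry is accurate.
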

\begin{proof}
This follows from a straightforward calculation and Lemma \ref{lemma_aqmodule}. 
\end{proof}
Explicitly, the action of $Y$ on $\hat M$ is given by 
\[
 Y\cdot u = -u,\quad Y\cdot v = \left[ (-1)^pq^{2p+4}(X^{-2p-3}-q^{-4}X^{-2p+1})\right]u + q^{2(2p+1)}X^{-2(2p+1)}v
\]

As before, $s$ acts diagonally, which gives a decomposition $\e \hat M = \C[x]\delta u \oplus \C[x]v$.
\begin{lemma}\label{lemma_22pp1lift}
 The $\C[x]$-module isomorphism $f:\e \hat M \to M$ given by $f(\delta u) = w$ and $f(v) = 1_K$ is an isomorphism of $A_q^{\Z_2}$-modules.
\end{lemma}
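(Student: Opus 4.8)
The plan is to invoke Lemma \ref{lemma_invsubalgebraiso}. First observe that $f$ really is an isomorphism of $\C[x]$-modules: since $s$ acts diagonally on $\hat M$ in the basis $(u,v)$, we have $\e\hat M = \C[x]\delta u \oplus \C[x]v$ as a free $\C[x]$-module of rank $2$, while by the Corollary preceding (\ref{equation_22pp1liftops}) together with the $\C[x]$-freeness of $K_q(S^3\setminus K_p)$ in the powers of $v$, the elements $w = S_{p-1}(v)+q^{-2}S_p(v)$ and $1_K = v^0$ are $\C[x]$-linearly independent and span $M$, so $M = \C[x]w \oplus \C[x]1_K$. Thus $f$ is a bijective $\C[x]$-linear map, and by Lemma \ref{lemma_invsubalgebraiso} it remains only to verify that $f$ intertwines the actions of $y$ and $z$ on the two $\C[x]$-generators $\delta u$ and $v$ of $\e\hat M$, i.e. the four identities $f(y\cdot \delta u) = y\cdot w$, $f(z\cdot\delta u)=z\cdot w$, $f(y\cdot v)=y\cdot 1_K$, and $f(z\cdot v)=z\cdot 1_K$.

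The computational device is the following reduction, valid on any $s$-invariant element $m$ of an $A_q\rtimes\Z_2$-module. From the relations $sXs = X^{-1}$, $sYs = Y^{-1}$, $XY=q^2YX$, one gets $Y^{-1}m = sYsm = sYm$, hence
\[
 y\cdot m = (Y+Y^{-1})m = (1+s)Ym, \qquad z\cdot m = q^{-1}(XY+X^{-1}Y^{-1})m = q^{-1}(1+s)(XY)m .
\]
Applying this with $m = \delta u$ or $m = v$, one reduces every computation to: (i) computing $Y\cdot u = -u$ (so $YX^{\pm1}u = q^{\mp2}X^{\pm1}Yu = -q^{\mp2}X^{\pm1}u$) and $Y\cdot v$ via the explicit formula displayed after Lemma \ref{lemma_22pp1_liftismod}; (ii) multiplying by $X$ where needed; and (iii) symmetrizing via $1+s$. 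The symmetrized expressions are then rewritten using Lemma \ref{chebyshevidentities} in the forms $X^{n+1}-X^{-n-1} = \delta\, S_n(x)$ and $X^n+X^{-n} = T_n(x)$. For $\delta u$ this yields $y\cdot\delta u = -(q^2+q^{-2})\delta u$ and $z\cdot \delta u = -q^{-3}x\,\delta u$, which match $y\cdot w$ and $z\cdot w$ from Lemma \ref{lemma_22pplus1sym} under $f$. For $v$ one obtains
\[
 y\cdot v = (-1)^{p+1}q^{2p+2}\bigl(q^2S_{2p+2}-q^{-2}S_{2p-2}\bigr)\delta u + q^{4p+2}T_{4p+2}\,v,
\]
and a parallel computation for $z\cdot v$, which under $f$ become exactly the formulas for $y\cdot 1_K$ and $z\cdot 1_K$ in Lemma \ref{gelcalemma}.

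The substance of the argument is entirely in the bookkeeping of signs and powers of $q$ in step (iii) above, and in the matching of the resulting Chebyshev expansions with the right-hand sides of Lemmas \ref{gelcalemma} and \ref{lemma_22pplus1sym}; this is where the main (though purely mechanical) difficulty lies, and where one must be careful to use the same normalization of the isomorphism of Theorem \ref{fg00} throughout (in contrast to the convention of \cite{GS03} used in the proof of Lemma \ref{lemma_22pplus1sym}). Once the four identities are checked, Lemma \ref{lemma_invsubalgebraiso} gives that $f$ is an isomorphism of $A_q^{\Z_2}$-modules, completing the proof.
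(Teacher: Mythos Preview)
Your proposal is correct and follows essentially the same approach as the paper: reduce to Lemma \ref{lemma_invsubalgebraiso}, use the identity $y\cdot m = (1+s)Ym$ (and the analogue for $z$) on the $s$-fixed generators $\delta u$ and $v$, compute $Y\cdot v$ from the explicit matrix, and rewrite the symmetrized results via Lemma \ref{chebyshevidentities} to match Lemmas \ref{gelcalemma} and \ref{lemma_22pplus1sym}. The only cosmetic difference is that the paper dispatches the $\delta u$ case by noting that $u$ generates the unknot submodule and invoking Lemma \ref{unknotlift}, whereas you carry out that computation directly.
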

\begin{proof}
 The element $u \in \hat M$ generates a proper submodule of $\hat M$, and it is clear from Lemma \ref{unknotlift} that the restriction of $f$ to this submodule is an isomorphism. We then compute
\begin{eqnarray*}
 y\cdot v &=& (Y+sYs)v = (1+s)Yv\\
 &=& (1+s)\left[(-1)^{p}q^{2p+4}(X^{-2p-3}-q^{-4}X^{-2p+1})u + q^{4p+2}X^{-4p-2}v\right]\\
 &=& (-1)^{p}q^{2p+4}(X^{-2p-3}-X^{2p+3}+q^{-4}X^{2p-1}-q^{-4}X^{-2p+1})u\\
 &\,& + q^{4p+2}(X^{-4p-2}+X^{4p+2})v\\
 &=& (-1)^{p+1}q^{2p+4}(-q^{-4}S_{2p-2}+S_{2p+2})\delta u + q^{4p+2}T_{4p+2}v
\end{eqnarray*}
(In the last step we have used Lemma \ref{chebyshevidentities}.) This shows that $f(y\cdot v) = y\cdot f(v)$. A similar computation shows $f(z\cdot v) = z\cdot f(v)$, and an application of Lemma \ref{lemma_invsubalgebraiso} completes the proof.
\end{proof}

\subsection{The figure eight}
Let $M = K_q(S^3 \setminus K)$ be the skein module of the complement of the figure eight knot. 
First we recall some facts from \cite{GS04} (translated into our notation). As $\C[x]$-modules, we have an isomorphism
\[
 M \cong \C[x]u \oplus \C[x]v \oplus \C[x]w
\]
Under this identification, the empty link is $u \in M$, and 
if $v', w' \in M$ are the loops labelled $y$ and $z$ (respectively) in Figure \ref{fig_fig8gens}, then $v = q^2 v' + u$ and $w = q^{-2}w' + u$. Gelca and Sain then give the following formulas to describe the module structure.

\begin{lemma}[\cite{GS04}]\label{actiononM}
The action of $y$ and $z$ on $M$ is determined by the formulas
\begin{eqnarray*}
 y\cdot u &=& (q^2+q^{-2})S_2u + (q^2S_2 + q^{-2})v + (q^2+q^{-2}S_2)w\\
 y\cdot v &=& (-q^6S_4 + q^2)u + (-q^6S_4+q^2S_2)v + (-q^6S_2-q^2)w\\
 y\cdot w &=& (q^{-2}-q^{-6}S_4)u + (-q^{-2}-q^{-6}S_2)v + (q^{-2}S_2-q^{-6}S_4)w\\
 z\cdot u &=& (qS_1+q^{-3}S_3)u + (q+q^{-3})S_1v + q^{-3}S_3w\\
 z\cdot v &=& -q^5S_3u+(qS_1-q^5S_3)v+(-q^5-q)S_1w\\
 z\cdot w &=& (q^{-3}S_1-q^{-7}S_5)u-q^{-7}S_3v + (q^{-3}S_3-q^{-7}S_5)w
 \end{eqnarray*}
\end{lemma}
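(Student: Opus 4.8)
The statement is essentially a restatement of the computation of Gelca and Sain in \cite{GS04}, and the plan is to obtain it from their formulas by a change of basis. The first step is to recall precisely what \cite{GS04} prove: in our notation, $K_q(S^3\setminus K)$ is free of rank $3$ over $\C[x]$ on the empty link $u$ and the loops $v',w'$ of Figure \ref{fig_fig8gens}, and \cite{GS04} gives explicit formulas for the action of the generators of $K_q(T^2)$ on this basis. Before using these I would fix the dictionary between the two papers, since \cite{GS04} normalizes the Frohman--Gelca isomorphism of Theorem \ref{fg00} differently from us (compare the convention $(m,n)_T \mapsto q^{mn}(X^mY^{-n}+X^{-m}Y^n)$ recalled in the proof of Lemma \ref{lemma_22pplus1sym}). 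Thus I would first rewrite the Gelca--Sain action of the meridian, longitude and $(1,1)$-curve in terms of our generators $x,y,z$ of $K_q(T^2)$ as normalized in Theorem \ref{thm_bp00}, tracking the powers of $q$ coming from framing corrections and orientation conventions.

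The second step is purely mechanical: substitute $v' = q^{-2}(v-u)$ and $w' = q^{2}(w-u)$ into the translated formulas, expand using $\C$-linearity of the $K_q(T^2)$-action, collect terms, and simplify the Laurent-polynomial coefficients in $x$ with the Chebyshev identities of Lemma \ref{chebyshevidentities} (rewriting products and telescoping sums of the $S_n$ and $T_n$). The result should reproduce the six displayed formulas verbatim. I would work in the basis $\{u,v,w\}$ rather than $\{u,v',w'\}$ precisely because, as in the trefoil and $(2,2p+1)$-torus cases, this is the basis in which the submodule generated by the empty link, and its lift to the nonsymmetric module $\hat M$, become transparent, which is what is needed for Theorem \ref{thm_conjecture2}.

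The word ``determined'' in the statement is justified as in the proof of Lemma \ref{lemma_invsubalgebraiso}: $K_q(T^2)$ is generated by $x,y,z$ (Theorem \ref{thm_bp00}), the action of $x$ is the given $\C[x]$-module structure, and the commutation relations (\ref{relationsforB'}) let one move powers of $x$ to the left, so the action of $y$ and $z$ on the $\C[x]$-basis fixes the whole module structure. The main obstacle is not conceptual but bookkeeping: reconciling the sign and power-of-$q$ conventions of \cite{GS04} with ours, and then carrying out the change-of-basis algebra without error. To guard against mistakes I would run the independent consistency check described in the Remark following Theorem \ref{thm_conjecture2}: feed the resulting $A_q^{\Z_2}$-module structure into Lemma \ref{lemma_liftedcoloredJones} to compute the colored Jones polynomials of the figure eight knot for small $n$ and compare with their known values; any error in a sign or a power of $q$ would surface immediately.
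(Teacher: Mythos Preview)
Your proposal is correct and matches the paper's approach exactly: the paper gives no proof of this lemma, simply citing \cite{GS04} and stating the formulas in the basis $\{u,v,w\}$ obtained from Gelca--Sain's basis $\{u,v',w'\}$ via $v = q^2 v' + u$, $w = q^{-2}w' + u$ (as recorded just before the lemma). Your plan---translate the conventions, perform the change of basis, and invoke Lemma \ref{lemma_invsubalgebraiso} for the word ``determined''---is precisely what is implicit in the paper's citation, and your suggested consistency check via Lemma \ref{lemma_liftedcoloredJones} is the one the authors themselves recommend in the remark following Theorem \ref{thm_conjecture2}.
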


\begin{figure}
\begin{center}
\includegraphics[scale=.6]{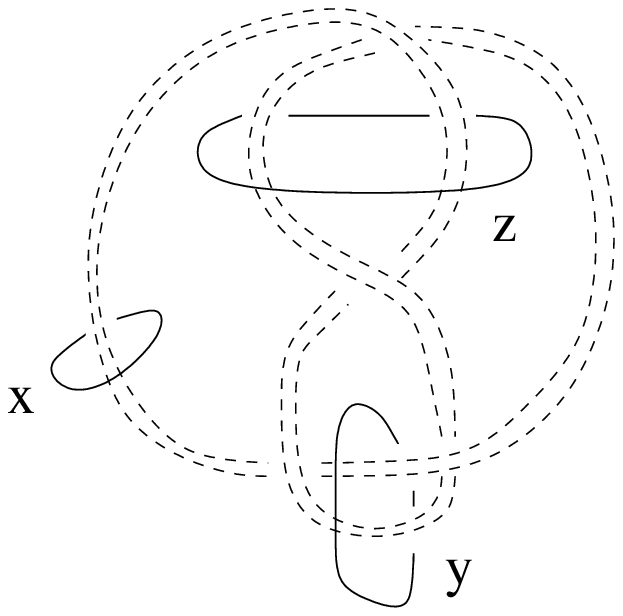}
\caption{Generators for the figure eight}\label{fig_fig8gens}
\end{center}
\end{figure}

As in the case of the trefoil, there is a proper submodule of $M$ which is isomorphic to the skein module of the unknot. In this case it is generated by the element 
 \[
  p := (x^2-3)u + v + w
 \]

 \begin{lemma}\label{peigenvec}
  We have equalities
  \[
   y\cdot p = (-q^2-q^{-2})p,\quad z\cdot p = -q^{-3}xp
  \] 
 \end{lemma}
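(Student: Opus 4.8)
\textbf{Proof proposal for Lemma~\ref{peigenvec}.}
The plan is to verify the two displayed identities by a direct computation from the explicit structure constants in Lemma~\ref{actiononM}. The one point requiring care is that, although $M$ is a free $\C[x]$-module on $u,v,w$, the operators $y$ and $z$ are \emph{not} $\C[x]$-linear: on $M$ they satisfy the $q$-commutation relations \eqref{relationsforB'} with $x$, so $y\cdot(x^{2}u)\neq x^{2}(y\cdot u)$ in general. Consequently, before applying Lemma~\ref{actiononM} one must rewrite $yx^{2}$ and $zx^{2}$ as $\C[x]$-combinations of $x,y,z$ with the powers of $x$ moved to the left, using $yx=q^{2}xy-q(q^{2}-q^{-2})z$ and $zx=q^{-2}xz+q^{-1}(q^{2}-q^{-2})y$ applied twice.

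Concretely, I would proceed as follows. First, since $p=(x^{2}-3)u+v+w$ and $y$ is additive, write $y\cdot p = y\cdot(x^{2}u)-3\,(y\cdot u)+(y\cdot v)+(y\cdot w)$ and use the rewriting of $yx^{2}$ to reduce $y\cdot(x^{2}u)$ to an explicit $\C[x]$-combination of $y\cdot u$ and $z\cdot u$. Substituting the formulas of Lemma~\ref{actiononM} and collecting the coefficients of $u$, $v$, $w$, simplify each as a polynomial in $x$ using the elementary Chebyshev identities ($xS_{n}=S_{n+1}+S_{n-1}$, together with $S_{2}=x^{2}-1$, $S_{3}=x^{3}-2x$, $S_{4}=x^{4}-3x^{2}+1$, $S_{5}=x^{5}-4x^{3}+3x$), and check that the $u$-coefficient equals $-(q^{2}+q^{-2})(x^{2}-3)$ while the $v$- and $w$-coefficients each equal $-(q^{2}+q^{-2})$. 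Then run the same computation for $z\cdot p = z\cdot(x^{2}u)-3\,(z\cdot u)+(z\cdot v)+(z\cdot w)$, now using the rewriting of $zx^{2}$, and check the three coefficients come out to $-q^{-3}x(x^{2}-3)$, $-q^{-3}x$, $-q^{-3}x$.

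The main obstacle is bookkeeping rather than anything conceptual: the correction terms produced by the non-commutativity carry assorted powers of $q$, and the verification only works because of nontrivial cancellations among them (for instance the leading $x^{4}$-term of the $u$-coefficient of $y\cdot p$ cancels precisely because $q^{4}(q^{2}+q^{-2})-(q^{2}-q^{-6})-(q^{6}+q^{-6})=0$), so the simplification must be carried out carefully; a purely $\C[x]$-linear shortcut does not work. A cleaner alternative, available once the nonsymmetric lift $\hat M$ of $M$ over $A_{q}\rtimes\Z_{2}$ has been constructed (as for the trefoil and the $(2,2p+1)$ knots), is to verify the lifted statement, in which $X$, $Y$, $s$ act by explicit matrices over $\C[X^{\pm1}]$ and the $q$-commutator juggling is replaced by finite matrix arithmetic. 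In either case, once Lemma~\ref{peigenvec} is established it follows — by the propagation argument behind Lemma~\ref{lemma_invsubalgebraiso} — that $\C[x]p\subset M$ is an $A_{q}^{\Z_{2}}$-submodule and that $u^{k}\cdot\varnothing\mapsto x^{k}p$ defines an isomorphism from the skein module of the unknot onto it, which yields Theorem~\ref{thm_conjecture2} for the figure eight knot.
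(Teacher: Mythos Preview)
Your proposal is correct and follows exactly the approach the paper has in mind: the paper's own proof reads, in full, ``This is an entertaining but lengthy computation which we omit.'' You have supplied a faithful outline of that computation, correctly flagging the one genuine subtlety (that $y$ and $z$ are not $\C[x]$-linear, so $y\cdot(x^{2}u)$ must be expanded via the $q$-commutation relations \eqref{relationsforB'} before Lemma~\ref{actiononM} can be applied), and your sample cancellation $q^{4}(q^{2}+q^{-2})-(q^{2}-q^{-6})-(q^{6}+q^{-6})=0$ for the $x^{4}$-term in the $u$-coefficient is correct. The alternative you mention---verifying the identity inside the nonsymmetric lift $\hat M$ defined by the matrices \eqref{figeightops}---is also valid, though in the paper's logical order $\hat M$ is introduced after Lemma~\ref{peigenvec} (and the change of basis in Lemma~\ref{actiononquotient} presupposes it), so using it here would require reorganizing the exposition slightly.
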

\begin{proof}
This is an entertaining but lengthy computation which we omit.
\end{proof}

As before, it is convenient to describe the module structure of $M$ using the $\C[x]$-basis $p, u, v$.

\begin{lemma}\label{actiononquotient}
As $\C[x]$-modules, we have an isomorphism $M \cong \C[x]p \oplus \C[x]u \oplus \C[x]v$, and the actions of $y$ and $z$ in this basis are given by
\begin{eqnarray*}
 y\cdot p &=& -(q^2+q^{-2})p\\
 z\cdot p &=& -q^{-3}xp\\
 y\cdot u &=& (q^2+q^{-2}S_2)p + [-q^{-2}T_4+q^{-2}T_2+q^2T_0]u + (q^2-q^{-2})T_2 v\\
 z\cdot u &=& q^{-3}S_3p + (-q^{-3}T_5 + q^{-3}T_3+qT_1)u + (-q^{-3}T_3+qT_1)v\\
 y\cdot v &=& (-q^6S_2-q^2)p + (q^2-q^6)T_2u + (-q^6T_4+q^2T_2+q^2T_0)v\\
 z\cdot v &=& (-q^5-q)S_1p + (qT_3-q^5T_1)u + (-q^5T_3+2qT_1)v
\end{eqnarray*}
\end{lemma}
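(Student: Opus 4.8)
The plan is to derive the stated formulas purely by a unimodular change of the $\C[x]$-basis of $M$, starting from Lemma \ref{actiononM}. First I would record the transition between the two bases. From the definition $p = (x^2-3)u + v + w$ it follows that
\[
 w = p - (x^2-3)u - v,
\]
so the matrix expressing $\{u,v,w\}$ in terms of $\{p,u,v\}$ is triangular with $\pm 1$ on the diagonal; in particular it is invertible over $\C[x]$, which already gives the claimed direct sum decomposition $M \cong \C[x]p\oplus\C[x]u\oplus\C[x]v$.

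Next, the two rows of the statement involving $p$ are exactly Lemma \ref{peigenvec}, so nothing further is needed there. For the four rows involving $u$ and $v$, I would take the right-hand sides of the corresponding formulas in Lemma \ref{actiononM} (written in the old basis $u,v,w$), substitute the expression above for $w$, and collect the coefficients of $p$, $u$, and $v$. Every coefficient that arises is a polynomial in $x$, so this is a finite computation; the only mildly delicate point is re-expressing the resulting polynomials in terms of the Chebyshev polynomials $S_n$ and $T_n$ so as to match the stated answer. For this I would use $S_2 = x^2-1$, $T_0=2$, $T_2 = x^2-2$, $T_4 = x^4-4x^2+2$ (and in general the identities of Lemma \ref{chebyshevidentities}), together with the recursions $xS_n = S_{n+1}+S_{n-1}$ and $xT_n = T_{n+1}+T_{n-1}$, to reduce products such as $(x^2-3)S_2$ and $(x^2-3)S_3$ to linear combinations of $S_n$'s and $T_n$'s. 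As a representative case, in the $y\cdot u$ row the coefficient of $p$ is simply the old coefficient $q^2+q^{-2}S_2$ of $w$, while the coefficients of $u$ and $v$ come out as $(q^2+q^{-2})S_2-(q^2+q^{-2}S_2)(x^2-3)$ and $(q^2S_2+q^{-2})-(q^2+q^{-2}S_2)$, which simplify to $-q^{-2}T_4+q^{-2}T_2+q^2T_0$ and $(q^2-q^{-2})T_2$ respectively.

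The only real obstacle is bookkeeping: there are four rows to process and the Chebyshev rewriting must be carried out with the correct powers of $q$. Fortunately, as noted in the remark preceding this group of lemmas, the resulting module structure can be cross-checked against the known colored Jones polynomials of the figure eight knot via Lemma \ref{lemma_liftedcoloredJones}, so a sign or $q$-power slip would be caught. I would therefore display the computation for one representative row in full and indicate that the remaining three are entirely analogous.
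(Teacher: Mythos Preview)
Your proposal is correct and is exactly the approach the paper takes: the paper's proof reads in its entirety ``This is a straightforward calculation,'' and what you describe---a unimodular $\C[x]$-basis change via $w = p - (x^2-3)u - v$ followed by substitution into Lemma~\ref{actiononM} and Chebyshev rewriting---is precisely that calculation, with your sample row for $y\cdot u$ checking out. Nothing further is needed.
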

\begin{proof}
 This is a straightforward calculation.
\end{proof}

We now define the $A_q\rtimes \Z_2$-module $\hat M$ that satisfies $\e \hat M \cong M$. As a $\C[X^{\pm 1}]$-module we define
\[
 \hat M := \C[X^{\pm 1}] \otimes_\C \C\{ p', u,v\}
\]
(We have slightly abused notation by reusing the letters $u,v$, but this is justified by the fact that the inclusion $K_q(S^3\setminus K) \to \hat M$ identifies $u,v \in K_q(S^3\setminus K)$ with $u,v \in \hat M$. In particular, with this notation, the empty link is identified with $u$.) We use formula (\ref{equation_diagonalaction}) to define operators $S,P:\hat M \to \hat M$. As before, we will define the $A_q\rtimes \Z_2$-module structure on $\hat M$ using matrices (with respect to the ordered basis $p', u, v$). To write these matrices in a more compact form, we define the following polynomials in $\C[X^{\pm 1}]$:
\[
 a := -q^{-2}X^4+q^{-2}X^2+q^2, \quad b := -q^{-2}X^2+q^2X^{-2},\quad c := q^{-2}X^3 - q^2X^{-1}
\]

To shorten notation further in what follows, if $f \in \C[X^{\pm 1}]$, we write $f'(X) := f(q^2X^{-1})$ (note that $f'' = f$). We then define $s, Y \in \End_\C(M)$ as follows:
\begin{equation}\label{figeightops}
 s := \left[\begin{array}{ccc}-1&0&0\\0&1&0\\0&0&1\end{array}\right]S, \quad Y := \left[\begin{array}{ccc}-1& c&q^2c'\\0 &a&q^4b'\\0&b&a'\end{array}\right]P
\end{equation}

\begin{lemma}\label{lemma_figeightmod}
 The operators in (\ref{figeightops}) define a representation of $A_q\rtimes \Z_2$.
\end{lemma}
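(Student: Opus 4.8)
The plan is to invoke Lemma \ref{lemma_aqmodule}. Write $B(X)$ and $A(X)$ for the two matrices appearing in (\ref{figeightops}), so that $s = B(X)S$ and $Y = A(X)P$; here $B(X) = \mathrm{diag}(-1,1,1)$ is a \emph{constant} matrix and $A(X)$ is the displayed $3\times 3$ matrix with entries assembled from $a,b,c$ and their primes. By Lemma \ref{lemma_aqmodule} it is enough to verify the two identities
\[
 B(X^{-1})B(X) = \mathrm{Id}, \qquad A(X)\,B(q^{-2}X)\,A(q^2X^{-1}) = B(X)
\]
inside $\End_{\C[X^{\pm 1}]}(\hat M)$. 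The first is immediate: $B$ is a constant involution, so $B(X^{-1})B(X) = B^2 = \mathrm{Id}$.

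For the second identity, since $B$ is constant one has $B(q^{-2}X) = B$, so the relation to be checked is $A(X)\,B\,A(q^2X^{-1}) = B$. The substitution $X \mapsto q^2X^{-1}$ is exactly the involution $f \mapsto f'$ used in the text (it satisfies $f'' = f$), so $A(q^2X^{-1})$ is obtained from $A(X)$ by applying $'$ to each entry, using $(q^2c')' = q^2c$ and $(q^4b')' = q^4b$; left-multiplication by $B$ then simply negates its first row. First I would observe that $A(X)$, and hence $BA(q^2X^{-1})$, is block upper-triangular for the splitting $\{p'\}\oplus\{u,v\}$ (the first column of $A(X)$ is $(-1,0,0)^{\mathrm T}$), which disposes at once of the $(1,1)$, $(2,1)$ and $(3,1)$ entries of the product: the first column of $A(X)\,B\,A(q^2X^{-1})$ is $(-1,0,0)^{\mathrm T}$, matching $B$. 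What then remains is the lower-right $2\times 2$ block together with the two first-row off-diagonal entries, i.e. a short list of scalar identities in $\C[X^{\pm 1}]$.

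These identities reduce to a couple of elementary facts about $a,b,c$. From the explicit formula $b = -q^{-2}X^2 + q^2X^{-2}$ one reads off $b' = -b$, which kills every contribution of the $b$-terms — for instance the off-diagonal entries of the $\{u,v\}$-block, which come out proportional to $a(b+b')$. A direct expansion gives the Pythagorean-type relation $a a' + q^4 b^2 = 1$, which produces the two diagonal $1$'s of the $\{u,v\}$-block, and two analogous (slightly longer) expansions give $c' + ca' + q^2 c'b' = 0$ and $c + q^2bc + c'a = 0$, accounting for the first-row off-diagonal entries. The one mildly laborious step is multiplying out $A(X)\,B\,A(q^2X^{-1})$ and matching it against $B$; this is the crux, but it is purely mechanical once $b' = -b$, $aa' + q^4b^2 = 1$ and their $c$-analogues are established, and the whole computation is exactly what the colored-Jones consistency check mentioned after Theorem \ref{thm_conjecture2} is designed to catch.
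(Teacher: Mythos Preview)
Your proposal is correct and is essentially the same proof as the paper's: both invoke Lemma \ref{lemma_aqmodule}, dispose of $B(X^{-1})B(X)=\mathrm{Id}$ immediately since $B$ is constant, and then verify $A(X)\,B\,A(q^2X^{-1})=B$ by multiplying out and reducing to the scalar identities $b'=-b$, $aa'+q^4b^2=1$ (equivalently $aa'-q^4bb'=1$), and $c'+ca'+q^2c'b'=0$ together with its prime $c+q^2bc+ac'=0$. The paper also records $c'=-q^2X^{-2}c$, which you do not use explicitly; conversely, your remark that the first column is handled by the block upper-triangular shape is a small organizational addition, but the substance is identical.
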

\begin{proof}
Let $B$ and $A(X)$ be the matrices in (\ref{figeightops}). The first condition of Lemma \ref{lemma_aqmodule} clearly holds, so we are reduced to checking that $A(X)BA(q^2X^{-1})=B$. 
We compute
\[
 \left[\begin{array}{ccc}-1& c&q^2c'\\0 &a&q^4b'\\0&b&a'\end{array}\right]B\left[\begin{array}{ccc}-1& c'&q^2c\\0 &a'&q^4b\\0&b'&a\end{array}\right] = \left[\begin{array}{ccc}-1&c' + a'c + q^2b'c'&q^2(c+q^2bc+ac')\\0&aa'+q^4b'b'&q^4(ab+ab')\\0&a'b+a'b'&q^4bb+aa' \end{array}\right]
\]
The fact that the matrix on the right is the matrix $B$ follows from the identities
 \[
 b' = -b,\quad c' = -q^2X^{-2}c,\quad aa'-q^4bb' = 1,\quad a'c + q^2b'c' + c' =0.
 \]
\end{proof}
Explicitly, the action of $Y$ on $\hat M$ is given by
\begin{eqnarray*}
Y\cdot p' &=& -p'\\
Y\cdot u &=& \left[q^{-2}X^3-q^2X^{-1} \right]p' + \left[-q^{-2}X^4+q^{-2}X^2+q^2\right]u + \left[-q^{-2}X^2+q^2X^{-2}\right]v\\
Y\cdot v &=& \left[q^6X^{-3}-q^2X \right]p' + \left[q^2X^2-q^6X^{-2} \right]u + \left[-q^6X^{-4}+q^2X^{-2}+q^2\right]v
\end{eqnarray*}
Again, since $s$ acts diagonally, we see that $\e \hat M = \C[x]\{\delta p', w,v\}$.
\begin{lemma}
 The $\C[x]$-isomorphism $f: \e \hat M \to M$ defined by $f(\delta p') = p$, and $f(u) = u$, and $f(v) = v$ is an isomorphism of $A_q^{\Z_2}$-modules.
\end{lemma}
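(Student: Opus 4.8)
The plan is to reduce everything to Lemma \ref{lemma_invsubalgebraiso}, exactly as in the proofs of Lemmas \ref{lemma_symmetrictrefoil} and \ref{lemma_22pp1lift}. Since $s$ acts on $\hat M$ by the diagonal matrix $\mathrm{diag}(-1,1,1)$ composed with $S$ (see (\ref{figeightops})), one checks immediately that $\delta p'$ is $s$-invariant (because $S\delta=-\delta$) and that $u,v$ are $s$-invariant (because $S$ fixes constants), so $\{\delta p',u,v\}$ is a free $\C[x]$-basis of $\e\hat M$; consequently $f$ is a well-defined isomorphism of $\C[x]$-modules. By Lemma \ref{lemma_invsubalgebraiso} it then suffices to verify the six identities $f(y\cdot m)=y\cdot f(m)$ and $f(z\cdot m)=z\cdot f(m)$ for $m$ running over $\{\delta p',u,v\}$; once these hold, $f$ is automatically a map of $A_q^{\Z_2}$-modules. (That $\hat M$ is indeed an $A_q\rtimes\Z_2$-module, hence $\e\hat M$ an $A_q^{\Z_2}$-module, is Lemma \ref{lemma_figeightmod}.)

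\emph{The generator $\delta p'$.} The vector $p'$ generates the submodule $\C[X^{\pm1}]p'\subset\hat M$ on which, by the first row and column of the matrices in (\ref{figeightops}), $Y$ and $s$ act exactly as in the sign representation $V^-$. Hence Lemma \ref{unknotlift} (applied to this submodule) gives $y\cdot\delta p'=-(q^2+q^{-2})\delta p'$ and $z\cdot\delta p'=-q^{-3}x\,\delta p'$. Applying $f$ yields $-(q^2+q^{-2})p$ and $-q^{-3}xp$, which is precisely the statement of Lemma \ref{peigenvec}. This settles the $\delta p'$ generator.

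\emph{The generators $u$ and $v$.} Since $s\cdot u=u$, $s\cdot v=v$, and $Y^{-1}=sYs$ in $A_q\rtimes\Z_2$, on these vectors $y=Y+Y^{-1}=(1+s)Y$ and, writing $z=q^{-1}(XY+X^{-1}Y^{-1})$, one gets $z=q^{-1}(1+s)XY$. Using the explicit formulas $Y\cdot u=c\,p'+a\,u+b\,v$ and $Y\cdot v=q^2c'\,p'+q^4b'\,u+a'\,v$ from (\ref{figeightops}), applying $(1+s)$ replaces each coefficient $g(X)$ in front of $p'$ by $g(X)-g(X^{-1})$ and each coefficient in front of $u,v$ by $g(X)+g(X^{-1})$ (and similarly with an extra factor of $X$ for the $z$-computation). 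Lemma \ref{chebyshevidentities} then rewrites these as $\delta$ times a Chebyshev polynomial $S_n(x)$ for the $p'$-coefficients and as $T_n(x)$ for the $u,v$-coefficients. One now checks term by term, with $\delta p'\leftrightarrow p$, that the results agree with the formulas for $y\cdot u,\ y\cdot v,\ z\cdot u,\ z\cdot v$ in Lemma \ref{actiononquotient}; for instance $c(X)-c(X^{-1})=q^{-2}(X^3-X^{-3})+q^2(X-X^{-1})=\delta(q^{-2}S_2+q^2)$, matching the coefficient of $p$ in $y\cdot u$, and $b(X)+b(X^{-1})=(q^2-q^{-2})(X^2+X^{-2})=(q^2-q^{-2})T_2$, matching its coefficient of $v$.

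The only real labor is the bookkeeping for the $z$-action, i.e. for $q^{-1}(1+s)XY$ applied to $u$ and $v$: this involves the slightly longer polynomials $Xc(X)$, $Xa(X)$, $Xb(X)$ and their companions, but it is structurally identical to the $y$-computation and requires no new idea. Having verified all six intertwining identities on the generators, Lemma \ref{lemma_invsubalgebraiso} completes the proof.
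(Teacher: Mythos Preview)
Your proof is correct and follows precisely the approach the paper intends: the paper's own proof simply reads ``This is a straightforward computation which is quite similar to the proof of Lemma~\ref{lemma_symmetrictrefoil},'' and you have carried out exactly that computation, reducing via Lemma~\ref{lemma_invsubalgebraiso} to the six identities on the generators $\delta p',u,v$ and verifying them using $(1+s)Y$, $(1+s)q^{-1}XY$, the Chebyshev identities of Lemma~\ref{chebyshevidentities}, and comparison with Lemma~\ref{actiononquotient}.
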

\begin{proof}
 This is a straightforward computation which is quite similar to the proof of Lemma \ref{lemma_symmetrictrefoil}.
\end{proof}

\begin{remark}
 The lower-right $2\times 2$ block of the matrix defining the action of $Y$ in formula (\ref{figeightops}) appeared in \cite[Prop 4.5]{CM11}, where it was used to describe an inhomogenous recursion relation satisfied by the sequence of colored Jones polynomials for the figure eight knot. 
\end{remark}

\section{Divisibility and recursion relations of colored Jones polynomials}\label{sec_divisibility}
If $q^4-1$ is invertible, then $A_q\rtimes \Z_2$ and $A_q^{\Z_2}$ are Morita equivalent, so there is a unique $A_q\rtimes\Z_2$-module $\hat K_q(S^3\setminus K)$ such that $\e \hat K_q(S^3\setminus K)$ is isomorphic to $K_q(S^3\setminus K)$ as an $A_q^{\Z_2}$-module. We call $\hat K_q(S^3\setminus K)$ the `nonsymmetric skein module,' and we give a formula (\ref{equation_liftedcoloredjones}) for the colored Jones polynomials in terms of $\hat K_q(S^3\setminus K)$. 

Garoufalidis and L\^e \cite{GL05} defined an action of $A_q\rtimes \Z_2$ on the space $\mathbf H := \Hom(\Z, \C[q^{\pm 1}])$ of sequences of Laurent polynomials (see (\ref{actiononsequences})). If we define $J_{-n}(q) = -J_n(q)$, then we can consider $J_n(q)$ as an element of $\mathbf H$, and the main theorem of \cite{GL05} is that the annihilator of $J_n(q)$ in $A_q$ is non-zero. In other words, the sequence $J_n(q)$ satisfies a (generalized) recurrence relation.
Lemmas \ref{lemma_liftedcoloredJones} and \ref{lemma_actiononseq} give a close relationship between the $A_q\rtimes \Z_2$ modules $\hat K_q(D^2\times S^1)$ and $\mathbf H$ - the latter is the linear dual of the former (up to a twist by an automorphism, see Remark \ref{remark_completion}).

We use this observation to give two applications of our conjectures. First, Conjecture \ref{conj_unknotsubmodule} states that $K_q(S^3\setminus K)$ contains $K_q(S^3\setminus \mathrm{unknot})$ as a submodule. (In particular, the action of the longitude on the skein module of the knot complement has an eigenvector with eigenvalue $-q^2-q^{-2}$.) In Theorem \ref{theorem_inhomogenous} we show that this conjecture implies that the colored Jones polynomials of a knot satisfy a (generalized) inhomogeneous recursion relation. These generalized inhomogeneous recursion relations have been found in examples, but when this paper was written it had not been proved that they exist for all knots.

Second, the algebra $\H_{q,t_1,t_2,1,1}$ acts on the space $\mathbf H(q) := \Hom(\Z, \C(q))$ of sequences of \emph{rational} functions. We show that our conjectured action of $\H_{q,t_1,t_2,1,1}$ on the nonsymmetric skein module of the complement of a knot $K$ implies that $a\cdot (J_n(q)) \in \C(q) \mathbf H \subset \mathbf H(q)$, for any $a \in \H_{q,t_1,t_2,1,1}$. In other words, if $P_n(q)$ is a sequence of rational functions obtained by multiplying $J_n(q)$ by an element of $\H_{q,t_1,t_2,1,1}$, then there is a uniform common denominator for the rational functions in the sequence $P_n(q)$. (This is a non-trivial statement because the action of $\H_{q,t_1,t_2,1,1}$ does \emph{not} preserve the subspace $\C(q) \mathbf H \subset \mathbf H(q)$.) 

As a corollary, we show a divisibility property for the colored Jones polynomials: the rational function 
\[
\frac{(q-q^{-1})(J_{n+j}(q) + J_{n-1-j}(q))}{1-q^{4n-2}} 
\]
is actually a Laurent polynomial. Following a suggestion of Garoufalidis, we use Habiro's cyclotomic expansion of the colored Jones polynomials to give a proof of this statement that does not rely on Conjecture \ref{mainconjecture}.

\subsection{Nonsymmetric pairings}
In this section we show that the topological pairing (\ref{knotpairing}) lifts via the Morita equivalence between $A_q\rtimes \Z_2$ and $A_q^{\Z_2}$, and we give a formula for the colored Jones polynomials in terms of this nonsymmetric pairing.

\subsubsection{Pairings and Morita equivalence}
We first give a lemma that gives a sufficient condition for $A_q\rtimes \Z_2$ and $A_q^{\Z_2}$ to be Morita equivalent.
\begin{lemma}
 Suppose $q^4-1$ is invertible. Then $A_q\rtimes \Z_2$ and $A_q^{\Z_2}$ are Morita equivalent.
\end{lemma}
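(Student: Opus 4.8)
The plan is to exhibit an explicit Morita context (idempotent) realizing the equivalence. First I would recall that $A_q^{\Z_2} = \e(A_q\rtimes\Z_2)\e$ where $\e = (1+s)/2$, and this is exactly the standard "corner ring" setup: for any ring $R$ and idempotent $\e\in R$, the functor $M\mapsto \e M$ from $R$-modules to $\e R\e$-modules is an equivalence precisely when $R\e R = R$, i.e. when the two-sided ideal generated by $\e$ is all of $R$. So the whole lemma reduces to checking that $(A_q\rtimes\Z_2)\,\e\,(A_q\rtimes\Z_2) = A_q\rtimes\Z_2$ under the hypothesis that $q^4-1$ is invertible.

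Next I would verify this ideal condition by a direct computation. Since $A_q\rtimes\Z_2$ is generated by $X^{\pm1}$, $s$ (with $\e=(1+s)/2$), it suffices to write $1$ as an element of $(A_q\rtimes\Z_2)\e(A_q\rtimes\Z_2)$. The natural candidate is the complementary idempotent $\e' = (1-s)/2$: if I can show $\e'\in R\e R$ then, since $\e\in R\e R$ trivially, we get $1 = \e+\e' \in R\e R$. To produce $\e'$, I would use the commutation relation $sX = X^{-1}s$ to compute something like $X\e X^{-1} = X\tfrac{1+s}{2}X^{-1} = \tfrac12(1 + XsX^{-1}) = \tfrac12(1 + X^2 s)$, and more generally produce elements of $R\e R$ of the form $\tfrac12(1 + X^{2k}s)$ for all $k\in\Z$. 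Taking suitable $\C$-linear combinations (a "Vandermonde"-type argument, or simply subtracting two such elements) yields elements $X^{2k}\e'$ and hence, after multiplying by $X^{-2k}$, the element $\e'$ itself. The only place the hypothesis enters is in guaranteeing the relevant coefficients (differences of powers of $q^2$, ultimately a factor of $q^4-1$ or $q^2-q^{-2}$) are invertible so the linear combination can be solved. Concretely, $\tfrac12(1+X^2s) - \tfrac12(1+s) = \tfrac12(X^2-1)s \cdot(\text{something})$; dividing by $X^2-1$ is not allowed in $A_q$, so instead one plays off $\tfrac12(1+q^4X^2s)$ against $\tfrac12(1+X^2s)$, whose difference is $\tfrac12(q^4-1)X^2 s$, and here the invertibility of $q^4-1$ is exactly what lets us extract $X^2 s$, hence $s$, hence $\e'$.

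The main obstacle I anticipate is bookkeeping rather than conceptual: one must be careful that the elements being manipulated genuinely lie in the two-sided ideal $R\e R$ (i.e. are sandwiched with $\e$ in the middle) and not merely in $R$, and that the power-of-$q$ coefficients that appear really do collapse to a factor of $q^4-1$ (or $q^2-q^{-2}$, up to units) and nothing worse — so that "$q^4-1$ invertible" is both necessary-looking and sufficient. Once $1\in R\e R$ is established, I would invoke the standard Morita theorem (e.g. as in any text on noncommutative rings) to conclude that $M\mapsto \e M$ and $V\mapsto (R\e)\otimes_{\e R\e} V$ are mutually inverse equivalences between $(A_q\rtimes\Z_2)\text{-Mod}$ and $A_q^{\Z_2}\text{-Mod}$, which is the assertion of the lemma.
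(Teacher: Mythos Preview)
Your approach is correct and is essentially the same as the paper's: both reduce to the standard Morita criterion $R\e R = R$ for $R = A_q\rtimes\Z_2$ and $\e=(1+s)/2$, and then verify $1\in R\e R$ by an explicit computation producing a scalar multiple of $q^4-1$. The paper's verification is simply more compact --- it writes down the single identity $[Y,\,s[X,1+s]]_q\,Y^{-1}X = q^{-1}-q^3$ --- whereas you build up $\e'$ in stages; your sketch has one wobbly step (the element $\tfrac12(1+q^4X^2s)$ is not obviously in $R\e R$ from what you wrote), but the fix is immediate: from $(X\e X^{-1}-\e)=\tfrac12(X^2-1)s\in R\e R$, multiply on the left by $Y$ and on the right by $Y$ (using $sY=Y^{-1}s$ and $YX^2Y^{-1}=q^{-4}X^2$) to get $\tfrac12(q^{-4}X^2-1)s\in R\e R$, subtract, and conclude $s\in R\e R$, hence $1=2\e-s\in R\e R$.
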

\begin{proof}
 Recall $\e = (1+s)/2 \in A_q\rtimes \Z_2$, and that $A_q^{\Z_2}$ is isomorphic to $\e (A_q\rtimes \Z_2)\e$ via the map $a \mapsto a\e$. Standard Morita theory shows that $A_q\rtimes \Z_2$ and $\e (A_q\rtimes\Z_2)\e$ are Morita equivalent if the two-sided ideal generated by $\e$ contains $1$. Then the following computation completes the proof:
 \[
  [Y, s[X, 1+s]]_qY^{-1}X = q^{-1}-q^3
 \]
\end{proof}

We now give a lemma showing that pairings lift via Morita equivalences. Let $A$ be an algebra with an idempotent $\e^2 = \e \in A$. 
\begin{lemma}\label{lemma_liftingpairings}
 Suppose $A\e A = A$, and let $M\e$ and $\e N$ be left and right $\e A \e$-modules, respectively. Then the natural map of vector spaces $M\e \otimes_{\e A \e} \e N \to M \otimes_A N$ is an isomorphism.
\end{lemma}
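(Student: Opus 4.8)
The plan is to use the standard Morita-theoretic fact that when $A\e A = A$, the functor $M \mapsto M\e$ from left $A$-modules to left $\e A\e$-modules is an equivalence with quasi-inverse $L \mapsto A\e \otimes_{\e A\e} L$. Concretely, I would first observe that $A\e A = A$ forces $A\e$ to be a finitely generated projective generator of the category of left $A$-modules, and dually $\e A$ is one on the right. The key input is the isomorphism of $(\e A\e, \e A\e)$-bimodules $\e A \otimes_A A\e \cong \e A\e$ (which holds for any idempotent, via multiplication) together with the isomorphism of $(A,A)$-bimodules $A\e \otimes_{\e A\e} \e A \cong A\e A = A$ (which uses the hypothesis $A\e A = A$). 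These two bimodule isomorphisms are exactly the data of a Morita context realizing the equivalence.

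Given those, the proof of the statement is a short formal computation. For a left $\e A\e$-module $L := \e N$ coming from a right $A$-module $N$ via $\e N$, and a right $\e A\e$-module $M\e$, I would write
\[
M\e \otimes_{\e A\e} \e N \;\cong\; M\e \otimes_{\e A\e} \e A \otimes_A N,
\]
using nothing more than $\e N = \e A \otimes_A N$, which holds because $\e$ is idempotent (the map $\e a \otimes n \mapsto \e a n$ is inverted by $\e n \mapsto \e \otimes n$). Then I would associate the tensor products and collapse $M\e \otimes_{\e A\e} \e A \cong M\e A$; since $A\e A = A$ we have $M\e A = M A = M$ as right $A$-modules (here one uses that $M$ is an $A$-module, so $M = MA$, and $M\e A \subseteq MA = M$, while conversely $M = M\cdot 1 \in M(A\e A) = (M\e A)$... more precisely $MA = M A\e A = M\e A$ after noting $M\e A$ is already a submodule containing $M\cdot(A\e A)=MA$). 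Hence $M\e \otimes_{\e A\e}\e N \cong M\otimes_A N$, and chasing the isomorphisms shows the composite is precisely the natural map $m\e \otimes \e n \mapsto m\e \otimes n = m \otimes n$ in $M\otimes_A N$.

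The only mild subtlety — and what I expect to be the main point requiring care rather than a genuine obstacle — is checking that the chain of canonical isomorphisms composes to the \emph{natural} map in the statement (as opposed to merely producing \emph{some} isomorphism). I would handle this by tracking a generic element $m\e \otimes \e n$ through each step: it maps to $m\e \otimes \e \otimes n$, then to $(m\e \cdot \e)\otimes n = m\e \otimes n$ in $M\e A \otimes_A N$, and finally under $M\e A = M$ this is $m\otimes n \in M\otimes_A N$, which is exactly the image under the natural map. A secondary bookkeeping item is verifying $M = M\e A$ as a right $A$-module under the hypothesis $A\e A = A$: since $1 \in A = A\e A$, write $1 = \sum_i a_i \e b_i$, so for $m \in M$, $m = \sum_i (m a_i \e) b_i \in M\e A$, giving $M \subseteq M\e A \subseteq M$. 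With these two verifications in hand the lemma follows, and in the intended application $A = A_q\rtimes\Z_2$, $\e = (1+s)/2$, and $A\e A = A$ is guaranteed by the preceding lemma.
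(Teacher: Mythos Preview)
Your proposal is correct and follows essentially the same Morita-theoretic approach as the paper: the paper writes $M\e \otimes_{\e A\e} \e N \cong M \otimes_A (A\e \otimes_{\e A\e} \e A) \otimes_A N \cong M\otimes_A A \otimes_A N \cong M\otimes_A N$ and then checks that $m\e\otimes\e n\mapsto m\e\otimes\e n$, which is exactly your chain of isomorphisms with the collapse done in the middle rather than on one side. Note a small slip in your write-up: you call $N$ a ``right $A$-module'' once, but your identity $\e N=\e A\otimes_A N$ and everything afterward correctly treats $N$ as a left $A$-module (and $M$ as a right $A$-module); this matches the paper's own convention, where the stated ``left/right'' in the lemma is itself transposed relative to how the modules are actually used.
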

\begin{proof}
 Since $A\e A = A$, the functors $A \e \otimes_{\e A \e} -$ and $\e A \otimes_A - $ are inverse equivalences, so the natural map $A \e \otimes_\eae \e A \otimes_A A \to A$ given by $a\e \otimes \e b \otimes c \mapsto a\e b c$ is an isomorphism. We then have the following isomorphisms of vector spaces:
\begin{eqnarray*}
M\e \otimes_\eae \e N &\cong& (M \otimes_A A\e) \otimes_\eae (\e A \otimes_A N) \\
&\cong&  M \otimes_A (A\e \otimes_\eae \e A) \otimes_A N\\
&\cong& M\otimes_A A \otimes_A N\\
&\cong& M\otimes_A N
\end{eqnarray*}
Under this chain of isomorphisms we have $m\e \otimes \e n \mapsto m\e \otimes \e n$. 
\end{proof}

\begin{corollary}\label{cor_pairinglifts}
 If $q^4-1$ is invertible, then there is a unique (nonsymmetric) pairing
 \[
  \langle -,-\rangle:\hat K_q(D^2\times S^1)\otimes_{A_q\rtimes \Z_2} \hat K_q(S^3\setminus K) \to \C
 \]
which lifts the topological pairing of Section \ref{topologicalpairing}.
\end{corollary}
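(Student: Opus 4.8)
The proof is an immediate application of Lemma \ref{lemma_liftingpairings} to the case $A = A_q \rtimes \Z_2$ and $\e = (1+s)/2$, together with the preceding two lemmas. First I would note that the hypothesis $q^4 - 1$ invertible guarantees, by the first lemma of this subsection, that $A_q \rtimes \Z_2$ and $A_q^{\Z_2} \cong \e(A_q\rtimes\Z_2)\e$ are Morita equivalent; concretely, the explicit computation $[Y, s[X,1+s]]_q Y^{-1} X = q^{-1} - q^3$ exhibits an invertible element in the two-sided ideal $A\e A$, so $A\e A = A$. This is exactly the hypothesis needed to invoke Lemma \ref{lemma_liftingpairings}.

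Next I would set $M\e := K_q(D^2\times S^1)$ and $\e N := K_q(S^3\setminus K)$, viewed as left and right $A_q^{\Z_2}$-modules respectively (using the canonical module structures from Section \ref{topologicalpairing}), and take $M := \hat K_q(D^2 \times S^1) = A_q \otimes_{A_q^{\Z_2}} K_q(D^2\times S^1)$ and $N := \hat K_q(S^3\setminus K)$ to be the corresponding $A_q\rtimes\Z_2$-modules obtained from the Morita equivalence, so that indeed $\e M \cong M\e$ and $\e N$ recover the original skein modules. Lemma \ref{lemma_liftingpairings} then supplies a canonical isomorphism of vector spaces
\[
 \hat K_q(D^2\times S^1)\otimes_{A_q\rtimes\Z_2}\hat K_q(S^3\setminus K) \;\xrightarrow{\ \sim\ }\; K_q(D^2\times S^1)\otimes_{A_q^{\Z_2}} K_q(S^3\setminus K).
\]
Composing the inverse of this isomorphism with the topological pairing (\ref{knotpairing}) yields the desired pairing $\langle-,-\rangle$ on the nonsymmetric side, valued in $K_q(S^3) \cong \C$.

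Finally I would address uniqueness: any $A_q\rtimes\Z_2$-balanced pairing lifting the symmetric one must, after applying the equivalence of categories (i.e. the functor $\e(-)$), agree with the topological pairing on $K_q(D^2\times S^1)\otimes_{A_q^{\Z_2}}K_q(S^3\setminus K)$, and since the map of Lemma \ref{lemma_liftingpairings} is an isomorphism this determines the lift completely; more precisely, the chain of isomorphisms in the proof of that lemma sends $m\e \otimes \e n$ to $m\e\otimes\e n$, so the lifted pairing is forced to satisfy $\langle m\e, \e n\rangle = \langle m\e, \e n\rangle_{\mathrm{top}}$ and this condition pins it down by $A_q\rtimes\Z_2$-bilinearity (using $A\e A = A$). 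There is essentially no obstacle here — the only mild point is checking that the module structures match up under the Morita functors, i.e. that $\e\hat K_q(D^2\times S^1)$ really is $K_q(D^2\times S^1)$ with its standard structure and similarly for the knot complement, which is true by construction of the nonsymmetric skein module.
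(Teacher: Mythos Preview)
Your proposal is correct and follows exactly the route the paper intends: the corollary is stated without proof precisely because it is an immediate consequence of the Morita equivalence lemma (giving $A\e A = A$) together with Lemma \ref{lemma_liftingpairings}. One small slip worth fixing: in the paper's conventions $K_q(D^2\times S^1)$ is a \emph{right} $A_q^{\Z_2}$-module and $K_q(S^3\setminus K)$ is a \emph{left} one (see Section \ref{topologicalpairing}), so your assignment of ``left and right\ldots respectively'' is reversed, and correspondingly $\hat K_q(D^2\times S^1)$ should be built as $K_q(D^2\times S^1)\otimes_{A_q^{\Z_2}} A_q$ rather than with the tensor factors swapped; this does not affect the argument.
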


\subsubsection{The nonsymmetric skein module of the solid torus}
Recall that under the isomorphism $K_q(T^2) \to A_q^{\Z_2}$, the meridian and longitude are sent to $X+X^{-1}$ and $Y+Y^{-1}$, respectively, and the $(1,1)$ curve is sent to $q^{-1}(XY+X^{-1}Y^{-1})$. If $N_K \subset S^3$ is a closed tubular neighborhood of a knot $K \subset S^3$, then $N_K$ is diffeomorphic to $D^2 \times S^1$ and $K_q(N_K)$ is a right $K_q(T^2)$-module. Let $u = 1_K \cdot (Y+Y^{-1}) \in K_q(N_K)$, where $1_K$ is the empty link in $N_K$.
\begin{lemma}
 As vector spaces, $K_q(N_K) \cong \C[u]$. The right action of $K_q(T^2)$ is determined by the formulas
 \begin{align*}
  1_K \cdot f(Y+Y^{-1}) &= f(u)\\
  1_K \cdot (X+X^{-1}) &= -(q^2+q^{-2})1_K\\
  1_K \cdot q^{-1}(XY+X^{-1}Y^{-1}) &= -q^{-3}u
 \end{align*}
\end{lemma}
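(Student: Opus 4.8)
The plan is to reduce every assertion to the elementary skein calculus of the solid torus, exploiting that $N_K$ \emph{is} a solid torus. First I would fix a diffeomorphism $N_K\cong D^2\times S^1$ carrying $K$ to the core circle $c=\{0\}\times S^1$, and recall that $K_q(D^2\times S^1)\cong\C[\alpha]$, where $\alpha$ is the core curve with its product framing (this is the solid-torus computation, with injectivity from \cite{SW07}). Under the identification of $T=\partial N_K$ with $T^2$ supplied by Lemma \ref{lemma_meridianlongitude}, the meridian $m$ bounds a meridian disk inside $N_K$, while the longitude $l$ is isotopic inside $N_K$ to the core $c$; because $l$ carries the $0$-framing, its image in $N_K$ is $\alpha$ (at worst up to a kink, which only rescales by a unit). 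Hence $u=1_K\cdot(Y+Y^{-1})$, the class of the pushed-in longitude, generates $\C[\alpha]$, which gives $K_q(N_K)=\C[u]$ and shows $1_K$ generates $K_q(N_K)$ as a $K_q(T^2)$-module. For the first displayed formula: under the isomorphism $K_q(T^2)\cong A_q^{\Z_2}$ (Theorem \ref{fg00}, or \ref{thm_bp00}) the element $(Y+Y^{-1})^k$ is the link $l^k$ of $k$ parallel longitudes on $T$, which pushes into $N_K$ to $k$ parallel copies of the core, i.e. to $\alpha^k=u^k$; by linearity $1_K\cdot f(Y+Y^{-1})=f(u)$ for every polynomial $f$.

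For the second formula I would use that $X+X^{-1}$ is the class of the meridian $m$: its image in $N_K$ is an embedded trivial loop bounding a meridian disk and disjoint from the empty link, so the Kauffman relation $L\sqcup\bigcirc=-(q^2+q^{-2})L$ gives $1_K\cdot(X+X^{-1})=-(q^2+q^{-2})1_K$ (the $0$-framing convention rules out an extra kink). For the third formula, $q^{-1}(XY+X^{-1}Y^{-1})$ is the class of the $(1,1)$-curve on $T$; inside $N_K$, where $m$ is nullhomotopic, the $(1,1)$-curve is isotopic to the core, but its framing differs from that of the pushed-in $0$-framed longitude by a single (positive) kink, so its skein class is $-q^{-3}$ times that of $u$, giving $1_K\cdot q^{-1}(XY+X^{-1}Y^{-1})=-q^{-3}u$. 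This is verbatim the computation already made for the complement of the unknot in (\ref{theunknot}). Finally, these three identities determine the whole right action, since $1_K$ generates $K_q(N_K)$ and $K_q(T^2)$ is generated as an algebra by the meridian, the longitude and the $(1,1)$-curve (Theorem \ref{thm_bp00}); any product can be reduced using associativity together with the relations of Theorem \ref{thm_bp00}.

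The only genuinely delicate point — and the step I expect to require the most care — is matching the \emph{framings} of the boundary curves of $T$ under the diffeomorphism $N_K\cong D^2\times S^1$: one must check that the $0$-framed longitude pushes to the unknotted core with its standard framing, that the meridian pushes to a $0$-framed trivial loop, and that the $(1,1)$-curve acquires exactly one kink. Apart from this bookkeeping, all three formulas are elementary solid-torus skein relations and involve nothing specific to the knot $K$; the content of the lemma lies entirely in how the boundary identification of Lemma \ref{lemma_meridianlongitude} interacts with the standard solid torus. As an alternative route to the third formula, one could instead expand the $(1,1)$-curve in terms of products of the meridian and longitude classes via the Frohman--Gelca product-to-sum formula and deduce it from the first two formulas, at the cost of one further framing computation (for the $(1,-1)$-curve).
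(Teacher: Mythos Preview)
Your proposal is correct and follows essentially the same approach as the paper: identify $N_K$ with the solid torus, read off the three formulas from the elementary geometry of the meridian, longitude, and $(1,1)$-curve (contractibility, isotopy to the core, and a single framing kink, respectively), and then argue that these determine the full module structure. The only minor difference is that where you invoke associativity together with the relations of Theorem~\ref{thm_bp00}, the paper cites Lemma~\ref{lemma_invsubalgebraiso} (implicitly in its right-module form with the roles of $x$ and $y$ interchanged), which is exactly the clean statement encoding that reduction.
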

\begin{proof}
 The identification $K_q(N_K)\cong \C[u]$ and the first claimed formula follow from Section \ref{subsec_coloredJpolys}. The second formula follows from the fact that the meridian is contractible inside $D^2\times S^1$ and $0$-framed inside $S^3$ under the inclusion $D^2\times S^1 \hookrightarrow S^3$. The third follows from the fact that inside of the solid torus, the $(1,1)$ curve is isotopic to the longitude with a framing twist (which accounts for the factor of $q$). The statement that these formulas completely determine the module structure of $K_q(N_K)$ follows from Lemma \ref{lemma_invsubalgebraiso}.
\end{proof}

Let $V = \C[U^{\pm 1}]$, and give $V$ a right $A_q\rtimes \Z_2$-module structure via
\[
 f(U)\cdot g(Y) = f(U)g(U^{-1}),\quad f(U)\cdot X = -f(q^2U),\quad f(U)\cdot s = -f(U^{-1})
\]
Because of the sign in the action of $s$, we have $V\e = (U-U^{-1})\C[U+U^{-1}] \subset V$.
\begin{lemma}\label{lemma_liftedsolidtorus}
 The $\C$-linear isomorphism $K_q(N_K) \to V\e$ given by $f(u) \mapsto (U-U^{-1})f(U+U^{-1})$ is an isomorphism of right $A_q^{\Z_2}$-modules.
\end{lemma}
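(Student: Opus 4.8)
The plan is to verify directly that the stated $\C$-linear map $\varphi : K_q(N_K) \to V\e$, $f(u) \mapsto (U-U^{-1})f(U+U^{-1})$, intertwines the right actions of the generators of $A_q^{\Z_2}$. By Lemma \ref{lemma_invsubalgebraiso} (applied to right modules, or equivalently by passing to the opposite algebra), it suffices to check compatibility on the single $\C[\,\cdot\,]$-module generator $1_K \in K_q(N_K)$, which corresponds to $\delta = U-U^{-1} \in V\e$, and only for the images of $x = X+X^{-1}$, $y = Y+Y^{-1}$, and $z = q^{-1}(XY+X^{-1}Y^{-1})$ under $K_q(T^2) \hookrightarrow A_q^{\Z_2}$. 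Here one must be slightly careful that $\varphi$ really is an isomorphism of $\C[x]$-modules: on the source, $1_K \cdot (X+X^{-1})^n = -(q^2+q^{-2})^n 1_K$ is \emph{not} how $x$ acts — rather $x$ acts by multiplication by $u$ (the longitude), via $1_K\cdot f(Y+Y^{-1}) = f(u)$ — so the relevant $\C[x]$-structure is multiplication by $u$, matched to multiplication by $U+U^{-1}$ on $V\e$, under which $\varphi$ is manifestly a $\C[x]$-isomorphism.

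First I would record the right $A_q^{\Z_2}$-action on $V\e$. Since $V = \C[U^{\pm1}]$ carries the right $A_q\rtimes\Z_2$-structure $f(U)\cdot Y = f(U^{-1})$ (as an operator: $g(Y)$ acts by $g(U^{-1})$), $f(U)\cdot X = -f(q^2U)$, $f(U)\cdot s = -f(U^{-1})$, a direct computation gives, for $\delta = U - U^{-1}$:
\[
\delta \cdot (Y+Y^{-1}) = (U-U^{-1})(U^{-1}+U) = \delta\,(U+U^{-1}),
\]
\[
\delta \cdot (X+X^{-1}) = -\big(q^2(U-U^{-1}) + q^{-2}(U-U^{-1})\big) = -(q^2+q^{-2})\,\delta,
\]
\[
\delta \cdot q^{-1}(XY + X^{-1}Y^{-1}) = q^{-1}\big({-}q^{2}(U^{-1}-U) \;-\; q^{-2}(U-U^{-1})\big)\cdot(\text{reorder}) = -q^{-3}\,\delta\,(U+U^{-1}),
\]
where the middle steps use $X$ then $Y$ acting in the correct order on $\delta$, and $\delta\cdot s = -s\cdot$ is built in. These three identities match, term for term under $\varphi$, the three formulas for the right action of $K_q(T^2)$ on $1_K$ established in the preceding lemma: $1_K\cdot f(Y+Y^{-1}) = f(u)$, $1_K\cdot(X+X^{-1}) = -(q^2+q^{-2})1_K$, and $1_K\cdot q^{-1}(XY+X^{-1}Y^{-1}) = -q^{-3}u$, since $\varphi(1_K) = \delta$ and $\varphi(u\,1_K) = \delta(U+U^{-1})$.

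Then I would invoke Lemma \ref{lemma_invsubalgebraiso}: since $K_q(N_K)$ is generated over $\C[x]$ (acting by multiplication by $u$) by $1_K$, and $\varphi$ is a $\C[x]$-module isomorphism (both sides being free of rank one, $\C[u]\cong\C[U+U^{-1}]\cdot\delta$) carrying $1_K$ to $\delta$ and intertwining the actions of $y$ and $z$ on this generator, it follows that $\varphi$ intertwines the full $A_q^{\Z_2}$-action. Hence $\varphi$ is an isomorphism of right $A_q^{\Z_2}$-modules. The only genuinely delicate point is bookkeeping of the powers of $q$ and the order of composition in $\delta \cdot q^{-1}(XY+X^{-1}Y^{-1})$ — one must act first by the rightmost generator per the right-module convention — but this is a short finite check rather than a real obstacle; everything else is formal once Lemma \ref{lemma_invsubalgebraiso} is in hand.
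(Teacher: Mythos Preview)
Your proof is correct and follows essentially the same approach as the paper: establish $\C[Y+Y^{-1}]$-linearity of the map (the paper notes this follows since $Y+Y^{-1}$ commutes with $s$, while you verify it by direct computation on $\delta$), then check the actions of $X+X^{-1}$ and $q^{-1}(XY+X^{-1}Y^{-1})$ on the generator $\delta = U-U^{-1}$ and invoke Lemma~\ref{lemma_invsubalgebraiso} with the roles of $x$ and $y$ swapped. The only minor issue is terminological---when you write ``the relevant $\C[x]$-structure is multiplication by $u$'' you mean the $\C[y]$-structure (since $u$ corresponds to the longitude $Y+Y^{-1}$), but this does not affect the argument.
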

\begin{proof}
 The isomorphism is linear over $\C[Y+Y^{-1}]$ because $Y+Y^{-1}$ commutes with $s$. We then compute
 \begin{align*}
  (U-U^{-1})\cdot (X+X^{-1}) &= -(q^2+q^{-2})(U-U^{-1})\\
  (U-U^{-1})\cdot q^{-1}(XY+X^{-1}Y^{-1}) &= -q^{-3}(U-U^{-1})\cdot (Y+Y^{-1})
 \end{align*}
Then the claim follows from Lemma \ref{lemma_invsubalgebraiso}.
\end{proof}

\subsubsection{The colored Jones polynomials from the nonsymmetric pairing}
Let $\hat K_q(S^3\setminus K)$ be the nonsymmetric skein module of the complement of a knot $K$. From Corollary \ref{cor_pairinglifts} and Lemma \ref{lemma_liftedsolidtorus}, we see that if $q^4-1$ is invertible, then the topological pairing lifts to a nonsymmetric pairing
\begin{equation}\label{eq_liftedpairing}
 \langle -,-\rangle: V \otimes_{A_q\rtimes \Z_2} \hat K_q(S^3\setminus K) \to \C
\end{equation}
(The fact that we have used the same notation for the topological pairing and the nonsymmetric pairing is justified by the last sentence of the proof of Lemma \ref{lemma_liftingpairings}.) From Theorem \ref{thm_coloredjonespolys}, we have the equality
\[
 J_n(K; q) = (-1)^{n-1}\langle S_{n-1}(u), \varnothing\rangle
\]
 In the nonsymmetric setting, this formula simplifies substantially.
\begin{lemma}\label{lemma_liftedcoloredJones}
 We have the equality
 \begin{equation}\label{equation_liftedcoloredjones}
  J_n(K; q) = (-1)^{n}2\langle 1_V\cdot Y^n, \varnothing\rangle
 \end{equation}
\end{lemma}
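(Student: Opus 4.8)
The plan is to relate the two expressions for $J_n(K;q)$ by passing through the nonsymmetric solid torus module $V = \C[U^{\pm 1}]$ and using the isomorphism of Lemma \ref{lemma_liftedsolidtorus}. From Theorem \ref{thm_coloredjonespolys} we have $J_n(K;q) = (-1)^{n-1}\langle S_{n-1}(u), \varnothing\rangle$, where the pairing is the topological one on $K_q(N_K)\otimes_{A_q^{\Z_2}} K_q(S^3\setminus K)$. By Corollary \ref{cor_pairinglifts}, together with the identification of Lemma \ref{lemma_liftedsolidtorus}, this equals the nonsymmetric pairing $(-1)^{n-1}\langle S_{n-1}(U+U^{-1})\,(U-U^{-1}),\, \varnothing\rangle$ on $V\otimes_{A_q\rtimes\Z_2}\hat K_q(S^3\setminus K)$. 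So the first step is purely a translation of the known symmetric formula into the nonsymmetric world.

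The second step is the key algebraic identity: I would apply the Chebyshev identity from Lemma \ref{chebyshevidentities}, namely $(U-U^{-1})S_{n-1}(U+U^{-1}) = U^n - U^{-n}$, to rewrite the element $S_{n-1}(U+U^{-1})(U-U^{-1}) \in V$ as $U^n - U^{-n} = 1_V\cdot(U^n - U^{-n})$, where $1_V = U^0$. Now I want to express $U^n - U^{-n}$ in terms of the right $A_q\rtimes\Z_2$-action on $V$ defined before Lemma \ref{lemma_liftedsolidtorus}. Recall the action: $1_V\cdot Y = 1_V\cdot g(Y)$ is given by $f(U)\cdot g(Y) = f(U)g(U^{-1})$, so $1_V\cdot Y^n = U^{-n}$, and also $1_V\cdot s = -1_V$ acts by $U \mapsto U^{-1}$ with a sign, so $s$ acting on $U^{-n}$ gives $-U^n$. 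Hence $1_V\cdot Y^n = U^{-n}$ and $1_V\cdot sY^n = -(1_V\cdot Y^n)(U\mapsto U^{-1}) = -U^{n}$ — wait, more carefully: $(1_V\cdot Y^n)\cdot s = U^{-n}\cdot s = -U^{n}$. Therefore $U^n - U^{-n} = -\,1_V\cdot(Y^n s) - 1_V\cdot Y^n = -1_V\cdot Y^n(s+1) = -2\,\varepsilon$-type expression; more precisely $U^n - U^{-n} = -1_V\cdot Y^n(1+s)$. Since $\varnothing \in \hat K_q(S^3\setminus K)$ and the pairing is over $A_q\rtimes\Z_2$, the operator $(1+s) = 2\e$ can be moved across the tensor product, and $\e\varnothing = \varnothing$ (the empty link lies in the symmetric part, as it is fixed by $s$); so $\langle 1_V\cdot Y^n(1+s), \varnothing\rangle = \langle 1_V\cdot Y^n, (1+s)\varnothing\rangle = 2\langle 1_V\cdot Y^n, \varnothing\rangle$. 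Assembling: $J_n(K;q) = (-1)^{n-1}\langle U^n - U^{-n}, \varnothing\rangle = (-1)^{n-1}\cdot(-1)\cdot 2\langle 1_V\cdot Y^n,\varnothing\rangle = (-1)^n\, 2\langle 1_V\cdot Y^n,\varnothing\rangle$, which is the claimed formula.

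I expect the main obstacle to be bookkeeping the signs and the precise meaning of the right action — in particular verifying that $\varnothing$ is genuinely $s$-invariant in $\hat K_q(S^3\setminus K)$ (equivalently that $\varnothing$ lies in $\e\hat K_q(S^3\setminus K)$ under the Morita identification with the symmetric skein module), and that moving $(1+s)$ across the balanced tensor product is legitimate. One should also double-check the interaction of the sign in the $A_q\rtimes\Z_2$-action on $V$ (where $s$ acts as $-1$ on constants) with the convention $J_{-n}(q) = -J_n(q)$, since this is exactly the place where the ``$(-1)^{n-1}$ versus $(-1)^n$'' shift and the factor $2$ originate. Once these conventions are pinned down, the computation is the short chain of equalities above.
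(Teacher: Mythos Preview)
Your proof is correct and follows essentially the same route as the paper: translate $S_{n-1}(u)$ to $U^n-U^{-n}$ via the Chebyshev identity and the isomorphism of Lemma~\ref{lemma_liftedsolidtorus}, then write $U^n-U^{-n}=-1_V\cdot Y^n(1+s)$ and pass $(1+s)=2\e$ across the pairing using $\e\varnothing=\varnothing$. The only cosmetic difference is that the paper is terser and does not spell out the intermediate checks on the right action that you (correctly) flag as the places to watch signs.
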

\begin{proof}
 Under the isomorphism $K_q(D^2\times S^1) \cong V\e$, the empty link is identified with the element $U-U^{-1}$. Combining this with Lemma \ref{chebyshevidentities} gives 
 \[
  S_{n-1}(u) = (U-U^{-1})S_{n-1}(U+U^{-1}) = U^n-U^{-n} \in V
 \]
  Then the following computation completes the proof:
  \[
   \langle U^n-U^{-n}, \varnothing\rangle = -\langle 1_V Y^n(1+s),\varnothing\rangle = -\langle 1_VY^n, 2\e \varnothing\rangle = -2\langle 1_VY^n, \varnothing\rangle
  \]

\end{proof}

\subsubsection{The action of $A_q\rtimes \Z_2$ on the sequence $J_n(K; q)$}
If we define $\mathbf H := \Hom(\Z, \C[q^{\pm 1}])$ and fix a knot $K$, then $J(n) := J_n(K; q)$ is an element of $\mathbf H$ (after extending the colored Jones polynomials to negative integers via $J_{-n}(K; q) := -J_{n}(K; q)$). In \cite{GL05}, Garoufalidis and L\^e defined a left action of $A_q\rtimes \Z_2$ on $\mathbf H$ as follows:
\begin{equation}\label{actiononsequences}
 (Xf)(n) := -q^{-2n}f(n),\quad (Yf)(n) := -f(n+1),\quad (sf)(n) := -f(-n)
\end{equation}
(Actually, their action is a twist of this action by an automorphism of $A_q\rtimes\Z_2$ - we have chosen this twist so that Lemma \ref{lemma_actiononseq} holds.) We now relate this action to the formula for the colored Jones polynomial from Lemma \ref{lemma_liftedcoloredJones}. 
\begin{lemma}\label{lemma_actiononseq}
 For $a \in A_q\rtimes \Z_2$, we have the equality
 \begin{equation}
  (a\cdot J)(n) = (-1)^{n}\langle Y^n, a\cdot 1_M\rangle
 \end{equation}
\end{lemma}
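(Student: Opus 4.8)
The plan is to verify the identity $(a\cdot J)(n) = (-1)^n\langle Y^n, a\cdot 1_M\rangle$ by reducing to the case where $a$ is one of the algebra generators $X$, $Y$, $s$ of $A_q\rtimes\Z_2$, and then checking these three cases directly against the definitions in \eqref{actiononsequences} and Lemma \ref{lemma_liftedcoloredJones}. First I would record the base identity: Lemma \ref{lemma_liftedcoloredJones} gives $J(n) = (-1)^n 2\langle 1_V\cdot Y^n, \varnothing\rangle$, and since the nonsymmetric pairing \eqref{eq_liftedpairing} is $A_q\rtimes\Z_2$-balanced (it is a map out of $V\otimes_{A_q\rtimes\Z_2}\hat K_q(S^3\setminus K)$), we may freely move elements of the algebra across the tensor symbol. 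In particular $\langle 1_V\cdot Y^n, \varnothing\rangle = \langle 1_V, Y^n\cdot\varnothing\rangle = \langle 1_V, Y^n\cdot 1_M\rangle$, where I write $1_M = \varnothing$ for the empty link viewed as the distinguished generator; so up to the harmless normalization the statement to prove is $(a\cdot J)(n) = (-1)^n\langle 1_V, Y^n a\cdot 1_M\rangle$ with the convention that $\langle 1_V, Y^n\cdot 1_M\rangle$ absorbs the factor of $2$ (this bookkeeping of the constant should be stated once and then suppressed).

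Next I would do the three generator computations. For $a = Y$: on one hand $(Y\cdot J)(n) = -J(n+1) = -(-1)^{n+1}\langle 1_V, Y^{n+1}\cdot 1_M\rangle = (-1)^n\langle 1_V, Y^n Y\cdot 1_M\rangle$, which is exactly the right-hand side. For $a = X$: we have $(X\cdot J)(n) = -q^{-2n}J(n)$, while on the skein side $X$ acts on $V = \C[U^{\pm1}]$ (from Lemma \ref{lemma_liftedsolidtorus}, or rather its left-module analogue for the pairing) by $1_V\mapsto -q^{2\bullet}$-type scaling; the key point is that in the pairing $\langle 1_V, Y^n X\cdot 1_M\rangle$ one commutes $X$ past $Y^n$ using $XY = q^2YX$, i.e. $Y^n X = q^{-2n}X Y^n$, and then $X$ acts through the balancing on the $V$-side where $1_V\cdot X = -1_V$ (up to the standard scaling), producing the factor $-q^{-2n}$. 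For $a = s$: $(s\cdot J)(n) = -J(-n) = J(n)$ by the sign convention $J_{-n} = -J_n$, and on the other side $Y^n s = s Y^{-n}$ inside $A_q\rtimes\Z_2$, so $\langle 1_V, Y^n s\cdot 1_M\rangle = \langle 1_V, sY^{-n}\cdot 1_M\rangle = \langle 1_V\cdot s, Y^{-n}\cdot 1_M\rangle = -\langle 1_V, Y^{-n}\cdot 1_M\rangle = -(-1)^{-n}(s\cdot J)(\ldots)$ — here one uses $1_V\cdot s = -1_V$ and then matches powers; the signs $(-1)^n$ versus $(-1)^{-n}$ agree, and everything lines up with $J(n)$.

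Finally I would promote the three base cases to arbitrary $a$ by a short induction on word length in the generators. The statement to be proved is linear in $a$, so it suffices to treat monomials; and if it holds for $a$ then for $ga$ with $g\in\{X^{\pm1},Y^{\pm1},s\}$ one writes $((ga)\cdot J)(n) = (g\cdot(a\cdot J))(n)$ and applies the generator computation above with $a\cdot J$ in place of $J$ — the only subtlety is that the generator computations were phrased in terms of the specific sequence $J$, so I would instead phrase the three base cases as an identity of operators on $\mathbf H$ matched with left multiplication by $Y^n(\cdot)$ in the pairing, valid for the image of \emph{any} element of $\hat K_q(S^3\setminus K)$, not just $1_M$. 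Concretely, the cleanest formulation is: the map $\Phi: \hat K_q(S^3\setminus K)\to\mathbf H$, $\Phi(m)(n) := (-1)^n\langle 1_V, Y^n\cdot m\rangle$, is a morphism of left $A_q\rtimes\Z_2$-modules, and Lemma \ref{lemma_liftedcoloredJones} says $\Phi(1_M) = J$ (up to the constant $2$). Checking that $\Phi$ intertwines the actions of $X$, $Y$, $s$ is exactly the three computations above, using only $XY = q^2YX$, $sX = X^{-1}s$, $sY = Y^{-1}s$, $s^2 = 1$, the left-module structure on $V$, and the balancing of the pairing. The main obstacle — really the only place care is needed — is getting the signs and the powers of $q$ to match between the Garoufalidis–L\^e normalization in \eqref{actiononsequences} and the twist built into $V$ and into Lemma \ref{lemma_liftedcoloredJones}; this is precisely why the authors remark that their chosen twist is "so that Lemma \ref{lemma_actiononseq} holds," so the proof is a bookkeeping verification rather than anything conceptually deep.
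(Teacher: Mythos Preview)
Your proposal is correct and takes essentially the same approach as the paper: check the identity on the algebra generators $X$, $Y$, $s$ using the balancing of the pairing and the right $A_q\rtimes\Z_2$-module structure on $V$, which amounts to verifying that $\Phi(m)(n) := (-1)^n\langle Y^n, m\rangle$ is a module homomorphism. The paper's proof is simply a terse version of yours (it displays only the computation for $a=Y$ and declares the rest ``straightforward''), so your more explicit treatment of the $X$ and $s$ cases and the factor-of-$2$ bookkeeping is fine but not strictly needed.
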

\begin{proof}
 It suffices to show the claim for generators of $A_q\rtimes \Z_2$, and these are straightforward computations. For example,
\[
 (-1)^{n}\langle Y^n, Y \cdot 1_M\rangle = (-1)^{n}\langle Y^{n+1},1_M\rangle = -J(n+1) = (Y\cdot J)(n)
\]
\end{proof}

\begin{remark}
 This lemma has appeared in the literature for $a \in A_q^{\Z_2}$. However, the extension of the lemma to all of $A_q\rtimes \Z_2$ gives an interpretation of the appearence of the action (\ref{actiononsequences}). Also, the fact that $V$ is the sign representation (i.e. $1_Vs = -1_V$) gives a skein-theoretic interpretation of the sign in the definition $J(n) := -J(-n)$.
\end{remark}

\begin{remark}\label{remark_completion}
If $V$ is the nonsymmetric skein module of the solid torus, then the choice of basis $\{U^n\}$ for $V$ gives a linear isomorphism $V^* \to \mathbf H$, where $V^*$ is the linear dual of $V$. Since $V$ is a right $A_q\rtimes \Z_2$-module, its dual $V^*$ is a left $A_q\rtimes \Z_2$-module, and in this language, Lemma \ref{lemma_actiononseq} can be interpreted as the statement that the map $V^* \to \mathbf H$ is an isomorphism of left $A_q\rtimes\Z_2$-modules. 
\end{remark}

\subsection{Inhomogeneous recursion relations}
We will say that a sequence $f(n) \in \mathbf H$ satisfies an \emph{inhomogeneous recursion relation} if there is a nonzero $a \in A_q\rtimes \Z_2$ such that the sequence\footnote{In the definition of inhomogeneous recursion relation, we do not require $P(n)$ to be nonzero. If $P(n) = 0$, then $f(n)$ satisfies an inhomogeneous recursion relation that happens to be homogeneous.} $P(n) := af(n)$ satisfies $P(n) = P(0)$. Let $K_\varnothing \subset K_q(S^3\setminus K)$ be the $A_q^{\Z_2}$-submodule generated by the empty link $\varnothing$.

\begin{theorem}\label{theorem_inhomogenous}
 Suppose there is a nonzero map $K_q(S^3\setminus \mathrm{unknot}) \to K_\varnothing$ of $A_q^{\Z_2}$-modules. Then the sequence $J(n) := J_n(K; q)$ satisfies an inhomogeneous recursion relation.
\end{theorem}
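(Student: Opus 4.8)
The plan is to distill the hypothesis into a single second--order recursion satisfied by a twisted colored Jones sequence, and then to annihilate that recursion with an explicit first--order operator. First I would extract the ``unknot eigenvector'': by (\ref{theunknot}) the empty link $1_U\in K_q(S^3\setminus\mathrm{unknot})$ generates that module over $\C[x]$ and satisfies $(Y+Y^{-1}+q^2+q^{-2})\cdot 1_U=0$, so a nonzero $A_q^{\Z_2}$--module map $\phi\colon K_q(S^3\setminus\mathrm{unknot})\to K_\varnothing$ is determined by $e:=\phi(1_U)$, the element $e$ is nonzero, and it is again a longitude eigenvector, $(Y+Y^{-1}+q^2+q^{-2})\cdot e=0$ in $K_q(S^3\setminus K)$. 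Since $e\in K_\varnothing=K_q(T^2)\cdot\varnothing$, I can write $e=b\cdot\varnothing$ with $b\in K_q(T^2)\cong A_q^{\Z_2}$ nonzero; then $c:=(Y+Y^{-1}+q^2+q^{-2})\,b\in A_q^{\Z_2}$ annihilates $\varnothing$.

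Next I would transport this relation to the space of sequences $\mathbf H$. As the empty link is fixed by $s$, the identity $c\cdot\varnothing=0$ holds also in the nonsymmetric module $\hat K_q(S^3\setminus K)$ (via the Morita equivalence, which is available since $q^4-1$ is invertible), and feeding it into the nonsymmetric pairing by Lemma~\ref{lemma_actiononseq} gives $c\cdot J=0$ in $\mathbf H$. Thus the sequence $f:=b\cdot J$ is killed by $Y+Y^{-1}+q^2+q^{-2}$; unwinding the action (\ref{actiononsequences}), this says precisely $f(n+1)+f(n-1)=(q^2+q^{-2})\,f(n)$ for every $n$.

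Then I claim the operator $X(Y+q^{-2})\in A_q$ carries every solution of that two--term recursion to a constant sequence: writing $h:=X(Y+q^{-2})f$, the formulas (\ref{actiononsequences}) give $h(n)=-q^{-2n}\bigl(f(n-1)-q^2 f(n)\bigr)$, and substituting $f(n+1)=(q^2+q^{-2})f(n)-f(n-1)$ yields $h(n+1)=h(n)$. (Equivalently, for $q$ not a root of unity every such $f$ equals $n\mapsto\alpha q^{2n}+\beta q^{-2n}$, and $X(Y+q^{-2})$ annihilates the $q^{-2n}$ part while sending $q^{2n}$ to a constant.) Hence $a:=X(Y+q^{-2})\,b\in A_q\subset A_q\rtimes\Z_2$ satisfies $a\cdot J=h$, so $(a\cdot J)(n)=(a\cdot J)(0)$ for all $n$; and $a\neq 0$ because $A_q$ is a domain, $X$ is a unit, and $b\neq0$. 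This exhibits the desired inhomogeneous recursion relation.

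The main obstacle I anticipate is bookkeeping rather than ideas: one has to be careful that the symmetric $K_q(T^2)$--action on $K_q(S^3\setminus K)$ really does match, under Morita, the restriction of the $A_q\rtimes\Z_2$--action on $\hat K_q(S^3\setminus K)$, so that ``$c\cdot\varnothing=0$'' transports to $\mathbf H$, and to keep track of the normalization implicit in Lemma~\ref{lemma_actiononseq}. This requires $q^4-1$ to be invertible, which is harmless --- one runs the argument over $\C(q)$ and clears denominators at the end to land back in $A_q$ over $\C[q^{\pm1}]$. The extraction in the first paragraph and the computation in the third are then short and elementary.
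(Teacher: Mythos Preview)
Your argument is correct, but it takes a genuinely different route from the paper's proof. The paper lifts the map itself via Morita equivalence: the nonsymmetric unknot module is the \emph{sign} representation, so the image $p$ of $1_V$ in $\hat K_\varnothing$ is an eigenvector of $Y$ (not just of $Y+Y^{-1}$) with eigenvalue $-1$; writing $p=a\cdot\varnothing$ for some $a\in A_q\rtimes\Z_2$ immediately gives $P(n):=(a\cdot J)(n)$ with $P(n+1)=P(n)$ in one step. You instead stay at the symmetric level to extract only the $(Y+Y^{-1})$--eigenvector $e=b\cdot\varnothing$, obtain the second--order relation $f(n+1)+f(n-1)=(q^2+q^{-2})f(n)$ for $f=b\cdot J$, and then post-compose with the explicit first--order operator $X(Y+q^{-2})\in A_q$ to collapse this to a constant sequence. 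Your route is more elementary and has the pleasant feature that the resulting $a=X(Y+q^{-2})b$ lies in $A_q$ (no $s$ needed) and is visibly nonzero since $A_q$ is a domain; the paper's route is shorter and more conceptual, and explains structurally why the unknot submodule forces constancy (the sign representation already has $Y$ acting by $-1$).

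One small slip: with the action \eqref{actiononsequences} one has
\[
\bigl(X(Y+q^{-2})f\bigr)(n)=q^{-2n}\bigl(f(n+1)-q^{-2}f(n)\bigr),
\]
not the expression you wrote. This does not affect your conclusion: substituting $f(n+2)=(q^2+q^{-2})f(n+1)-f(n)$ into $h(n+1)=q^{-2n-2}\bigl(f(n+2)-q^{-2}f(n+1)\bigr)$ again yields $h(n+1)=h(n)$, so the argument goes through unchanged.
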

\begin{proof}
 Let $V = \hat K_q(S^3\setminus \mathrm{unknot})$ be the nonsymmetric skein module of the unknot described in (\ref{formula_liftedunknot}). Since Morita equivalence is functorial, we have a nonzero map $V \to \hat K_\varnothing \subset \hat K_q(S^3\setminus K)$. Let $p \in \hat K_\varnothing$ be the image of $1_V$ under this map (which is \emph{not} the image of the empty link). The surjective map $A_q^{\Z_2} \twoheadrightarrow K_\varnothing$ lifts via the Morita equivalence to a surjective map $(A_q\rtimes \Z_2)\e \twoheadrightarrow \hat K_\varnothing$, which implies $\hat K_\varnothing$ is generated (as an $A_q\rtimes \Z_2$-module) by the empty link $\varnothing$. Therefore, there exists $a \in A_q\rtimes \Z_2$ such that $p = a\cdot  \varnothing$. Formula (\ref{formula_liftedunknot}) shows that $1_V$ is an eigenvector of $Y$ with eigenvalue $-1$, so we have $Ya\varnothing = -a\varnothing$. We define
 \[
 P(n) := a\cdot J(n)
 \]
 We then compute
 \begin{equation*}
  P(n+1) = -Ya\cdot J(n) = (-1)^{n}\langle Y^n,Ya\varnothing\rangle
  = (-1)^{n-1}\langle Y^n, a\varnothing\rangle
  = P(n)
 \end{equation*}
\end{proof}

\subsection{Divisibility properties of $J_n(K; q)$}
Recall that $D_q$ is the localization of $A_q\rtimes \Z_2$ at the multiplicative set consisting of all nonzero polynomials in $X$. If we define $\mathbf H(q) := \Hom(\Z, \C(q))$ to be the space of sequences of rational functions, then the action of $A_q\rtimes\Z_2$ on $\mathbf H$ extends to an action of $D_q$ on $\mathbf H(q)$ via the formulas
\begin{equation}\label{actionofdqonsequences}
 \left(\frac{F(X)}{G(X)}\cdot f\right)(n) := \frac{F(-q^{-2n})}{G(-q^{-2n})}f(n),\quad (Y\cdot f)(n) := -f(n+1),\quad (s\cdot f)(n) := -f(-n)
\end{equation} 

The double affine Hecke algebra $\H_{q,\ult}$ can be viewed as a subalgebra of $D_q$, and this gives $\mathbf H(q)$ the structure of an $\H_{q,\ult}$ module. Garoufalidis and L\^e \cite{GL05} showed that the $A_q\rtimes \Z_2$-module map defined by $a \mapsto a\cdot J(n)$ has a nontrivial kernel. This leads to the following question which we hope to address in future work:
\begin{question}\label{question_Hrecursion}
 Does the map $\H_{q,\ult} \to \mathbf H(q)$ defined by $a \mapsto a\cdot J(n)$ have a nontrivial kernel?
\end{question}

In this section we relate the conjectured action of $\H_{q,t_1,t_2,1,1}$ on the nonsymmetric skein module to the action of $\H_{q,t_1,t_2,1,1}$ on $\mathbf H(q)$. We recall from Proposition \ref{prop_dunklembedding} that under the standard embedding, $\H_{q,t_1,t_2,1,1}$ is the subalgebra of $D_q$ generated by the elements $X$, $s$, and 
\[
 Y_{k,u} := t_1 Y - (q^2 \bar t_1 X^{-2}  + q\bar t_2X^{-1})sU
\]
where $U$ is the operator 
\begin{equation}\label{equation_operatorU}
 U := \frac{1}{1-q^2X^2}(1-sY)
\end{equation}

Let $M := (A_q\rtimes \Z_2) \cdot 1_K \subset \hat K_q(S^3\setminus K)$ be the submodule of the nonsymmetric skein module of a knot $K$ generated by the empty link. If $M^{\loc}$ is the localization of $M$ at all nonzero polynomials in $X$, then both $D_q$ and its subalgebra $\H_{q,t_1,t_2,1,1}$ naturally act on $M^{\loc}$. We recall that Conjecture \ref{mainconjecture} states that the natural map $M \to M^\loc$ is injective and that the action of  $\H_{q,t_1,t_2,1,1}$ preserves the subspace $M \subset M^\loc$. It is clear that this statement implies $UY^j1_M \in M$ for all $j \in \Z$. We then define
 \begin{equation}\label{equation_P}
  P_{j}(n) := (-1)^{n+j}\frac{J(n+j)+J(n-j-1)}{q^{4n-2}-1}
 \end{equation}

\begin{theorem}\label{thm_divisibility}
 Assume Conjecture \ref{mainconjecture} holds for $K \subset S^3$. Then there exist $a_{j,k,l}(q) \in \C(q)$ depending on $K$ but not on $n \in \Z$ such that
 \[
  P_{j}(n) = \sum_{k,l}a_{j,k,l}(q)q^{-2nk}J(n+l)
 \]
Furthermore, $(q^2-1)P_{j}(n)$ is a Laurent polynomial in $\C[q^{\pm 1}]$.
\end{theorem}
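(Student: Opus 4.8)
The plan is to translate the statement into the action of the subalgebra $\H_{q,t_1,t_2,1,1} \subset D_q$ on the sequence space $\mathbf H(q)$ via \eqref{actionofdqonsequences}, and then to compute the specific sequence obtained by applying the operator $U$ (and its twists by powers of $Y$) to $J(n)$. First I would observe, using Lemma \ref{lemma_actiononseq} and Remark \ref{remark_completion}, that applying any $a \in A_q\rtimes \Z_2$ (or, after localizing, any $a \in D_q$) to the sequence $J(n)$ corresponds to the dual of the action on $1_M \in M$; so $UY^j 1_M \in M$ (which follows from Conjecture \ref{mainconjecture}, since $Y_{k,u}$ and hence $U$ lie in $\H_{q,t_1,t_2,1,1}$) implies that the sequence $n \mapsto (U Y^j \cdot J)(n)$ lies in the image of $\mathbf H = \Hom(\Z,\C[q^{\pm 1}])$ — i.e., it is a sequence of \emph{Laurent polynomials}, not merely rational functions, and moreover (by the same duality, since $M$ is generated by $1_M$ over $A_q\rtimes\Z_2$) it is an $A_q\rtimes\Z_2$-linear combination of shifts of $J$, giving the displayed formula $\sum_{k,l} a_{j,k,l}(q) q^{-2nk} J(n+l)$.

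Next I would carry out the explicit computation of $(U Y^j \cdot J)(n)$. Using \eqref{actionofdqonsequences}: $Y$ acts by $(Yf)(n) = -f(n+1)$, $s$ acts by $(sf)(n) = -f(-n)$, and $X^2$ acts by multiplication by $q^{-4n}$. So $sY$ sends $f(n) \mapsto -(Yf)(-n) = -(-f(-n+1)) = f(-n+1)$, hence $(1-sY)f$ at $n$ equals $f(n) - f(1-n)$, and dividing by $1-q^2X^2 \mapsto 1 - q^{2}q^{-4n} = 1 - q^{2-4n}$ gives
\[
(U\cdot f)(n) = \frac{f(n) - f(1-n)}{1 - q^{2-4n}}.
\]
Applying this to $Y^j \cdot J$, whose value at $m$ is $(-1)^j J(m+j)$ up to the sign bookkeeping from $(Yf)(n) = -f(n+1)$, and using the convention $J(-n) = -J(n)$ to rewrite $J(1-n+j) = -J(n-1-j)$, I expect to recover exactly $P_j(n)$ as in \eqref{equation_P} up to an overall sign and a harmless factor; this is the routine calculation I will not grind through here, but it is the step that pins down the claimed formula \eqref{equation_P} and shows $P_j(n)$ is (up to the $q^2-1$ clearing of denominators) the image under $U$ of a shift of $J$.

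Finally, for the Laurent-polynomiality of $(q^2-1)P_j(n)$: a priori Conjecture \ref{mainconjecture} gives $U Y^j 1_M \in M$, hence $(U Y^j \cdot J)(n) \in \C[q^{\pm 1}]$ for each $n$. But the definition \eqref{equation_P} involves dividing by $q^{4n-2}-1$, whose only effect on denominators — after the numerator $J(n+j)+J(n-j-1)$ is known to vanish at the roots of $q^{4n-2}-1$ that are allowed — is to possibly introduce a factor of $q^2 - 1$ in the denominator (coming from the $n$-independent cyclotomic factor of $q^{4n-2}-1$, namely $q^2-1$, which does not vary with $n$ and so is not controlled by the per-$n$ membership statement). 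Multiplying by $q^2-1$ removes exactly this. I would make this precise by factoring $q^{4n-2}-1 = (q^2-1)\cdot \frac{q^{4n-2}-1}{q^2-1}$ and noting the second factor is a polynomial in $q$ whose relevant roots are cancelled by the numerator precisely because $UY^j\cdot J$ is integral; so $(q^2-1) P_j(n) \in \C[q^{\pm1}]$. \textbf{The main obstacle} I anticipate is the careful sign and normalization bookkeeping needed to match the operator-theoretic expression $(U Y^j \cdot J)(n)$ with the combinatorially-defined $P_j(n)$ of \eqref{equation_P} — in particular handling the antisymmetric extension $J(-n)=-J(n)$ and the signs $(-1)^{n+j}$ — together with isolating the precise denominator $(q^2-1)$ and verifying it is the \emph{only} obstruction to integrality (i.e. that the remaining cyclotomic factors of $q^{4n-2}-1$ are genuinely cancelled by the numerator, which is where the hypothesis $UY^j 1_M \in M$ is essential rather than cosmetic).
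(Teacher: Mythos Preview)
Your overall strategy matches the paper's: both identify $P_j(n)$ with $(-1)^n\langle Y^n, UY^j 1_M\rangle = (UY^j\cdot J)(n)$ via the duality of Lemma~\ref{lemma_actiononseq}, and both use Conjecture~\ref{mainconjecture} to write $UY^j 1_M \in M$ as a finite sum $\sum_{k,l} a_{j,k,l} X^k Y^l\, 1_M$, which under \eqref{actiononsequences} yields the displayed formula. Your explicit computation of $(U\cdot f)(n)$ is correct and is exactly what the paper does.

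The gap is in your Laurent-polynomiality argument. You assert that $UY^j 1_M \in M$ gives $(UY^j\cdot J)(n) \in \C[q^{\pm1}]$ outright, and then separately worry about the denominator $q^{4n-2}-1$; but these are the same quantity, so the subsequent reasoning is circular. The point you are missing is that Lemma~\ref{lemma_actiononseq} rests on the nonsymmetric pairing of Corollary~\ref{cor_pairinglifts}, which is only available when $q^4-1$ is invertible. Thus from $UY^j 1_M \in M$ one can only conclude $P_j(n) \in \C[q^{\pm1},(q^4-1)^{-1}]$, i.e.\ the only possible poles are at fourth roots of unity. The paper then finishes by observing that the explicit denominator $q^{4n-2}-1$ does \emph{not} vanish at $q^2=-1$ (indeed $(-1)^{2n-1}-1=-2$), so no pole at $q=\pm i$ can occur, while at $q=\pm 1$ the zero of $q^{4n-2}-1$ is simple. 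Hence $(q^2-1)P_j(n)$ has no poles at all and lies in $\C[q^{\pm1}]$. Your instinct that $q^2-1$ is the only surviving obstruction is correct, but the reason is the $q^4-1$ restriction on the Morita equivalence together with the non-vanishing of $q^{4n-2}-1$ at $q^2=-1$, not the ``$n$-independence'' of the factor.
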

\begin{proof}
 To establish the claims we use Lemma \ref{lemma_actiononseq} to compute the quantity $\langle Y^n, UY^j 1_M\rangle$ in terms of the colored Jones polynomials:
 \begin{align*}
  \langle Y^n, UY^j 1_M\rangle &= (-1)^{n}UY^jJ(n)\\
  &= (-1)^{n}\frac{1}{1-q^2X^2}(1-sY)Y^jJ(n)\\
  &= (-1)^{n+j}\frac{1}{1-q^2X^2}(1-sY)J(n+j)\\
  &= \frac{(-1)^j}{1-q^2X^2}\left[ (-1)^{n}J(n+j) + (-1)^{-n+1}J(-n+1+j)\right]\\
  &= \frac{ (-1)^{n+j}\left[J(n+j) + J(n-1-j)\right]}{q^{4n-2}-1}\\
  &= P_{j}(n)
 \end{align*}
 Since the nonsymmetric pairing exists whenever $q^4-1$ is invertible, the rational function $P_{j}(n)$ can only have poles when $q^4-1=0$. However, the colored Jones polynomials are Laurent polynomials, and the denominator of $P_{j}(n)$ has simple zeros when $q^2-1 = 0$ and does not have zeros when $q^2+1 = 0$. Therefore, $(q^2-1)P_{j}(n)$ is a Laurent polynomial, which shows the second claim.
 
 To show the first claim, note that Conjecture \ref{mainconjecture} implies that there exist $a_{j}(k,l) \in \C(q)$ such that 
 \[
 UY^j 1_M = \sum_{k,l}a_{j}(k,l)X^kY^l 1_M
 \]
 Combining this with the previous computation, we have the equalities
 \begin{eqnarray*}
  P_j(n) &=& (-1)^{n}\langle Y^n, UY^j1_M\rangle \\
  &=& (-1)^{n}\langle Y^n, \sum_{k,l}a_jX^kY^l 1_M\rangle\\
  &=& \sum_{k,l} a_{j,k,l}(q) (-1)^k q^{-2nk} J(n+l)
 \end{eqnarray*}
To obtain the claimed statement, the factor $(-1)^k$ can be absorbed into the coefficient $a_j(k,l)$. (Note that the power of $q$ cannot be absorbed into this coefficient because $a_j(k,l)$ does not depend on $n \in \Z$.)
\end{proof}


\subsection{Habiro's cyclotomic expansion}
In this section we use Habiro's cyclotomic expansion \cite{Hab08} of the colored Jones polynomials to prove that for any knot, the rational function $(q^2-1)P_j(n)$ from (\ref{equation_P}) is actually a Laurent polynomial. We first recall this expansion in our normalization conventions (see Remark \ref{remark_signconvention}). Define the polynomials
\[
 c_{n,k} := \prod_{j=1}^k (q^{4n}+q^{-4n}-q^{4j}-q^{-4j})
\]
By definition, $c_{n,0} = 1$, $c_{n,n} = 0$, and $c_{n,k+1} = (q^{4n}+q^{-4n}-q^{4k+4}-q^{-4k-4})c_{n,k}$.
\begin{theorem}[\cite{Hab08}]\label{theorem_habiro}
 There exist $H_k \in \Z[q^{\pm 1}]$, independent of $n$, such that
 \[
  J(n) = \sum_{k=0}^{n-1} \frac{q^{2n}-q^{-2n}}{q^2-q^{-2}}c_{n,k} H_k
 \]
\end{theorem}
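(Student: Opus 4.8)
The plan is to obtain the expansion in Theorem \ref{theorem_habiro} by realizing the colored Jones invariant as the evaluation on the irreducible module $V_n$ of the \emph{universal} $\sl_2$ invariant $J_K$ of $K$, which lives in a completion of the center of $\U_q(\sl_2)$, and to read off the coefficients $H_k$ from an expansion of $J_K$ in a distinguished ``cyclotomic'' basis of central elements. Concretely, let $C$ be the central element of $\U_q(\sl_2)$ (a fixed normalization of the Casimir, equivalently a fixed polynomial in it) that acts on the $n$-dimensional irreducible $V_n$ by the scalar $q^{4n}+q^{-4n}$, and set
\[
 \omega_k := \prod_{j=1}^{k}\bigl(C - (q^{4j}+q^{-4j})\bigr),\qquad k \geq 0,
\]
so that $\omega_k$ acts on $V_n$ by exactly $c_{n,k}$, and $\omega_k|_{V_n}=0$ once $k\geq n$ (the factor $j=n$ vanishes). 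The quantum dimension of $V_n$ supplies the prefactor $(q^{2n}-q^{-2n})/(q^2-q^{-2})$. With these conventions the theorem reduces to the single assertion that $J_K=\sum_{k\geq 0}H_k\,\omega_k$ with $H_k\in\Z[q^{\pm 1}]$ independent of $n$; taking the quantum trace on $V_n$ and using $\omega_k|_{V_n}=c_{n,k}$ then produces the displayed formula with the stated truncation at $k=n-1$.

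First I would dispose of the \emph{existence} of such an expansion over $\C(q)$, which is elementary and separate from the deep input. For fixed $K$, write $\widetilde J(n):=(q^2-q^{-2})J(n)/(q^{2n}-q^{-2n})$ and consider the system $\widetilde J(n)=\sum_{k=0}^{n-1}H_k\,c_{n,k}$ for $n=1,2,3,\dots$. This system is lower triangular in the unknowns $H_0,H_1,\dots$: having solved for $H_0,\dots,H_{n-1}$, the $(n+1)$-st equation determines $H_n$ because the coefficient $c_{n+1,n}=\prod_{j=1}^{n}(q^{4(n+1)}+q^{-4(n+1)}-q^{4j}-q^{-4j})$ of $H_n$ is nonzero (no factor vanishes, since $j\leq n<n+1$). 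Thus the $H_k\in\C(q)$ exist and are unique. This already gives a weak form of the theorem, and it isolates the genuine content as the integrality statement $H_k\in\Z[q^{\pm 1}]$.

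The integrality is the main obstacle, and here I would import Habiro's integral structure for $\U_q(\sl_2)$ rather than attempt a bare-hands denominator estimate. The strategy is to work with an integral form $\U_\Z$ of the quantum group and its integral center, completed with respect to the ideals generated by the $(q)_n$-type products that define the Habiro ring, and to prove that the normalized $\{\omega_k\}$ form a \emph{topological} basis of this completed integral center. The universal invariant $J_K$ of a $0$-framed knot is assembled from the $R$-matrix and the ribbon element, both of which preserve $\U_\Z$; the crucial point is that the $k$-th coefficient of $J_K$ in the $\{\omega_k\}$-expansion is forced into $\Z[q^{\pm 1}]$ because the denominators introduced by the cyclotomic elements cancel exactly against the $(q)_k$ occurring in the integral basis. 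This cancellation is the technical heart of \cite{Hab08}, and I would cite Habiro for it rather than reproduce it.

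Finally, since in this paper Theorem \ref{theorem_habiro} is invoked only as an input to Theorem \ref{theorem_divisibilityfromhabiro}, it is enough to record the two facts used downstream: the eigenvalue identity $\omega_k|_{V_n}=c_{n,k}$, which is a one-line computation from the Casimir eigenvalue $q^{4n}+q^{-4n}$, and Habiro's integrality $H_k\in\Z[q^{\pm 1}]$. I would verify the former explicitly and quote the latter.
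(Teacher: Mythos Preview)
The paper does not prove Theorem \ref{theorem_habiro} at all: it is stated as a citation to \cite{Hab08} and used as a black box in the proof of Theorem \ref{theorem_divisibilityfromhabiro}. So there is no ``paper's own proof'' to compare against; your proposal is a sketch of Habiro's argument rather than of anything the authors do.

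That said, your sketch is a reasonable outline of the structure of Habiro's proof. Your triangularity argument for existence/uniqueness of the $H_k$ over $\C(q)$ is correct and elementary, and you are right that the whole content is the integrality $H_k\in\Z[q^{\pm 1}]$, which genuinely requires Habiro's machinery (the integral form, the completed center, and the cyclotomic basis). One small quibble: the normalization of the Casimir eigenvalue you wrote, $q^{4n}+q^{-4n}$, should be checked against the conventions in force here (the paper's $q$ is related to the usual quantum group $q$ by a square, and one must be careful that $c_{n,k}$ as defined in the paper really is the eigenvalue of your $\omega_k$ on $V_n$); this is purely bookkeeping but easy to get wrong. In any case, since the paper only \emph{uses} the expansion and never claims to prove it, the appropriate thing here is simply to cite \cite{Hab08}, as the authors do.
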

Since $c_{n,n} = 0$, we may take the upper limit of this sum to be infinity. Also, as an example of the theorem, for the figure eight knot we have $H_k = 1$ for all $k$, and for the unknot we have $H_0 = 1$ and $H_k = 0$ for $k > 1$. Finally, since we use the convention $J(n) = -J(-n)$, the theorem is also true for negative $n$ if we define $H_{-k} = H_k$.
\begin{theorem}\label{theorem_divisibilityfromhabiro}
 The following rational function is actually a Laurent polynomial:
 \[
  (q^2-1)P_j(n) = (q^2-1)\frac{J(n+j)+J(n-j-1)}{q^{4n-2}-1}
 \]
\end{theorem}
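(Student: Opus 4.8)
The plan is to substitute Habiro's cyclotomic expansion (Theorem \ref{theorem_habiro}) directly into the numerator $J(n+j)+J(n-j-1)$ and show that the resulting expression is divisible by $q^{4n-2}-1$ in the ring $\Z[q^{\pm 1}]$ (equivalently, in $\Z[q^{\pm 1}]$ after clearing the $q^2-1$ in the denominator of the Habiro coefficients). The key observation is that the only possible poles of $P_j(n)$ are at roots of $q^{4n-2}-1$, so it suffices to check that the numerator vanishes at each such root. Writing $\zeta$ for a primitive root of $q^{4n-2}=1$, i.e. $q^{4n-2}=1$, we have $q^{4n}=q^2$ and $q^{-4n}=q^{-2}$; so at such a $q$ the polynomial $c_{n,k}$ becomes $\prod_{j=1}^k(q^2+q^{-2}-q^{4j}-q^{-4j})$, which is manifestly \emph{independent of $n$}. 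This is the crucial simplification.

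First I would make the substitution precise: using $J(m)=\sum_k \frac{q^{2m}-q^{-2m}}{q^2-q^{-2}}c_{m,k}H_k$ and the convention $J(-m)=-J(m)$ (equivalently $H_{-k}=H_k$, $c_{-m,k}=c_{m,k}$), write
\[
J(n+j)+J(n-1-j) \;=\; \sum_{k\ge 0}\frac{H_k}{q^2-q^{-2}}\Big[(q^{2(n+j)}-q^{-2(n+j)})c_{n+j,k}+(q^{2(n-1-j)}-q^{-2(n-1-j)})c_{n-1-j,k}\Big].
\]
Then I would evaluate the bracket at a root of $q^{4n-2}-1$. At such a root, $q^{4n}=q^2$, hence $c_{n+j,k}$ and $c_{n-1-j,k}$ both collapse: $q^{4(n+j)}=q^{2+4j}$, $q^{4(n-1-j)}=q^{2-4-4j}=q^{-2-4j}$, so
\[
c_{n+j,k}\big|=\prod_{i=1}^k(q^{2+4j}+q^{-2-4j}-q^{4i}-q^{-4i}),\qquad
c_{n-1-j,k}\big|=\prod_{i=1}^k(q^{-2-4j}+q^{2+4j}-q^{4i}-q^{-4i}),
\]
which are \emph{equal}. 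Call this common value $\gamma_{j,k}(q)$, independent of $n$. Similarly at such a root $q^{2(n+j)}-q^{-2(n+j)} = q^{2j}\cdot q^{2n}-q^{-2j}q^{-2n}$ and $q^{2(n-1-j)}-q^{-2(n-1-j)}=q^{-2-2j}q^{2n}-q^{2+2j}q^{-2n}$; but one must also use $q^{4n-2}=1$, i.e. $q^{2n}=q^{1/2}\cdot(\text{unit of order dividing }2)$ — rather than worrying about square roots, I would instead note $q^{4n}=q^2$ forces $q^{4n-2}=1$, so $q^{2n}$ and $q^{-2n}$ satisfy $q^{2n}\cdot q^{-2n}=1$ and $(q^{2n})^2 = q^{4n}=q^2$, hence $q^{-2n}=q^{-2}\cdot q^{2n}$. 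Substituting, $q^{2(n+j)}-q^{-2(n+j)} = q^{2n}(q^{2j}-q^{-2-2j})$ and $q^{2(n-1-j)}-q^{-2(n-1-j)} = q^{2n}(q^{-2-2j}-q^{4}q^{-2-2j}\cdot\ldots)$ — I would carry out this short computation carefully and check the two terms cancel: the first factor is $q^{2n}(q^{2j}-q^{-2-2j})$ and the second is $q^{2n}(q^{-2-2j}-q^{2j})\cdot(\text{the appropriate unit})$, giving cancellation of the bracket termwise in $k$. Hence $J(n+j)+J(n-1-j)$ vanishes at every root of $q^{4n-2}-1$, so $q^{4n-2}-1$ divides the numerator, and after accounting for the single factor $q^2-q^{-2}$ in the Habiro denominator (which contributes the harmless $q^2-1$) we conclude $(q^2-1)P_j(n)\in\C[q^{\pm1}]$.

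The main obstacle I anticipate is bookkeeping with the ``square-root'' ambiguity in $q^{2n}$: the relation $q^{4n-2}=1$ pins down $q^{4n}$ but not $q^{2n}$ on the nose, so I must phrase the vanishing argument so that only $q^{4n}=q^2$ (and $q^{2n}q^{-2n}=1$) is used, treating $q^{2n}$ as a formal unit $t$ with $t^2=q^2$. Concretely I would set $t:=q^{2n}$, rewrite the numerator as a Laurent polynomial in $t$ and $q$ modulo the relation $t^2=q^2$, and show it is identically zero modulo that relation; since $q^{4n-2}-1 = q^{-2}(t^2-q^2)$ up to a unit, divisibility in $\C[q^{\pm1}][t]/(t^2-q^2)$ is exactly what is needed. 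A secondary, purely cosmetic point is matching the sign $(-1)^{n+j}$ and the power $q^{4n-2}$ versus $q^{-(4n-2)}$ in definition (\ref{equation_P}); these do not affect the polynomiality conclusion. I expect no difficulty with convergence of the sum over $k$ since $c_{m,k}$ vanishes for $k\ge |m|$, making every sum finite.
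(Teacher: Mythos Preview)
Your approach is essentially the same as the paper's: both substitute Habiro's expansion and show that each term $s_k := (q^{2(n+j)}-q^{-2(n+j)})c_{n+j,k}+(q^{2(n-1-j)}-q^{-2(n-1-j)})c_{n-1-j,k}$ is divisible by $q^{4n-2}-1$. The paper proves this by induction on $k$, showing $s_k \equiv C_k\, s_{k-1}\pmod{q^{4n-2}-1}$; you short-circuit the induction by noting that \emph{every} factor of $c_{n+j,k}$ and of $c_{n-1-j,k}$ agrees modulo $q^{4n}=q^2$, so the two products are equal outright, and then that the prefactors are exact negatives (your formal-variable device $t=q^{2n}$ with $t^2=q^2$ makes this clean: $q^{2(n+j)}-q^{-2(n+j)}=t(q^{2j}-q^{-2-2j})$ and $q^{2(n-1-j)}-q^{-2(n-1-j)}=t(q^{-2-2j}-q^{2j})$).

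One small point to tighten: your remark that the Habiro denominator $q^2-q^{-2}$ ``contributes the harmless $q^2-1$'' skips the factor $q^2+1$. What you actually prove is $(q^{4n-2}-1)\mid (q^2-q^{-2})\big(J(n+j)+J(n-1-j)\big)$; to get $(q^{4n-2}-1)\mid (q^2-1)\big(J(n+j)+J(n-1-j)\big)$ you should note that $q^2+1$ is coprime to $q^{4n-2}-1$ (since $(q^2)^{2n-1}\big|_{q^2=-1}=-1\neq 1$), so the extra $q^2+1$ can be cancelled. The paper handles this implicitly by checking $s_k$ is divisible by $q+q^{-1}$ as well.
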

\begin{proof}
 (In the statement of the theorem we have ignored the sign $(-1)^{n+j}$.) Since Theorem \ref{theorem_habiro} is true for both positive and negative $n$, we are free to assume that $n+j \geq 0$ and $n-j-1 \geq 0$. If we shorten notation by writing  $a = (q^{2n+2j}-q^{-2n-2j})$ and $b = (q^{2n-2j-2}-q^{2+2j-2n})$, we then have
 \begin{align*}
  (q^2-1)P_j(n)
  & = \frac{1}{(q+q^{-1})(q^{4n-2}-1)} \sum_{k=0}^\infty H_k \left[ac_{n+j,k} + bc_{n-j-1,k}\right]\\
  & =: \sum_{k=0}^\infty H_k \frac{s_k}{(q+q^{-1})(q^{4n-2}-1)}
  \end{align*}
 
  We prove that $s_k$ is divisible by ${(q+q^{-1})(q^{4n-2}-1)}$ by induction on $k$. Since $c_{n,0} = 1$, we have
  \begin{align*}
   s_{k=0} & = (q^{2n+2j}-q^{-2n-2j}) + (q^{2(n-j-1)} - q^{2(1+j-n)}) \\
   & = q^{2n+2j}- q^{2(1+j-n)} - q^{-2n-2j} + q^{2(n-j-1)} \\
   & = q^{2j}(q^{2n}-q^{-2n+2}) + q^{-2j}(q^{2n-2}-q^{-2n})\\
   & \equiv q^{2j-2n}(q^{-2} + q^{-4j}) \quad (\textrm{mod } q^{4n-2}-1)
  \end{align*}
  Since the expression on the final line is divisible by $q^{4j-2}+1$, it is divisible by $q^2+1$, and this proves the claim for $k=0$. For the inductive step, we will show $s_k \equiv s_{k-1} (\textrm{mod } q^{4n-2}-1)$.  We first compute
  \begin{align*}
   s_{k} &= ac_{n+j,k} + bc_{n-j-1,k}\\
   &= a\left(q^{4(n+j)}+q^{-4(n+j)}-q^{4k}-q^{-4k}\right)c_{n+j,k-1} \\
   &\,  + b\left(q^{4(n-j-1)}+q^{-4(n-j-1)}-q^{4k}-q^{-4k}\right)c_{n-j-1,k-1}
  \end{align*}
  We now split this into four terms, each of which can be dealt with similarly. For example,
  \begin{align*}
   & \,\,aq^{4(n+j)}c_{n+j,k-1} + bq^{-4(n-j-1)}c_{n-j-1,k-1}\\
   = & \,\,q^{4j} \left( a q^{4n} c_{n+j,k-1} + b q^{4-4n}c_{n-j-1,k-1}\right)\\
   \equiv &\,\, q^{4j}\left( a q^{4n} c_{n+j,k-1} + b q^{4n}c_{n-j-1,k-1}\right)\\
   \equiv &\,\, q^{4j+4n}s_{k-1}\quad (\textrm{mod } q^{4n-2}-1)
  \end{align*}
\end{proof}
\begin{remark}\label{remark_strongerdivisibility}
 This proof actually shows the following rational function is a Laurent polynomial:
 \[
  \frac{(q^2-1) \left(J(n+j)+J(n-j-1)\right)}{(q^{4n-2}-1)(q^{4j-2}+1)}
 \]
\end{remark}

\subsection{More divisibility properties}\label{sec_congruences}
In this section we prove Conjecture 1.6 of \cite{CLPZ14} using the techniques of the previous section (and, in particular, Habiro's theorem). 
Their conjecture is stated using a different normalization of the colored Jones polynomials, so in this section we use their normalization. In particular, their $q$ is our $q^2$, and they used the normalized colored Jones polynomials $\bar J_n(K)$, which are related to ours via $\bar J_n(K) = J_{n+1}(K) / J_{n+1}(\mathrm{unknot})$. Let $[n] := q^n - q^{-n}$. We prove the following theorem which implies \cite[Conj. 1.6]{CLPZ14}:

\begin{theorem}\label{thm_clpzconj}
 For any knot the following congruence holds:
\begin{equation}\label{eq_clpz}
 \bar J_{n-1}(q) - \bar J_{k-1}(q) \equiv 0 \quad (\mathrm{mod }\,\, [n-k][n+k])
\end{equation}
\end{theorem}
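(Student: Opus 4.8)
The plan is to mimic the proof of Theorem \ref{theorem_divisibilityfromhabiro} essentially verbatim, using Habiro's cyclotomic expansion (Theorem \ref{theorem_habiro}) in the normalization of \cite{CLPZ14}. First I would translate the statement into the conventions of the previous subsections: since their $q$ is our $q^2$ and $\bar J_n(K) = J_{n+1}(K)/J_{n+1}(\mathrm{unknot})$, and $J_{n+1}(\mathrm{unknot}) = [2n+2]/[2]$ in the notation $[m] = q^m - q^{-m}$, the quantity $\bar J_{n-1}(q)$ equals $\sum_{k\ge 0} c_{n,k} H_k$ (up to the explicit change of variables in the $c_{n,k}$), with the prefactor $(q^{2n}-q^{-2n})/(q^2-q^{-2})$ cancelling against $J_{n+1}(\mathrm{unknot})$. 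Thus $\bar J_{n-1}$ is literally the ``reduced'' cyclotomic series $\sum_k c_{n,k} H_k$, and the claimed congruence becomes
\[
 \sum_{k\ge 0} H_k\,(c_{n,k} - c_{m,k}) \equiv 0 \quad (\mathrm{mod}\ [n-m][n+m]),
\]
where I have written $m$ for their $k$ to avoid collision with the summation index.

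Next I would prove the key divisibility statement: for every fixed $k$,
\[
 c_{n,k} - c_{m,k} \equiv 0 \quad (\mathrm{mod}\ [n-m][n+m]),
\]
by induction on $k$. The base case $k=0$ is trivial since $c_{n,0} = c_{m,0} = 1$. For the inductive step, recall $c_{n,k+1} = (q^{4n}+q^{-4n} - q^{4k+4} - q^{-4k-4})\,c_{n,k}$ (after the appropriate change of variables matching \cite{CLPZ14}, this is $c_{n,k+1} = (\{2n\}\{2n\}\text{-type factor} - \cdots)c_{n,k}$; I will record the exact form once the normalization is pinned down). Writing $d_n := q^{4n}+q^{-4n}$, we get
\[
 c_{n,k+1} - c_{m,k+1} = (d_n - q^{4k+4}-q^{-4k-4})(c_{n,k} - c_{m,k}) + (d_n - d_m)\,c_{m,k}.
\]
The first summand is divisible by $[n-m][n+m]$ by the inductive hypothesis. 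For the second, note $d_n - d_m = (q^{4n}-q^{4m}) + (q^{-4n}-q^{-4m})$, and $q^{4n} - q^{4m} = q^{4m}(q^{4(n-m)}-1)$, which is divisible by $q^{2(n-m)}-q^{-2(n-m)} = [2n-2m]$ up to units; similarly $q^{-4n}-q^{-4m}$ is divisible by $[2n-2m]$. A short manipulation (pulling out the common $q$-power and grouping) shows $d_n - d_m$ is in fact divisible by $[2n-2m][2n+2m]$, hence a fortiori by $[n-m][n+m]$ after the change of variables $q \mapsto q^2$ is accounted for. Summing over $k$ against the $n$-independent integer coefficients $H_k$ (and using $c_{n,k} = 0$ for $k \ge n$, so the sum is finite) gives the theorem. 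I would also remark, in parallel with Remark \ref{remark_strongerdivisibility}, that the argument actually yields divisibility by the larger ideal generated by $[2n-2m][2n+2m]$, which may be the precise content of \cite[Conj. 1.6]{CLPZ14}.

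The main obstacle I anticipate is purely bookkeeping: getting the normalization dictionary between \cite{CLPZ14} and Habiro's expansion exactly right — in particular tracking the powers of $q$ in the definition of $c_{n,k}$, the factor $J_{n+1}(\mathrm{unknot})$, the index shift $\bar J_{n-1}$ vs. $J_n$, and the substitution $q \mapsto q^2$ — so that the telescoping divisibility $c_{n,k+1} - c_{m,k+1}$ lands in precisely the ideal $([n-k][n+k])$ claimed and not a proper sub- or super-ideal. Once that is set up correctly, the induction is routine and mirrors the four-term splitting already carried out in the proof of Theorem \ref{theorem_divisibilityfromhabiro}. No genuinely new idea beyond Habiro's theorem is needed; the point is that cyclotomic-style congruences for the colored Jones polynomial follow formally from the integrality and $n$-independence of the Habiro coefficients $H_k$ together with elementary divisibility facts about the $q$-integers appearing in $c_{n,k}$.
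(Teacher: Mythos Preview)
Your proposal is correct and follows essentially the same argument as the paper: Habiro's expansion reduces the claim to showing each $d_{n,j}-d_{k,j}$ lies in the ideal $([n-k][n+k])$, which is proved by induction on $j$ using the identity $q^{2n}+q^{-2n}-q^{2k}-q^{-2k}=[n-k][n+k]$ (in the CLPZ $q$) --- precisely your $d_n-d_m$ observation. The paper avoids your anticipated bookkeeping obstacle simply by redefining the cyclotomic factors $d_{n,j}:=\prod_{l=1}^{j}(q^{2n}+q^{-2n}-q^{2l}-q^{-2l})$ directly in the CLPZ normalization, after which the induction is the one-line telescoping you wrote down; note also that in that normalization the divisor is exactly $[n-k][n+k]$, so your closing remark about a strictly larger ideal $[2n-2m][2n+2m]$ does not survive the change of variables.
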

\begin{proof}
First we define
\[
 d_{n,j} := \prod_{k=1}^j(q^{2n}+q^{-2n}-q^{2k} - q^{-2k})
\]
(where $d_{n,0} = 1$ by convention). Habiro's theorem in the normalization conventions of \cite{CLPZ14} says that there exist polynomials $H_k \in \Z[q^{+\pm 1}]$, independent of $n$, such that
\[
 \bar J_{n-1}(q) = \sum_{j=0}^\infty d_{n,j} H_j
\]
(This sum is finite because $d_{n,n} = 0$.) We can therefore write the left hand side of (\ref{eq_clpz}) as follows:
\[
 \bar J_{n-1}(q) - \bar J_{k-1}(q) = \sum_{j=0}^\infty (d_{n,j} - d_{k,j})H_j
\]
We prove by induction on $j$ that each coefficient $a_j := (d_{n,j}-d_{k,j})$ is congruent to $0$ modulo $[n-k][n+k]$. The base case $j=0$ is trivial since $d_{n,0} = 1$. Now assume $a_{j-1} \equiv 0$. We first compute
\begin{eqnarray*}
 d_{n,j} &=& (q^{2n}+q^{-2n}-q^{2j}-q^{-2j})d_{n,j-1}\\
 &\equiv& (-[n-k][n+k] + q^{2n}+q^{-2n}-q^{2j}-q^{-2j})d_{n,j-1}\\
 &=& (q^{2k}+q^{-2k}-q^{2j}-q^{-2j})d_{n,j-1}
\end{eqnarray*}
We then have the following congruences:
\begin{eqnarray*}
 a_{j} &=& (q^{2n}+q^{-2n} - q^{2j}-q^{-2j})d_{n,j-1} - d_{k,j}\\
 &\equiv& (q^{2k+2}+q^{-2k-2} - q^{2j}-q^{-2j})d_{n,j-1} - d_{k,j}\\
 &=& (q^{2k+2}+q^{-2k-2} - q^{2j}-q^{-2j})(d_{n,j-1} - d_{k,j-1})\\
 &\equiv& 0
\end{eqnarray*}
This completes the proof of the theorem.
\end{proof}
\begin{remark}
 The difference in the signs in (\ref{eq_clpz}) and the numerator of Remark \ref{remark_strongerdivisibility} comes from the differences in normalization between $J_n$ and $\bar J_n$.
\end{remark}

\section{Canonical 3-parameter deformations}\label{sec_ccdeformations}
In this section we discuss deformations of (nonsymmetric) skein modules to the DAHA $\H_{q,t_1,t_2,t_3,t_4} = \H_{q,\underline t}$ of type $C^{\vee} C_1$ introduced by Sahi in \cite{Sah99} (see also \cite{NS04}).
%
To reduce confusion, in this section we write $X$, $\yy$, and $s$ for the generators of $A_q\rtimes \Z_2$. As before, we let $D_q$ be the algebra obtained from $A_q\rtimes \Z_2$ by inverting all nonzero polynomials in $X$. Then $\H_{q,\ult}$ is the subalgebra of $D_q$ generated by $X$, $X^{-1}$ and the following operators in $D_q$:
\begin{eqnarray*}
 T_0 &=& t_1 s\yy - \frac{q^2 \bar t_1 X^2  + q\bar t_2X}{1-q^2X^2}(1-s\yy)\\
 T_1 &=& t_3s + \frac{\bar t_3+\bar t_4 X}{1-X^2}(1-s)
\end{eqnarray*}

If $M$ is an $A_q\rtimes \Z_2$-module, we write $M^\loc := D_q \otimes_{A_q\rtimes \Z_2} M$ for the localization of $M$ at all nonzero polynomials $X$. If $M$ is free over the subalgebra $\C[X^{\pm 1}]$ (which is the case in all our examples), then the natural map $M \to M^\loc$ is injective. Since $\H_{q,\ult}$ is a subalgebra of $D_q$, it acts naturally on $M^\loc$.

We now prove Conjectures \ref{mainconjecture} and \ref{conj_5param} for our example knots.
\begin{theorem}\label{thm_mainconjecture}
 Let $M$ be the $A_q\rtimes \Z_2$ module which is the nonsymmetric skein module of the unknot, a $(2,2p+1)$ torus knot, or the figure eight knot, and let $M'$ be the quotient of $M$ by the unknot submodule.
 \begin{enumerate} 
  \item The action of $\H_{q,t_1,t_2,1,1}$ preserves the subspace $M \subset M^\loc$. 
  \item The action of $\H_{q,t_1,t_2,t_3,t_4}$ preserves the subspace $M' \subset (M')^\loc$.
 \end{enumerate}
\end{theorem}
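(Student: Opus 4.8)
The strategy is to reduce the statement, in each case, to checking that two specific operators in $D_q$ preserve the relevant $A_q\rtimes \Z_2$-module. Since $\H_{q,t_1,t_2,1,1}$ is generated over $\C[X^{\pm 1}]$ by $s$ and $T_0$ (with $t_3=t_4=1$, the operator $T_1$ is just $s$, which obviously preserves any $A_q\rtimes \Z_2$-submodule of $M^\loc$), and since $T_0 = t_1 s\yy - (q^2\bar t_1 X^2 + q\bar t_2 X)U_0$ with $U_0 = (1-q^2X^2)^{-1}(1-s\yy)$, part (1) is equivalent to the single condition $U_0 M\subset M$. For part (2), with all four parameters present one must in addition control $T_1 = t_3 s + (\bar t_3 + \bar t_4 X)(1-X^2)^{-1}(1-s)$, so the extra condition is that $U_1 := (1-X^2)^{-1}(1-s)$ preserves $M'$. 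So the whole theorem comes down to: (a) $U_0 M\subset M$ and (b) $U_1 M'\subset M'$, where $M'$ is the quotient of $M$ by the unknot submodule identified in Section \ref{sec_qeqqskeinmodules}.

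For each example knot I would use the explicit presentations from Section \ref{sec_qeqqskeinmodules}: the module $\hat M = \bigoplus \C[X^{\pm 1}] e_i$ with the operators $s = B(X)S$, $Y = A(X)P$ given in \eqref{formula_liftedunknot}, \eqref{equation_trefoilmatrices}, \eqref{equation_22pp1liftops}, \eqref{figeightops}. The key point is that in all these presentations $s$ acts in a triangular/block fashion and the unknot submodule is spanned by the first basis vector, so the quotient $M'$ is free over $\C[X^{\pm 1}]$ with an explicit basis and explicit $A(X), B(X)$ obtained by deleting the first row and column. Then $U_0 = (1-q^2X^2)^{-1}(1-s\yy)$ acting on a basis vector $e_i$ produces $(1-q^2X^2)^{-1}\bigl(e_i - s\yy e_i\bigr)$, and $s\yy e_i = B(X) A(q^{-2}X)\,(\text{reflected/scaled } e_i)$ is an explicit Laurent-polynomial combination of the $e_j$; the claim $U_0 M\subset M$ is precisely the assertion that $1-q^2X^2$ divides each coordinate of $e_i - s\yy e_i$ in $\C[X^{\pm 1}]$. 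This is a finite divisibility check in each case. The same applies to $U_1$ and the factor $1-X^2$ on $M'$. For the unknot this is immediate from \eqref{formula_liftedunknot}; for the trefoil and general $(2,2p+1)$ knot the relevant matrix entries are the binomials $q^2X^{-1}-q^6X^{-5}$ and $(-1)^p q^{2p+4}(X^{-2p-3}-q^{-4}X^{-2p+1})$ appearing in \eqref{equation_trefoilmatrices}, \eqref{equation_22pp1liftops}, and one checks directly that after forming $e_i - s\yy e_i$ the result is divisible by $1-q^2X^2$; for the figure eight one uses the polynomials $a,b,c$ and the identities $b'=-b$, $c'=-q^2X^{-2}c$, $aa'-q^4bb'=1$, $a'c+q^2b'c'+c'=0$ from the proof of Lemma \ref{lemma_figeightmod} together with the analogous divisibility identities.

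The main obstacle is genuinely the $(2,2p+1)$ family, where the computation must be done uniformly in $p$: one needs the general-$p$ matrices from Lemma \ref{lemma_22pp1_liftismod} and has to verify that the off-diagonal entry of $1-q^2X^2 \cdot U_0$ (equivalently of $e_v - s\yy e_v$) lands in $\C[X^{\pm 1}]$ for every $p$, and likewise that the quotient $M'$ — which for the $(2,2p+1)$ torus knot is still rank one, the polynomial representation twisted by $\tau^{-2(2p+1)}$, and for the figure eight is rank two — is stable under $U_1$. I expect the cleanest route for the uniform $p$-case is to observe that $\hat M$ fits into the short exact sequence $0\to V^-\to \hat M\to \tau^{-2(2p+1)}(V^+)\to 0$ (the obvious generalization of Lemma \ref{lemma_trefoilseq}), that $U_0$ and $U_1$ preserve $V^-$ (this is the unknot case) and act on the quotient $\tau^{N}(V^+)$, where one checks the polynomial representation is $U_0$- and $U_1$-stable directly from the explicit formulas \eqref{actionondqonsequences}-style action $s\cdot f(X)=f(X^{-1})$, $\yy\cdot f(X)=f(q^{-2}X)$ since both $T_0$ and $T_1$ are the Demazure–Lusztig-type operators that by construction preserve $\C[X^{\pm 1}]$ (this is exactly the statement recalled after Proposition \ref{prop_dunklembedding}); twisting by $\tau^N$ does not affect stability under $X$, $s$, or these operators up to the bookkeeping of $q$-powers. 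Assembling these two facts via the five lemmas of the relevant subsections and Lemma \ref{lemma_twisteddunkl}/\eqref{equation_operatorU} gives both parts of the theorem.
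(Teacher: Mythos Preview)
Your reduction to showing $U_0 M\subset M$ and $U_1 M'\subset M'$ is exactly the paper's approach, and the case-by-case divisibility check you outline is in spirit the same as what the paper does via Lemma \ref{lemma_upreservesM}. Two points, however, need attention.

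First, a small correction and a small gap. On a constant basis vector $e_i$ one has $s\yy\,e_i = B(X)A(X^{-1})\,e_i$ (not $B(X)A(q^{-2}X)$; the operator is $B(X)SA(X)P = B(X)A(X^{-1})SP$, and $SP$ fixes constants). More importantly, your assertion that ``$U_0 M\subset M$ is precisely the assertion that $1-q^2X^2$ divides each coordinate of $e_i - s\yy e_i$'' is not automatic: $U_0$ is not $\C[X^{\pm 1}]$-linear, so checking on a $\C[X^{\pm 1}]$-basis does not a priori suffice. One needs the observation that for a general element $f(X)e_i$ the ``scalar'' part $f(X)-f(q^{-2}X^{-1})$ is automatically divisible by $1-q^2X^2$; equivalently, one decomposes $M$ using the idempotents $(1\pm SP)/2$. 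This is precisely the content of the paper's Lemma \ref{lemma_upreservesM}, which you are implicitly invoking without stating it.

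Second, and more seriously, your preferred route for the $(2,2p+1)$ family via the short exact sequence $0\to V^-\to M\to \tau^{-2(2p+1)}(V^+)\to 0$ does not work as written. If an operator $U_0\in D_q$ on $M^\loc$ preserves both the sub $V^-$ and the quotient (inside their localizations), it does \emph{not} follow that it preserves $M$: for $m\in M$ with image $\bar m$, knowing $U_0\bar m$ lies in the quotient only tells you that $U_0 m$ differs from some element of $M$ by something in $(V^-)^{\loc}$, not in $V^-$. The extension is nonsplit over $A_q\rtimes\Z_2$, and the denominators in the off-diagonal direction are exactly what must be controlled. (Your remark that ``twisting by $\tau^N$ does not affect stability up to bookkeeping'' also hides a genuine parity condition: on $\tau^N(V^+)$ one finds $(1-s\yy)\cdot 1 = 1-(-1)^N$ at $X=-q^{-1}$, so stability requires $N$ even.) In fact there is no need for this detour: the direct computation of the $2\times 2$ matrix $1-B(X)A(X^{-1})$ is easy and uniform in $p$, since its nonzero entries are (up to monomials) $q^4X^4-1$ and $1-(qX)^{2(2p+1)}$, both visibly divisible by $1-q^2X^2$. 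This is exactly how the paper handles it in Section \ref{sec_conj1fortorusknots}; your first approach already suffices once Lemma \ref{lemma_upreservesM} is supplied, and the short exact sequence argument should be dropped.
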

\begin{remark}
 For the knots listed in the theorem, the results of Section \ref{sec_qeqqskeinmodules} make it clear that there is a unique map from the skein module of the unknot to the skein module of $K$, so the quotient $M'$ in the second statement of the theorem is well-defined.
\end{remark}

\begin{proof}
We define the operators
\begin{equation}\label{operatoru}
 U_0 := \frac{1}{1-q^2X^2}(1-s\hat y),\quad U_1 := \frac{1}{1-X^2}(1-s)
\end{equation}
To prove the first statement, it suffices to show that $U_0$ preserves $M \subset M^\loc$. Once this is proved, the second statement is implied by the statement $U_1 M' \subset M'$. To avoid confusion of notation, after proving the technical Lemma \ref{lemma_upreservesM}, we will divide the proof into separate subsections (one for each knot). 
\end{proof}

\begin{remark}\label{remark_eUe}
 Before continuing with the proof of the theorem, we remark that if $\e M = K_q(S^3\setminus K)$, the conjecture that $U_0$ preserves the nonsymmetric skein module $M \subset M^\loc$ implies that $q^{-1}\e X^{-1} U_0 \e$ preserves $\e M$ inside the localization of $\e M$ at polynomials in $x = X+X^{-1}$. Under the identification $A_q^{\Z_2} = K_q(T^2\times [0,1])$, we have
 \[
  q^{-1}\e X^{-1}U_0 \e = \frac{1}{(x-(q+q^{-1}))(x+(q+q^{-1}))}\left[ (q - q^{-1})x - q^{-2}(1,-1) + q^2(1,1)\right]
 \]
  (Here $x$ is the meridian $(1,0)$, and the notation $(m,l)$ refers to the $(m,l)$ curve on $T^2$.) We note that if $K$ is the unknot, then the operator $q^{-1}\e X^{-1}U_0 \e$ annihilates the empty link. 
\end{remark}

We first give a technical lemma that provides conditions that imply $U_i M \subset M$ for $i=0,1$. We recall the notation of Lemma \ref{lemma_aqmodule}. In particular, suppose that $M = \C[X^{\pm 1}]\otimes_\C V$ and define operators $S, P \in \End_\C(M)$ using (\ref{equation_diagonalaction}). Furthermore, suppose that $A(X),B(X) \in \End_{\C[X^{\pm 1}]}(M)$ satisfy
\[
 B(X)B(X^{-1}) = 1, \quad A(X)B(X)A(q^2X^{-1}) = B(X)
\]
We then define $s,\yy: M \to M$ by $\yy = A(X)P$ and $s = B(X)S$, and Lemma \ref{lemma_aqmodule} shows that the operators $X, s, \yy:M \to M$ define a representation of $A_q\rtimes \Z_2$.
\begin{lemma}\label{lemma_upreservesM}
Assume the notation in the previous paragraph, and suppose the following condition holds:
 \[
  (1-B(X)A(X^{-1})) M \subset (1-q^2X^2)M
 \]
Then the operator $U_0$ from formula (\ref{operatoru}) preserves $M \subset M^\loc$. Also, the statement $U_1 M' \subset M'$ is implied by the following condition:
\[
 (1-B(X))M' \subset (1-X^2)M'
\]

\end{lemma}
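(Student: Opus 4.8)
The strategy is to compute $U_0$ and $U_1$ directly in the realization $M=\C[X^{\pm1}]\otimes_\C V$ with $\yy=A(X)P$ and $s=B(X)S$, reduce each operator to multiplication by an endomorphism-valued rational function, and then read off the stated divisibility hypotheses as exactly the conditions needed to clear denominators. First I would expand $s\yy = B(X)S\,A(X)P = B(X)A(X^{-1})SP$, and note that on $M$ the operators $S$ and $P$ commute and $SP$ acts by $f(X)\otimes v\mapsto f(q^2X^{-1})\otimes v$; write $Q:=SP$ for brevity. Thus $1-s\yy = 1 - B(X)A(X^{-1})Q$, and
\[
U_0 = \frac{1}{1-q^2X^2}\bigl(1 - B(X)A(X^{-1})Q\bigr).
\]
The point is that $U_0$ is \emph{not} simply multiplication by a rational matrix because of the $Q$; so I would split $U_0 = \frac{1}{1-q^2X^2} - \frac{B(X)A(X^{-1})}{1-q^2X^2}Q$ and observe that the second term, applied to $M$, lands inside $\frac{1}{1-q^2X^2}\bigl(B(X)A(X^{-1})\cdot M\bigr)$ after pushing $Q$ to the right (since $Q$ preserves $M$). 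So for $U_0M\subset M$ it suffices that $\frac{1}{1-q^2X^2}(1-B(X)A(X^{-1}))M\subset M$ — because the ``diagonal part'' $\frac{1}{1-q^2X^2}\bigl(1-B(X)A(X^{-1})Q\bigr) - \frac{1}{1-q^2X^2}\bigl(1-B(X)A(X^{-1})\bigr) = \frac{B(X)A(X^{-1})}{1-q^2X^2}(1-Q)$ needs separate treatment; here I would use that $1-Q$ kills constants and more generally $(1-Q)(f(X)\otimes v) = (f(X)-f(q^2X^{-1}))\otimes v$, which is divisible by... \emph{not} obviously by $1-q^2X^2$, so instead I would argue that $(1-q^2X^2)$ divides $f(X)-f(q^2X^{-1})$ is false in general, and therefore the correct decomposition must keep $Q$ attached: the honest computation is $U_0(g(X)\otimes v) = \frac{1}{1-q^2X^2}\bigl(g(X)\otimes v - g(q^2X^{-1})\,B(X)A(X^{-1})\otimes v\bigr)$ — wait, more carefully, $U_0(g(X)\otimes v) = \frac{1}{1-q^2X^2}\bigl(g(X)\otimes v - B(X)A(X^{-1})(g(q^2X^{-1})\otimes v)\bigr)$. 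So the numerator is $g(X)v - g(q^2X^{-1})B(X)A(X^{-1})v$. Setting $g\equiv 1$ recovers the hypothesis $(1-B(X)A(X^{-1}))M\subset(1-q^2X^2)M$; for general $g$ one writes $g(X)v - g(q^2X^{-1})B(X)A(X^{-1})v = \bigl(g(X)-g(q^2X^{-1})\bigr)v + g(q^2X^{-1})\bigl(1-B(X)A(X^{-1})\bigr)v$, and the second summand is divisible by $1-q^2X^2$ by hypothesis, so it remains to handle the first summand $\frac{g(X)-g(q^2X^{-1})}{1-q^2X^2}v$.

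The main obstacle is precisely this last term: $\frac{g(X)-g(q^2X^{-1})}{1-q^2X^2}$ need not be a Laurent polynomial for a single monomial $g$, so one cannot conclude naively. The resolution is that $U_0$ as defined is the \emph{image under $\Theta$ of an operator built from $T_0,T_1$}, hence it is already known to be an element of $D_q$; more to the point, the combination that actually appears is such that the ``diagonal'' ambiguity cancels. Concretely, I expect the clean statement is that $U_0$ equals $\frac{1}{1-q^2X^2}(1-s\yy)$ where $s\yy$ is a \emph{single} algebra element of $A_q\rtimes\Z_2$ acting on $M$, so $U_0(g(X)v)$ for a general $g$ reduces via $A_q\rtimes\Z_2$-linearity to the case of generators — but $U_0$ is not $A_q\rtimes\Z_2$-linear. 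The right move, which I would carry out, is: observe $s\yy$ sends $X$ to $q^{-2}X^{-1}$-conjugates, so $(1-s\yy)(X^k v) = X^k v - q^{-2k}(s\yy\cdot v)$ where $s\yy\cdot v = B(X)A(X^{-1})v$ depends on $X$; thus $(1-s\yy)(X^kv) = X^k v - q^{-2k}B(X)A(X^{-1})v$. Factor: $= X^k\bigl(v - B(X)A(X^{-1})v\bigr) + \bigl(X^k - q^{-2k}\bigr)B(X)A(X^{-1})v$. The first piece is divisible by $1-q^2X^2$ by the hypothesis (times $X^k$); the second piece has the factor $X^k - q^{-2k}$. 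Now $X^k - q^{-2k}$ is \emph{not} divisible by $1-q^2X^2=(1-qX)(1+qX)$ in general (only when $k$ is... it vanishes at $X=q^{-2/k}$, unrelated). Here I must have mis-set the action; rechecking, $P$ sends $f(X)$ to $f(q^{-2}X)$, so $\yy\cdot(X^kv)$ involves $q^{-2k}$, fine, but then $s\yy\cdot(X^kv) = B(X)S\bigl(A(X)q^{-2k}X^kv\bigr)$ and the $S$ inverts $X$ inside everything, giving $q^{-2k}B(X)A(X^{-1})X^{-k}v = q^{-2k}X^{-k}B(X)A(X^{-1})v$ since $X$-powers are central in $\C[X^{\pm1}]$. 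So $(1-s\yy)(X^kv) = X^kv - q^{-2k}X^{-k}B(X)A(X^{-1})v$. Factoring as before against the hypothesis term $v-B(X)A(X^{-1})v$: $= X^k(v - B(X)A(X^{-1})v) + (X^k - q^{-2k}X^{-k})B(X)A(X^{-1})v$, and now $X^k - q^{-2k}X^{-k} = X^{-k}(X^{2k} - q^{-2k})$, still not visibly divisible by $1-q^2X^2$. I therefore expect the genuinely correct bookkeeping to require handling $g(X)=X^k$ together with $g(X)=X^{-k}$ in $s$-symmetric combinations, or—more likely—that the lemma is applied only after one has verified the displayed hypothesis, and the passage from the hypothesis (case $g=1$) to all $g$ uses that $M$ is generated over $\C[X^{\pm1}]$ by $s$-eigenvectors, reducing to monomials times such eigenvectors; this reduction, plus a short direct check on $\frac{X^k - X^{-k}}{1-q^2X^2}$-type expressions being handled by the eigenvector structure, is the heart of the argument.

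For the $U_1$ statement the computation is entirely parallel but simpler: $U_1 = \frac{1}{1-X^2}(1-s)$, and $(1-s)(g(X)v) = g(X)v - g(X^{-1})B(X)v$, which for $g=1$ gives numerator $(1-B(X))v$, divisible by $1-X^2$ by hypothesis, and the general-$g$ case reduces to the $g=1$ case plus the term $\frac{g(X)-g(X^{-1})}{1-X^2}v$, where now crucially $\frac{g(X)-g(X^{-1})}{1-X^2}$ \emph{is} a Laurent polynomial for every $g\in\C[X^{\pm1}]$ (it is a polynomial in $x=X+X^{-1}$, by the Chebyshev identity of Lemma \ref{chebyshevidentities}, since $X^n - X^{-n} = (X-X^{-1})S_{n-1}(x)$ and dividing by $(X-X^{-1})(X+X^{-1})$... one checks $1-X^2 = -X(X-X^{-1})$, so $\frac{X^n-X^{-n}}{1-X^2} = \frac{(X-X^{-1})S_{n-1}(x)}{-X(X-X^{-1})} = -X^{-1}S_{n-1}(x)\in\C[X^{\pm1}]$). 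So the $U_1$ clause follows cleanly. In summary: the routine half is $U_1$; the subtle half is $U_0$, where the factor $1-q^2X^2$ is \emph{not} $s$-invariant, so one cannot symmetrize as freely, and the key structural input one needs is that the nonsymmetric skein module $M$ in each example is free over $\C[X^{\pm1}]$ on a basis consisting of $s$-eigenvectors (which is exactly what Lemmas \ref{unknotlift}, \ref{lemma_22pp1lift}, \ref{lemma_figeightmod} provide), allowing the reduction of $U_0M\subset M$ to the single displayed divisibility condition $(1-B(X)A(X^{-1}))M\subset(1-q^2X^2)M$ checked on that basis.
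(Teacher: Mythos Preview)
Your decomposition
\[
(1-s\yy)(X^k v) \;=\; X^k\bigl(1-B(X)A(X^{-1})\bigr)v \;+\; \bigl(X^k - q^{-2k}X^{-k}\bigr)B(X)A(X^{-1})v
\]
is exactly the right one, and it is essentially the paper's proof written elementwise rather than via the idempotents $\f_\pm := (1\pm SP)/2$. The gap is that you stopped one line too early: the factor $X^k - q^{-2k}X^{-k}$ \emph{is} divisible by $1-q^2X^2$. Indeed $X^k - q^{-2k}X^{-k} = X^{-k}\bigl((X^2)^k-(q^{-2})^k\bigr)$, and $(X^2)^k-(q^{-2})^k$ is divisible by $X^2-q^{-2} = -q^{-2}(1-q^2X^2)$ by the standard factorization $a^k-b^k=(a-b)(a^{k-1}+\cdots+b^{k-1})$. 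Equivalently, for any Laurent polynomial $g$, the expression $g(X)-g(q^{-2}X^{-1})$ vanishes at $X=\pm q^{-1}$ (since there $q^{-2}X^{-1}=X$), so it is divisible by $1-q^2X^2$. This is precisely the paper's statement that $\f_- M \subset (1-q^2X^2)M$, phrased as $(1-SP)(f(X)\otimes v)=(f(X)-f(q^{-2}X^{-1}))\otimes v$.

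Once you observe this, both summands in your decomposition lie in $(1-q^2X^2)M$: the first by the hypothesis, the second by the divisibility just noted (and $\C[X^{\pm1}]$-linearity of $B(X)A(X^{-1})$). Hence $U_0 M\subset M$. Your detour through $s$-eigenvectors and the structure of specific examples is unnecessary; the lemma holds as stated with no extra hypotheses. (Also, a small slip: $SP$ sends $f(X)$ to $f(q^{-2}X^{-1})$, not $f(q^2X^{-1})$; you used the correct version later.) Your treatment of $U_1$ is fine and is the same argument with $q$ replaced by $1$.
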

\begin{proof}
Since $(SP)^2 = 1$, the elements $\f_\pm := \frac{1\pm SP}{2}$ are idempotents that satisfy $(SP)\f_\pm = \pm \f_\pm$. (These are not the standard idempotents that have been used previously.) We can then write
\begin{eqnarray*}
 (1-s\yy) &=& (1-B(X)SA(X)P)(\f_++\f_-)\\
 &=& (1-B(X)A(X^{-1})SP)(\f_+ +\f_-)\\
 &=& (1-B(X)A(X^{-1}))\f_+ + (1+B(X)A(X^{-1}))\f_-
\end{eqnarray*}
Since $(1-SP)\cdot (f(X)\otimes v) = (f(X) - f(q^{-2}X^{-1}))\otimes v$, we see that $\f_-\cdot M \subset (1-q^2X^2)M$, and since $B(X)$ and $A(X)$ are $\C[X^{\pm 1}]$-linear, this implies 
\[(1+B(X)A(X^{-1}))\f_-M \subset (1-q^2X^2)M\]
By assumption, $(1-s\yy)\f_+ M \subset (1-q^2X^2)M$, and this shows $U_0M \subset M$. The second statement follows by a similar argument which we omit.
\end{proof}

\begin{remark}\label{remark_signproblems}
 The first condition in Lemma \ref{lemma_upreservesM} is equivalent to the conditions $B(q^{-1})A(q) = \mathrm{Id}$ and $B(-q^{-1})A(-q) = \mathrm{Id}$, where both equalities hold in $M_n(\C)$. However, the conditions in Lemma \ref{lemma_aqmodule} hold a-priori, and when specialized to $X=q^{-1}$ they become $(B(q^{-1})A(q))^2 = \mathrm{Id}$ (and similar for $X=-q$). Since a (complex) matrix that squares to the identity matrix is diagonalizable, the only obstruction for Lemma \ref{lemma_upreservesM} to hold is ``sign problems'' (at least for modules which are free and finitely generated over $\C[X^{\pm 1}]$).
\end{remark}

\subsection{The unknot}
We recall that the module structure of the (nonsymmetric) skein module $M$ of the unknot is given by
\[
 M \cong \C[X^{\pm 1}],\quad s\cdot 1 = -1, \quad \yy \cdot 1 = -1
\]
We then compute
\[
 (1-s\yy)X^n = X^n - q^{-2n}X^{-n}
\]
The right hand side is clearly divisible by $(1-q^2X^2)$, and since $\{X^n\}$ is a basis for $M$, we see that $U_0$ preserves $M \subset M^\loc$. The second claim of Theorem \ref{thm_mainconjecture} is tautological in this case.


\subsection{$(2,2p+1)$ torus knots}\label{sec_conj1fortorusknots}
Let $M$ be the nonsymmetric skein module of the $(2,2p+1)$ knot $L_p$. (More precisely, $M$ is the lift of the $K_q(T^2)$ submodule of $K_q(S^3\setminus L_p)$ generated by the empty link.) We recall that $M = \C[X^{\pm 1}]\{u,v\}$ as a $\C[X^{\pm 1}]$-module, with the action of $s, \yy$ given by
\[
 s = \left[\begin{array}{cc}-1&0\\0&1\end{array}\right]S, \quad \yy = A(X)P := \left[\begin{array}{cc}-1& (-1)^{p}q^{2p+4}(X^{-2p-3}-q^{-4} X^{-2p+1})\\0 &q^{2(2p+1)}X^{-2(2p+1)}\end{array}\right]P
\]
Since $u$ generates the unknot submodule, we may abuse notation and identify $M' = \C[X^{\pm 1}]v$, with the action of $s,\yy$ given by
\[
 s = B'S = \left[ 1 \right] S,\quad \yy = \left[ q^{2(2p+1)}X^{-2(2p+1)}\right]P
\]

\begin{lemma}
 The operator $U_0$ preserves $M \subset M^\loc$, and the operator $U_1$ preserves $M' \subset (M')^\loc$.
\end{lemma}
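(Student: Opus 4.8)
The plan is to invoke Lemma \ref{lemma_upreservesM} directly. The module $M = \C[X^{\pm 1}]\{u,v\}$ is already presented in the form $\C[X^{\pm 1}]\otimes_\C V$ with $V = \C\{u,v\}$, $s = B(X)S$ and $\yy = A(X)P$, where
\[
 B(X) = \begin{bmatrix} -1 & 0 \\ 0 & 1\end{bmatrix},\qquad
 A(X) = \begin{bmatrix} -1 & (-1)^{p}q^{2p+4}\bigl(X^{-2p-3}-q^{-4}X^{-2p+1}\bigr) \\ 0 & q^{2(2p+1)}X^{-2(2p+1)}\end{bmatrix},
\]
and Lemma \ref{lemma_22pp1_liftismod} already guarantees the compatibility relations $B(X)B(X^{-1})=1$ and $A(X)B(X)A(q^{2}X^{-1})=B(X)$. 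By Lemma \ref{lemma_upreservesM}, it therefore suffices to verify the single divisibility condition $\bigl(1-B(X)A(X^{-1})\bigr)M\subset (1-q^{2}X^{2})M$ for the $U_0$ statement, and the condition $\bigl(1-B'(X)\bigr)M'\subset (1-X^{2})M'$ for the $U_1$ statement, where $B'(X)=[1]$ is the $1\times 1$ matrix describing the action of $s$ on $M'=\C[X^{\pm 1}]v$.

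The $U_1$ claim is then immediate: since $B'(X)=[1]$ is constant and equal to the identity, $1-B'(X)=0$, so the required inclusion holds trivially and Lemma \ref{lemma_upreservesM} gives $U_1 M'\subset M'$. For the $U_0$ claim I would compute $1-B(X)A(X^{-1})$ explicitly; its first column vanishes (reflecting the fact that $u$ spans the unknot submodule, on which $1-s\yy$ acts by zero), while its second column is
\[
 \begin{bmatrix} (-1)^{p}q^{2p+4}\bigl(X^{2p+3}-q^{-4}X^{2p-1}\bigr) \\[2pt] 1-q^{2(2p+1)}X^{2(2p+1)} \end{bmatrix}
 = \begin{bmatrix} (-1)^{p}q^{2p}X^{2p-1}\bigl(q^{4}X^{4}-1\bigr) \\[2pt] 1-\bigl(q^{2}X^{2}\bigr)^{2p+1} \end{bmatrix}.
\]
Both entries are divisible by $1-q^{2}X^{2}$ in $\C[X^{\pm 1}]$: the top one because $q^{4}X^{4}-1 = (q^{2}X^{2}-1)(q^{2}X^{2}+1)$, and the bottom one because $1-t^{2p+1}$ is divisible by $1-t$. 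Since $M$ is free over $\C[X^{\pm 1}]$ with basis $\{u,v\}$, this yields $\bigl(1-B(X)A(X^{-1})\bigr)M\subset(1-q^{2}X^{2})M$, which is exactly the hypothesis of Lemma \ref{lemma_upreservesM}, so $U_0 M\subset M$. (Equivalently, one may invoke Remark \ref{remark_signproblems}: a short computation shows $A(q)=A(-q)=\mathrm{diag}(-1,1)=B$, hence $B(\pm q^{-1})A(\pm q)=B^{2}=\mathrm{Id}$, which rules out the sign obstruction.)

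I expect no genuine obstacle here beyond careful bookkeeping of the powers of $q$ and the signs $(-1)^{p}$. The one structural point to highlight — and the reason the ``sign problems'' of Remark \ref{remark_signproblems} do not occur — is that the shape of $A(X)$ forced by the Gelca--Sain computation makes the relevant matrix entries factor precisely through $q^{4}X^{4}-1$ and $(q^{2}X^{2})^{2p+1}-1$, each of which carries $1-q^{2}X^{2}$ as a factor; this is exactly what the conjecture predicts, namely that the denominator $1-q^{2}X^{2}$ of $U_0$ is absorbed by the structure constants of the skein module.
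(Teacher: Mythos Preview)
Your proof is correct and follows exactly the same route as the paper: apply Lemma \ref{lemma_upreservesM}, compute the matrix $1-B\,A(X^{-1})$ explicitly, and check that each entry is divisible by $1-q^{2}X^{2}$, while for $U_1$ the condition is trivial because $B'=[1]$. Your write-up actually supplies more detail than the paper's (the explicit factorizations $q^{4}X^{4}-1=(q^{2}X^{2}-1)(q^{2}X^{2}+1)$ and $1-(q^{2}X^{2})^{2p+1}$, and the alternative verification via Remark \ref{remark_signproblems}); note also that the paper's displayed $(1,2)$ entry carries a sign $(-1)^{p+1}$ whereas your computation giving $(-1)^{p}$ is the correct one, though of course this has no bearing on the divisibility.
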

\begin{proof}
To show that the condition of Lemma \ref{lemma_upreservesM} holds, we compute
\[
 1 - BA(X^{-1}) = \left[\begin{array}{cc}0& (-1)^{p+1}q^{2p+4}(X^{2p+3}-q^{-4} X^{2p-1})\\0 &1 - q^{2(2p+1)}X^{2(2p+1)}\end{array}\right]
\]
Since all the entries are divisible by $1-q^2X^2$, this shows the first claim. The second claim follows from the fact that $B'$ is the identity matrix.
\end{proof}
\begin{example}\label{example_trefoil}
The action of the generators $X, s, Y$ of $\H_{q,t_1,t_2,1,1}$ on the generators $u,v$ can be computed explicitly. For example, for the trefoil (i.e. $p=1$), $X$ acts by multiplication, and the action of $s$ and $Y$ are given by the following:
\begin{eqnarray*}
 s\cdot u &=& -u\\
 s\cdot v &=& v\\
 Y\cdot u &=& -t_1u\\
 Y\cdot v &=& [t_1(q^2X^{-1}-q^6X^{-5}) - (q^2\bar t_1X^{-2} + q\bar t_2X^{-1})(q^4X^{-3}+q^2X^{-1}) ]u\\
 &\,& + [t_1q^6X^{-6} - (q^2\bar t_1X^{-2} + q\bar t_2X^{-1})(q^4X^{-4}+q^2X^{-2}+1) ]v
\end{eqnarray*}
Here we have written $\bar t_i = t_i - t_i^{-1}$. It is clear that when $t_1=t_2=1$, these formulas specialize to operator $Y$ in (\ref{eq_trefoilexplicit}). 
\end{example}
\subsection{The figure eight}
Let $M$ be the nonsymmetric skein module of the figure eight knot. We recall that as a $\C[X^{\pm 1}]$-module we have $M = \C[X^{\pm 1}] \otimes_\C (\C p' \oplus \C u \oplus \C v)$. We recall the polynomials $a,b,c \in \C[X^{\pm 1}]$:
\[
 a = -q^{-2}X^4+q^{-2}X^2+q^2, \quad b = -q^{-2}X^2+q^2X^{-2},\quad c = q^{-2}X^3 - q^2X^{-1}
\]
Finally, $s$ and $\yy$ are defined by matrices (with respect to the ordered basis $\{p', u, v\}$):
\[
 s = \left[\begin{array}{ccc}-1&0&0\\0&1&0\\0&0&1\end{array}\right]S, \quad \yy = \left[\begin{array}{ccc}-1& c(X)&q^2c(q^2X^{-1})\\0 &a(X)&q^4b(q^2X^{-1})\\0&b(X)&a(q^2X^{-1})\end{array}\right]P
\]
Since $p'$ generates the unknot submodule, we may write the action of $s$ on the quotient $M'$ as
\[
 s = B'S = \left[\begin{array}{cc}1&0\\0&1\end{array}\right]S
\]

\begin{lemma}
 The operator $U_0$ preserves $M \subset M^\loc$, and $U_1$ preserves $M' \subset (M')^\loc$.
\end{lemma}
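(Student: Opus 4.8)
The plan is to verify the two divisibility hypotheses of Lemma~\ref{lemma_upreservesM} for the matrices $B(X) = \mathrm{diag}(-1,1,1)$ and $A(X)$ appearing in~(\ref{figeightops}). For the claim that $U_0$ preserves $M \subset M^\loc$, I would form the matrix $1 - B(X)A(X^{-1})$ explicitly. Since $B(X)$ is the constant matrix $\mathrm{diag}(-1,1,1)$, multiplying by it on the left just negates the first row, and forming $A(X^{-1})$ replaces each occurrence of $a,b,c$ at argument $X$ by the same polynomial at $X^{-1}$, while the primed entries $c'=c(q^2X^{-1})$, $b'=b(q^2X^{-1})$, $a'=a(q^2X^{-1})$ become $c(q^2X)$, $b(q^2X)$, $a(q^2X)$. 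One is then left to check that every entry of $1-B(X)A(X^{-1})$ is divisible by $1-q^2X^2$ in $\C[X^{\pm 1}]$. The $(1,1)$ entry is $1-(-1)(-1)=0$ and the $(2,1),(3,1)$ entries vanish since the first column of $A$ is $(-1,0,0)^{\mathrm t}$; the remaining entries are (up to sign) $c(X^{-1})$, $q^2c(q^2X)$, $1-a(X^{-1})$, $q^4b(q^2X)$, $b(X^{-1})$, $1-a(q^2X)$, and each of these is seen to vanish at $X^2 = q^{-2}$ by direct substitution using the definitions $a=-q^{-2}X^4+q^{-2}X^2+q^2$, $b=-q^{-2}X^2+q^2X^{-2}$, $c=q^{-2}X^3-q^2X^{-1}$. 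Hence every entry is divisible by $1-q^2X^2$, which is exactly the hypothesis of Lemma~\ref{lemma_upreservesM}, so $U_0M\subset M$.

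Alternatively — and this is the cleaner route I would actually write up — I would invoke Remark~\ref{remark_signproblems}: the identities already established in the proof of Lemma~\ref{lemma_figeightmod} (the hypotheses of Lemma~\ref{lemma_aqmodule}) force $\bigl(B(\pm q^{-1})A(\pm q)\bigr)^2 = \mathrm{Id}$, so it suffices to rule out a $(-1)$-eigenvalue, i.e.\ to check $B(\pm q^{-1})A(\pm q)=\mathrm{Id}$. Substituting $X=\pm q$ gives $c(\pm q)=q^{-2}(\pm q)^3-q^2(\pm q)^{-1}=0$, $b(\pm q)=-q^{-2}(\pm q)^2+q^2(\pm q)^{-2}=0$, and $a(\pm q)=-q^{-2}(\pm q)^4+q^{-2}(\pm q)^2+q^2=1$, so $A(\pm q)=\mathrm{diag}(-1,1,1)=B(\pm q^{-1})$, whence $B(\pm q^{-1})A(\pm q)=\mathrm{Id}$. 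This avoids the full matrix multiplication entirely.

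For the claim $U_1M'\subset M'$, I would use that the unknot submodule of $M$ is the $\C[X^{\pm 1}]$-span of $p'$, so that in the quotient $M' = \C[X^{\pm 1}]\otimes_\C(\C u\oplus \C v)$ the generator $s = B(X)S$ acts through the matrix $B' = \mathrm{Id}_2$ (the first row and column of $B(X)$ having been discarded). Then $(1-B')M' = 0 \subset (1-X^2)M'$, so the second hypothesis of Lemma~\ref{lemma_upreservesM} holds vacuously and $U_1$ preserves $M'\subset (M')^\loc$.

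There is no conceptual obstacle here: the only work is the elementary bookkeeping of the first paragraph, or the three scalar substitutions of the second, and the needed polynomial identities are precisely those already recorded in the proof of Lemma~\ref{lemma_figeightmod}. The one point that requires care is correctly tracking the substitution $f\mapsto f'=f(q^2X^{-1})$ and the resulting argument $q^2X$ (rather than $X^{-1}$) when passing from $A(X)$ to $A(X^{-1})$ in the off-diagonal entries; a sign or exponent slip there is the most likely source of error.
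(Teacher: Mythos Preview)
Your proposal is correct and follows essentially the same approach as the paper: you compute $1-B(X)A(X^{-1})$, check entrywise divisibility by $1-q^2X^2$, and then observe that $B'=\mathrm{Id}_2$ for the quotient $M'$, exactly as in the paper's proof. Your alternative route via Remark~\ref{remark_signproblems} (checking $B(\pm q^{-1})A(\pm q)=\mathrm{Id}$ by direct substitution into $a,b,c$) is a slightly cleaner way to package the same verification, but is not genuinely different.
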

\begin{proof}
 To show that the first condition of Lemma \ref{lemma_upreservesM} holds, we compute
 \[
 1 - BA(X^{-1}) = \left[\begin{array}{ccc}0& -c(X^{-1})&-q^2c(q^{2}X)\\0 &1-a(X^{-1})& -q^4b(q^2X)\\0&-b(X^{-1})&1-a(q^2X)\end{array}\right]
 \]
 It is straightforward to check that all entries in this matrix are divisible by $1-q^2X^2$, which proves the first claim. The second claim follows from the fact that $B'$ is the identity matrix.
\end{proof}

\subsection{3-variable Jones polynomials}
In the previous section we gave examples of skein modules of knot complements that extend to representations of $\H_{q,t_1,t_2,1,1}$. In this section we use these modules to give example calculations of 3-variable polynomials $J_n(q, t_1,t_2) \in \C[q^{\pm 1},t_1^{\pm 1}, t_2^{\pm 1}]$ that specialize to the colored Jones polynomials when $t_1,t_2=1$. More precisely, 
if $Y_{t_1,t_2}=sT_0 \in D_q$, we define
\begin{equation}\label{equation_defof3varpolys}
 J_n(q,t_1,t_2) := \langle \varnothing, S_{n-1}(Y_{t_1,t_2}+Y^{-1}_{t_1,t_2})\cdot \varnothing\rangle
\end{equation}
where $\varnothing$ is the empty link and $\langle -,-\rangle$ is the pairing from Corollary \ref{cor_pairinglifts}. We remark that the following equality is a direct consequence of Theorem \ref{thm_coloredjonespolys} and the fact that $Y_{1,1}+Y^{-1}_{1,1}$ is the longitude:
\begin{equation}\label{equation_specialization}
 J_n(q) = J_n(q, t_1=1,t_2=1)
\end{equation}
In other words, the polynomials $J_n(q,t_1,t_2)$ specialize to the classical colored Jones polynomials of the knot $K$. We prove the following symmetry property of these 3-variable polynomials, which extends the well-known symmetry for the classical colored Jones polynomials.

\begin{proposition}\label{prop_mirror}
 Let $\bar K$ be the mirror image of $K$, and suppose Conjecture \ref{mainconjecture} holds for $K$. Then
 \[
  J_n(K; q,t_1,t_2) = J_n(\bar K; q^{-1},t_1^{-1},t_2^{-1})
 \]
\end{proposition}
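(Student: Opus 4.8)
The plan is to relate the skein module of the mirror knot $\bar K$ to that of $K$ via the orientation-reversing diffeomorphism of $S^3$, track how this diffeomorphism acts on the relevant algebras, and then chase the definition (\ref{equation_defof3varpolys}) through. First I would recall the basic topological fact: if $r:S^3\to S^3$ is an orientation-reversing diffeomorphism sending $K$ to $\bar K$, then it restricts to a diffeomorphism $S^3\setminus K \to S^3\setminus \bar K$, and on the boundary torus it fixes the meridian $m$ but sends the longitude $l$ to $l^{-1}$ (the linking number normalization in Lemma \ref{lemma_meridianlongitude} forces the sign). Since $r$ reverses orientation, the induced map on skein modules intertwines $K_q(-)$ with $K_{q^{-1}}(-)$ (a line with blackboard framing gets reflected, so $q\leftrightarrow q^{-1}$). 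Thus there is an isomorphism of $\C$-vector spaces $K_q(S^3\setminus K)\cong K_{q^{-1}}(S^3\setminus \bar K)$ which is semilinear over the corresponding torus algebras: on $K_q(T^2)\cong A_q^{\Z_2}$ it sends $X\mapsto X$, $Y\mapsto Y^{-1}$ and replaces $q$ by $q^{-1}$. Passing through the Morita equivalence (which is functorial, so it commutes with this construction), one gets the analogous statement for the nonsymmetric skein modules $\hat K_q(S^3\setminus K)\cong \hat K_{q^{-1}}(S^3\setminus\bar K)$, now as modules over $A_q\rtimes\Z_2$ versus $A_{q^{-1}}\rtimes\Z_2$ with $X\mapsto X$, $\yy\mapsto \yy^{-1}$, $s\mapsto s$.

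Next I would examine how the Dunkl-type operators $T_0, T_1$ and hence $Y_{t_1,t_2} = sT_0$ behave under the substitution $q\mapsto q^{-1}$, $\ult\mapsto \ult^{-1}$, $\yy\mapsto\yy^{-1}$. Here the point is that the five-parameter relations (\ref{ccdaharelations}) are essentially symmetric under $t_i\mapsto t_i^{-1}$ combined with $q\mapsto q^{-1}$ — replacing $(T_0,T_0^\vee,T_1,T_1^\vee)$ by their inverses sends the quadratic relation for $T_0$ with root data $(t_1,-t_1^{-1})$ to the same relation, and sends $T_1^\vee T_1 T_0 T_0^\vee = q$ to $T_1^{-\vee}T_1^{-1}T_0^{-1}T_0^{-\vee}=q^{-1}$, i.e. the relations at $(q^{-1},\ult^{-1})$. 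Under the explicit embedding into $D_q$, I expect the clean statement: the automorphism of $D_q$ (respectively the identification $D_q\cong D_{q^{-1}}$) given by $X\mapsto X$, $\yy\mapsto\yy^{-1}$, $s\mapsto s$, $q\mapsto q^{-1}$ carries the operator $Y_{t_1,t_2}\in D_q$ to $Y^{-1}_{t_1^{-1},t_2^{-1}}\in D_{q^{-1}}$ (one checks this directly on the formula $T_0 = t_1 s\yy - \frac{q^2\bar t_1 X^2 + q\bar t_2 X}{1-q^2X^2}(1-s\yy)$, using $\bar t_i\mapsto -\bar t_i$ and $q^2 X^2$ fixed). Consequently $Y_{t_1,t_2}+Y^{-1}_{t_1,t_2}$ at $(q,t_1,t_2)$ is identified with $Y_{t_1^{-1},t_2^{-1}}+Y^{-1}_{t_1^{-1},t_2^{-1}}$ at $(q^{-1},t_1^{-1},t_2^{-1})$, and since $S_{n-1}$ is a fixed polynomial, the operators $S_{n-1}(Y_{t_1,t_2}+Y^{-1}_{t_1,t_2})$ correspond under the identification.

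The final step is to compare the pairings. The topological pairing of Section \ref{topologicalpairing} comes from gluing the solid torus $N_K$ to $N_c=S^3\setminus K$ along $T^2$ to recover $S^3$; applying $r$ to this gluing gives the gluing that recovers $S^3$ for $\bar K$, and $r$ intertwines the $K_q(S^3)\cong\C$ identification of Example \ref{skeins3} for $K$ with the $K_{q^{-1}}(S^3)\cong\C$ identification for $\bar K$ (Kauffman's map is symmetric under $q\mapsto q^{-1}$ up to the standard variable change, and in particular the empty link goes to the empty link, $1\mapsto 1$). Hence $\langle v,w\rangle_K = \langle \bar r(v), \bar r(w)\rangle_{\bar K}$ where $\bar r$ is the induced isomorphism on the lifted modules, and $\bar r(\varnothing)=\varnothing$. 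Plugging $v = S_{n-1}(Y_{t_1,t_2}+Y^{-1}_{t_1,t_2})\cdot\varnothing$ and using the operator correspondence from the previous paragraph yields
\[
 J_n(K;q,t_1,t_2) = \langle \varnothing, S_{n-1}(Y_{t_1,t_2}+Y^{-1}_{t_1,t_2})\varnothing\rangle_K = \langle\varnothing, S_{n-1}(Y_{t_1^{-1},t_2^{-1}}+Y^{-1}_{t_1^{-1},t_2^{-1}})\varnothing\rangle_{\bar K} = J_n(\bar K;q^{-1},t_1^{-1},t_2^{-1}).
\]
The role of Conjecture \ref{mainconjecture} in the hypothesis is exactly to guarantee that $S_{n-1}(Y_{t_1,t_2}+Y^{-1}_{t_1,t_2})$ actually preserves the (non-localized) skein module, so that the pairing in (\ref{equation_defof3varpolys}) is defined and the manipulations above take place in $\hat K_q$ rather than its localization; Conjecture \ref{mainconjecture} for $K$ is equivalent to Conjecture \ref{mainconjecture} for $\bar K$ under $\bar r$, so both sides make sense simultaneously.

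I expect the main obstacle to be bookkeeping of signs and powers of $q$ — specifically, verifying precisely that the orientation-reversing diffeomorphism induces $q\mapsto q^{-1}$ with $\yy\mapsto\yy^{-1}$ and $s\mapsto s$ (and not, say, with an extra Dehn-twist correction that would shift $\yy$ by a power of $X$), and then confirming on the nose that this substitution sends $Y_{t_1,t_2}$ to $Y^{-1}_{t_1^{-1},t_2^{-1}}$ rather than to some conjugate or to $Y_{t_1^{-1},t_2^{-1}}$ itself. These are finite checks: the first is settled by the linking-number conditions of Lemma \ref{lemma_meridianlongitude} together with the framing conventions for blackboard-framed diagrams, and the second is a direct substitution into the displayed formula for $T_0$ (and $T_1$, though $T_1$ does not enter $Y_{t_1,t_2}$). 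Everything else is formal functoriality of $K_q(-)$, of the Morita equivalence, and of the pairing.
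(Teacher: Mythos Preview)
Your overall architecture is the same as the paper's: use the mirror diffeomorphism to identify $K_q(S^3\setminus K)$ with $K_{q^{-1}}(S^3\setminus\bar K)$, check that on the torus algebra this corresponds to the isomorphism $\varphi: D_q\to D_{q^{-1}}$ with $X\mapsto X$, $\yy\mapsto\yy^{-1}$, $s\mapsto s$, verify that the pairing transforms correctly, and then show the relevant operators match. That is exactly what the paper does.

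The gap is in your key technical claim that $\varphi$ carries $Y_{t_1,t_2}$ to $Y^{-1}_{t_1^{-1},t_2^{-1}}$. This is false. First, your parenthetical justification ``$q^2X^2$ fixed'' is wrong: under $q\mapsto q^{-1}$ the pole $1-q^2X^2$ becomes $1-q^{-2}X^2$, not itself. More decisively, compare leading terms: writing $Y_{q,t_1,t_2}=sT_0 = t_1\yy - (\cdots)(s-\yy)$, applying $\varphi$ gives $t_1\yy^{-1}-\cdots$, whereas $Y^{-1}_{q^{-1},t_1^{-1},t_2^{-1}} = (T_0^{(q^{-1},t_1^{-1},t_2^{-1})}+\bar t_1)s = t_1^{-1}\yy^{-1}+\cdots+\bar t_1 s$. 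The coefficients $t_1$ versus $t_1^{-1}$ already disagree, and there is an extra $\bar t_1 s$ term. So $\varphi(Y)\ne Y^{-1}$ at the new parameters, and your deduction of the identity for $Y+Y^{-1}$ from the individual identity collapses.

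What \emph{is} true, and what the paper proves, is the identity for the sum $Y+Y^{-1}$ directly. The paper invokes the explicit Askey--Wilson form of this operator (from \cite[Prop.~5.8]{NS04}, cf.\ Lemma~\ref{lemma_askeywilson}):
\[
 \Theta_{q,t_1,t_2}(Y+Y^{-1}) = A_{q,t_1,t_2}(X)(\yy-1) + A_{q,t_1,t_2}(X^{-1})(\yy^{-1}-1) + t_1+t_1^{-1},
\]
with $A_{q,t_1,t_2}(X)=\dfrac{t_1^{-1}qX^{-1}-t_2+t_2^{-1}-t_1q^{-1}X}{qX^{-1}-q^{-1}X}$. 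Since $\varphi$ swaps $\yy\leftrightarrow\yy^{-1}$ and fixes $X$, the needed identity $\varphi(\Theta_{q,t_1,t_2}(Y+Y^{-1}))=\Theta_{q^{-1},t_1^{-1},t_2^{-1}}(Y+Y^{-1})$ reduces to the scalar check $A_{q,t_1,t_2}(X)=A_{q^{-1},t_1^{-1},t_2^{-1}}(X^{-1})$, which is immediate from the formula. Replace your individual-$Y$ claim with this computation and the rest of your argument goes through.
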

\begin{proof}
The mirror map $S^3 \to S^3$ induces a $\C$-linear isomorphism 
\begin{equation}\label{equation_mirror}                                                                                                                                                           
K_q(S^3\setminus K) \stackrel \sim \to K_{q^{-1}}(S^3\setminus \bar K)                                                                                                                                                         \end{equation}
and we identify these two skein modules as vector spaces using this map. If $\epsilon(K,q): K_q(S^3\setminus K) \to K_q(S^3)$ is the $\C$-linear map induced by the inclusion $K \to S^3$, then $\epsilon(K,q) = \epsilon(\bar K, q^{-1})$ (under the identification (\ref{equation_mirror})). Furthermore, the skein module $K_{q^{-1}}(S^3\setminus \bar K)$ is the twist of the skein module $K_q(S^3\setminus K)$ by the isomorphism 
 \[
  \varphi:A_q\rtimes \Z_2 \to A_{q^{-1}}\rtimes \Z_2,\quad \varphi(X) = X,\quad \varphi(\hat y) = \hat y^{-1},\quad \varphi(s) = s
 \]
(Since this isomorphism sends $s \mapsto s$, it descends to an isomorphism of the spherical subalgebras.) This automorphism extends to $\varphi:D_q \to D_{q^{-1}}$. Let $\Theta_{q,t_1,t_2}:\H_{q,t_1,t_2,1,1} \to D_q$ be the standard embedding given by Proposition \ref{prop_dunklembedding}. To prove the claim, we show that 
\begin{equation}\label{equation_ymirror}
\varphi(\Theta_{q,t_1,t_2}(Y_{t_1+t_2}+Y^{-1}_{t_1,t_2})) = \Theta_{q^{-1},t_1^{-1},t_2^{-1}}(Y_{t_1^{-1},t_2^{-1}} + Y^{-1}_{t_1^{-1},t_2^{-1}})
\end{equation}
Translating \cite[Prop. 5.8]{NS04} into our notation and specializing $t_3=t_4=1$, we have
\begin{eqnarray}
 \Theta_{q,t_1,t_2}(Y_{t_1,t_2}+Y_{t_1,t_2}^{-1}) &=& A_{q,t_1,t_2}(X)(\hat y -1) + A_{q,t_1,t_2}(X^{-1})(\hat y^{-1} - 1)+ t_1 + t_1^{-1}\label{eq_askeywilsonop}\\
  A_{q,t_1,t_2}(X) &:=& \frac{t^{-1}_1qX^{-1} - t_2 + t_2^{-1} - t_1q^{-1}X}{qX^{-1}-q^{-1}X}\notag
\end{eqnarray}
We then have the equality
\[
 \varphi(A_{q,t_1,t_2}(X)) = A_{q^{-1},t_1^{-1},t_2^{-1}}(X^{-1})
\]
which proves (\ref{equation_ymirror}) and completes the proof of the proposition.
\end{proof}

For the unknot we compute a closed formula for $J_n(q,t_1,t_2)$ in the following lemma, but for nontrivial knots this computation seems to be more difficult, so we include several example computations in the appendix.
\begin{theorem}
If $K$ is the unknot, then
\[
 J_n(K;q,t_1,t_2) = \frac{(t_1^{-1}q^2)^n - (t_1^{-1}q^2)^{-n}}{t_1^{-1}q^2-(t_1^{-1}q^2)^{-1}}
\] 
\end{theorem}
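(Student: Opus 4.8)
<br>

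The plan is to compute the 3-variable polynomial for the unknot directly from the definition (\ref{equation_defof3varpolys}), using the explicit description of the nonsymmetric skein module of the unknot given in (\ref{formula_liftedunknot}) together with the action of the Askey-Wilson operator. Recall that $\hat K_q(S^3\setminus \mathrm{unknot}) \cong \C[X^{\pm 1}]$ with $Y\cdot f(X) = -f(q^{-2}X)$ and $s\cdot f(X) = -f(X^{-1})$, and the empty link corresponds (via Lemma \ref{lemma_liftedsolidtorus}, in the right-module picture dualized appropriately) to the element $X - X^{-1}$, which spans the sign isotypic piece. The key point is that on this module the operator $Y_{t_1,t_2} = sT_0$ acts very simply: since $T_1 = t_3 s + \cdots$ with $t_3 = 1$ reduces (on symmetric-type elements) and, more to the point, since $U_0 = (1-q^2X^2)^{-1}(1-s\yy)$ annihilates the empty link of the unknot (as noted in Remark \ref{remark_eUe}), the operator $T_0 = t_1 s\yy - (q^2\bar t_1 X^2 + q\bar t_2 X)(1-q^2X^2)^{-1}(1-s\yy)$ acts on $\varnothing = X - X^{-1}$ simply as $t_1 s\yy$.

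Concretely, first I would verify that $(1-s\yy)\cdot(X-X^{-1}) = 0$: indeed $s\yy\cdot(X-X^{-1}) = s\cdot(-(q^{-2}X - q^{2}X^{-1})) = -(q^{-2}X^{-1} - q^2 X) = q^2 X - q^{-2}X^{-1}$, hmm, so this is not literally zero — I need to be careful with which eigenvector is annihilated. The correct statement is that $U_0$ annihilates $\varnothing$ after the appropriate idempotent projection, so I would instead just compute $T_0\cdot \varnothing$ head-on using the formula for $T_0$ from the $\H_{q,\underline t}$-embedding, keeping all terms, and simplify; the denominator $1-q^2X^2$ will cancel against the numerator because the relevant combination lies in the correct isotypic component (this is exactly the content of Conjecture \ref{mainconjecture} for the unknot, already proved in the subsection ``The unknot'' of Section \ref{sec_ccdeformations}). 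The upshot should be $T_0\cdot\varnothing = t_1^{-1}q^2 \cdot(\text{shift of }\varnothing)$ or similar, giving $Y_{t_1,t_2}\cdot\varnothing = \lambda \varnothing'$ where $\varnothing' = q^{-2}X - q^2 X^{-1}$ or a scalar multiple, and iterating, $Y_{t_1,t_2}^k \cdot \varnothing$ is a scalar multiple of $X^{?}$ terms.

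Once I have that $Y_{t_1,t_2}$ acts on the "nonsymmetric empty link" $1_V$ as multiplication by a scalar power of $t_1^{-1}q^2$ (paralleling the fact that on the unknot $Y$ itself acts by $-1$ and on $K_q(N_K)$ the longitude eigenvalue is governed by $q$'s), I can invoke Lemma \ref{lemma_liftedcoloredJones} and Lemma \ref{chebyshevidentities}: in the nonsymmetric pairing, $S_{n-1}(Y_{t_1,t_2} + Y_{t_1,t_2}^{-1})\cdot\varnothing$ corresponds, under the identification $K_q(D^2\times S^1) \cong V\e$, to $(W - W^{-1})S_{n-1}(W+W^{-1}) = W^n - W^{-n}$ where $W = t_1^{-1}q^2$ is the eigenvalue scalar, and pairing against $\varnothing$ picks out the value $(W^n - W^{-n})/(W - W^{-1})$. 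This is precisely the claimed formula $\dfrac{(t_1^{-1}q^2)^n - (t_1^{-1}q^2)^{-n}}{t_1^{-1}q^2 - (t_1^{-1}q^2)^{-1}}$.

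The main obstacle will be the bookkeeping of signs and powers of $q$ — in particular pinning down exactly which $\C[X^{\pm 1}]$-element represents the empty link on each side of the pairing (there are sign twists coming from Theorem \ref{thm_qeq1repvar}, from the sign representation structure, and from the normalization conventions in Remark \ref{remark_signconvention}), and confirming that $Y_{t_1,t_2}$ really acts as a scalar times a monomial shift rather than picking up extra terms from the $X^2$ and $X$ pieces of $T_0$. The cleanest way to handle this is to note that the unknot's nonsymmetric skein module is the sign representation $V^-$, on which $\yy$ and $s$ act as scalars $(-1)$ on the degree-appropriate basis, so the only genuinely new computation is evaluating the rational-function coefficient in $T_0$ at the relevant monomials and checking the denominator clears; everything else is then forced. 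Given the explicit matrices already recorded, I expect this to reduce to a two-line verification, and the final answer follows by the same Chebyshev-polynomial manipulation (Lemma \ref{chebyshevidentities}) used throughout Section \ref{sec_qeqqskeinmodules}.
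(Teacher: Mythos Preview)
Your overall strategy --- show that the empty link is an eigenvector of the deformed longitude operator and then apply the Chebyshev identity $S_{n-1}(W+W^{-1}) = (W^n-W^{-n})/(W-W^{-1})$ --- is exactly what the paper does. The gap is in the middle, where you try to work with $Y_{t_1,t_2} = sT_0$ itself rather than with the symmetric combination $Y_{t_1,t_2}+Y_{t_1,t_2}^{-1}$. Your initial hope that $(1-s\yy)\cdot(X-X^{-1})=0$ is, as you noticed, false; but the fallback claim that $Y_{t_1,t_2}$ acts on the nonsymmetric empty link as a scalar (or as ``scalar times a monomial shift'') is also false. A direct computation of $T_0\cdot X$ already produces terms in $X$, $X^{-1}$, and constants simultaneously, so $Y_{t_1,t_2}$ is not diagonal or shift-like on the sign representation, and the ``compute head-on'' step does not close.

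The paper sidesteps this by never computing $Y_{t_1,t_2}$ alone. Instead it uses formula~(\ref{eq_askeywilsonop}), which expresses the element $Y_{t_1,t_2}+Y_{t_1,t_2}^{-1} \in D_q$ as
\[
A_{q,t_1,t_2}(X)(\hat y - 1) + A_{q,t_1,t_2}(X^{-1})(\hat y^{-1}-1) + t_1 + t_1^{-1}.
\]
Applied to $\varnothing = X - X^{-1}$ in the sign representation (\ref{formula_liftedunknot}), each of $(\hat y^{\pm 1}-1)\cdot\delta$ is a scalar multiple of the corresponding denominator $qX^{\mp 1}-q^{-1}X^{\pm 1}$, the $t_2$-contributions cancel, and one obtains in one line
\[
(Y_{t_1,t_2}+Y_{t_1,t_2}^{-1})\cdot\varnothing = -(t_1^{-1}q^2 + t_1 q^{-2})\,\varnothing.
\]
Now $\varnothing$ really is an eigenvector of the \emph{symmetric} operator, so $S_{n-1}(Y_{t_1,t_2}+Y_{t_1,t_2}^{-1})\cdot\varnothing = S_{n-1}(-(t_1^{-1}q^2 + t_1 q^{-2}))\,\varnothing$, and the Chebyshev identity together with $\langle\varnothing,\varnothing\rangle = 1$ finishes. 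The moral: for this computation, work at the spherical level with $Y+Y^{-1}$ from the start; the individual operator $Y_{t_1,t_2}$ on the nonsymmetric module is more complicated than you need.
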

\begin{proof}
 Using (\ref{eq_askeywilsonop}) and (\ref{formula_liftedunknot}), it is straightforward to compute 
 \[
  (Y_{t_1,t_2} + Y_{t_1,t_2}^{-1})\cdot \varnothing = -(t^{-1}_1q^2+t_1q^{-2})\varnothing
 \]
Then the claimed equality follows from the fact that $\langle \varnothing,\varnothing\rangle = 1$ and from the identities for Chebyshev polynomials in Lemma \ref{chebyshevidentities}.
\end{proof}

\begin{remark}
 If $q$ is specialized to $q=1$, then the classical Jones polynomials are independent of the knot $K$. In all examples we have checked, this is also true for the polynomials $J_n(K; q=1, t_1,t_2)$ (when they are normalized as in (\ref{equation_defof3varpolys})). 
\end{remark}

\section{5-parameter deformations}\label{sec_5paramdeformations}
In this section we discuss deformations of the nonsymmetric skein module $M$ of a knot to a family of modules over the DAHA $\H_{q,\ult}$ for all parameters $\ult \in (\C^*)^4$. Unfortunately, the deformations of $M$ that we produce in this section do not seem to be canonical, and in particular depend on a choice of $\C[X^{\pm 1}]$-splitting of the module $M$. If $M$ is the nonsymmetric skein module of the $(2,2p+1)$ torus knot, there is a natural choice of such a splitting (see Remark \ref{remark_splittingcanonical}), but for other knots this does not seem to be the case. For simplicity, in this section we will only discuss the trefoil.

We recall that the $\H_{q,\ult}$ is the subalgebra of $\End_\C(\C[X^{\pm 1}])$ generated by $X^{\pm 1}$ and the two operators
\begin{eqnarray*}
 T_0 &=& t_1 s\yy - \frac{q^2 \bar t_1 X^2  + q\bar t_2X}{1-q^2X^2}(1-s\yy)\\
 T_1 &=& t_3s + \frac{\bar t_3+\bar t_4 X}{1-X^2}(1-s)
\end{eqnarray*}
In Section \ref{sec_ccdeformations} we showed that the operator $T_0$ acts on the nonsymmetric skein module of several knot complements. However, the operator $T_1$ does not even act on the nonsymmetric skein module $V$ of the unknot because $V$ is the sign representation of $A_q\rtimes \Z_2$. More precisely, we recall that $V= \C[X^{\pm 1}]$ as a $\C[X^{\pm 1}]$-module, with the action of $\yy$ and $s$ determined by
\begin{equation}\label{equation_signrep5param}
 \yy\cdot 1 = -1,\quad s\cdot 1 = -1
\end{equation}
Then $T_1\cdot 1$ is a rational function in $X$, so in the sign representation, the action of $T_1$ on $\C(X)$ does not preserve the subspace $\C[X^{\pm 1}]$. 

To avoid this problem, we define a new operator $T^-_1 \in D_q$ that acts on $V$ and show that the operators $X$, $T_0$, and $T^-_1$ provide an action of $\H_{q,\ult}$ on the sign representation. In examples, the nonsymmetric skein module splits over $\C[X^{\pm 1}]\rtimes \Z_2$ as a sum of the standard and sign polynomial representations, and we can therefore use this splitting to define an action of $\H_{q,\ult}$ on the nonsymmetric skein module. We describe this action for the trefoil and then use this action to define 5-variable polynomials that specialize to the colored Jones polynomials of the trefoil.

\subsection{The Dunkl embedding for the sign representation}\label{sec_signdunklemb}
Let $V$ be the sign representation from (\ref{equation_signrep5param}) and let $V^\loc$ be the localization of $V$ at all nonzero polynomials in $X$. Define the following operator:
\begin{equation}\label{equation_t1minus}
 T_1^- := t_3^{-1}s + \frac{t_3-t_3^{-1} + (t_4-t_4^{-1})X}{1-X^2}(1+s)
\end{equation}

(Note the sign in the term $(1+s)$ is not a typo - it is required for $T_1^-$ to act on the sign representation $V$.) We recall from (\ref{ccdaharelations}) that $\H_{q,\ult}$ is generated by elements $V_i$ and $V_i^\vee$ (for $i=1,2$) with relations (\ref{ccdaharelations}).

\begin{lemma}\label{lemma_5paramsignrep}
 The operators $T_0$ and $T_1^-$ preserve the subspace $V \subset V^\loc$. Furthermore, the operators $X$, $T_0$, and $T_1^-$ generate a copy of $\H_{q,\ult}$ inside of $D_q$ via the map
 \begin{equation}\label{equation_5paramdunkl}
  T_0 \mapsto T_0,\quad T_1 \mapsto T_1^-,\quad T_0^\vee \mapsto qT_0^{-1}X,\quad T_1^\vee \mapsto X^{-1}(T_1^-)^{-1}
 \end{equation}

\end{lemma}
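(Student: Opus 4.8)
The plan is to verify the two assertions separately: first that $T_0$ and $T_1^-$ preserve $V \subset V^\loc$, and second that the assignment \eqref{equation_5paramdunkl} defines an algebra embedding $\H_{q,\ult} \into D_q$. For the preservation claim, recall $V = \C[X^{\pm 1}]$ with $\yy \cdot 1 = -1$ and $s \cdot 1 = -1$, so that on a monomial $X^n$ one has $s \cdot X^n = -X^{-n}$ and $s\yy \cdot X^n = -q^{-2n}X^{-n}$. For $T_0$ I would apply the same argument already used in the unknot case in Section \ref{sec_ccdeformations}: compute $(1-s\yy)X^n = X^n - q^{-2n}X^{-n}$, observe this is divisible by $1-q^2X^2$ in $\C[X^{\pm 1}]$, and conclude that the rational prefactor in $T_0$ clears, so $T_0 X^n \in V$. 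For $T_1^-$, the key point (flagged by the parenthetical remark after \eqref{equation_t1minus}) is that the $(1+s)$ in $T_1^-$ is exactly matched to the sign action: $(1+s)X^n = X^n - X^{-n}$ is divisible by $1-X^2$ (indeed $X^n - X^{-n} = -(X-X^{-1})S_{n-1}(X+X^{-1})$ by Lemma \ref{chebyshevidentities}, and $1-X^2 = -X(X-X^{-1})$), so again the denominator cancels and $T_1^- X^n \in V$. This is the routine half.

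For the embedding claim, the cleanest route is to observe that $T_1^-$ is obtained from the Demazure--Lusztig operator $T_1$ appearing in Proposition \ref{prop_dunklembedding} by conjugating with an automorphism of $D_q$ that fixes $X$ and $\yy$ and twists $s$. Concretely, I would exhibit an algebra automorphism $\omega$ of $D_q$ (of the form $s \mapsto s \cdot g(X)$ for a suitable unit or, better, $s \mapsto -s$ composed with a gauge transformation $f(X) \mapsto \lambda(X) f(X) \lambda(X)^{-1}$) under which the standard Dunkl operators $T_0, T_1$ of Proposition \ref{prop_dunklembedding} with parameters $\ult$ map to $T_0, T_1^-$; then since $\omega$ is an automorphism, the relations \eqref{ccdaharelations} are preserved, and composing Sahi's embedding with $\omega$ gives the desired embedding. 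One checks directly that $T_1^-$ still satisfies the quadratic relation $(T_1^- - t_3)(T_1^- + t_3^{-1}) = 0$: since $(T_1^-)$ acts on $V$ and $V$ is spanned over $\C[X^{\pm 1}]$-linearly by $1$ with $s \cdot 1 = -1$, one reduces to a short computation on $\C(X)$, exactly as in the verification that $T_1$ is a Hecke generator in \cite{Lus89}, \cite{NS04}. The last relation $T_1^\vee T_1 T_0 T_0^\vee = q$ is then automatic from the definitions $T_0^\vee \mapsto qT_0^{-1}X$, $T_1^\vee \mapsto X^{-1}(T_1^-)^{-1}$, since the product telescopes to $X^{-1}(T_1^-)^{-1}(T_1^-)T_0(qT_0^{-1}X) = q$. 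Injectivity follows because $\omega$ is an automorphism and Sahi's map is injective.

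The main obstacle is pinning down the precise automorphism (or gauge twist) $\omega$ of $D_q$ that conjugates $T_1$ to $T_1^-$ and checking it is compatible with $T_0$ and with the $\vee$-generators — i.e. that the \emph{same} $\omega$ works for all four generators simultaneously, so that the full set of relations \eqref{ccdaharelations} transports. I expect this to require writing $T_1^-$ and $T_1$ both in the form $\alpha(X) s + \beta(X)$ on $\C(X)$, reading off the conjugating gauge factor $\lambda(X)$ from the ratio of the $s$-coefficients, and then verifying that conjugation by $\lambda(X)$ leaves $T_0$ (which also has an $s\yy$ term, not just $s$) in the stated form, possibly after absorbing a sign. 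If a clean conjugation is not available, the fallback is the brute-force verification: check each of the five defining relations of $\H_{q,\ult}$ directly for the operators $X, T_0, T_1^-, qT_0^{-1}X, X^{-1}(T_1^-)^{-1}$ in $D_q$, which is a finite (if tedious) computation in the localized quantum torus, and then invoke a dimension/PBW argument (as in \cite{Sah99}, \cite{NS04}, \cite{Obl04}) to conclude the resulting surjection onto the subalgebra generated by these operators is an isomorphism.
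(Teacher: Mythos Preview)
Your preservation argument is correct and essentially matches the paper's. The paper's phrasing for $T_0$ is a touch slicker --- it observes that $s\yy$ acts identically on the trivial and the sign polynomial representations (the two signs cancel), so $T_0$ preserves the sign module because it already preserves the standard one --- but this is equivalent to your direct divisibility computation. Your argument for $T_1^-$ via $(1+s)X^n = X^n - X^{-n} \in (1-X^2)\C[X^{\pm 1}]$ is exactly right.

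For the embedding claim, your route (a) is a dead end, and you have already put your finger on why. Any automorphism $\omega$ of $D_q$ that carries the standard Dunkl image to this one must send $T_0 \mapsto T_0$ (since $T_0$ is literally the same element of $D_q$ in both embeddings) while sending $T_1 \mapsto T_1^-$. The sign twist $s \mapsto -s$, $\yy \mapsto -\yy$ fixes $s\yy$ and hence $T_0$, but it sends $T_1$ to $-t_3 s + \tfrac{\bar t_3 + \bar t_4 X}{1-X^2}(1+s)$, which differs from $T_1^-$ by $(t_3 + t_3^{-1})s$; a further gauge conjugation $s \mapsto g(X)s$ that corrects this will no longer fix $s\yy$, so it spoils $T_0$. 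So no single $\omega$ works, and your fallback (b) is the honest route --- but it is more work than necessary.

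The paper exploits the very observation that blocks route (a): since $T_0$ and $T_0^\vee = qT_0^{-1}X$ are unchanged from Proposition~\ref{prop_dunklembedding}, the first two quadratic relations already hold in $D_q$ (they hold on the faithful standard module $\C(X)$, hence as identities in $D_q$). The fifth relation telescopes, as you note. That leaves only the two quadratics for $T_1^-$ and $T_1^\vee = X^{-1}(T_1^-)^{-1}$. Here the paper observes that $T_1^-$ (hence also $T_1^\vee$) commutes with $X+X^{-1}$ in $D_q$, so the relation operators are $\C(X+X^{-1})$-linear on the faithful sign module $V^\loc = \C(X)$. Since $\C(X)$ is spanned over $\C(X+X^{-1})$ by $1$ and $X$, it suffices to check the two quadratics on these two vectors --- a short computation. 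This simultaneously verifies $T_1^- V \subset V$ and the relations, with far less labour than a full brute-force check in $D_q$.
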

\begin{proof}
 The operator $s\yy$ acts in the same way on the trivial and sign representations of $D_q$, and this implies $T_0$ preserves the subspace $V \subset V^\loc$. Since the map $D_q \to \End_{\C}(\C(X))$ is injective for $q$ not a root of unity, this also shows that $T_0$ and $T_0^\vee$ satisfy the first two relations in (\ref{ccdaharelations}). Also, the final relation holds by definition.
 
 To show that the third and fourth relations hold, we first note that as a $\C[X+X^{-1}]$-module, we have a splitting $V = \C[X+X^{-1}]1\oplus \C[X+X^{-1}]X$. Furthermore, the operator $T_1^-$ commutes with $X+X^{-1} \in D_q$, which shows that it suffices to check that the left hand sides of the third and fourth relation annihilate the elements $1,X \in V$. These are straightforward computations which we omit.
\end{proof}

\subsection{5-parameter deformation for the trefoil}
We recall that the $\C[X^{\pm 1}]$-module structure of the nonsymmetric skein module $M$ of the trefoil is
\begin{equation}\label{equation_trefoilsplitting}
 M \cong \C[X^{\pm 1}]u \oplus \C[X^{\pm 1}]v
\end{equation}
To define the $A_q\rtimes \Z_2$-module structure of $M$, we first define two operators $S, P: M \to  M$ as in equation (\ref{equation_diagonalaction}). We can then write the action of the elements $\yy, s \in A_q\rtimes \Z_2$ in terms of matrices (with respect to the ordered basis $(u, v)$ of $M$):
\[
 s = \left[\begin{array}{cc}-1&0\\0&1\end{array}\right]S, \quad \yy = \left[\begin{array}{cc}-1& q^2X^{-1}-q^6X^{-5}\\0&q^6X^{-6}\end{array}\right]P
\]
\begin{remark}\label{remark_splittingcanonical}
 The element $u \in M$ generates a submodule $M' \subset M$, and we write $M''$ for the quotient, so that we have an exact sequence
 \[
  0 \to M' \to M \to  M'' \to 0
 \]
 If $q$ is not a root of unity, then $M'$ and $M''$ are simple $A_q\rtimes \Z_2$-modules, and since this extension is non-split, it is easy to see that $M'$ is the \emph{unique} nontrivial submodule of $M$ (see Lemma \ref{lemma_trefoilseq}). The module $M''$ has a distinguished generator $v''$ (the image of the empty link $v$), so a $\C[X^{\pm 1}]$ splitting of this exact sequence is determined by the image of $v''$ in $M$. The natural choice for this image is the empty link in $M$, which is $v$. In this sense, the choice of $\C[X^{\pm 1}]$-splitting given in (\ref{equation_trefoilsplitting}) is natural. However, in general the analogue of the quotient $M''$ has $\C[X^{\pm 1}]$-rank greater than 1, so a choice of $\C[X^{\pm 1}]$-splitting is not determined by a single element of $M$.
\end{remark}

We now define the operator $T_1^*:M \to M$ as follows:
\[
 T_1^* = \left[\begin{array}{cc} T_1^-&0\\0&T_1\end{array}\right]
\]

\begin{theorem}
 The operators $X$, $T_0$, and $T_1^*$ provide an action of $\H_{q,\ult}$ on $M$ via the map (\ref{equation_5paramdunkl}).
\end{theorem}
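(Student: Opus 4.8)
The plan is to verify directly that the three operators $X$, $T_0$, and $T_1^*$, viewed as elements of $\End_\C(M^{\loc})$, satisfy the defining relations \eqref{ccdaharelations} of $\H_{q,\ult}$ (with $T_0^\vee \mapsto qT_0^{-1}X$, $T_1^\vee\mapsto X^{-1}(T_1^*)^{-1}$), and that they preserve the lattice $M \subset M^{\loc}$. The key observation is that $M$ decomposes as a direct sum of $\C[X^{\pm 1}]\rtimes\Z_2$-modules: the submodule $M' = \C[X^{\pm 1}]u$ (on which $s$ acts by $-S$, i.e. a copy of the sign representation of $\C[X^{\pm1}]\rtimes\Z_2$), and the complementary $\C[X^{\pm 1}]$-summand $\C[X^{\pm 1}]v$ (on which $s$ acts by $+S$, a copy of the standard representation). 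The operator $T_1^* = \operatorname{diag}(T_1^-, T_1)$ is built to respect this decomposition as a $\C[X^{\pm 1}]\rtimes\Z_2$-module decomposition, while $T_0$ and $X$ do \emph{not} respect it (they are only block-upper-triangular). So the $C^\vee C_1$ relations must be checked for the full module, not summand-by-summand.

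First I would record what is already proved. Theorem~\ref{thm_mainconjecture} (its proof via Lemma~\ref{lemma_upreservesM}) shows $T_0 M \subset M$; and the sign-representation Dunkl embedding of Lemma~\ref{lemma_5paramsignrep} shows $T_1^- V \subset V$ for the sign representation $V$, while the standard Dunkl embedding of Proposition~\ref{prop_dunklembedding} shows $T_1 W \subset W$ for the standard representation $W = \C[X^{\pm1}]$. Applying these to the two $\C[X^{\pm1}]$-summands of $M$ gives $T_1^* M \subset M$ immediately, and $XM \subset M$ is trivial. So the only real content is the relations. The quadratic relation $(T_0 - t_1)(T_0 + t_1^{-1}) = 0$ holds because $T_0$ acts by the same formula on $M$ as on any $A_q\rtimes\Z_2$-module, and this relation holds inside $D_q$ (Proposition~\ref{prop_dunklembedding}); likewise $T_0^\vee = qT_0^{-1}X$ automatically inherits its quadratic relation, and $T_1^\vee T_1 T_0 T_0^\vee = q$ holds by definition of $T_0^\vee$ and $T_1^\vee$. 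That leaves the two quadratic relations for $T_1^*$ and $(T_1^*)^\vee$, and these are the crux.

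The main obstacle, and the only step requiring genuine computation, is verifying $(T_1^* - t_3)(T_1^* + t_3^{-1}) = 0$ on $M$, together with $(T_1^\vee - t_4)(T_1^\vee + t_4^{-1}) = 0$ where $T_1^\vee = X^{-1}(T_1^*)^{-1}$. On the standard summand $\C[X^{\pm1}]v$ this is exactly the Demazure--Lusztig relation $(T_1 - t_3)(T_1 + t_3^{-1}) = 0$ from Proposition~\ref{prop_dunklembedding}. On the sign summand $\C[X^{\pm1}]u$ one needs the analogous relation for $T_1^-$; the point of Lemma~\ref{lemma_5paramsignrep} (whose proof was ``a straightforward computation which we omit'') is precisely that $T_1^-$ satisfies $(T_1^- - t_3)(T_1^- + t_3^{-1}) = 0$ and generates a copy of $\H_{q,\ult}$ on the sign representation. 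Since $T_1^*$ is block-diagonal with these two blocks, the relation for $T_1^*$ follows block-by-block; but because $T_0$ is only block-triangular, I would double-check that the \emph{mixed} relation $T_1^\vee T_1 T_0 T_0^\vee = q$ (the one relation coupling $T_1^*$ with $T_0$) survives: expanding $X^{-1}(T_1^*)^{-1}\, T_1^*\, T_0\, qT_0^{-1}X = X^{-1}\cdot q \cdot X = q$ shows it is an identity regardless of the block structure. Concretely the plan is: (i) state $T_1^*M\subset M$, $T_0 M\subset M$, $XM\subset M$ citing Lemmas \ref{lemma_upreservesM}, \ref{lemma_5paramsignrep} and Prop.~\ref{prop_dunklembedding}; (ii) observe that $T_0, T_0^\vee, T_1^\vee, X$ satisfy all relations they are involved in because these hold in $D_q$ and the map $D_q \to \End_\C(\C(X))$ is injective (for $q$ not a root of unity); (iii) verify the quadratic relation for $T_1^*$ by the block decomposition, invoking the relation for $T_1$ (Prop.~\ref{prop_dunklembedding}) on the standard summand and for $T_1^-$ (Lemma~\ref{lemma_5paramsignrep}) on the sign summand; (iv) conclude that \eqref{equation_5paramdunkl} defines an algebra map $\H_{q,\ult} \to \End_\C(M^{\loc})$ landing in operators that preserve $M$, which is the assertion. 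The subtlety to flag is that the splitting \eqref{equation_trefoilsplitting} is not $A_q\rtimes\Z_2$-stable, so one cannot naively say ``$M$ is a direct sum of $\H_{q,\ult}$-modules''; it is only the \emph{definition} of $T_1^*$ and the $\C[X^{\pm1}]\rtimes\Z_2$-stability of the splitting that make the argument go through.
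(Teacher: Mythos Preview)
Your proposal is correct and follows essentially the same approach as the paper's own proof: verify that $T_0$ preserves $M$ (citing the earlier torus-knot computation), note that the $T_0$ and $T_0^\vee$ relations and the product relation hold automatically inside $D_q$, and then check the $T_1^*$ and $T_1^\vee$ relations block-by-block using the $\C[X^{\pm 1}]\rtimes\Z_2$-splitting together with Lemma~\ref{lemma_5paramsignrep} on the sign summand and Proposition~\ref{prop_dunklembedding} on the standard summand. Your write-up is in fact somewhat more careful than the paper's---you explicitly verify that the product relation $T_1^\vee T_1 T_0 T_0^\vee = q$ is insensitive to the block structure and flag the subtlety that the splitting is not $A_q\rtimes\Z_2$-stable---but the underlying argument is the same.
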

\begin{proof}
It was shown in Section \ref{sec_conj1fortorusknots} that the operator $T_0$ preserves the subspace $M \subset M^\loc$, and this implies $T_0$ and $qT_0^{-1}X$ satisfy the first two relations of (\ref{ccdaharelations}). The final relation holds by definition. Finally, since the splitting in (\ref{equation_trefoilsplitting}) is a splitting of $\C[X^{\pm 1}]\rtimes \Z_2$-modules, the operator $T_1^*$ preserves this splitting. Then the third and fourth relations of (\ref{ccdaharelations}) follow from Lemma \ref{lemma_5paramsignrep} and from the fact that $T_1$ satisfies these relations in the standard polynomial representation.
\end{proof}

\begin{remark}
This action can be used to produce 5-variable polynomials $J_n(q,t_1,t_2,t_3,t_4)$ that restrict to the colored Jones polynomials $J_n(q)$ using a similar formula to (\ref{equation_defof3varpolys}) - however, the polynomials this produces are quite lengthy, so for the sake of brevity we omit them.
\end{remark}

\section{Appendix: Computer calculations of multi-variable Jones polynomials}

We now include computer computations
of 3-variable polynomials normalized as in (\ref{equation_defof3varpolys}). To shorten notation, we write $t := t_1$ and $v = t_2 - t_2^{-1}$. (We remind the reader of the normaliqation conventions for the colored Jones polynomials, see Remark \ref{remark_signconvention}. In particular, the $q$ of the KnotAtlas is our $q^2$.)


\subsection{The trefoil}
For the trefoil, we have the following polynomials:

\begin{polynomial}
 J_2 = v q  - t^{-1}  q^{2}+(    - t^{-1}    + 
t )q^{4}  -t q^{6}+  v q^{7}  -v q^{9}  - t^{-1}  q
^{10}+(   t^{-1}  - 
t )q^{12}  -v q^{15}+   t^{-1}  q^{18}
\end{polynomial}

\begin{polynomial}
  J_3 = v^2 q^{2}    - t^{-1}     v q^{3}+(    -1   + 
 t^{-2}  + t^2 )q^{4}+(      -2     t^{-1}   v   +   t  
v   )q^{5}+(    -1   + 
 t^{-2}  )q^{6}+(      - t^{-1}     v   +   t  
v   )q^{7}+(   t^{-2}  + v^2 )q^{8}     - t^{-1}vq^{9}+(    -1   +  t^{-2}  )q^{10}    - t^{-1}     
vq^{11}   +t^{-2}  q^{12}    -t    v q^{13}+(  1 - 
t^2 )q^{14}    -t    v q^{15}+(  1 - 
v^2 )q^{16}+(     t^{-1}   v   -   t  v   )q^{17}+(  1 
- t^2 - v^2 )q^{18}+(    2   t^{-1}   v   -   t  
v   )q^{19}+(  2 -  t^{-2}  )q^{20}+(    2   t^{-1}   v   
-   t  v   )q^{21}+(    - t^{-2}    + t^2 - 
v^2 )q^{22}+(     t^{-1}   v   -   t  v   )q^{23}+(  2 
-  t^{-2}  - t^2 )q^{24}+(     t^{-1}   v   +   t  
v   )q^{25}+(    - t^{-2}    + t^2 - 
v^2 )q^{26}+   t^{-1}   v q^{27}  + t^{-1}   
v q^{29}+(  1 -  t^{-2}  + v^2 )q^{30}+(     t^{-1}   v   
+   t  v   )q^{31}+(    -2   + t^2 - 
v^2 )q^{32}    - t^{-1}     v q^{33}+(  1 -  t^{-2}  + 
v^2 )q^{34}+  t  v q^{35}+  t  v q^{37}+(    -1   + 
 t^{-2}  )q^{38}    - t^{-1}     
v q^{39}+(    - t^{-2}    + v^2 )q^{40}+(    -1   + 
 t^{-2}  )q^{42}    - t^{-1}     
v q^{43}    - t^{-1}     v q^{45}+   t^{-2}  q^{48}
\end{polynomial}

We now set $v = 0$ and give the first coefficients of the Taylor expansion of $J_2$ and $J_3$ around $t-1$. Since the 3-variable polynomials specialiqe to the classical colored Jones polynomials when $t=1$, the coefficient of $(t-1)^0$ is the classical colored Jones polynomial. For the $t-1$ expansion of $J_2$, we have the coefficients

\begin{polynomial}
 -q^2-q^6-q^{10}+q^{18} \\
 q^2+2 q^4-q^6+q^{10}-2 q^{12}-q^{18} \\
 -q^2-q^4-q^{10}+q^{12}+q^{18}
\end{polynomial}

For the expansion of $J_3$, we have the coefficients
\begin{polynomial} q^4+q^8+q^{12}+q^{16}+q^{20}-q^{32}-q^{36}-q^{40}+q^{48}, \\
 -2 \Big(q^6+q^8+q^{10}+q^{12}+q^{14}+q^{18}-q^{20}-2 q^{22}-2 q^{26}-q^{30}-q^{32}-q^{34}+q^{38}-q^{40}+q^{42}+q^{48}\Big), \\
 4 q^4+3 q^6+3 q^8+3 q^{10}+3 q^{12}-q^{14}-q^{18}-3 q^{20}-2 q^{22}-4 q^{24}-2 q^{26}-3 q^{30}+q^{32}-3 q^{34}+3 q^{38}-3 q^{40}+3 q^{42}+3 q^{48}
 \end{polynomial}
We also give the first two coefficients in the expansion of $J_4$:
 \begin{polynomial}
  -q^6-q^{10}-q^{14}-q^{18}-q^{22}-q^{26}-q^{30}+q^{46}+q^{50}+q^{54}+q^{58}+q^{62}-q^{74}-q^{78}-q^{82}+q^{90} \end{polynomial}\begin{polynomial}
 q^6+2 q^8-q^{10}+4 q^{12}+3 q^{14}+2 q^{16}+3 q^{18}+3 q^{22}+4 q^{24}+q^{26}+2 q^{28}-q^{30}-2 q^{34}-4 q^{36}-2 q^{38}-8 q^{40}-4 q^{42}-6 q^{44}-3
q^{46}-6 q^{48}-3 q^{50}+q^{54}+q^{58}-q^{62}+6 q^{64}+6 q^{68}+2 q^{72}+3 q^{74}+q^{78}-2 q^{80}+3 q^{82}-2 q^{84}-3 q^{90}
 \end{polynomial}

\subsection{The $(5,2)$ torus knot}
For the $(5,2)$ torus knot, we have

\begin{polynomial}
 J_2 = v q+(    - t^{-1}    + t )q^{2}+(    - t^{-1}    
+ 
t )q^{4}+  v q^{5}  -t q^{6}+  v q^{7}+(    - t^{-1}  
  + 
t )q^{8}  -v q^{9}  - t^{-1}  q^{10}+  v q^{11}+(   t^{-1}  - 
t )q^{12}  - t^{-1}  q^{14}  -v q^{15}+(   t^{-1}  - 
t )q^{18}  -v q^{21}+(   t^{-1}  - 
t )q^{24}  -v q^{27}+   t^{-1}  q^{30}
\end{polynomial}

For the $(t-1)$ expansion of $J_2$, we have the coefficients
\begin{polynomial}
-q^6-q^{10}-q^{14}+q^{30} \\
 2 q^2+2 q^4-q^6+2 q^8+q^{10}-2 q^{12}+q^{14}-2 q^{18}-2 q^{24}-q^{30} \\
 -q^2-q^4-q^8-q^{10}+q^{12}-q^{14}+q^{18}+q^{24}+q^{30}
\end{polynomial}

The first two terms of the $(t-1)$ expansion of $J_3$ are
\begin{polynomial}
q^{12}+q^{16}+q^{20}+q^{24}+q^{28}-q^{56}-q^{60}-q^{64}+q^{80} \end{polynomial}
\begin{polynomial}
 -2 \Big(q^8+q^{10}+q^{12}+2 q^{14}+q^{16}+2 q^{18}+q^{20}+q^{22}+q^{28}-q^{30}-2 q^{34}-2 q^{36}-q^{40}-q^{42}-q^{44}- \\ 
 2 q^{46}-q^{48}-q^{52}-2 q^{54}-q^{58}+q^{62}-q^{64}+q^{68}+q^{70}+q^{74}+q^{80}\Big)
\end{polynomial}

\subsection{The figure eight}

Since the figure eight knot is isotopic to its mirror image, Proposition \ref{prop_mirror} shows that $J_n(q,t,v) = J_n(q^{-1},t^{-1},-v)$ (in the current notation for parameters). Explicitly, we have

\begin{polynomial}
J_2 =   -t q^{-10}  -v q^{-7}+(   t^{-1}  - 
t )q^{-4}  -v q^{-1}+  v q+(    - t^{-1}    + 
t )q^{4}+  v q^{7}  - t^{-1}  q^{10}
\end{polynomial}
\begin{polynomial}
 J_3 =   t^2 q^{-28}+  t  v q^{-25}+  t  
v q^{-23}+(    -1   + t^2 )q^{-22}+(    -t^2   + 
v^2 )q^{-20}+  t  v q^{-19}+(    -1   + 
t^2 )q^{-18}+   - t^{-1}     v q^{-17}+(    -1   + 
t^2 )q^{-16}    - t^{-1}     v q^{-15}+(  1 - t^2 + 
v^2 )q^{-14}+  2  t  v q^{-13}+(    -2   +  t^{-2}  + t^2 
- v^2 )q^{-12}  - t^{-1}     
v q^{-11}+  v^2 q^{-10}    -t    v q^{-9}+(  1 - t^2 
+ v^2 )q^{-8}+(    -1   +  t^{-2}  -   2  
v^2   )q^{-6}+(      - t^{-1}     v   -   t  
v   )q^{-5}+(  2 -  t^{-2}  )q^{-4}+(    2   t^{-1}   v   
-   2  t  v   )q^{-3}+(  2 -   2  t^2   )q^{-2}+(    2  
 t^{-1}   v   -   t  v   )q^{-1}+(  1 -   2  
v^2   )+(     t^{-1}   v   -   2  t  
v   )q+(  2 -   2   t^{-2}    )q^{2}+(    2  
 t^{-1}   v   -   2  t  v   )q^{3}+(  2 - 
t^2 )q^{4}+(     t^{-1}   v   +   t  
v   )q^{5}+(    -1   + t^2 -   2  v^2   )q^{6}+(  1 - 
 t^{-2}  + v^2 )q^{8}+   t^{-1}   
v q^{9}+  v^2 q^{10}+  t  v q^{11}+(    -2   + 
 t^{-2}  + t^2 - v^2 )q^{12}    -2     t^{-1}   
v q^{13}+(  1 -  t^{-2}  + v^2 )q^{14}+  t  
v q^{15}+(    -1   +  t^{-2}  )q^{16}+  t  
v q^{17}+(    -1   +  t^{-2}  )q^{18}    - t^{-1}     
v q^{19}+(    - t^{-2}    + v^2 )q^{20}+(    -1   + 
 t^{-2}  )q^{22}    - t^{-1}     
v q^{23}    - t^{-1}     v q^{25}+  t^{-2}  q^{28}
\end{polynomial}

For the $t-1$ expansion of $J_2$ we have the following coefficients:
\begin{polynomial}
 -q^{10}-q^{-10} \\
 2 q^4+q^{10}-q^{-10}-2 q^{-4} \\
 -q^4-q^{10}+q^{-4}
\end{polynomial}

For the $t-1$ expansion of $J_3$ we have the following coefficients:
\begin{polynomial}
 1+q^4-q^{20}+q^{28}+q^{-28}-q^{-20}+q^{-4}, \\
 -2 \Big(2 q^2-q^4+q^6+q^8+q^{14}-q^{16}-q^{18}+q^{20}-q^{22}-q^{28}+q^{-28}+q^{-22}-q^{-20}+q^{-18}+q^{-16}-q^{-14}-q^{-8}-q^{-6}+q^{-4}-2 q^{-2}\Big) \\
 -6 q^2-q^4+q^6-3 q^8+4 q^{12}-3 q^{14}+3 q^{16}+3 q^{18}-3 q^{20}+3 q^{22}+3 q^{28}+q^{-28}+q^{-22}-q^{-20}+q^{-18}+q^{-16}-q^{-14}+4 q^{-12}-q^{-8}+3
q^{-6}-3 q^{-4}-2 q^{-2}
\end{polynomial}

The first three terms of the expansion of $J_4$ are
\begin{polynomial}
  -q^{18}-q^{22}-q^{34}+q^{42}+q^{46}-q^{54}-q^{-54}+q^{-46}+q^{-42}-q^{-34}-q^{-22}-q^{-18} 
  \end{polynomial}\begin{polynomial}
 -2 q^2+6 q^4+4 q^8+2 q^{10}+4 q^{12}+4 q^{14}+q^{18}-q^{22}+2 q^{24}-2 q^{26}-2 q^{28}-4 q^{32}+3 q^{34}-2 q^{36}-q^{42}+2 q^{44}-3 q^{46}+2 q^{48}+3
q^{54}-3 q^{-54}-2 q^{-48}+3 q^{-46}-2 q^{-44}+q^{-42}+2 q^{-36}-3 q^{-34}+4 q^{-32}+2 q^{-28}+2 q^{-26}-2 q^{-24}+q^{-22}-q^{-18}-4 q^{-14}-4 q^{-12}-2
q^{-10}-4 q^{-8}-6 q^{-4}+2 q^{-2} \end{polynomial}
\begin{polynomial}
 -8-7 q^2-7 q^4-4 q^6-2 q^8-q^{10}+2 q^{12}+2 q^{14}+4 q^{16}+3 q^{18}+4 q^{20}+4 q^{22}-q^{24}+9 q^{26}-3 q^{28}+6 q^{32}-10 q^{34}+5 q^{36}-4 q^{38}+q^{42}-5
q^{44}+6 q^{46}-5 q^{48}-6 q^{54}-3 q^{-54}-3 q^{-48}+3 q^{-46}-3 q^{-44}-4 q^{-38}+3 q^{-36}-7 q^{-34}+2 q^{-32}-5 q^{-28}+7 q^{-26}+q^{-24}+3 q^{-22}+4
q^{-20}+4 q^{-18}+4 q^{-16}+6 q^{-14}+6 q^{-12}+q^{-10}+2 q^{-8}-4 q^{-6}-q^{-4}-9 q^{-2}
\end{polynomial}

\bibliography{secondbibtexfile_3_6}{}

\newcommand{\etalchar}[1]{$^{#1}$}
\providecommand{\bysame}{\leavevmode\hbox to3em{\hrulefill}\thinspace}
\providecommand{\MR}{\relax\ifhmode\unskip\space\fi MR }
\providecommand{\MRhref}[2]{%
  \href{http://www.ams.org/mathscinet-getitem?mr=#1}{#2}
}
\providecommand{\href}[2]{#2}
\begin{thebibliography}{CCG{\etalchar{+}}94}

\bibitem[Ago]{35687}
Ian Agol, \emph{Complete knot invariant?}, MathOverflow,
  URL:http://mathoverflow.net/q/35687 (version: 2010-08-16).

\bibitem[AW85]{AW85}
Richard Askey and James Wilson, \emph{Some basic hypergeometric orthogonal
  polynomials that generalize {J}acobi polynomials}, Mem. Amer. Math. Soc.
  \textbf{54} (1985), no.~319, iv+55. \MR{783216 (87a:05023)}

\bibitem[Bar99]{Bar99}
John~W. Barrett, \emph{Skein spaces and spin structures}, Math. Proc. Cambridge
  Philos. Soc. \textbf{126} (1999), no.~2, 267--275. \MR{1670233 (99k:57006)}

\bibitem[BC11]{BC11}
Yuri Berest and Oleg Chalykh, \emph{Quasi-invariants of complex reflection
  groups}, Compos. Math. \textbf{147} (2011), no.~3, 965--1002. \MR{2801407}

\bibitem[BH95]{BH95}
G.~W. Brumfiel and H.~M. Hilden, \emph{{${\rm SL}(2)$} representations of
  finitely presented groups}, Contemporary Mathematics, vol. 187, American
  Mathematical Society, Providence, RI, 1995. \MR{1339764 (96g:20004)}

\bibitem[BLF05]{BL05}
Doug Bullock and Walter Lo~Faro, \emph{The {K}auffman bracket skein module of a
  twist knot exterior}, Algebr. Geom. Topol. \textbf{5} (2005), 107--118
  (electronic). \MR{2135547 (2006a:57012)}

\bibitem[BP00]{BP00}
Doug Bullock and J{\'o}zef~H. Przytycki, \emph{Multiplicative structure of
  {K}auffman bracket skein module quantizations}, Proc. Amer. Math. Soc.
  \textbf{128} (2000), no.~3, 923--931. \MR{1625701 (2000e:57007)}

\bibitem[BS12]{BS12}
Yuri Berest and Peter Samuelson, \emph{Dunkl operators and quasi-invariants of
  complex reflection groups}, Mathematical aspects of quantization, Contemp.
  Math., vol. 583, Amer. Math. Soc., Providence, RI, 2012, pp.~1--23.
  \MR{3013091}

\bibitem[Bul97]{Bul97}
Doug Bullock, \emph{Rings of {${\rm SL}\sb 2({\bf C})$}-characters and the
  {K}auffman bracket skein module}, Comment. Math. Helv. \textbf{72} (1997),
  no.~4, 521--542. \MR{1600138 (98k:57008)}

\bibitem[BZ03]{BZ03}
Gerhard Burde and Heiner Zieschang, \emph{Knots}, second ed., de Gruyter
  Studies in Mathematics, vol.~5, Walter de Gruyter \& Co., Berlin, 2003.
  \MR{1959408 (2003m:57005)}

\bibitem[CCG{\etalchar{+}}94]{CCG94}
D.~Cooper, M.~Culler, H.~Gillet, D.~D. Long, and P.~B. Shalen, \emph{Plane
  curves associated to character varieties of {$3$}-manifolds}, Invent. Math.
  \textbf{118} (1994), no.~1, 47--84. \MR{1288467 (95g:57029)}

\bibitem[Che95]{Che95}
Ivan Cherednik, \emph{Double affine {H}ecke algebras and {M}acdonald's
  conjectures}, Ann. of Math. (2) \textbf{141} (1995), no.~1, 191--216.
  \MR{1314036 (96m:33010)}

\bibitem[Che05]{Che05}
\bysame, \emph{Double affine {H}ecke algebras}, London Mathematical Society
  Lecture Note Series, vol. 319, Cambridge University Press, Cambridge, 2005.
  \MR{2133033 (2007e:32012)}

\bibitem[{Che}11]{Che11}
I.~{Cherednik}, \emph{Jones polynomials of torus knots via daha}, ArXiv
  1111.6195 (2011).

\bibitem[CKM12]{CKM12}
S.~{Cautis}, J.~{Kamnitzer}, and S.~{Morrison}, \emph{{Webs and quantum skew
  Howe duality}}, ArXiv e-prints (2012).

\bibitem[CLPZ]{CLPZ14}
Q.~{Chen}, K.~{Liu}, P.~{Peng}, and S.~{Zhu}, \emph{Congruent skein relations
  for colored {H}omfly-pt invariants and colored {J}ones polynomials}, ArXiv
  1402.3571 (2014).

\bibitem[CM11]{CM11}
L.~{Charles} and J.~{Marche}, \emph{{Knot state asymptotics I, AJ Conjecture
  and abelian representations}}, ArXiv 1107.1645 (2011).

\bibitem[EOR07]{EOR07}
Pavel Etingof, Alexei Oblomkov, and Eric Rains, \emph{Generalized double affine
  {H}ecke algebras of rank 1 and quantized del {P}ezzo surfaces}, Adv. Math.
  \textbf{212} (2007), no.~2, 749--796. \MR{2329319 (2008h:20006)}

\bibitem[FG00]{FG00}
Charles Frohman and R{\u{a}}zvan Gelca, \emph{Skein modules and the
  noncommutative torus}, Trans. Amer. Math. Soc. \textbf{352} (2000), no.~10,
  4877--4888. \MR{MR1675190 (2001b:57014)}

\bibitem[Fuk05]{Fuk05}
Shinji Fukuhara, \emph{Explicit formulae for two-bridge knot polynomials}, J.
  Aust. Math. Soc. \textbf{78} (2005), no.~2, 149--166. \MR{2141874
  (2006f:57013)}

\bibitem[GL05]{GL05}
Stavros Garoufalidis and Thang T.~Q. L{\^e}, \emph{The colored {J}ones function
  is {$q$}-holonomic}, Geom. Topol. \textbf{9} (2005), 1253--1293 (electronic).
  \MR{2174266 (2006j:57029)}


\bibitem[Gel02]{Gel02}
R{\u{a}}zvan Gelca, \emph{Non-commutative trigonometry and the {$A$}-polynomial
  of the trefoil knot}, Math. Proc. Cambridge Philos. Soc. \textbf{133} (2002),
  no.~2, 311--323. \MR{1912404 (2004c:57021)}

\bibitem[GS03]{GS03}
R{\u{a}}zvan Gelca and Jeremy Sain, \emph{The noncommutative {A}-ideal of a
  {$(2,2p+1)$}-torus knot determines its {J}ones polynomial}, J. Knot Theory
  Ramifications \textbf{12} (2003), no.~2, 187--201. \MR{1967240 (2004d:57015)}

\bibitem[GS04]{GS04}
\bysame, \emph{The computation of the non-commutative generalization of the
  {$A$}-polynomial of the figure-eight knot}, J. Knot Theory Ramifications
  \textbf{13} (2004), no.~6, 785--808. \MR{2088746 (2005f:57020)}


\bibitem[GL89]{GL89}
C.~McA. Gordon and J.~Luecke, \emph{Knots are determined by their complements},
  J. Amer. Math. Soc. \textbf{2} (1989), no.~2, 371--415. \MR{965210
  (90a:57006a)}


\bibitem[Hab08]{Hab08}
Kazuo Habiro, \emph{A unified {W}itten-{R}eshetikhin-{T}uraev invariant for
  integral homology spheres}, Invent. Math. \textbf{171} (2008), no.~1, 1--81.
  \MR{2358055 (2009b:57020)}

\bibitem[KM91]{KM91}
Robion Kirby and Paul Melvin, \emph{The {$3$}-manifold invariants of {W}itten
  and {R}eshetikhin-{T}uraev for {${\rm sl}(2,{\bf C})$}}, Invent. Math.
  \textbf{105} (1991), no.~3, 473--545. \MR{1117149 (92e:57011)}

\bibitem[Koo08]{Koo08}
Tom~H. Koornwinder, \emph{Zhedanov's algebra {$\rm AW(3)$} and the double
  affine {H}ecke algebra in the rank one case. {II}. {T}he spherical
  subalgebra}, SIGMA Symmetry Integrability Geom. Methods Appl. \textbf{4}
  (2008), Paper 052, 17. \MR{2425640 (2010e:33028)}

\bibitem[L{\^e}06]{Le06}
Thang T.~Q. L{\^e}, \emph{The colored {J}ones polynomial and the
  {$A$}-polynomial of knots}, Adv. Math. \textbf{207} (2006), no.~2, 782--804.
  \MR{2271986 (2007k:57021)}

\bibitem[LM85]{LM85}
Alexander Lubotzky and Andy~R. Magid, \emph{Varieties of representations of
  finitely generated groups}, Mem. Amer. Math. Soc. \textbf{58} (1985),
  no.~336, xi+117. \MR{818915 (87c:20021)}

\bibitem[Lus89]{Lus89}
George Lusztig, \emph{Affine {H}ecke algebras and their graded version}, J.
  Amer. Math. Soc. \textbf{2} (1989), no.~3, 599--635. \MR{991016 (90e:16049)}

\bibitem[Mac03]{Mac03}
I.~G. Macdonald, \emph{Affine {H}ecke algebras and orthogonal polynomials},
  Cambridge Tracts in Mathematics, vol. 157, Cambridge University Press,
  Cambridge, 2003. \MR{1976581 (2005b:33021)}

\bibitem[Min82]{Min82}
Jerome Minkus, \emph{The branched cyclic coverings of {$2$} bridge knots and
  links}, Mem. Amer. Math. Soc. \textbf{35} (1982), no.~255, iv+68. \MR{643587
  (83g:57004)}

\bibitem[NS04]{NS04}
Masatoshi Noumi and Jasper~V. Stokman, \emph{Askey-{W}ilson polynomials: an
  affine {H}ecke algebra approach}, Laredo {L}ectures on {O}rthogonal
  {P}olynomials and {S}pecial {F}unctions, Adv. Theory Spec. Funct. Orthogonal
  Polynomials, Nova Sci. Publ., Hauppauge, NY, 2004, pp.~111--144. \MR{2085854
  (2005h:42057)}

\bibitem[Obl04]{Obl04}
Alexei Oblomkov, \emph{Double affine {H}ecke algebras of rank 1 and affine
  cubic surfaces}, Int. Math. Res. Not. (2004), no.~18, 877--912. \MR{2037756
  (2005j:20005)}

\bibitem[Prz91]{Prz91}
J{\'o}zef~H. Przytycki, \emph{Skein modules of {$3$}-manifolds}, Bull. Polish
  Acad. Sci. Math. \textbf{39} (1991), no.~1-2, 91--100. \MR{1194712
  (94g:57011)}

\bibitem[PS00]{PS00}
J{\'o}zef~H. Przytycki and Adam~S. Sikora, \emph{On skein algebras and {${\rm
  Sl}\sb 2({\bf C})$}-character varieties}, Topology \textbf{39} (2000), no.~1,
  115--148. \MR{1710996 (2000g:57026)}

\bibitem[Ric79]{Ric79}
R.~W. Richardson, \emph{Commuting varieties of semisimple {L}ie algebras and
  algebraic groups}, Compositio Math. \textbf{38} (1979), no.~3, 311--327.
  \MR{535074 (80c:17009)}

\bibitem[RT90]{RT90}
N.~Yu. Reshetikhin and V.~G. Turaev, \emph{Ribbon graphs and their invariants
  derived from quantum groups}, Comm. Math. Phys. \textbf{127} (1990), no.~1,
  1--26. \MR{1036112 (91c:57016)}

\bibitem[Sah99]{Sah99}
Siddhartha Sahi, \emph{Nonsymmetric {K}oornwinder polynomials and duality},
  Ann. of Math. (2) \textbf{150} (1999), no.~1, 267--282. \MR{1715325
  (2002b:33018)}

\bibitem[Sam12]{Sam12}
Peter Samuelson, \emph{Kauffman bracket skein modules and the quantum torus},
  Ph.D. thesis, Cornell University, 2012.

\bibitem[Sam14]{Sam14}
\bysame, \emph{A topological construction of {C}herednik's $sl_2$ torus knot
  polynomials}, in preparation (2014).

\bibitem[Sco83]{Sco83}
Peter Scott, \emph{The geometries of {$3$}-manifolds}, Bull. London Math. Soc.
  \textbf{15} (1983), no.~5, 401--487. \MR{705527 (84m:57009)}

\bibitem[Sik05]{Sik05}
Adam~S. Sikora, \emph{Skein theory for {${\rm SU}(n)$}-quantum invariants},
  Algebr. Geom. Topol. \textbf{5} (2005), 865--897 (electronic). \MR{2171796
  (2006j:57033)}

\bibitem[Sto03]{Sto03}
Jasper~V. Stokman, \emph{Difference {F}ourier transforms for nonreduced root
  systems}, Selecta Math. (N.S.) \textbf{9} (2003), no.~3, 409--494.
  \MR{2006574 (2004h:33040)}

\bibitem[SW07]{SW07}
Adam~S. Sikora and Bruce~W. Westbury, \emph{Confluence theory for graphs},
  Algebr. Geom. Topol. \textbf{7} (2007), 439--478. \MR{2308953 (2008f:57004)}

\bibitem[Tha01]{Tha01}
Michael Thaddeus, \emph{Mirror symmetry, {L}anglands duality, and commuting
  elements of {L}ie groups}, Internat. Math. Res. Notices (2001), no.~22,
  1169--1193. \MR{1862614 (2002h:14019)}

\bibitem[Wal68]{Wal68}
Friedhelm Waldhausen, \emph{On irreducible {$3$}-manifolds which are
  sufficiently large}, Ann. of Math. (2) \textbf{87} (1968), 56--88.
  \MR{0224099 (36 \#7146)}

\end{thebibliography}
\bibliographystyle{amsalpha}

\end{document}